\tikzset{>=latex}
\newtheorem{theorem}{Theorem}[section]
\newtheorem{proposition}[theorem]{Proposition}
\newtheorem{cor}[theorem]{Corollary}
\newtheorem{lemma}[theorem]{Lemma}
\theoremstyle{definition}
\newtheorem{definition}[theorem]{Definition}
\newtheorem{example}[theorem]{Example}
\newtheorem{remark}[theorem]{Remark}
\theoremstyle{problem}
\newtheoremstyle{citing}
  {}
  {}
  {\itshape}
  {}
  {\bfseries}
  {.}
  {.5em}
  {\thmnote{#3}}
\theoremstyle{citing}
\newtheorem*{varthm}{}
\def\calnc         { {\mathcal {NC}}}
\def\scrh          {\mathscr H}
\def\Z      {\mathbb Z}
\def\N      {\mathbb N}
\def\s      {\sigma}
\def\ord    {\textup{ord}}
\def\f    {\textup{fi}}
\let\emptyset\varnothing
\title{Subgroup graph methods\\
 for presentations of finitely generated groups\\
  and the connectivity of associated simplicial complexes}
\author{Cora Welsch}
\affil{
Mathematisches Institut, Universit\"at M\"unster, \\
Einsteinstra\ss e 62, 48149 M\"unster,\\
 E-mail: cora.welsch@uni-muenster.de}
\date{\today}
\begin{document}

\maketitle

\begin{abstract}
In this article we generalize the theory of subgroup graphs of subgroups of free groups, developed by I. Kapovich and A. Myasnikov, based on a work by J. Stallings, to finite index subgroups of finitely generated groups. Given a presentation of a finitely generated group $G=\left\langle\,  X \, | \, R \, \right\rangle$ and a finite connected $X$-regular graph $\Gamma$ which fulfills the relators $R$, we associate to $\Gamma$ a finite index subgroup $H$ of $G$. Conversely, the Schreier coset graph of $H$ with respect to $X$ and $G$ is such a graph $\Gamma$. 
Firstly, we study and prove various properties of $H$ in relation to the graph $\Gamma$.  Secondly, we prove that for many finitely generated infinite groups the order and nerve complexes that we associate to $G$ are contractible.  
In particular, this is the case for free and free abelian groups, Fuchsian groups of genus $g \geq 2$, infinite right angled Coxeter groups, Artin and pure braid groups, infinite virtually cyclic groups, Baumslag-Solitar groups as well as the (free) product of at least two of these, and all finite index subgroups of these groups. 
\end{abstract}

\section{Introduction}
\label{Intro}
In this article we generalize the theory of subgroup graphs of subgroups of free groups, developed by I. Kapovich and A. Myasnikov in \cite{KM02}, based on J. Stallings' work \cite{S}, to finite index subgroups of finitely generated groups. To a folded graph whose edges are labeled by elements of $X$ they associate a subgroup of the free group $F(X)$. Conversely, they prove that for every subgroup $H\leq F(X)$ there exists a based $X$-graph $(\Gamma,v)$, unique up to isomorphism, which is connected, folded, a core graph with respect to $v$ and which has language $H=L(\Gamma,v)$.  They call this graph the subgroup graph $\Gamma_X(H)$. Furthermore, they show that it is the core of the Schreier coset graph of $H$ with respect to $F(X)$ at the base-vertex $H$ (see Definitions \ref{defxgraph}, \ref{deflang}, \ref{deffolded}, \ref{defcore} and \ref{def1}).

We develop a theory of subgroup graphs of finite index subgroups of finitely generated groups as follows. Let $\phi\colon F(X) \rightarrow G\cong F(X) / \left\langle\!\left\langle\, R\, \right\rangle\!\right\rangle_{F(X)}$ be the canonical epimorphism.

\begin{varthm}[Theorem \ref{thm2}] \textup{(Subgroup Graph)}\mbox{}\\
Let $G$ be a group with a presentation $G= \langle \,X\, |\, R\, \rangle$, where $X$ is finite and $R$ is not necessarily finite. 
\begin{enumerate}[(1)]
\item Let $\Gamma$ be an $X$-regular connected graph with $n$ vertices. Let $v_0$ be a base-vertex of $\Gamma$. Assume that $\Gamma$ fulfills the defining relators $R$. 
Then $\phi(L(\Gamma,v_0))$ is a subgroup of $G$ of index $[G:\phi(L(\Gamma,v_0))]=n$.
\item Let $H\leq G$ be a subgroup of index $[G:H]=n\in \N$. Then there exists a based $X$-graph $(\Gamma,v_0)$ (unique up to a canonical isomorphism of based $X$-graphs)  such that
\begin{enumerate}[(i)]
\item 
$\Gamma$ is $X$-regular and connected (see Definition \ref{defregular});
\item $\Gamma$ fulfills the defining relators $R$  (see Definition \ref{deffulfil});
\item $\Gamma$ has $n$ vertices; 
\item  $\phi(L(\Gamma, v_0))=H$.
\end{enumerate}
\end{enumerate}
In this situation we call $\Gamma$ the subgroup graph of $H$ with respect to $X$ and $R$. We denote it by $\Gamma_{X,R}(H)$ or briefly by $\Gamma(H)$. The base-vertex $v_0$ is denoted by $1_H$.
In fact, $\Gamma_{X,R}(H)=\Gamma_X(H')$, where $H':=L(\Gamma_{X,R}(H),1_H)\leq F(X)$.
\end{varthm}

The subgroup graph $\Gamma_{X,R}(H)$ is the Schreier coset graph of $H$ with respect to $X$ and $G$, with $H$ as the base-vertex. 
The important part of the theorem above is part (1). It shows that if we can construct such a graph, which is easily done, then there exists such a subgroup $H\leq G$. Furthermore,  
the subgroup graph $\Gamma_{X,R}(H)$ provides useful information about various properties of $H$. For example:

\begin{varthm}[Proposition \ref{propmorsub2}] \textup{(Morphisms and Subgroups)}\mbox{}\\
Let $\pi\colon \Gamma\rightarrow \Gamma' $ be a morphism of $X$-graphs, let $v\in V(\Gamma)$ and $v'=\pi(v)$. Let $G =\langle \,X\, |\, R\, \rangle$ be a group with $X$ finite and $R$  not necessarily finite. Suppose that $\Gamma$ and $\Gamma'$ are connected finite $X$-regular graphs which fulfill the defining relators $R$. Put $K=L(\Gamma,v)$ and $H=L(\Gamma', v')$. Then $\phi(K) \leq \phi(H)\leq G$.
\end{varthm}

\begin{varthm}[Proposition \ref{con2}] \textup{(Conjugate Subgroups)}\mbox{}\\
Let $H$ and $K$ be subgroups of finite index in the group $G=\langle \,X\, |\, R\, \rangle$, where $X$ is finite and $R$ is not necessarily finite. Then $H$ is conjugate to $K$ in $G$ if and only if the subgroup graphs $\Gamma_{X,R}(H)$ and $\Gamma_{X,R}(K)$ are isomorphic as $X$-graphs. 
\end{varthm}

 \begin{varthm}[Theorem \ref{norm2}] \textup{(Normal Subgroups)}\mbox{}\\
Let $H$ be a finite index subgroup of the group $G= \langle \,X\, |\, R\, \rangle$, where $X$ is finite and $R$ is not necessarily finite.  Then $H$ is normal in $G$ if and only if the based $X$-graphs $(\Gamma_{X,R}(H),1_H)$ and $(\Gamma_{X,R}(H),v)$ are isomorphic for all $v\in V(\Gamma_{X,R}(H))$. 
\end{varthm}

\begin{varthm}[Theorem \ref{thmnormalizer}] \textup{(Normalizer)}\mbox{}\\
Let $G =\langle \,X\, |\, R\, \rangle$ be a group with $X$ finite and $R$ not necessarily finite. Let $H$ be a finite index subgroup of $G$. Let $p_v$ be the reduced path in $\Gamma_{X,R}(H)$ from $1_H$ to $v$ with label $\mu(p_v)=g_v$. Then $g_v\in N_G(H)$ if and only if $(\Gamma_{X,R}(H),1_H)$ and $(\Gamma_{X,R}(H),v)$ are isomorphic as based $X$-graphs. Furthermore, let $V$ be the set of vertices of $\Gamma_{X,R}(H)$ with $(\Gamma_{X,R}(H),1_H)$ isomorph to $(\Gamma_{X,R}(H),v)$ as based $X$-graphs. Then
$$N_G(H)=\bigcup\limits_{v\in V} Hg_v. $$
\end{varthm}

\begin{varthm}[Proposition \ref{propint}] \textup{(Intersection)}\mbox{}\\
Let $H$ and $K$ be finite index subgroups of the  group $G=\langle \,X\, |\, R\, \rangle$, where $X$ is finite and $R$ is not necessarily  finite.   
Let $\Gamma(H) \times_1 \Gamma( K)$ be the connected component of the product graph $\Gamma_{X,R}(H) \times \Gamma_{X,R}(K)$ containing $1_H \times 1_K$ (see Definition \ref{defproductgraph}). Then $(\Gamma( H) \times_1 \Gamma( K), 1_{ H} \times 1_{ K})$ is the subgroup graph of  
$H \cap K<G$.
\end{varthm}

As a consequence of the theory of subgroup graphs developed in this article, we are able to answer questions related to the connectivity of order complexes. 

In \cite{B00} K.S. Brown asked two questions about the connectivity of the coset poset $\mathscr C (G)$ of a finite group $G$. The coset poset $\mathscr C (G)$ is the collection of all left cosets of all proper subgroups of $G$, the cosets being ordered by inclusion. The connectivity of the coset poset $\mathscr C (G)$ is the connectivity of the order complex $\Delta \mathscr C (G)$ (see Definition \ref{deforder}).

Brown's first question, whether there is any finite nontrivial group whose coset poset is contractible, was recently answered in the negative by J.  Shareshian and R. Woodroofe \cite{SW14}. Brown's second question, to find finite groups whose coset posets are simply connected, was investigated by D.A. Ramras in \cite{R05}. Ramras also studied $\mathscr C (G)$ for infinite groups $G$. He showed that the coset poset is contractible if $G$ is not finitely generated. 

We consider a different poset of cosets. Specifically, $P_\f (G)$ is the collection of all right cosets of all proper finite index subgroups of $G$. If $G$ is finite, then $\mathscr C (G)=P_\f(G)$, since $xH=(xHx^{-1})x$. We are interested in the connectivity of the order complex $\Delta P_\f (G)$ for the poset $(P_\f (G), \subseteq)$. We study the case when $G$ is an infinite finitely generated group. 
In fact, we consider the connectivity of the nerve complex $\calnc (G, \scrh_\f)$ (see Definition \ref{defnerve}), which is homotopy equivalent to $\Delta P_\f (G)$. The set $\scrh_\f$ is the set of all proper finite index subgroups of $G$. To investigate its connectivity, we use the subgroup graphs developed in this article.  
We obtain the following main results. 

The $X$-graphs $\Gamma_p$ below are subgroup graphs with $p$ vertices such that there exists an $X$-word $w_p$ with $V(\Gamma_p)=\{t(p_i) \mid o(p_i)=v, 0 \leq i < p\}$ for each reduced path $p_i$ with label $\mu(p_i)=w_p^i$.

\begin{varthm}[Theorem \ref{thmcon}] \mbox{}\\
Let $G= \langle \,X\, |\, R\, \rangle$ be a  group with $X$ finite and $R$  not necessarily finite. Suppose that 
there exists a collection $\{\Gamma_p\}_{p\in P}$  of $X$-graphs $\Gamma_p$  as in Proposition \ref{propGp} such that $P$ is a set of infinitely many primes. Suppose also that there exists a freely reduced $X$-word $w$ with $w_p=w$ uniformly for all $p\in P$. Then the nerve complex $\calnc (G, \scrh_\f)$ and  the order complex $\Delta P_\f (G)$ are contractible. 
\end{varthm}

\begin{varthm}[Theorem \ref{thmconsub}] \mbox{}\\
Let $G=\left\langle\,  X \, | \, R\, \right\rangle$ be a group with $X$ finite and $R$  not necessarily finite. 
Let $H$ be a finite index subgroup of $G$. Suppose that $G$ is a group such that there exists a collection $\{\Gamma_p\}_{p\in P}$ of subgroup graphs as in Theorem \ref{thmcon}. Then the nerve complex $\calnc(H, \scrh_\f )$ and the order complex $\Delta P_\f (H)$ are contractible.
\end{varthm}

\begin{varthm}[Theorem \ref{thmast1}] \mbox{}\\
Let $G_1=\left\langle \, X_1 \, | \, R_1 \,\right\rangle$ and $G_2=\left\langle \,  X_2 \, | \, R_2 \,\right\rangle$ be  groups with $X_1$, $X_2$ finite and $R_1$, $R_2$ not necessarily finite. Suppose that there exist proper subgroups $H_1<  G_1$ and $H_2 < G_2$ such that $\{1, w_1, w_1^2, ..., w_1^{n_1-1}\}$ and $\{1,w_2, w_2^2,..., w_2^{n_2-1}\}$ are full sets of coset representatives of $H_1$, $H_2$ in $G_1$, $G_2$ for some $w_1\in G_1$, $w_2\in G_2$. Then the nerve complex $\calnc (G_1 \ast G_2, \scrh_\f )$ and the order complex $\Delta P_\f (G_1 \ast G_2)$  are contractible. 
\end{varthm}

\begin{varthm}[Theorem \ref{thmast2}] \mbox{}\\
Let $G_1=\left\langle \, X_1 \, | \, R_1 \,\right\rangle$ and $G_2=\left\langle \,  X_2 \, | \, R_2 \,\right\rangle$ be  groups with $X_1$, $X_2$ finite and $R_1$, $R_2$ not necessarily finite. Suppose that $G_1$ is a group such that there exists a collection $\{\Gamma_p\}_{p\in P}$  of $X_1$-graphs as in Theorem \ref{thmcon}.  
Then the nerve complex $\calnc(G, \scrh_\f )$ and the order complex $\Delta P_\f (G)$ are contractible for $G=G_1\ast G_2$, $G=G_1\times G_2$ or $G=G_2 \rtimes G_1$.
\end{varthm}

\begin{varthm}[Corollary \ref{corende}] \mbox{}\\
Let $G$ be either the free product $G_1 \ast ...\ast G_n$, the direct product $G_1 \times ... \times G_n$, the semidirect product $G_1 \rtimes G_2$ or a finite index subgroup of the group $G_1$. Suppose that each $G_i$, for $1\leq i \leq n$, is one of the following finitely generated groups: \\
 a free group; a free abelian group; a Fuchsian group of genus $g \geq 2$;  an infinite right angled Coxeter group; an Artin group; a pure braid group; a Baumslag-Solitar group or an infinite virtually cyclic group. \\
Then the nerve complex $\calnc (G, \scrh_\f)$ and the order complex $\Delta P_\f (G)$ are contractible.
\end{varthm}

\begin{varthm}[Theorem \ref{thmamalgam}] \mbox{}\\
Let $G_1$ and $G_2$ be finitely generated groups. Suppose there exist proper normal subgroups  $H_1 \lhd  G_1$ and $H_2 \lhd G_2$  of finite index such that  $H_1 \cong H_2$ and $\{1,w_1,...,w_1^{n_1-1}\}$ and $\{1,w_2,...,w_2^{n_2-1}\}$ are full sets of coset representatives of $H_1< G_1$ and $H_2<G_2$ for some $w_1\in G_1$, $w_2\in G_2$. Then the nerve complex $\calnc(G, \scrh_\f)$ and the order complex $\Delta P_\f(G)$ are contractible for $G=G_1 \ast_D G_2$ with $\{1\}\leq D \leq H_1$.
\end{varthm}

The article is organized as follows.  Sections \ref{subfree}  is an introduction to the theory of subgroup graphs of subgroups of free groups. We recall the relevant material from \cite{KM02} without proofs, in order to make our article self-contained. 

Section \ref{ssub} develops our theory of subgroup graphs of finite index subgroups of finitely generated groups, see Theorem \ref{thm2}. Section \ref{appsub} gives applications of this theory. Most of them are results of \cite{KM02} which we generalize to finite index subgroups of finitely generated groups. In addition, we give  a method to detect the existence of Hall subgroups and normal subgroups as well as a method to compute the normalizer of a finite index subgroup.

In Section \ref{secconnect} and \ref{secsuf} our main results are stated and proved. There we use  the subgroup graph to prove the connectivity of the order complex $\Delta P_\f (G)$, where $G$ is a finitely generated group.
In Section \ref{secconnect} we consider special classes of infinite finitely generated groups. In Section \ref{secsuf} we prove sufficient conditions for finitely generated groups $G_1$, $G_2$ such that $\Delta P_\f(G_1\ast G_2)$, $P_\f(G_1\times G_2)$, $P_\f(G_1\rtimes G_2)$, $\Delta P_\f (G_1\ast _D G_2)$ and $\Delta P_\f (H)$ are contractible for $H\leq G_1$ a finite index subgroup.


\subsubsection*{Acknowledgment} This is part of the author's Ph.D. thesis. The author greatly indebted to Linus Kramer for suggesting the problem and to Corina Ciobotaru for reading some parts.


\section{Subgroup graphs of subgroups of free groups}
\label{subfree}


Following the article of Ilya Kapovich and Alexei Myasnikov \cite{KM02}, this section is meant to recall the theory of subgroup graphs for free groups. Their approach is more combinatorial and computational than the topological one by J. Stallings \cite{S}. Both approaches study subgroups of free groups. Let $F(X)$ be a finitely generated free group. To a based folded connected $X$-graph $(\Gamma,v)$  which is a core graph  with respect to $v$ they associate a subgroup $H\leq F(X)$, by taking the language $L(\Gamma,v)=H$ (see Definitions \ref{defxgraph}, \ref{deflang}, \ref{deffolded} and \ref{defcore}). 
Conversely, for every subgroup $H\leq F(X)$ there exists a  based $X$-graph $(\Gamma',v')$, unique up to isomorphism, which is folded, connected, a core graph with respect to $v'$ and has language $L(\Gamma', v')=H$. 
The correspondence is unique up to isomorphism of based $X$-graphs.  
Therefore we call $(\Gamma',v')$ the subgroup graph $\Gamma_X(H)$ of $H\leq F(X)$ with base-vertex $v'=1_H$. 
 The graph $\Gamma_X(H)$ is the core of the Schreier coset graph of $H$ with respect to $F(X)$ at the base-vertex $H$ (see Definition \ref{def1}).  
If $H$ is a subgroup of finite index in $F(X)$, then $\Gamma_X(H)$ is the Schreier coset graph of $H$ with respect to $F(X)$. \\

\begin{definition}{($X$-Graph, see \cite[2.1]{KM02})}\label{defxgraph}\mbox{}\\
Let $X$ be a finite set that is called an \textit{alphabet}. Let $\Gamma$ be a (finite or infinite) directed multi-edge graph with vertex set $V(\Gamma)$ and $E(\Gamma)$ the set of directed edges. We denote by $o(e)$ the \textit{origin} and by $t(e)$ the \textit{terminus} of the edge $e$.  We say $e$ is an edge from $o(e)$ to $t(e)$.  If $o(e)=t(e)$ then $e$ is called a \textit{loop}. 

A graph $\Gamma$ is called an \textit{$X$-labeled directed graph} (or \textit{$X$-digraph}, or \textit{$X$-graph}) if every directed edge $e \in E(\Gamma)$ is labeled by a letter from $X$, which is denoted by $\mu(e)$. 
 
A map $\pi\colon \Gamma \rightarrow \Gamma'$, between two $X$-graphs is called a \textit{morphism} of $X$-graphs if $\pi$ takes vertices to vertices, directed edges to directed edges, preserves labels of directed edges and has the property that $o(\pi(e))=\pi(o(e))$, $t(\pi(e))=\pi(t(e))$ for every edge $e$ of $\Gamma$. 
Furthermore,  a morphism of based $X$-graphs $(\Gamma,v)$, $(\Gamma',v')$ maps $v$ to $v'$.
\end{definition}

In this article $X^{-1}$ will always denote the set of all formal inverses of the elements in $X$ and $X$ will always be finite. By adding edges to an $X$-graph $\Gamma$, we obtain a new graph $\widehat{\Gamma}$.

\begin{definition}{(See \cite[2.2]{KM02})\\}
Let $\Gamma$ be an $X$-graph. We define the \textit{$(X\cup X^{-1})$-graph $\widehat{\Gamma}$} as follows:
$V(\widehat{\Gamma}):=V(\Gamma)$ and for every edge $e \in E(\Gamma)$ we introduce the formal inverse $e^{-1}$ of $e$, whose label is $\mu(e)^{-1}$. 
The endpoints of $e^{-1}$ are $o(e^{-1}):=t(e)$ and $t(e^{-1}):=o(e)$. 
For a new edge $e^{-1}$, we set $(e^{-1})^{-1}=e$. We have that $E(\widehat{\Gamma}):=E(\Gamma) \cup E(\Gamma)^{-1}$. For an example see Figure \ref{Gdach}. 
\end{definition} 
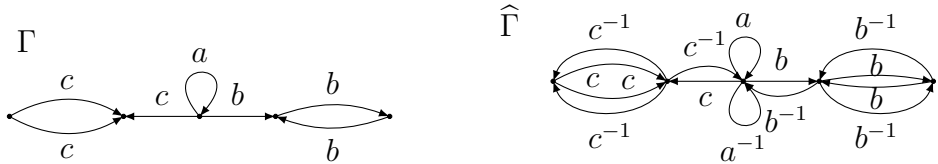
\begin{figure}[h!]
	\centering
  \begin{tikzpicture}     

\coordinate[label=left:$\Gamma$](0) at (0,1);
\coordinate (1) at (-0.5,0);
\coordinate (2) at (1,0);
\coordinate (3) at (2,0);
\coordinate (4) at (3,0);
\coordinate (5) at (4.5,0);
 \filldraw[black](1) circle (0.8pt)
 (2) circle (0.8pt)
 (3) circle (0.8pt)
 (4) circle (0.8pt)
 (5) circle (0.8pt)
 ;
\path[->,min distance=1cm] (3) edge[in=50,out=130,above] node {$a$}(3);
\draw [->] (4) to[bend left=20] node[above] {$b$} (5);
\draw [->] (5) to[bend left=20] node[below] {$b$} (4);
\draw [->] (1) to[bend left=30] node[above] {$c$} (2);
\draw [->] (1) to[bend right=30] node[below] {$c$} (2);
\draw [->] (3) to node[above] {$c$} (2);
\draw [->] (3) to node[above] {$b$} (4);
\end{tikzpicture}
\hspace{10mm}
\begin{tikzpicture}     

\coordinate[label=left:$\widehat{\Gamma}$](0) at (-0.8,0.8);
\coordinate (1) at (-0.50,0);
\coordinate (2) at (1,0);
\coordinate (3) at (2,0);
\coordinate (4) at (3,0);
\coordinate (5) at (4.5,0);
 \filldraw[black](1) circle (0.8pt)
 (2) circle (0.8pt)
 (3) circle (0.8pt)
 (4) circle (0.8pt)
 (5) circle (0.8pt)
 ;
\path[->,min distance=1cm] (3) edge[in=50,out=130,above] node {$a$}(3);
\draw [->] (4) to[bend left=10] node[below] {$b$} (5);
\draw [->] (5) to[bend left=10] node[above] {$b$} (4);
\draw [->] (1) to[bend left=30] node[below right] {$c$} (2);
\draw [->] (1) to[bend right=30] node[above left] {$c$} (2);

\draw [->] (3) to node[below] {$c$} (2);
\draw [->] (3) to node[above] {$b$} (4);
\path[->,min distance=1cm] (3) edge[in=310,out=230,below] node {$a^{-1}$}(3);
\draw [->] (5) to[bend right=70] node[above] {$b^{-1}$} (4);
\draw [->] (4) to[bend right=70] node[below] {$b^{-1}$} (5);
 (1);
 \draw [->] (2) to[bend left=70] node[below] {$c^{-1}$} (1);
\draw [->] (2) to[bend right=70] node[above] {$c^{-1}$} (1);
\draw [->] (2) to[bend left=40] node[above] {$c^{-1}$} (3);
\draw [->] (4) to[bend left=40] node[below] {$\ b^{-1}$} (3);
\end{tikzpicture}
	\caption{An $X$-graph $\Gamma$ and the $X\cup X^{-1}$-graph $\widehat{\Gamma}$ for $X=\{a,b,c\}$. }
	\label{Gdach}
\end{figure}

\begin{definition}{(Path, see \cite[2.2]{KM02})\\}
Let $\Gamma$ be an $X$-graph. A \textit{path} $p$ in $\Gamma$ is, by definition, a sequence of edges $e_1,...,e_k $, where $e_i \in E(\widehat{\Gamma})$ and $o(e_i)=t(e_{i-1})$. The origin of $p$ is $o(p):=o(e_1)$ and its terminus is $t(p):=t(e_k)$. The label of $p$ is, by definition, $\mu(p):=\mu(e_1)\cdots \mu(e_k)$, the word in the free monoid  generated by $X\cup X^{-1}$. We call $p$ a path from $o(e_1)$ to $t(e_k)$. 
 If $v$ is a vertex of $\Gamma$, we consider the sequence $p=v$ to be a path with $o(p)=t(p)=v$ and $\mu(p)=1$ (the empty word). 
\end{definition}

\begin{definition}{(Reduced Word and Reduced Path, see \cite[2.6]{KM02})\\}
A \textit{freely reduced word} in the alphabet $X \cup X^{-1}$ is a word without any subwords $xx^{-1}$ or $x^{-1}x$ for $x \in X$.  
A path $p$ in an $X$-graph $\Gamma$ is said to be \textit{reduced} if $p$ does not contain subpaths of the form $e, e^{-1}$ for $e\in E(\widehat{\Gamma})$.
\end{definition}

Let $w$ be a word in the alphabet $X \cup X^{-1}$ (or  an $X$-word, or a word in $X^{\pm 1}$). We  denote by  $\overline{w}$ the freely reduced $X$-word  obtained by removing $xx^{-1}$, $x^{-1}x$ successively. 
\textit{The free group} on $X$, denoted $F(X)$, is the collection of all freely reduced words in $X^{\pm 1}$. The multiplication in the free group $F(X)$ is defined as $$ f \cdot g:=\overline{fg}$$ 
for all $f, g\in F(X)$.

\begin{definition}{(Language, see \cite[2.7]{KM02})}\label{deflang}\mbox{}\\
Let $\Gamma$ be an $X$-graph and let $v$ be a vertex of $\Gamma$. We define the \textit{language of $\Gamma$ with respect to} $v$ to be 
$$L(\Gamma, v)=\{\mu(p) \mid p \textrm{ is a reduced path in } \Gamma \textrm{ with } o(p)=t(p)=v \}.$$
\end{definition}  
\begin{figure}[h!]
	\centering
  \begin{tikzpicture}     
\coordinate[label=left:$\Gamma$] (0) at (0,0.6);
\coordinate[label=below:$u$] (u) at (0,0);
\coordinate[label=below:$v$] (v) at (2,0); 
\coordinate[label=below:$w$] (w) at (4,0);
 \filldraw[black]
 (u) circle (0.8pt)
 (v) circle (0.8pt)
 (w) circle (0.8pt);
\draw [->] (u) to[bend left=30] node[above] {$x$} (v);
\draw [->] (u) to[bend right=30] node[below] {$x$} (v);
\draw [->] (v) to node[above] {$y$} (w);

\end{tikzpicture}
	\caption{An $\{x,y\}$-graph $\Gamma$. }
	\label{sprache}
\end{figure}
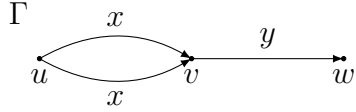
For example, in Figure \ref{sprache} the languages of $\Gamma$ with respect to the vertex $u, v$ or $w$ are: 
	$L(\Gamma, u)=\{ (xx^{-1})^n \mid n \in \mathbb N\}$, 
	$L(\Gamma, v)=\{ (x^{-1}x)^n \mid n \in \mathbb N\}$ and $L(\Gamma, w)=\{ 1, y^{-1}(x^{-1}x)^ny \mid n \in \mathbb N_{>0}\}$.

Note that $\mu(p)$ may have subwords of the form $xx^{-1}$ or $x^{-1}x$ for some $x\in X$ even if $p$ is a reduced path. Hence the words in the language $L(\Gamma, v)$ of an $X$-graph are not necessarily freely reduced.  

\begin{proposition}\textup{(See \cite[3.1]{KM02})\\}
Let $\Gamma$ be an $X$-graph and let $v$ be a base-vertex of $\Gamma$. Then the set 
$$\overline{L(\Gamma, v)}=\{\overline{w} \mid w \in L(\Gamma, v) \}$$
is a subgroup of the free group $F(X)$.
\end{proposition}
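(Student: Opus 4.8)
The plan is to verify directly that $\overline{L(\Gamma,v)}$ satisfies the three subgroup axioms inside $F(X)$. Since every element of $\overline{L(\Gamma,v)}$ is by construction a freely reduced word, it is automatically a subset of $F(X)$; it then remains to check that it contains $1$, is closed under taking inverses, and is closed under the multiplication $f\cdot g=\overline{fg}$ of $F(X)$. Two elementary facts about paths and the bar operation will do all the work: first, reversing a path inverts its label up to free reduction; second, and more importantly, every path in $\Gamma$ can be replaced by a \emph{reduced} path with the same endpoints and the same freely reduced label.

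I would isolate the second fact as a lemma: \emph{for every path $p$ in $\Gamma$ there is a reduced path $p'$ in $\Gamma$ with $o(p')=o(p)$, $t(p')=t(p)$ and $\overline{\mu(p')}=\overline{\mu(p)}$.} The proof is by induction on the number of edges of $p$. If $p$ is already reduced, take $p'=p$. Otherwise $p$ contains a subpath of the form $e,e^{-1}$ with $e\in E(\widehat\Gamma)$; deleting this subpath yields a strictly shorter path $p_1$ with the same origin and terminus, and $\mu(p_1)$ is obtained from $\mu(p)$ by deleting a subword $\mu(e)\mu(e)^{-1}$, which has the form $xx^{-1}$ or $x^{-1}x$ for some $x\in X$. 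Hence $\overline{\mu(p_1)}=\overline{\mu(p)}$, and applying the induction hypothesis to $p_1$ produces the desired $p'$.

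With the lemma in hand the three axioms follow quickly. The empty path at $v$ is reduced with label $1$, so $1\in\overline{L(\Gamma,v)}$. If $w\in L(\Gamma,v)$ is realized by a reduced path $p$ from $v$ to $v$, then the reverse path (traverse the edges of $p$ in reverse order, each replaced by its formal inverse) is again a reduced path from $v$ to $v$, with label $w^{-1}$ (the formal inverse word); since $\overline{w^{-1}}=\overline{w}^{-1}$ in $F(X)$, we get $\overline{w}^{-1}\in\overline{L(\Gamma,v)}$. Finally, given $w_1,w_2\in L(\Gamma,v)$ realized by reduced paths $p_1,p_2$ based at $v$, the concatenation $p_1p_2$ is a (generally non-reduced) path from $v$ to $v$ with label $w_1w_2$; by the lemma there is a reduced path $p'$ from $v$ to $v$ with $\overline{\mu(p')}=\overline{w_1w_2}$, so $\overline{w_1w_2}\in\overline{L(\Gamma,v)}$. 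Using the standard identity $\overline{w_1w_2}=\overline{\overline{w_1}\,\overline{w_2}}=\overline{w_1}\cdot\overline{w_2}$, this says exactly that $\overline{L(\Gamma,v)}$ is closed under the multiplication of $F(X)$.

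The only genuinely non-formal point is the reduction lemma, and even there the content is just the observation that deleting a backtrack $e,e^{-1}$ from a path deletes an $xx^{\pm1}$ pair from its label and hence does not change the freely reduced word it represents; I expect this — together with keeping track of the base-vertex so that the new path is still a loop at $v$ — to be the one step worth writing carefully, after which the verification of the axioms is immediate.
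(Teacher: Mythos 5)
Your proof is correct. The paper itself states this proposition without proof, recalling it from \cite{KM02} (Proposition 3.1); your argument --- the lemma that any path can be replaced by a reduced path with the same endpoints and the same freely reduced label, followed by a direct check of identity, inverses and products --- is exactly the standard argument used there, so it matches the intended proof rather than offering a different route.
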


As we will see, the language of a folded $X$-graph consists only of freely reduced words.  

\begin{definition}{(Folded $X$-Graph, see \cite[2.3]{KM02})}\label{deffolded}\mbox{}\\
Let $\Gamma$ be an $X$-graph. We say that the $X$-graph $\Gamma $ is \textit{folded} if for each vertex $v$ of $\Gamma$ and each letter $x\in X$ there is at most one edge in $\Gamma$ with origin $v$ and label $x$ and at most one edge with terminus $v$ and label $x$.
\end{definition}

For example, in Figure \ref{folding}, the graphs $\Gamma, \Gamma' $ and $\Gamma''$  are not folded but the graph $ \Gamma'''$ is folded.

Suppose $\Gamma$ is an $X$-graph and $e, e'$ are two edges of $\Gamma$ with $o(e)=o(e')$ (or $t(e)=t(e')$) and the same label $x\in X$. Then, informally speaking, \textit{folding}  $\Gamma$ at $e, e'$ means identifying $e$ and $e'$ in a single new edge with label $x$. 
 For a more precise definition see \cite[2.4]{KM02} and \cite[3.2]{S}.

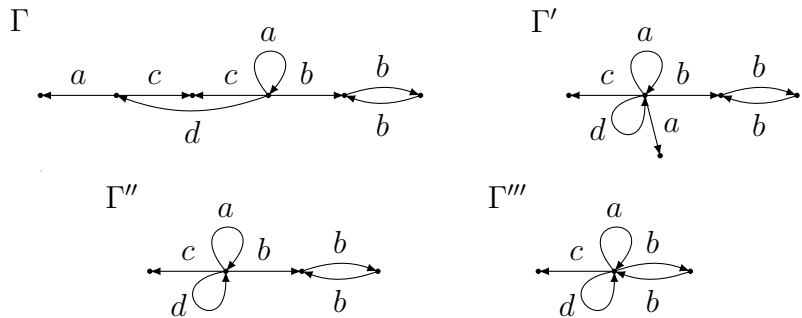
\begin{figure}[h!]
	\centering
  \begin{tikzpicture}     

\coordinate[label=left:$\Gamma$](0) at (0,1);
\coordinate (1) at (0,0);
\coordinate (2) at (1,0);
\coordinate (3) at (2,0);
\coordinate (4) at (3,0);
\coordinate (5) at (4,0);
\coordinate (6) at (5,0);
 \filldraw[black](1) circle (0.8pt)
 (2) circle (0.8pt)
 (3) circle (0.8pt)
 (4) circle (0.8pt)
 (5) circle (0.8pt)
  (6) circle (0.8pt)
  (0,-1) circle (0pt)
 ;
\path[->,min distance=1cm] (4) edge[in=50,out=130,above] node {$a$}(4);
\draw [->] (5) to[bend left=20] node[above] {$b$} (6);
\draw [->] (6) to[bend left=20] node[below] {$b$} (5);
\draw [->] (4) to[bend left=20] node[below] {$d$} (2);
\draw [->] (2) to node[above] {$a$} (1);
\draw [->] (2) to node[above] {$c$} (3);
\draw [->] (4) to node[above] {$c$} (3);
\draw [->] (4) to node[above] {$b$} (5);
\end{tikzpicture}
\hspace{10mm}
\begin{tikzpicture}     

\coordinate[label=left:$\Gamma'$](0) at (0,1);
\coordinate (1) at (0,0);
\coordinate (2) at (1,0);
\coordinate (3) at (2,0);
\coordinate (4) at (3,0);
\coordinate (5) at (1.2,-0.8);

 \filldraw[black](1) circle (0.8pt)
 (2) circle (0.8pt)
 (3) circle (0.8pt)
 (4) circle (0.8pt)
 (5) circle (0.8pt)
 
 ;
\path[->,min distance=1cm] (2) edge[in=50,out=130,above] node {$a$}(2);
\path[->,min distance=1cm] (2) edge[in=270,out=190,left] node {$d$}(2);
\draw [->] (3) to[bend left=20] node[above] {$b$} (4);
\draw [->] (4) to[bend left=20] node[below] {$b$} (3);

\draw [->] (2) to node[above] {$c$} (1);
\draw [->] (2) to node[right] {$a$} (5);
\draw [->] (2) to node[above] {$b$} (3);
\end{tikzpicture}

\begin{tikzpicture}     

\coordinate[label=left:$\Gamma''$](0) at (0,1);
\coordinate (1) at (0,0);
\coordinate (2) at (1,0);
\coordinate (3) at (2,0);
\coordinate (4) at (3,0);

 \filldraw[black](1) circle (0.8pt)
 (2) circle (0.8pt)
 (3) circle (0.8pt)
 (4) circle (0.8pt)

 ;
\path[->,min distance=1cm] (2) edge[in=50,out=130,above] node {$a$}(2);
\path[->,min distance=1cm] (2) edge[in=270,out=190,left] node {$d$}(2);
\draw [->] (3) to[bend left=20] node[above] {$b$} (4);
\draw [->] (4) to[bend left=20] node[below] {$b$} (3);

\draw [->] (2) to node[above] {$c$} (1);
\draw [->] (2) to node[above] {$b$} (3);
\end{tikzpicture}
\hspace{10mm}
\begin{tikzpicture}     

\coordinate[label=left:$\Gamma'''$](0) at (0,1);
\coordinate (1) at (0,0);
\coordinate (2) at (1,0);
\coordinate (3) at (2,0);

 \filldraw[black](1) circle (0.8pt)
 (2) circle (0.8pt)
 (3) circle (0.8pt)

 ;
\path[->,min distance=1cm] (2) edge[in=50,out=130,above] node {$a$}(2);
\path[->,min distance=1cm] (2) edge[in=270,out=190,left] node {$d$}(2);
\draw [->] (3) to[bend left=20] node[below] {$b$} (2);
\draw [->] (2) to[bend left=20] node[above] {$b$} (3);
\draw [->] (2) to node[above] {$c$} (1);

\end{tikzpicture}
	\caption{Folding of an $\{a,b,c,d\}$-graph $\Gamma$. Every step $\Gamma \dashrightarrow \Gamma'$, $\Gamma' \dashrightarrow \Gamma''$, and $\Gamma'' \dashrightarrow \Gamma'''$ is a folding.}
	\label{folding}
\end{figure}

With the definition of a folded $X$-graph at hand we may associate  a subgroup to the language of an $X$-graph.

\begin{lemma}\textup{(See \cite[2.9]{KM02})\\}
Let $\Gamma$ be a folded $X$-graph and $v$ be a vertex of $\Gamma$. Then all the words in the language $L(\Gamma, v)$ are freely reduced. 
\end{lemma}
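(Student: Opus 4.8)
The plan is to argue by contradiction, exploiting the foldedness of $\Gamma$ to rule out any cancellation in the label word of a reduced closed path.

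First I would record an auxiliary observation: the folding condition transfers from $\Gamma$ to $\widehat{\Gamma}$, in the sense that for every vertex $w$ and every letter $\ell\in X\cup X^{-1}$ there is at most one edge of $\widehat{\Gamma}$ with origin $w$ and label $\ell$, and likewise at most one with terminus $w$ and label $\ell$. This is a short case distinction. If $\ell\in X$, then any such edge already lies in $E(\Gamma)$ (the edges in $E(\widehat{\Gamma})\setminus E(\Gamma)$ carry labels in $X^{-1}$), so the claim is exactly the defining property of a folded $X$-graph. If $\ell=x^{-1}$ with $x\in X$, then such an edge has the form $f^{-1}$ with $f\in E(\Gamma)$ and $\mu(f)=x$, and $o(f^{-1})=w$ means $t(f)=w$; since $\Gamma$ is folded there is at most one edge $f\in E(\Gamma)$ with terminus $w$ and label $x$, hence at most one such $f^{-1}$. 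The statement for termini is symmetric.

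Now let $w\in L(\Gamma,v)$, say $w=\mu(p)$ for a reduced path $p=e_1,\dots,e_k$ in $\Gamma$ with $o(p)=t(p)=v$, where each $e_i\in E(\widehat{\Gamma})$. Suppose $w$ is not freely reduced. Then $w$ contains a subword $yy^{-1}$ with $y\in X\cup X^{-1}$, which, since $w=\mu(e_1)\cdots\mu(e_k)$ is a concatenation of single letters, forces $\mu(e_{i+1})=\mu(e_i)^{-1}$ for some $1\le i<k$. Consider the edge $e_i^{-1}\in E(\widehat{\Gamma})$: it has origin $o(e_i^{-1})=t(e_i)=o(e_{i+1})$ and label $\mu(e_i^{-1})=\mu(e_i)^{-1}=\mu(e_{i+1})$. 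By the auxiliary observation, $e_i^{-1}$ and $e_{i+1}$ are the same edge of $\widehat{\Gamma}$, so $p$ contains the subpath $e_i,e_i^{-1}$, contradicting the assumption that $p$ is reduced. Hence $w$ is freely reduced, and since $w\in L(\Gamma,v)$ was arbitrary, every word in $L(\Gamma,v)$ is freely reduced.

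The only care needed is in the bookkeeping of the two cases $\ell\in X$ versus $\ell\in X^{-1}$ in the auxiliary observation, together with the remark that cancellation in $\mu(p)$ can occur only between the labels of consecutive edges; beyond that the argument is immediate. I expect that mild case distinction in the auxiliary observation to be the main obstacle.
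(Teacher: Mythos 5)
Your proof is correct: the auxiliary observation that foldedness of $\Gamma$ gives uniqueness of edges in $\widehat{\Gamma}$ with a prescribed origin (or terminus) and label in $X\cup X^{-1}$, followed by the identification $e_{i+1}=e_i^{-1}$ contradicting reducedness of the path, is exactly the standard argument. The paper itself states this lemma without proof, citing \cite{KM02}, and your argument is essentially the one given there, so there is nothing further to compare.
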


\begin{cor}\textup{(See \cite[3.2]{KM02})}\label{cor1} \mbox{}\\
Suppose $\Gamma$ is a folded $X$-graph. Then $L(\Gamma, v)=\overline{L(\Gamma, v)}$ is a subgroup of the free group $F(X)$.
\end{cor}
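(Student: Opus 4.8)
The plan is to deduce the corollary directly from the two results that immediately precede it, with essentially no new work. First I would invoke the lemma (\cite[2.9]{KM02}) asserting that, because $\Gamma$ is folded, every word in $L(\Gamma,v)$ is already freely reduced. Consequently the free-reduction operation $w\mapsto\overline{w}$ fixes each element of $L(\Gamma,v)$, so
$$\overline{L(\Gamma,v)}=\{\overline{w}\mid w\in L(\Gamma,v)\}=L(\Gamma,v).$$
Next I would apply the proposition (\cite[3.1]{KM02}), which says that for an arbitrary $X$-graph $\Gamma$ and base-vertex $v$ the set $\overline{L(\Gamma,v)}$ is a subgroup of $F(X)$. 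Combining this with the displayed equality gives that $L(\Gamma,v)=\overline{L(\Gamma,v)}$ is a subgroup of $F(X)$, which is exactly the assertion.

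I expect no real obstacle: the corollary is a one-line consequence of the cited lemma and proposition. The only point that carries any content is the foldedness input that feeds the lemma, namely the fact that a reduced path in a folded $X$-graph has a freely reduced label. If a self-contained treatment were wanted, I would argue as follows. Let $p=e_1\cdots e_k$ be a reduced path in $\Gamma$, so no two consecutive edges satisfy $e_{i+1}=e_i^{-1}$. Suppose, for contradiction, that $\mu(p)=\mu(e_1)\cdots\mu(e_k)$ admits a cancellation, i.e. for some $i$ we have $\mu(e_i)\mu(e_{i+1})\in\{xx^{-1},x^{-1}x\}$ for some $x\in X$. In the case $\mu(e_i)=x$, $\mu(e_{i+1})=x^{-1}$, the edge $e_{i+1}$ (viewed in $\widehat\Gamma$) is an edge with terminus $t(e_{i+1})=t(e_i)$ (since $o(e_{i+1})=t(e_i)$ and $e_{i+1}$ carries label $x^{-1}$, so its inverse is an $x$-edge ending at $t(e_i)$) and label $x$ on its inverse; together with $e_i$ this produces two distinct edges of $\Gamma$ with the same terminus and the same label $x$, contradicting that $\Gamma$ is folded — unless $e_{i+1}=e_i^{-1}$, which is excluded since $p$ is reduced. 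The case $\mu(e_i)=x^{-1}$, $\mu(e_{i+1})=x$ is symmetric, using the "at most one edge with origin $v$ and label $x$" clause of the folded condition. Hence $\mu(p)$ is freely reduced.

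With that lemma in hand the corollary follows immediately as above, so in the write-up I would simply cite the lemma and the proposition rather than reproving either, and conclude in a single sentence that $L(\Gamma,v)=\overline{L(\Gamma,v)}\leq F(X)$.
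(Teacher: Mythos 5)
Your argument is correct and is exactly the intended derivation: the paper states this corollary as a quoted result from \cite{KM02} without reproving it, and its proof there is precisely your one-line combination of the lemma (words in $L(\Gamma,v)$ of a folded graph are freely reduced, so $L(\Gamma,v)=\overline{L(\Gamma,v)}$) with the proposition that $\overline{L(\Gamma,v)}$ is a subgroup of $F(X)$. Your optional self-contained proof of the lemma is also sound (the phrase ``$e_{i+1}$ is an edge with terminus $t(e_i)$'' should refer to $e_{i+1}^{-1}$, as your parenthetical in fact makes clear), so nothing further is needed.
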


To each folded based $X$-graph $(\Gamma, v)$ we have thus associated a subgroup of the free group by considering the language $L(\Gamma, v)$ of the $X$-graph. 
But two different folded based $X$-graphs  can have the same language. For example, each of these four  based graphs $(\Gamma,u)$, $(\Gamma,v)$, $(\Gamma',v')$  and $(\Gamma',u')$ in Figure \ref{core} has language $\{x^{2z} \mid z \in \Z\}$. 

\begin{figure}[h!]
	\centering
  \begin{tikzpicture}     
\coordinate[label=left:$\Gamma$] (0) at (0,0.6);
\coordinate[label=below:$u$] (u) at (0,0);
\coordinate[label=below:$v$] (v) at (2,0); 
\coordinate[label=below:$w$] (w) at (4,0);
 \filldraw[black]
 (u) circle (0.8pt)
 (v) circle (0.8pt)
 (w) circle (0.8pt);
\draw [->] (v) to[bend right=30] node[above] {$x$} (u);
\draw [->] (u) to[bend right=30] node[below] {$x$} (v);
\draw [->] (v) to node[above] {$y$} (w);

\end{tikzpicture}
\hspace{1cm}
\begin{tikzpicture}     
\coordinate[label=left:$\Gamma'$] (0) at (0,0.6);
\coordinate[label=below:$u'$] (u) at (0,0);
\coordinate[label=below:$v'$] (v) at (2,0); 

 \filldraw[black]
 (u) circle (0.8pt)
 (v) circle (0.8pt);
\draw [->] (v) to[bend right=30] node[above] {$x$} (u);
\draw [->] (u) to[bend right=30] node[below] {$x$} (v);

\end{tikzpicture}
	\caption{An $X$-graph $\Gamma$ and an $X$-graph $\Gamma'=Core(\Gamma,v)$ for $X=\{x, y\}$.}
	\label{core}
\end{figure}
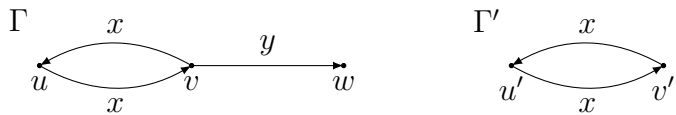

\begin{definition}{(Core Graph, see \cite[3.5]{KM02})}\label{defcore} \mbox{}\\
Let $\Gamma$ be an $X$-graph and let $v$ be a vertex. Then the \textit{core of  $\Gamma$ at} $v$ is defined as 
$$Core(\Gamma, v)= \bigcup \{ p \mid p \textrm{ is a reduced path in } \Gamma \textrm{ with } o(p)=t(p)=v \}.$$
If $Core(\Gamma, v)=\Gamma$ we say that $\Gamma$ is a \textit{core graph with respect to $v$}. 
\end{definition}

For example, the graph $\Gamma$ in Figure \ref{core} is a core graph only with respect to $w$. For the vertex $u$ and $v$ we have that $Core(\Gamma, u)=Core(\Gamma, v)= \Gamma'$.

\begin{theorem}\textup{(Subgroup Graph, see \cite[5.3]{KM02})}\label{thm}\mbox{}\\
Let $H\leq F(X)$ be a subgroup. 
There exists a  based $X$-graph $(\Gamma, v)$ (unique up to canonical isomorphism of based $X$-graphs) such that
\begin{enumerate}[(i)]
\item 
the graph $\Gamma$ is folded and connected;
\item 
the graph $\Gamma$ is a core graph with respect to $v$;
\item 
$L(\Gamma, v)=H$.
\end{enumerate}
In this situation we call $\Gamma$ the subgroup graph of $H$ with respect to $X$ and denote it by $\Gamma_X(H)$ or  briefly by $\Gamma(H)$. The base-vertex $v$ is denoted $1_H$. 
\end{theorem}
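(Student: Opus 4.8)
The plan is to realize the required graph explicitly as the core of the Schreier coset graph of $H$, which disposes of existence and uniqueness at once.

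\textbf{Existence.} First I would form the Schreier $X$-graph $S$ of $H$ in $F(X)$ (in the sense of \cite{KM02}): its vertices are the right cosets in $H\backslash F(X)$, and for each coset $Hg$ and each letter $x\in X$ there is one edge labeled $x$ from $Hg$ to $Hgx$. This $S$ is connected because $X$ generates $F(X)$, and folded because for each vertex $Hg$ and each $x\in X$ the only $x$-edge out of $Hg$ is $Hg\to Hgx$ and the only $x$-edge into $Hg$ is $Hgx^{-1}\to Hg$. Since $S$ is folded, a freely reduced $X$-word $w$ labels at most one path starting at $v:=H\cdot 1$, that path is automatically reduced, and it is closed exactly when $Hw=H$, i.e.\ $w\in H$; together with the fact that the language of a folded graph consists of freely reduced words \cite[2.9]{KM02} this gives $L(S,v)=H$. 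Finally I would put $\Gamma:=Core(S,v)$ (Definition \ref{defcore}): passing to the core deletes only vertices and edges lying on no reduced closed path at $v$, so $L(\Gamma,v)=L(S,v)=H$, while $\Gamma$ stays folded (a subgraph of a folded graph) and connected (a union of closed paths through $v$) and is a core graph with respect to $v$ by construction. Thus $(\Gamma,v)$ satisfies (i)--(iii), and I set $1_H:=v$. (A Stallings-style alternative is to take the bouquet of loops spelling a generating set of $H$, apply folds --- which change neither $\overline{L(\Gamma,v)}$ nor connectedness --- and then take the core, passing to the direct limit of the fold sequence when $H$ is infinitely generated; but the Schreier construction avoids all termination bookkeeping.)

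\textbf{Uniqueness.} Let $(\Gamma_1,v_1)$ and $(\Gamma_2,v_2)$ both satisfy (i)--(iii). Two consequences of foldedness drive the argument: (a) in a folded $X$-graph a freely reduced word labels at most one path from a fixed vertex, and any such path is reduced; (b) if $q$ is a closed path at the base vertex of a folded graph, then successively deleting backtracks $e,e^{-1}$ --- which only removes subwords $xx^{-1}$ from $\mu(q)$ and preserves endpoints --- produces a reduced closed path with label $\overline{\mu(q)}$, so $\overline{\mu(q)}\in L(\Gamma,v)$; the same deletion process turns any path $p$ into a reduced path with the same endpoints and label $\overline{\mu(p)}$. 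Using these I would define, for $i=1,2$, a morphism of based $X$-graphs $\psi_i\colon(\Gamma_i,v_i)\to(S,v)$ by $\psi_i(u):=H\cdot\overline{\mu(p)}$ for any path $p$ in $\widehat{\Gamma_i}$ from $v_i$ to $u$; this is well defined by (b) (two choices $p,p'$ yield a closed path $pp'^{-1}$, so $\overline{\mu(p)}\,\overline{\mu(p')}^{-1}\in H$ and $H\overline{\mu(p)}=H\overline{\mu(p')}$), it preserves labels and the base vertex, and it is injective on edges since in the folded graph $\Gamma_i$ an edge is determined by its origin and label. To see injectivity on vertices, suppose $\psi_i(u)=\psi_i(u')$; choosing reduced paths $\tilde p,\tilde p'$ from $v_i$ to $u,u'$ with labels $\overline{\mu(p)},\overline{\mu(p')}$, the element $h:=\overline{\mu(p)}\,\overline{\mu(p')}^{-1}$ lies in $H=L(\Gamma_i,v_i)$, so some reduced closed path $c$ at $v_i$ reads $h$; reducing $c\tilde p'$ gives a reduced path from $v_i$ to $u'$ with label $\overline{h\,\overline{\mu(p')}}=\overline{\mu(p)}$, which by (a) coincides with $\tilde p$, whence $u=u'$. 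Finally, since $\Gamma_i$ is a core graph with respect to $v_i$, every vertex and edge of $\Gamma_i$ lies on a reduced closed path at $v_i$, which $\psi_i$ sends to a reduced closed path at $v$ in $S$, hence into $Core(S,v)$; conversely every vertex and edge of $Core(S,v)$ lies on a reduced closed path $r$ at $v$ with label $w\in L(S,v)=H=L(\Gamma_i,v_i)$, so $\Gamma_i$ has a reduced closed path at $v_i$ reading $w$, whose $\psi_i$-image is a reduced closed path at $v$ reading $w$ and therefore equals $r$ by (a). Hence each $\psi_i$ is an isomorphism of based $X$-graphs onto $Core(S,v)$, so $\psi_2^{-1}\psi_1$ is an isomorphism $(\Gamma_1,v_1)\to(\Gamma_2,v_2)$, and it is the unique one because a morphism between connected folded core graphs is determined by the image of the base vertex (propagate along paths, using foldedness). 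Since $S$ depends only on $H$, this yields uniqueness up to canonical isomorphism and identifies $\Gamma_X(H)$ with $Core(S,v)$.

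\textbf{Main obstacle.} Existence is routine once the Schreier graph is written down; the real work is in the uniqueness half, specifically verifying that $\psi_i$ is well defined and injective on vertices, which rests entirely on the two foldedness lemmas (a) and (b) and the careful bookkeeping with free reduction of path labels. Everything else is formal.
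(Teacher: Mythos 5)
Your argument is correct and follows essentially the same route as the source the paper relies on: Theorem \ref{thm} is quoted from \cite[5.3]{KM02} without proof, and the remark following it identifies $\Gamma_X(H)$ with the core of the Schreier coset graph, which is precisely your existence construction, while your uniqueness argument via the based morphism into the Schreier graph is the standard folded-graph argument of \cite{KM02}. The only cosmetic point is that injectivity of $\psi_i$ on edges should be recorded after vertex-injectivity (it uses it together with foldedness), but that is a matter of ordering, not substance.
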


\begin{remark}
The subgroup graph $\Gamma_X(H)$ is the core of the Schreier coset graph of $H$ with respect to $X$ and $F(X)$,  see the proof of Theorem 5.1 \cite{KM02}. 
\end{remark}

By Theorem \ref{thm}, the graph $\Gamma(H)$ is unique (up to isomorphism). Consequently, if $(\Gamma,v)$ is a folded connected core $X$-graph with language $L(\Gamma,v)=H$, then $(\Gamma,v)\cong(\Gamma(H),1_H)$.

\begin{definition}\textup{(Schreier Coset Graph)}\label{def1} \mbox{}\\
Let $G$ be a group with a finite generating set $X$. Let $H$ be a subgroup of $G$. Let $\Gamma$ be the following $X$-graph. The vertex set of $\Gamma$ is the set of right cosets of the subgroup $H$ in $G$. For two cosets $Hg$ and $Hg'$ and each letter $x\in X$ we introduce a directed edge with  origin $Hg$, terminus $Hg'$ and label $x$ whenever $Hgx=Hg'$. This graph is called the \textit{Schreier coset graph} of $H$ with respect to $X$ and $G$.
\end{definition}

There are three important classes of subgroups of the free group $F(X)$: finite index subgroups, finitely generated subgroups and infinitely generated ones. In their article \cite{KM02},  Kapovich and Myasnikov show that each of these three classes have subgroup graphs with specific properties.

\begin{lemma}\textup{(See \cite[5.4]{KM02})\\}
For a subgroup $H\leq F(X)$ the subgroup graph $\Gamma(H)$ is finite if and only if $H$ is finitely generated. 
\end{lemma}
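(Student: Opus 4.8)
The plan is to prove the two implications separately, in both cases using the uniqueness statement of Theorem \ref{thm} together with the characterization of $\Gamma(H)$ as a folded connected $X$-graph, which is a core graph with respect to $1_H$ and has language $L(\Gamma(H),1_H)=H$.

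For the direction ``$\Gamma(H)$ finite $\Rightarrow$ $H$ finitely generated'' I would run the classical Nielsen--Schreier argument. Choose a spanning tree $T$ of the finite connected graph $\Gamma(H)$, and for each vertex $v$ let $q_v$ be the unique reduced path in $T$ from $1_H$ to $v$. For every edge $e$ of $\Gamma(H)$ not lying in $T$, put $g_e:=\overline{\mu(q_{o(e)})\,\mu(e)\,\mu(q_{t(e)})^{-1}}\in F(X)$; reducing the loop $q_{o(e)}\,e\,q_{t(e)}^{-1}$ at $1_H$ to a reduced loop and using that $\Gamma(H)$ is folded (so reduced paths have freely reduced labels) shows $g_e\in L(\Gamma(H),1_H)=H$. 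Conversely, given any reduced loop $p=e_1\cdots e_m$ at $1_H$, inserting the tree paths $q_{t(e_j)}^{-1}q_{o(e_{j+1})}$ between consecutive edges expresses $\mu(p)$, as an element of $F(X)$, as a product of the $g_{e_j}^{\pm1}$: the factors coming from edges $e_j\in E(T)$ are labels of loops contained in the tree and hence trivial in $F(X)$. Thus $H$ is generated by the finitely many elements $g_e$, $e\notin E(T)$, and so is finitely generated. The only point needing care is that $\Gamma(H)$ is folded, so that $L(\Gamma(H),1_H)=\overline{L(\Gamma(H),1_H)}$ (Corollary \ref{cor1}) is exactly the image of the fundamental group of the graph under the labeling map; this is the content of the material recalled above.

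For the converse, write $H=\langle h_1,\dots,h_k\rangle$ with each $h_i$ a freely reduced word (if $H$ is trivial, $\Gamma(H)$ is a single vertex and there is nothing to prove). Since $h_i\in H=L(\Gamma(H),1_H)$, there is a reduced loop $p_i$ in $\Gamma(H)$ at $1_H$ with $\mu(p_i)=h_i$. Set $\Delta:=Core\bigl(p_1\cup\cdots\cup p_k,\ 1_H\bigr)$ (Definition \ref{defcore}), a finite subgraph of $\Gamma(H)$. It is connected (all $p_i$ pass through $1_H$), folded (a subgraph of a folded graph is folded), and a core graph with respect to $1_H$, because taking $Core(-,1_H)$ is idempotent: every edge of $Core(-,1_H)$ lies on a reduced loop at $1_H$ which is itself contained in $Core(-,1_H)$. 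By Corollary \ref{cor1}, $L(\Delta,1_H)$ is a subgroup of $F(X)$; it contains each $h_i$ (the loop $p_i$ lies in $\Delta$ and is still reduced there), hence contains $H$. On the other hand a reduced loop at $1_H$ in the subgraph $\Delta$ is a reduced loop at $1_H$ in $\Gamma(H)$, so $L(\Delta,1_H)\subseteq L(\Gamma(H),1_H)=H$. Therefore $L(\Delta,1_H)=H$, and $(\Delta,1_H)$ is a folded connected $X$-graph, a core graph with respect to $1_H$, with language $H$. By the uniqueness part of Theorem \ref{thm}, $(\Delta,1_H)\cong(\Gamma(H),1_H)$, and since $\Delta$ is finite, so is $\Gamma(H)$.

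I expect the main obstacle to be bookkeeping rather than a genuine difficulty: verifying that the subgraph $\Delta$ inherits ``folded'', ``connected'' and ``core with respect to $1_H$'' from $\Gamma(H)$, and that in the first part inserting and deleting tree paths does not change the element of $F(X)$ represented by a loop. Both rest on the single fact that in a folded $X$-graph reduced paths carry freely reduced labels, so that path reduction mirrors free reduction in $F(X)$.
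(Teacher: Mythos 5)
Your proof is correct. Note that the paper itself gives no proof of this lemma: it is recalled from Kapovich--Myasnikov \cite{KM02} (their Lemma 5.4) in the expository Section \ref{subfree}, so the only comparison available is with their argument. Your first direction ($\Gamma(H)$ finite $\Rightarrow$ $H$ finitely generated) is exactly the spanning-tree/free-basis argument that the paper records as Proposition \ref{properz}; you could shorten it to ``$Y_T$ is a free basis of $H$ and $T^+$ is finite when $\Gamma(H)$ is finite.'' Your converse takes a genuinely different route from \cite{KM02}: there, $\Gamma(H)$ is produced by folding a finite wedge of loops spelling the generators of $H$, so finiteness comes for free from the construction, whereas you start from the abstractly given $\Gamma(H)$, form the union $\Delta$ of reduced loops at $1_H$ spelling the generators, verify that $\Delta$ is a finite, folded, connected core graph with $L(\Delta,1_H)=H$ (using Corollary \ref{cor1} and the fact that reducedness of a path and foldedness pass to subgraphs), and then invoke the uniqueness statement of Theorem \ref{thm} to conclude $(\Delta,1_H)\cong(\Gamma(H),1_H)$. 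What your version buys is independence from the folding algorithm: it only uses the axiomatic characterization of $\Gamma(H)$, at the price of having to check the three inherited properties of $\Delta$, which you do correctly (in fact $\mathrm{Core}(p_1\cup\dots\cup p_k,1_H)=p_1\cup\dots\cup p_k$ here, so the core operation is only needed to see that $\Delta$ is a core graph). The KM02 construction, by contrast, is what one would use to actually compute $\Gamma(H)$ from the generators.
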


The next notion distinguishes finitely generated subgroups of $F(X)$ from those of finite index (which are also finitely generated). 

\begin{definition}{(Regular $X$-Graph, see  \cite[8.1]{KM02})}\label{defregular}\mbox{}\\
An $X$-graph $\Gamma$ is said to be \textit{$X$-regular}  if for every vertex $v$ of $\Gamma$ and every $x$ in $X\cup X^{-1}$ there is exactly one edge in $\widehat{\Gamma}$ with origin $v$ and label $x$.

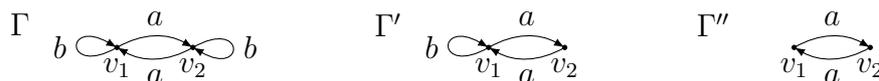
\begin{figure}[!h]
	\centering
  \begin{tikzpicture}     
\coordinate[label=left:$ \Gamma$](0) at (0,0.3);
\coordinate[label=below:$v_1$] (2) at (1,0);
\coordinate [label=below:$v_2$](3) at (2,0);

 \filldraw[black]
 (2) circle (0.8pt)
 (3) circle (0.8pt)

 ;
 \path[->,min distance=8mm] (2) edge[in=150,out=210,left] node {$b$}(2);
\path[->,min distance=8mm] (3) edge[in=330,out=30,right] node {$b$}(3);
\draw [->] (3) to[bend left=30] node[below] {$a$} (2);
\draw [->] (2) to[bend left=30] node[above] {$a$} (3);
\end{tikzpicture}
\hspace{10mm}
\begin{tikzpicture}     
\coordinate[label=left:$ \Gamma'$](0) at (0,0.3);
\coordinate[label=below:$v_1$] (2) at (1,0);
\coordinate [label=below:$v_2$](3) at (2,0);

 \filldraw[black]
 (2) circle (0.8pt)
 (3) circle (0.8pt)

 ;
\path[->,min distance=8mm] (2) edge[in=150,out=210,left] node {$b$}(2);

\draw [->] (3) to[bend left=30] node[below] {$a$} (2);
\draw [->] (2) to[bend left=30] node[above] {$a$} (3);
\end{tikzpicture}
\hspace{10mm}
\begin{tikzpicture}     
\coordinate[label=left:$\Gamma''$](1) at (0.3,0.3);
\coordinate[label=below:$v_1$] (2) at (1,0);
\coordinate [label=below:$v_2$](3) at (2,0);

 \filldraw[black]
 (2) circle (0.8pt)
 (3) circle (0.8pt)
 ;
\draw [->] (3) to[bend left=30] node[below] {$a$} (2);
\draw [->] (2) to[bend left=30] node[above] {$a$} (3);

\end{tikzpicture}
	\caption{$\{a,b\}$-regular and not $\{a,b\}$-regular graphs.}
	\label{regular}
\end{figure}
\end{definition}

Figure \ref{regular} contains the following examples. 
The graph $\Gamma$ is $\{a,b\}$-regular, the graph $\Gamma'$ and $\Gamma''$ are not $\{a,b\}$-regular and the graph $\Gamma''$ is $\{a\}$-regular. 

We can reformulate the notion of being $X$-regular. An $X$-graph $\Gamma$ is $X$-regular if for every vertex $v$ of $\Gamma$ and every $x \in X$ there is exactly one edge with label $x$ and origin $v$ and exactly one edge with label $x$ and terminus $v$. 

By the above reformulation of $X$-regularity and the definition
of a folded $X$-graph, we obtain:

\begin{lemma}\label{lem1.1} \mbox{}\\
An $X$-regular graph is folded. 
\end{lemma}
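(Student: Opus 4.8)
The plan is to unwind the two definitions and notice that "exactly one" is strictly stronger than "at most one", so the statement is immediate once the conditions are compared on the correct graphs. The only point requiring a word of justification is that $X$-regularity is phrased in terms of $\widehat{\Gamma}$ while foldedness is phrased in terms of $\Gamma$; bridging these is exactly what the reformulation of $X$-regularity stated just above the lemma does.

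First I would recall (or briefly re-derive) that reformulation: an $X$-graph $\Gamma$ is $X$-regular if and only if for every vertex $v$ and every $x\in X$ there is exactly one edge of $\Gamma$ with origin $v$ and label $x$, and exactly one edge of $\Gamma$ with terminus $v$ and label $x$. The derivation is pure bookkeeping. The edges of $\widehat{\Gamma}$ carrying a label in $X$ are precisely the edges of $\Gamma$, with origins and termini unchanged, so the clause of $X$-regularity for labels $x\in X$ at the vertex $v$ is literally the origin clause in $\Gamma$. The edges of $\widehat{\Gamma}$ carrying a label $x^{-1}\in X^{-1}$ are precisely the formal inverses $e^{-1}$ of edges $e$ of $\Gamma$ with $\mu(e)=x$, and $o(e^{-1})=t(e)$; hence the clause of $X$-regularity for labels $x^{-1}$ at the vertex $v$ becomes the statement that there is exactly one edge $e$ of $\Gamma$ with $t(e)=v$ and $\mu(e)=x$, i.e.\ the terminus clause in $\Gamma$.

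Then I would conclude: if $\Gamma$ is $X$-regular, then for every vertex $v$ and every $x\in X$ there is exactly one---in particular at most one---edge of $\Gamma$ with origin $v$ and label $x$, and exactly one---in particular at most one---edge of $\Gamma$ with terminus $v$ and label $x$. This is verbatim the defining condition of a folded $X$-graph, so $\Gamma$ is folded. There is essentially no obstacle; the only thing to be careful about is tracking the direction of the formal inverse edges so that the $X^{-1}$-part of $X$-regularity is matched against the \emph{terminus} clause of foldedness and not the origin clause. Since both the reformulation and the implication "exactly one $\Rightarrow$ at most one" are already available in the text, the argument reduces to a single line.
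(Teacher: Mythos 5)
Your proof is correct and follows exactly the route the paper intends: the reformulation of $X$-regularity stated immediately before the lemma (exactly one edge with label $x$ and origin $v$, and exactly one with label $x$ and terminus $v$), from which foldedness follows since \emph{exactly one} implies \emph{at most one}. Your careful matching of the $X^{-1}$-labelled edges of $\widehat{\Gamma}$ with the terminus clause is precisely the bookkeeping underlying that reformulation.
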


\begin{lemma}\label{lem1.2} \mbox{}\\
A connected finite $X$-regular graph $\Gamma$  is a core graph with respect to every vertex of $V(\Gamma)$. 
\end{lemma}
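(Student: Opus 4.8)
The plan is to show that every vertex and every edge of a connected finite $X$-regular graph $\Gamma$ lies on some reduced closed path based at an arbitrary fixed vertex $v$, so that $Core(\Gamma,v)=\Gamma$. The key structural input is that in an $X$-regular graph every vertex has, for each $x\in X\cup X^{-1}$, exactly one outgoing edge in $\widehat\Gamma$ with that label; consequently every edge $e\in E(\widehat\Gamma)$ can be extended to the left and to the right by edges whose labels do not cancel against $\mu(e)$, so no edge is ever "forced" to be a spur. I would combine this local extension property with connectedness and finiteness to produce the required closed paths.

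First I would observe that, since $\Gamma$ is connected, for any vertex $w$ there is a path $q$ from $v$ to $w$; after freely reducing $q$ as a path (cancelling subpaths $e,e^{-1}$) we may assume $q$ is reduced. Next, the crucial point: a \emph{reduced} path $q$ from $v$ to $w$ can always be closed up to a reduced closed path at $v$. Here I would use $X$-regularity at $w$: let $e$ be the last edge of $q$, so $\mu(e)=x\in X^{\pm1}$; pick the unique edge $f$ at $w$ with $o(f)=w$ and $\mu(f)=x$ (note $\mu(f)=x\neq x^{-1}$, so appending $f$ does not create a backtrack), travel along $f$, and then return — one clean way is to use the reverse path $q^{-1}$ from $w$ back to $v$, but since $q^{-1}$ begins with $e^{-1}$ which would cancel against $f$... actually the cleanest route is: consider the closed path $p = q \cdot \ell \cdot q^{-1}$ where $\ell$ is a nontrivial reduced \emph{loop-like} detour at $w$, but such a loop need not exist. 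So instead I would argue directly that $\overline{q\,q^{-1}}$ must be re-examined — the honest approach is to show $w\in Core(\Gamma,v)$ by noting that $\overline{L(\Gamma,v)}=L(\Gamma,v)$ (Lemma 1.1 plus Corollary \ref{cor1}) has finite index $n=|V(\Gamma)|$ in $F(X)$, hence is nontrivial, so there is a nontrivial reduced closed path at $v$; then using $X$-regularity I can transport and splice. Let me restate the plan more carefully.

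The clean argument is this. By Lemma \ref{lem1.1}, $\Gamma$ is folded, so $H:=L(\Gamma,v)$ is a subgroup of $F(X)$ by Corollary \ref{cor1}. Because $\Gamma$ is finite, connected and $X$-regular with $n:=|V(\Gamma)|$ vertices, reading labels along edges shows that reduced paths from $v$ biject (via their reduced labels, using foldedness) with elements of $F(X)$ reduced modulo the right-coset structure, giving $[F(X):H]=n$. In particular, for each vertex $w$, choosing a reduced path $q$ from $v$ to $w$ (connectedness), $X$-regularity at $w$ lets me append, for the first letter $x_1$ of some fixed reduced word $u\neq 1$ in $H$... — rather than belabor this, the streamlined version: given the reduced path $q$ from $v$ to $w$ with label $g$, and any $h\in H$ with $h\neq 1$, the word $g^{-1}hg$ need not be in $H$, but $ghg^{-1}\cdot$ ... . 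I would instead invoke the standard fact: in the Schreier-type graph $\Gamma$, the reduced path from $v$ reading $g$ and then continuing to read a \emph{cyclically reduced} nontrivial element $w_0$ of $H$ conjugated appropriately stays reduced; concretely, pick $h\in H$, $h\ne 1$, reduced; then $\overline{g^{-1}}\,h\,\overline{g}$ read from $w$ — wait, $h$ starts at $v$, not $w$.

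Cutting through this: the real proof is short. \emph{Step 1.} Fix $v$ and any vertex $w$; by connectedness pick a reduced path $q$ from $v$ to $w$ with $\mu(q)=g$. \emph{Step 2.} Since $H=L(\Gamma,v)$ has finite index $n\ge1$ in $F(X)$ and $F(X)$ is infinite when $X\neq\emptyset$ (the case $X=\emptyset$ being trivial), $H\neq\{1\}$; pick $1\neq h\in H$, reduced, and by replacing $h$ with a high power $h^k$ if necessary we may assume $h$ is long, and after a cyclic permutation / conjugation within $H$ we may assume the reduced form of $h$ does not begin with the last letter of $g^{-1}$ nor end with the first letter of $g$ — this is achieved by conjugating $h$ by $g$, i.e. considering $ghg^{-1}$, which lies in $gHg^{-1}$; but $gHg^{-1}$ corresponds to $Core(\Gamma,w)$... . \emph{So the genuinely clean statement:} by the uniqueness in Theorem \ref{thm} applied at $w$, $(Core(\Gamma,w),w)$ is the subgroup graph of $L(\Gamma,w)=gHg^{-1}$, which again has index $n$ in $F(X)$, hence is a finite $X$-regular graph on some subset of $V(\Gamma)$; but $X$-regularity forces every vertex to have full degree, and a proper $X$-regular subgraph of a connected $X$-regular graph is impossible, so $Core(\Gamma,w)=\Gamma$, and in particular $v$ and the whole of $\Gamma$ appears; symmetrically $Core(\Gamma,v)=\Gamma$.

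\begin{proof}
If $X=\emptyset$ then $\Gamma$ is a single vertex and the claim is vacuous, so assume $X\neq\emptyset$. Fix $v\in V(\Gamma)$ and set $H:=L(\Gamma,v)$. By Lemma \ref{lem1.1} the graph $\Gamma$ is folded, so by Corollary \ref{cor1} $H$ is a subgroup of $F(X)$ and $L(\Gamma,v)=\overline{L(\Gamma,v)}$.

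We first record that $Core(\Gamma,v)$ is itself an $X$-regular graph. Indeed, let $w$ be a vertex of $Core(\Gamma,v)$, so $w$ lies on a reduced closed path $p$ at $v$; write $p=p_1 p_2$ where $t(p_1)=w=o(p_2)$. Let $x\in X\cup X^{-1}$. By $X$-regularity of $\Gamma$ there is a unique edge $e$ of $\widehat\Gamma$ with $o(e)=w$ and $\mu(e)=x$, and a unique edge $e'$ with $o(e')=w$ and $\mu(e')=x^{-1}$; note $e'=(e)^{-1}$ if and only if this edge is a loop at $w$. If the last edge of $p_1$ is not $e^{-1}$ and the first edge of $p_2$ is not $e$, then $p_1\, e\, e^{-1}\, p_2$ is \emph{not} reduced, which is unhelpful; instead consider that either the last edge of $p_1$ equals $e$ or $e'$ or neither. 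In all cases $e$ occurs in a reduced closed path at $v$: if the last edge of $p_1$ is not $e^{-1}$, then since $|V(\Gamma)|$ is finite we may follow $e$ from $w$ and, by connectedness and finiteness, extend to a reduced path back to $v$ — more precisely, iterate the unique-extension property of $X$-regular graphs: starting from the reduced path $p_1 e$, at its terminus $t(e)$ pick any edge whose label is not the inverse of $\mu(e)$ (possible since $|X\cup X^{-1}|\ge 2$), continue; since $\Gamma$ is finite this walk must revisit a vertex, producing a reduced closed subpath, and splicing with $p_1 e$ and a reduced path back to $v$ (which exists by connectedness, after free reduction) yields a reduced closed path at $v$ through $e$; the case where the last edge of $p_1$ is $e^{-1}$ is handled symmetrically using $p_2$. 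Hence every edge of $\widehat\Gamma$ at $w$ lies in $Core(\Gamma,v)$, so $Core(\Gamma,v)$ is $X$-regular.

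Finally, $Core(\Gamma,v)$ is a nonempty $X$-regular subgraph of the connected $X$-regular graph $\Gamma$. If $Core(\Gamma,v)\neq\Gamma$, then since $\Gamma$ is connected there is an edge $e$ of $\Gamma$ with exactly one endpoint, say $o(e)=:w$, in $Core(\Gamma,v)$. But $Core(\Gamma,v)$ is $X$-regular, so it already contains a (unique) edge at $w$ with label $\mu(e)$ and origin $w$; by foldedness of $\Gamma$ (Lemma \ref{lem1.1}) that edge \emph{is} $e$, so $t(e)\in Core(\Gamma,v)$, a contradiction. Therefore $Core(\Gamma,v)=\Gamma$. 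Since $v$ was arbitrary, $\Gamma$ is a core graph with respect to every vertex.
\end{proof}
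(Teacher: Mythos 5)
Your overall strategy --- first show that $Core(\Gamma,v)$ is itself $X$-regular, then argue that an $X$-regular subgraph containing a vertex of a connected, folded, $X$-regular graph must be all of $\Gamma$ --- is sound, and the second step is essentially right (one small slip: if $Core(\Gamma,v)\neq\Gamma$ it may happen that every vertex lies in the core and only edges are missing, so the sentence ``there is an edge with exactly one endpoint in the core'' is not justified; but your regularity-plus-foldedness argument covers that case too, since a missing edge has its origin in the core and the core's unique outgoing edge with that label must, by foldedness, be the missing edge itself). The genuine gap is in the first step, at exactly the point where the content of the lemma sits: you must exhibit a reduced closed path based at $v$ that traverses the prescribed edge $e$. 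Your construction extends $p_1e$ to a longer reduced path, observes by finiteness that the walk revisits a vertex, and then ``splices'' with ``a reduced path back to $v$ (which exists by connectedness, after free reduction)''. But the concatenation of $p_1e$, the extension, and a return path need not be reduced, and the free reduction you invoke can cancel backwards through the extension and through $e$ itself (in the extreme case where the return path is the reverse of the outgoing one, everything cancels); the ``reduced closed subpath'' created by the revisited vertex is never used to control this cancellation. So ``yields a reduced closed path at $v$ through $e$'' is asserted rather than proved, and this is precisely the nontrivial point.

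The missing ingredient is control of the junctions. Note first that if the last edge of $p_1$ is $e^{-1}$, or the first edge of $p_2$ is $e$, then $e$ already lies on $p$ and there is nothing to do; so assume the last edge of $p_1$ is not $e^{-1}$. It then suffices to produce a reduced path $s$ from $t(e)$ to $v$ whose first edge is not $e^{-1}$, for then $p_1\,e\,s$ is reduced, closed at $v$, and contains $e$. Such an $s$ exists: take any reduced path $s_0$ from $t(e)$ to $v$; if it begins with $e^{-1}$ and $|X|\geq 2$, prepend the monochromatic $y$-cycle at $t(e)$ for a letter $y\neq\mu(e)^{\pm1}$ (it exists because the $y$-labeled edges define a permutation of the finite vertex set), and check that neither junction cancels; the case $|X|=1$, where $\Gamma$ is a single cycle, is immediate. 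Alternatively you could avoid path surgery entirely with the results you already cite: $Core(\Gamma,v)$ is folded, connected and a core graph with $L(Core(\Gamma,v),v)=L(\Gamma,v)=H$; $X$-regularity makes ``read $g$ from $v$'' a well-defined transitive right action of $F(X)$ on $V(\Gamma)$ with stabilizer $H$, so $[F(X):H]=|V(\Gamma)|<\infty$, and Theorem \ref{thm} together with Proposition \ref{prop1} then identify $Core(\Gamma,v)$ with $\Gamma_X(H)$, an $X$-regular graph on $|V(\Gamma)|$ vertices, after which your final paragraph closes the argument. (For comparison: the paper states Lemma \ref{lem1.2} without proof, as an immediate consequence of the reformulated definition of $X$-regularity, so there is no written argument there to measure yours against.)
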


The subgroup graph $\Gamma(H)$ of $H$ is the core of the Schreier coset graph of $H$ with respect to $X$ and $G$. Thus the subgroup graph $\Gamma(H)$ is the Schreier coset graph of $H$ if and only if the Schreier coset graph is a core graph. By Lemma \ref{lem1.2}, this is the case if the Schreier coset graph of $H$ is finite thus $H$ has finite index in $G$.

\begin{proposition}\textup{(See \cite[8.3]{KM02})} \label{prop1} \mbox{}\\
Let $H$ be a subgroup of the free group $F(X)$. Then the index $[F(X):H]$ is finite if and only if the subgroup graph $\Gamma(H)$ is a finite $X$-regular graph. 
In this case $[F(X):H]=|V(\Gamma(H))|$.
\end{proposition}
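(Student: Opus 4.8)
The plan is to lift the whole statement to the Schreier coset graph and then compare that graph with its core. Write $S$ for the Schreier coset graph of $H$ with respect to $X$ and $F(X)$ (Definition \ref{def1}), with the coset $H\cdot 1$ as base-vertex. By the Remark following Theorem \ref{thm}, $\Gamma(H)=\operatorname{Core}(S,H\cdot 1)$ and $1_H=H\cdot 1$. Everything then follows from two structural features of $S$ together with Lemmas \ref{lem1.1} and \ref{lem1.2}.

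First I would record the standing facts about $S$. \emph{(a)} $S$ is $X$-regular in the sense of Definition \ref{defregular}: for each coset $Hg$ and each $x\in X$, the coset $Hgx$ is the terminus of the unique $x$-edge out of $Hg$ and $Hgx^{-1}$ is the origin of the unique $x$-edge into $Hg$ (this is exactly the reformulation of $X$-regularity given just before Lemma \ref{lem1.1}). \emph{(b)} $S$ is connected, since $X$ generates $F(X)$: reading the word $g$ traces a path from $H\cdot 1$ to $Hg$. Note also that $\operatorname{Core}(S,H\cdot 1)$ is nonempty, because the trivial path at $H\cdot 1$ is reduced and closed, so $H\cdot 1$ is one of its vertices; and $|V(S)|=[F(X):H]$ by construction.

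For the ``only if'' direction, assume $[F(X):H]=n<\infty$. Then $S$ has exactly $n$ vertices, hence is finite, and it is connected and $X$-regular by \emph{(a)}, \emph{(b)}. By Lemma \ref{lem1.1} it is folded, and by Lemma \ref{lem1.2} it is a core graph with respect to every vertex, in particular $\operatorname{Core}(S,H\cdot 1)=S$. Hence $\Gamma(H)=S$ is a finite $X$-regular graph and $|V(\Gamma(H))|=|V(S)|=n=[F(X):H]$.

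For the ``if'' direction, assume $\Gamma(H)=\operatorname{Core}(S,H\cdot 1)$ is a finite $X$-regular graph; I want $\operatorname{Core}(S,H\cdot 1)=S$, which gives that $S$ is finite and $[F(X):H]=|V(S)|=|V(\Gamma(H))|<\infty$. The key observation is that $\operatorname{Core}(S,H\cdot 1)$ is closed under incident edges: if $v$ is a vertex of the core and $e$ is any edge of $\widehat{S}$ with origin $v$ and label $x\in X\cup X^{-1}$, then $X$-regularity of the core supplies an edge $e'$ of the core with origin $v$ and label $x$, $e'$ is also an edge of $\widehat{S}$ (the core is a subgraph), and $X$-regularity of $S$ forces $e=e'$; hence $e$, and with it $t(e)$, lies in the core. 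Starting from $H\cdot 1$ and iterating, the core contains the entire connected component of $H\cdot 1$ in $S$, which by \emph{(b)} is all of $S$. The main obstacle is precisely this last step: one must make the ``no edge escapes the core'' argument airtight, and it is here that the two instances of $X$-regularity — of $S$ and of $\operatorname{Core}(S,H\cdot1)$ — together with the connectedness of $S$ are indispensable; the rest is bookkeeping.
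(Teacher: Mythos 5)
Your argument is correct. Note that the paper itself gives no proof of this statement: it is recalled from \cite{KM02}*{8.3} in the survey section, so there is nothing internal to compare against line by line. What you write is a valid self-contained proof built from facts the paper does state: the identification of $\Gamma_X(H)$ with the core of the Schreier coset graph $S$ at the base-vertex (the remark after Theorem \ref{thm}), the $X$-regularity and connectedness of $S$, and Lemmas \ref{lem1.1} and \ref{lem1.2}. The ``only if'' direction is exactly the expected one. The ``if'' direction's key step --- that the two instances of $X$-regularity (of $\operatorname{Core}(S,H\cdot 1)$ by hypothesis, of $S$ by construction) force every edge of $\widehat S$ issuing from a core vertex to coincide with the unique core edge of the same label, so that the core swallows the whole connected component of the base-vertex --- is airtight, given that the core is nonempty (the trivial path) and that a directed edge of $S$ lying in $\widehat{\operatorname{Core}}$ must lie in $\operatorname{Core}$ itself, since $E(\widehat S)$ is the disjoint union of $E(S)$ and the formal inverses. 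This is essentially the standard Schreier-graph argument behind the cited result, so no gap to report.
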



\section{Subgroup graphs of finite index subgroups of finitely generated groups}
\label{ssub}


In this section we generalize the theory of subgroup graphs of subgroups of free groups, as described in Section \ref{subfree},  to finite index subgroups of finitely generated groups.  
Suppose that $G$ is a finitely generated group with a presentation $\langle \,X\, |\, R\, \rangle$, where  $X$ is finite and $R$ is not necessarily finite. To  a finite connected based $X$-regular graph $(\Gamma,v)$ which fulfills the defining relators $R$ we associate a finite index subgroup $H\leq G$, by taking $H:=\phi(L(\Gamma, v))$ (see Definitions \ref{defregular}, \ref{deffulfil}).  
Conversely, for every finite index subgroup $H\leq G$ there exists a  finite connected $X$-regular based graph $(\Gamma',v')$, unique up to isomorphism, which fulfills the defining relators $R$ and has $\phi(L(\Gamma',v'))=H$.  
The correspondence is unique up to isomorphism of based $X$-graphs. 
Therefore we call $(\Gamma',v')$ the subgroup graph $\Gamma_{X,R}(H)$ of $H\leq G$.
It is the Schreier coset graph of $H$ with respect to $X$ and $G$.   
Notice that the subgroup graph $\Gamma_{X,R}(H)$ depends on the presentation of the group $G$. Therefore the subgroup graphs $\Gamma_{X,R}(H)$ and $\Gamma_{X',R'}(H)$ of $H$ may not be isomorphic for different generators $X$, $X'$ and relators $R$, $R'$. However, the number of vertices $|V(\Gamma_{X,R}(H))|=[G:H]$ is independent of $X$ and $R$.\\

For the rest of the article, let us fix some notation: Let $G=\langle \,X\, |\, R\, \rangle$ be a finitely generated group, where $X$ is finite and $R$ is a subset of words of the free group $F(X)$, not necessarily finite. We denote the normal closure of $R$ in $F(X)$ by $N:= \langle \! \langle\, R\, \rangle \! \rangle_{F(X)}=\left\langle\, \overline{wrw^{-1}} \, |\, w\in F(X), r\in R\, \right\rangle$. Let $\phi\colon F(X) \rightarrow G$ be the canonical epimorphism such that $G \cong F(X)/N$. 

Our aim is to develop a theory of subgroup graphs of finite index subgroups of finitely generated groups.  A natural way to define a subgroup graph of $H\leq G$ is to use the subgroup graph $\Gamma_X(H')$ of  $H'=\phi^{-1}(H)\leq F(X)$.  Then $\phi(L(\Gamma_X(H'),1_{H'}))=H$ and $N\leq H'$. Moreover, if $[F(X):H']=n$, then $[G:H]=n$. Since $H'$ is a finite index subgroup in $F(X)$, the subgroup graph $\Gamma_X(H')$ is finite, connected and $X$-regular. If $N\leq L(\Gamma,v)\leq F(X)$, then $L(\Gamma,v)=\phi^{-1}(H)$. Consequently, to determine the subgroup graph $\Gamma(H)$, we need a condition such that $N\leq L(\Gamma(H),v)$.

\begin{definition}\textup{(Fulfilling $X$-Graph)}\label{deffulfil}\mbox{}\\
Let $G=\langle \,X\, |\, R\, \rangle $ be a presentation of a group, where $X$ is finite and $R$ is not necessarily finite. We say that an $X$-graph $\Gamma$ \textit{fulfills the defining relators $R$} if for all vertices $v \in V(\Gamma)$ the following holds: 
   if $p_r$ is a reduced path with origin $v$ and label $r \in R$, then the terminus of $p_r$ is $v$. 
\end{definition}

 \begin{example}
 We consider the graphs of Figure  \ref{fulfil}. 
 The $\{x, y\}$-regular graph $\Gamma$ fulfills the relators $x^2$, $y^2$ and  $(xy)^3$. Indeed, the reduced paths with label $x^2$ are: $v_1 \xrightarrow{x} v_2 \xrightarrow{x} v_1$; $v_2 \xrightarrow{x} v_1 \xrightarrow{x} v_2$; $v_3\xrightarrow{x} v_3\xrightarrow{x} v_3$. For the relator  $(xy)^3$, an example of a reduced path with that label is $v_1 \xrightarrow{x} v_2 \xrightarrow{y}  v_3 \xrightarrow{x} v_3 \xrightarrow{y} v_2 \xrightarrow{x} v_1 \xrightarrow{y}  v_1$. Notice that the graph $\Gamma$ does not fulfill the relator $(xy)^2$. In fact, the reduced path with label $(xy)^2$ and origin $v_1$ has terminus $v_2$. 
 
 Instead, the $\{x, y\}$-regular graph $\Gamma'$ fulfills the relator $(xy)^2$. The reduced paths with label $(xy)^2$ are: $u_1 \xrightarrow{x} u_2 \xrightarrow{y}  u_1 \xrightarrow{x} u_2 \xrightarrow{y}  u_1$; $u_2\xrightarrow{x} u_3 \xrightarrow{y}  u_3 \xrightarrow{x} u_4 \xrightarrow{y}  u_2$; $u_3 \xrightarrow{x} u_4 \xrightarrow{y}  u_2 \xrightarrow{x} u_3 \xrightarrow{y}  u_3$; $u_4 \xrightarrow{x} u_1\xrightarrow{y} u_4 \xrightarrow{x} u_1 \xrightarrow{y} u_4$. The graph $\Gamma'$ also fulfills the relators $x^4$ and $y^3$. 
 
 Note that if a graph fulfills a relator $w^n$, then it also fulfills the relator $w^{nz}$ for $z\in \Z$.  
 \end{example}
 
 \begin{figure}[h!]
\centering
       
\begin{tikzpicture}     

\coordinate[label=below:$v_1$] (u) at (0,0);
\coordinate[label=below:$v_2$] (v) at (1.5,0); 
\coordinate[label=below:$v_3$] (w) at (3,0);
\coordinate[label=above:$\Gamma$] () at (0,1);
 \filldraw[black]
 (u) circle (0.8pt)
 (v) circle (0.8pt)
 (w) circle (0.8pt);
\draw [->] (u) to[bend left=30] node[above] {$x$} (v);
\draw [->] (v) to[bend left=30] node[below] {$x$} (u);
\draw [->] (w) to[bend left=30] node[below] {$y$} (v);
\draw [->] (v) to[bend left=30] node[above] {$y$} (w);
\path[->,min distance=1cm] (u) edge[in=140,out=220,left] node {$y$}(u);
\path[->,min distance=1cm] (w) edge[in=40,out=320,right] node {$x$}(w);

\end{tikzpicture}
\hspace{10mm}
\begin{tikzpicture}     
\coordinate[label=below:$u_1$] (1) at (0,0);
\coordinate[label=below:$u_4$] (2) at (1.5,0); 
\coordinate[label=above:$u_3$] (3) at (1.5,1.5);
\coordinate[label=above:$u_2$] (4) at (0,1.5);
\coordinate[label=above:$\Gamma'$] () at (-1,1.5);
 \filldraw[black](1) circle (0.8pt)
 (2) circle (0.8pt)
 (3) circle (0.8pt)
 (4) circle (0.8pt);
\draw [->] (1) to node[above] {$y$} (2);
\draw [->] (2) to node[right] {$y$} (4);
\draw [->] (4) to node[right] {$y$} (1);
\path[->,min distance=1cm] (3) edge[in=320,out=30,right] node {$y$}(3);
\draw [->] (2) to[bend left=30] node[below] {$x$} (1);
\draw [->] (3) to[bend left=30] node[right] {$x$} (2);
\draw [->] (4) to[bend left=30] node[above] {$x$} (3);
\draw [->] (1) to[bend left=30] node[left] {$x$} (4);
\end{tikzpicture}
	\caption{Two $\{a,b\}$-graphs which fulfill different relators.} 
	\label{fulfil}
\end{figure}
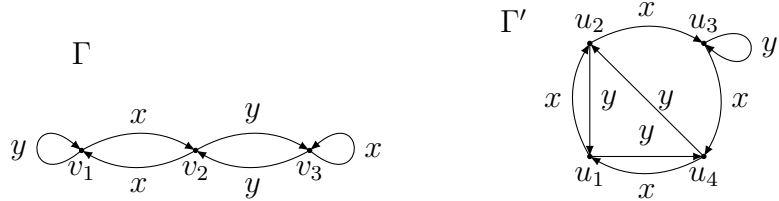

\begin{remark}
Let $p_v$ be a reduced path from $1_H$ to $v$ in $\Gamma_X(H)$ and $\mu (p_v)=g_v$. 
The subgroup graph $\Gamma_X(H)$ of a finite index subgroup $H$ of the free group $F(X)$ is the Schreier coset graph of $H$. Therefore $H\backslash F(X)=\{Hg_v \mid v\in V(\Gamma_X(H))\}$. Moreover,
$$Hg_v= \{\mu(p)\ \mid p \textrm{ is a reduced path with } o(p)=1_H \textrm{ and } t(p)=v \}.$$ 
\end{remark}

Using the above remark we can prove the following proposition. 

\begin{proposition}\label{prop21}\mbox{}\\ 
Let $\Gamma$ be a finite $X$-regular connected graph with base-vertex $v_0$. Then 
$\Gamma $ fulfills the defining relators $R$ if and only if $\langle\!\langle\,  R\, \rangle\!\rangle_{F(X)} \leq L(\Gamma, v_0)$.
\end{proposition}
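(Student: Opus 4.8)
The plan is to prove both implications directly from the definitions, using the description of cosets in the Schreier coset graph recalled in the preceding remark. Write $L=L(\Gamma,v_0)$ and $N=\langle\!\langle R\rangle\!\rangle_{F(X)}$. Since $\Gamma$ is finite, connected and $X$-regular, Lemmas \ref{lem1.1} and \ref{lem1.2} tell us that $\Gamma$ is folded and a core graph with respect to every vertex, so by Theorem \ref{thm} we may identify $(\Gamma,v_0)$ with $(\Gamma_X(L),1_L)$; in particular $L\leq F(X)$ is a subgroup of finite index $n=|V(\Gamma)|$ and $\Gamma$ is the Schreier coset graph of $L$. Thus for each vertex $v$ the set of labels of reduced paths from $v_0$ to $v$ is exactly one right coset $Lg_v$, and these cosets partition $F(X)$.

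For the forward direction, assume $\Gamma$ fulfills the defining relators $R$. Fix $r\in R$ and an arbitrary $w\in F(X)$; I must show $\overline{wrw^{-1}}\in L$, since such elements generate $N$ and $L$ is a subgroup. Read $w$ along $\Gamma$ starting at $v_0$: because $\Gamma$ is $X$-regular there is a unique reduced path realizing $w$ (up to trivial backtracking the terminus is well-defined), ending at some vertex $v$ with $v_0g=Lg$ for $g=\overline{w}$. Now read $r$ starting at $v$: by the fulfilling hypothesis the reduced path with label $r$ and origin $v$ returns to $v$. Concatenating the $w$-path, this $r$-path, and the reverse of the $w$-path gives a reduced-up-to-backtracking closed path at $v_0$ whose label is $wrw^{-1}$; hence $\overline{wrw^{-1}}\in L(\Gamma,v_0)=L$. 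Therefore $N\leq L$.

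For the converse, assume $N\leq L$. Let $v\in V(\Gamma)$ and let $p_r$ be a reduced path with origin $v$ and label $r\in R$; I must show $t(p_r)=v$. Pick a reduced path $q$ from $v_0$ to $v$ with label $g_v$, so $v$ corresponds to the coset $Lg_v$. The path $q\,p_r$ has origin $v_0$ and label $g_v r$, hence its terminus $t(p_r)$ corresponds to the coset $Lg_vr$. But $r\in R\subseteq N\leq L$, and since $L$ is normalized by nothing in particular we instead argue at the level of the path: actually, since $r\in N\le L$ and $\overline{g_v r}=\overline{g_v r g_v^{-1}}\cdot\overline{g_v}\in L g_v$ using that $\overline{g_v r g_v^{-1}}\in N\le L$ (here $N$ is normal in $F(X)$, so $g_v r g_v^{-1}\in N$), we get $Lg_vr=Lg_v$. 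Two vertices of a Schreier coset graph are equal iff the corresponding cosets are equal, so $t(p_r)=v$, as desired.

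The only genuinely delicate point is bookkeeping the distinction between paths in $\Gamma$ (whose labels may contain cancellations $xx^{-1}$) and freely reduced words in $F(X)$: one must check that reading an $X$-word $w$ from a vertex in an $X$-regular graph produces a terminus depending only on $\overline{w}$, and that concatenation of paths corresponds to the (unreduced) product of labels which maps under $\phi$ consistently with multiplication of cosets. This is exactly the content of the remark preceding the proposition together with $X$-regularity, so it should go through cleanly; the rest is the standard correspondence between subgroups, normal closures and Schreier graphs.
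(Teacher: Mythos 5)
Your proof is correct and follows essentially the same route as the paper: identify $(\Gamma,v_0)$ with the Schreier coset graph of $L=L(\Gamma,v_0)$ via Lemmas \ref{lem1.1}, \ref{lem1.2} and Theorem \ref{thm}, obtain $\overline{wrw^{-1}}\in L$ by concatenating the $w$-path, the closed $r$-path and the reversed $w$-path, and prove the converse by the coset computation $Lg_vr=Lg_v$ using normality of $N$. Your forward direction is marginally cleaner than the paper's in that you simply invoke that $L$ is a subgroup (Corollary \ref{cor1}) to pass from the generators $\overline{wrw^{-1}}$ to all of $\langle\!\langle R\rangle\!\rangle_{F(X)}$, where the paper spells out the paths for products of conjugates; the garbled clause about ``$L$ is normalized by nothing in particular'' should be deleted, since the argument you actually run correctly uses normality of $N$, not of $L$.
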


\begin{proof} 
Since $\Gamma$ is a finite connected $X$-regular graph, $\Gamma$ is also folded and a core graph, by Lemmas \ref{lem1.1} and \ref{lem1.2}. By Corollary \ref{cor1}, the language $H=L(\Gamma, v)$ is a subgroup of $F(X)$. Following Theorem \ref{thm}, $\Gamma$ is the subgroup graph $\Gamma_X(H)$ of $H$, where $v_0=1_H$. 
By Proposition \ref{prop1}, the subgroup $H$ has finite index in $F(X)$ and $\Gamma_X (H)$ is a Schreier coset graph. 

Assume that $\Gamma$ fulfills the relators $R$. Let $w$ be in $F(X)$.  Since $\Gamma$ is $X$-regular, there exists a unique reduced path $p_w$ with $\mu(p_w)=w$ and $o(p_w)=v_0$. Let $v$ be the terminus of the path $p_w$. 
Since $\Gamma $ fulfills the defining relators $R$, the reduced path $p_r$ with $\mu(p_r)=r\in R$ and $o(p_r)=v$ has terminus $v$. 
The path $p_w^{-1}$ with $\mu(p_w^{-1})=w^{-1}$ and $o(p_w^{-1})=v$ ends in $v_0$. Consequently, the path $p_wp_rp_w^{-1}$ is a path from $v_0$ to $v_0$, which may not be reduced. Let $p$ be the path which we obtain by deleting successively all subpaths $ee^{-1}$ from $p_wp_rp_w^{-1}$, where $e\in E(\widehat{\Gamma})$. Then the path $p$ is reduced. By definition $\mu(p)\in L(\Gamma, v_0)$. Since the graph $\Gamma$ is $X$-regular, $\mu(p)$ is a freely reduced word. Therefore $\mu(p)=\overline{wrw^{-1}}$. Analogously, there exists a reduced path with label $\overline{wrw^{-1}}$ from $v$ to $v$ for each $v\in V(\Gamma)$.  
Moreover, there exists a reduced path for every combination of paths  $v_0\xrightarrow{wrw^{-1}} v_0$ or paths $v_0\xrightarrow{u} v \xrightarrow{wrw^{-1}w'r'w'^{-1}...} v \xrightarrow{u^{-1}} v_0$.  Therefore  $\langle\!\langle\,  R\, \rangle\!\rangle_{F(X)} \leq L(\Gamma, v_0)$.

Assume now that $\left\langle\!\left\langle\,  R\, \right\rangle\!\right\rangle_{F(X)} \leq L(\Gamma, v_0)=H\leq F(X)$ and let $v$ be a vertex of $\Gamma$ and $r\in R$. Since $\Gamma$ is $X$-regular, there  exists a unique reduced path $p_r$ with $\mu(p_r)=r$ and $o(p_r)=v$. Let $v'$ be the terminus of $p_r$. 
Let $w$ be the label of a reduced path from $v_0$ to $v$. The graph $\Gamma$ is a Schreier coset graph. Thus the vertex $v$ is the coset $Hw$ and the vertex $v'$ is the coset $Hw'$ with $w':=\overline{wr}$. 
By assumption, we have $\overline{wrw^{-1}}\in H$, hence $H\overline{wrw^{-1}}=H$ which is equivalent to $Hw'=H\overline{wr}=H\overline w =Hw$, hence $v=v'$. 
\end{proof}

Finally, we can state the main theorem of this section. Part (\ref{thm2_(1)}) is the analogue to Corollary \ref{cor1} and part (\ref{thm2_(2)}) the analogue to Theorem \ref{thm}.

\begin{theorem}\textup{(Subgroup Graph)}\label{thm2}\mbox{}\\
Let $G$ be a group with a presentation $G=\langle \,X\, |\, R\, \rangle$, where $X$ is finite and $R$ is not necessarily finite.
\begin{enumerate}[(1)]
\item 
\label{thm2_(1)}
Let $\Gamma$ be an $X$-regular connected graph with $n$ vertices.  Let $v_0$ be a base-vertex of $\Gamma$. Assume that $\Gamma$ fulfills the defining relators $R$. 
Then $\phi(L(\Gamma,v_0))$ is a subgroup of $G$ of index $[G:\phi(L(\Gamma,v_0))]=n$.
\item
\label{thm2_(2)}
 Let $H\leq G$ be a subgroup of index $[G:H]=n\in \N$. Then there exists a based $X$-graph $(\Gamma,v_0)$ (unique up to a canonical isomorphism of based $X$-graphs)  such that
\begin{enumerate}[(i)]
\item 
$\Gamma$ is $X$-regular and connected;
\item $\Gamma$ fulfills the defining relators $R$; 
\item $\Gamma$ has $n$ vertices; 
\item  $\phi(L(\Gamma, v_0))=H$.
\end{enumerate}
\end{enumerate}

In this situation we call $\Gamma$ the subgroup graph of $H$ with respect to $X$ and $R$. We denote it by $\Gamma_{X,R}(H)$ or briefly by $\Gamma(H)$. The base-vertex $v_0$ is denoted by $1_H$.
In fact, $\Gamma_{X,R}(H)=\Gamma_X(H')$, where $H':=L(\Gamma_{X,R}(H),1_H)\leq F(X)$.
\end{theorem}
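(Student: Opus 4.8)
The plan is to reduce everything to the free-group theory of Section~\ref{subfree} via the canonical epimorphism $\phi\colon F(X)\to G$ and the normal closure $N=\langle\!\langle R\rangle\!\rangle_{F(X)}$, using Proposition~\ref{prop21} as the bridge between ``fulfilling the relators'' and ``containing $N$''.

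For part~(\ref{thm2_(1)}): given a finite $X$-regular connected graph $(\Gamma,v_0)$ that fulfills $R$, first invoke Lemmas~\ref{lem1.1} and~\ref{lem1.2} to see that $\Gamma$ is folded and a core graph with respect to $v_0$, so by Corollary~\ref{cor1} the language $H':=L(\Gamma,v_0)$ is a subgroup of $F(X)$, and by Theorem~\ref{thm} $(\Gamma,v_0)\cong(\Gamma_X(H'),1_{H'})$. By Proposition~\ref{prop1}, $[F(X):H']=|V(\Gamma)|=n$. Now Proposition~\ref{prop21} gives $N\le H'$. I then claim $\phi(L(\Gamma,v_0))=\phi(H')=H'/N$ is a subgroup of $G\cong F(X)/N$ of the same index $n$: since $N\le H'\le F(X)$, the image $\phi(H')$ is a subgroup, and the correspondence theorem (third isomorphism theorem) gives a bijection between cosets $H'g$ in $F(X)$ and cosets $\phi(H')\phi(g)$ in $G$, so $[G:\phi(H')]=[F(X):H']=n$. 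This finishes~(\ref{thm2_(1)}).

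For part~(\ref{thm2_(2)}): given $H\le G$ with $[G:H]=n$, set $H':=\phi^{-1}(H)\le F(X)$. Then $N\le H'$ and, by the correspondence theorem again, $[F(X):H']=[G:H]=n$, so $H'$ has finite index; hence by Theorem~\ref{thm} and Proposition~\ref{prop1} the subgroup graph $\Gamma_X(H')$ exists, is unique up to canonical isomorphism of based $X$-graphs, and is finite, connected and $X$-regular with $n$ vertices. Set $(\Gamma,v_0):=(\Gamma_X(H'),1_{H'})$. Properties (i) and (iii) are immediate; property~(iv) holds because $\phi(L(\Gamma,v_0))=\phi(H')=H$ (as $N\le H'$ means $\phi(\phi^{-1}(H))=H$); property~(ii) follows from Proposition~\ref{prop21} since $N=\langle\!\langle R\rangle\!\rangle_{F(X)}\le H'=L(\Gamma,v_0)$. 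For uniqueness, suppose $(\Gamma_1,v_1)$ is any based $X$-graph satisfying (i)--(iv). By Lemmas~\ref{lem1.1}, \ref{lem1.2} it is folded and a core graph with respect to $v_1$, so by Theorem~\ref{thm} it is the subgroup graph $\Gamma_X(K)$ with $K:=L(\Gamma_1,v_1)$; property~(ii) and Proposition~\ref{prop21} give $N\le K$, and property~(iv) gives $\phi(K)=H$, whence $K=\phi^{-1}(\phi(K))=\phi^{-1}(H)=H'$ (using $N\le K$). So $(\Gamma_1,v_1)\cong(\Gamma_X(H'),1_{H'})=(\Gamma,v_0)$ canonically. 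The final assertion $\Gamma_{X,R}(H)=\Gamma_X(H')$ with $H'=L(\Gamma_{X,R}(H),1_H)$ is then just a restatement of this construction.

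The only genuinely substantive input is Proposition~\ref{prop21}, which is already proved in the excerpt; the rest is bookkeeping with the correspondence theorem and the uniqueness statement of Theorem~\ref{thm}. The main point requiring a little care is the index computation $[G:\phi(H')]=[F(X):H']$ and, symmetrically, that $\phi^{-1}(H)$ has the same index as $H$: both follow from $N\le H'$ (resp.\ $N\le\phi^{-1}(H)$) together with the lattice isomorphism between subgroups of $F(X)$ containing $N$ and subgroups of $G$, which also matches up cosets and hence indices. A secondary point worth stating explicitly is that the isomorphism in Theorem~\ref{thm} is an isomorphism of \emph{based} $X$-graphs, so ``canonical'' in our uniqueness claim is inherited verbatim from the free case.
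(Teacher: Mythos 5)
Your proposal is correct and follows essentially the same route as the paper: reduce to the free-group theory via $\phi$ and $N=\langle\!\langle R\rangle\!\rangle_{F(X)}$, use Proposition~\ref{prop21} to translate ``fulfills $R$'' into $N\le L(\Gamma,v_0)$, and then invoke Theorem~\ref{thm}, Proposition~\ref{prop1} and the correspondence theorem for the index and uniqueness statements. The only difference is one of explicitness: you spell out the correspondence-theorem bookkeeping and the uniqueness argument (that any graph satisfying (i)--(iv) has language $\phi^{-1}(H)$) which the paper leaves implicit by appealing directly to the uniqueness in Theorem~\ref{thm}.
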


\begin{proof}
Let us prove part (\ref{thm2_(1)}). 
The graph $(\Gamma, v_0)$ is the subgroup graph of the subgroup $ L(\Gamma, v_0)=:H'\leq F(X)$. Since $\Gamma$ is $X$-regular and has $n$ vertices, $H'$ is a subgroup of index $n$ in $F(X)$. The graph $\Gamma$ fulfills the defining relators $R$. Thus  $N \leq H'$, and consequently $[G: \phi(H')]=[F(X):H']=n$.

Let us prove  part (\ref{thm2_(2)}).
Let $\phi^{-1}(H)=:H'\leq F(X)$. By Theorem \ref{thm}, there is a  subgroup graph $\Gamma_X(H')$ for $H'$ (unique up to isomorphism of based $X$-graphs) which is folded, connected, a core graph and has $L(\Gamma_X(H'), 1_{ H'})= H'$. Thus  $\phi(L(\Gamma_X(H'),1_{H'}))=\phi(H')=\phi(\phi^{-1}(H))=H$.  But $[G:H]=[F(X):H']$,
since $N =\phi^{-1} (1_G) \leq \phi^{-1}(H)= H'$. By Proposition \ref{prop1},  $\Gamma_X ( H')$ is an $X$-regular graph with $n$ vertices. By Proposition \ref{prop21}, $\Gamma$ fulfills the defining relators $R$. Thus $(\Gamma_{X,R}(H),1_H)=(\Gamma,v_0)\cong (\Gamma_X(H'),1_{H'})$.
\end{proof}

In Section \ref{secconnect} and \ref{secsuf} we use part (\ref{thm2_(1)}) of the theorem above. We construct an $X$-graph satisfying the properties of (\ref{thm2_(1)}) which gives us a finite index subgroup. 
By the proof of the Theorem \cite[5.1]{KM02}, we see that the subgroup graph $\Gamma(H)$ is the Schreier coset graph of $H$ with respect to $X$ and $G$.

\begin{remark} 
As $\Gamma(H)$ is the Schreier coset graph for the finite index subgroup $H\leq G$, we obtain the cosets of $H\backslash G$ from the subgroup graph $\Gamma(H)$. 
Let $p_v$ be a reduced path from $1_H$ to $v\in V(\Gamma(H))$ and $\phi(\mu(p_v))=g_v \in G$. Then 
$$Hg_v= \{\phi(\mu(p)) \mid p \textrm{ is a reduced path with } o(p)=1_H \textrm{ and } t(p)=v \}.$$ 
\end{remark}

We end this section with examples of subgroup graphs of finite index subgroups. As a finite group has only finite index subgroups, our theory gives us all subgroups of a finite group.

\begin{example}
Figure \ref{S3} shows all finite $X$-regular graphs which fulfill the defining relators of the presentation $\left\langle\, s_1,s_2 \, | \, s_1^2, s_2^2, (s_1s_2)^3 \, \right\rangle$ of the symmetric group $S_3$. 

The graph $\Gamma$ is the subgroup graph $\Gamma(S_3)$ of the symmetric group $S_3$. 
The language of $\Gamma'$ is the same for both of its vertices and $\phi(L(\Gamma',v_1))=\left\langle s_1s_2 \right\rangle=A_3$. Therefore $\Gamma(A_3)=\Gamma'$ is the subgroup graph of $A_3$. 
The graph $\Gamma''$ gives us three different subgroups:
 $H_1=\phi(L(\Gamma'',v_1))=\left\langle s_1 \right\rangle$,
  $H_2=\phi(L(\Gamma'',v_2))=\left\langle s_1s_2s_1 \right\rangle$    and
   $H_3=\phi(L(\Gamma'',v_3))=\left\langle s_2 \right\rangle$ in $S_3$. 
   Hence the subgroup graphs of $H_1, H_2$ and $H_3$ are $(\Gamma(H_i),1_{H_i})=(\Gamma'',v_i)$. 
   The language of the graph $\Gamma'''$ is the same for all its vertices.  Thus the graph $\Gamma'''$ is the subgroup graph $\Gamma(\{1_{S_3}\})$ of the trivial group. 
\end{example}

  \begin{figure}[h!]
\centering
  \begin{tikzpicture}     
\coordinate[label=left:$ \Gamma$](0) at (0,1);
\coordinate[label=below:$v_1$] (1) at (0,0);

 \filldraw[black]
 (1) circle (0.8pt)
 ;
\path[->,min distance=1cm] (1) edge[in=140,out=220,left] node {$s_1$}(1);
\path[->,min distance=1cm] (1) edge[in=40,out=320,right] node {$s_2$}(1);
\end{tikzpicture}
\hspace{20mm}
\begin{tikzpicture}     
\coordinate[label=left:$ \Gamma'$](0) at (0,0.8);
\coordinate[label=left:$v_1$] (1) at (0,0);
\coordinate [label=right:$v_2$](2) at (1.5,0);
\coordinate [label=below:](4) at (4,0);

 \filldraw[black]
 (2) circle (0.8pt)
 (1) circle (0.8pt)

 ;
\draw [->] (2) to[bend left=70] node[below] {$s_2$} (1);
\draw [->] (1) to[bend left=70] node[above] {$s_1$} (2);
\draw [->] (2) to[bend right=10] node[below] {$s_2$} (1);
\draw [->] (1) to[bend right=10] node[above] {$s_1$} (2);
\end{tikzpicture}

\vspace{8mm}
\begin{tikzpicture}     
\coordinate[label=left:$ \Gamma''$](0) at (0,1);
\coordinate[label=below:$v_1$] (1) at (0,0);
\coordinate [label=below:$v_2$](2) at (1,0);
\coordinate [label=below:$v_3$](3) at (2,0);
\coordinate [label=below:](4) at (1,-1);
 \filldraw[black]
 (2) circle (0.8pt)
 (1) circle (0.8pt)
 (3) circle (0.8pt)

 ;
 \path[->,min distance=1cm] (1) edge[in=140,out=220,left] node {$s_1$}(1);
\path[->,min distance=1cm] (3) edge[in=40,out=320,right] node {$s_2$}(3);
\draw [->] (3) to[bend left=20] node[below] {$s_1$} (2);
\draw [->] (2) to[bend left=20] node[above] {$s_1$} (3);
\draw [->] (2) to[bend left=20] node[below] {$s_2$} (1);
\draw [->] (1) to[bend left=20] node[above] {$s_2$} (2);
\end{tikzpicture}
\hspace{10mm}
\begin{tikzpicture}
\coordinate[label=left:$ \Gamma'''$](0) at (-2,1.8);
\coordinate[label=left:$v_1$](4) at (180:1.5);
\coordinate[label=above left:$v_2$](5) at (120:1.5); 
\coordinate[label=above right:$v_3$](6) at (60:1.5);
\coordinate[label=right:$v_4$](7) at (0:1.5);
\coordinate[label=below right:$v_5$](8) at (300:1.5); 
\coordinate[label=below left:$v_6$](9) at (240:1.5);
 \filldraw[black](4) circle (0.8pt)
 (5) circle (0.8pt)
 (6) circle (0.8pt)
 (7) circle (0.8pt)
 (8) circle (0.8pt)
 (9) circle (0.8pt)
;
\draw [->]  (4) to[bend left=25] node[left] {$s_1$} (5);

\draw [->]  (4) to[bend left=25] node[right] {$s_2$} (9);
\draw [->]  (6) to[bend left=25] node[below] {$s_2$} (5);

\draw [->]  (6) to[bend left=25] node[right] {$s_1$} (7);
\draw [->]  (8) to[bend left=25] node[left] {$s_2$} (7);
\draw [->]  (8) to[bend left=25] node[below] {$s_1$} (9);

\draw [->]  (5) to[bend left=25] node[right] {$s_1$} (4);

\draw [->]  (9) to[bend left=25] node[left] {$s_2$} (4);
\draw [->]  (5) to[bend left=25] node[above] {$s_2$} (6);

\draw [->]  (7) to[bend left=25] node[left] {$s_1$} (6);
\draw [->]  (7) to[bend left=25] node[right] {$s_2$} (8);
\draw [->]  (9) to[bend left=25] node[above] {$s_1$} (8);

\end{tikzpicture}
	\caption{All $\{s_1, s_2\}$-regular graphs which fulfill the defining relators of the presentation $\left\langle\, s_1,s_2 \, | \, s_1^2, s_2^2, (s_1s_2)^3 \, \right\rangle$ of the symmetric group $S_3$. }
	\label{S3}
\end{figure}
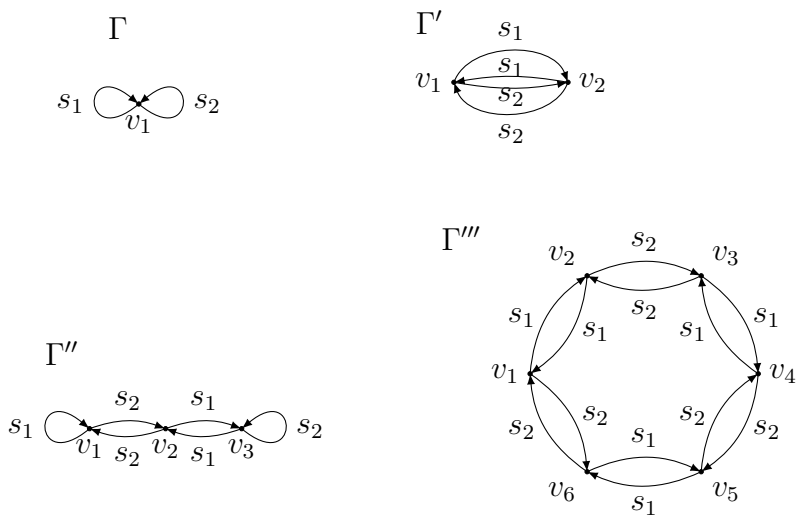


 \section{Applications of subgroup graphs of finite index subgroups}
 \label{appsub}
 

 In this section we extend the results of \cite{KM02} to applications for finite index subgroups of finitely generated groups. 
 Furthermore, we add some results.
 Subsection \ref{sec:HallAndSylowSubgroups} uses the subgroup graph to detect a Hall subgroup of a finite group.
In Subsection \ref{sec:Mor}  we use the subgroup graphs of two finite index subgroups to determine if one is a subgroup of the other. 
 Subsection \ref{sec:GeneratingSystems}  provides a generating system for a finite index subgroup using its subgroup graph. 
 Subsection \ref{sec:ConjugacyClass} shows that $\{\phi(L(\Gamma(H),v)) \mid v\in \Gamma(H) \}$ is the conjugacy class of the finite index subgroup $H\leq G$.   
 In Subsection \ref{sec:NormalSubgroup} we prove that if $(\Gamma(H),1_H)\cong(\Gamma(H),v)$ for all $v\in V(\Gamma(H))$, then the finite index subgroup $H<G$ is normal in $G$. 
 Moreover, we provide the normalizer of a finite index subgroup. 
 Subsection \ref{sec:IntersectionOfSubgroups} proves that the connected component of $1_H \times 1_K$ of the product graph $\Gamma(H) \times \Gamma(K)$ is the subgroup graph of the intersection $H\cap K$ of two finite index subgroups $H$, $K$ of $G$. 
 Furthermore, $\Gamma(H)\times \Gamma(K)$ provides the intersection $Hg \cap Kg'$ for all $g,g'\in G$. 
 In Subsection \ref{sec:Malnormal} we prove that a subgroup $H$ of a finite group is malnormal if and only if $L(\Gamma(H)\times \Gamma(H), v\times u)=N$ for all $u\times v$ not in the connected component of $1_H\times 1_K$.  
 
Note that $F(X)=\langle \,X\, |\, \emptyset \, \rangle$ with $X$ finite is a presentation as required in all theorems for infinite groups. Consequently, every theorem for infinite groups in this article holds for the free group $F(X)$.

\subsection{Hall and Sylow subgroups}
\label{sec:HallAndSylowSubgroups}

First, we consider Hall subgroups. A \textit{Hall subgroup} of a finite group $G$ is a subgroup $H$ whose order is coprime to its index $[G:H]$. All Sylow subgroups are Hall subgroups.

\begin{proposition}\textup{(Hall and Sylow Subgroups)\\}
 Let $G =\langle \,X\, |\, R\, \rangle$ be a finite group of order $|G|=n$. There exists a Hall subgroup $H$ of order $d$ (hence $d$ and $\frac{n} {d}$ are coprime) if and only if there exists a connected $X$-regular graph $\Gamma$ which fulfills the defining relators $R$ and has $m=\frac{n}{d}$ vertices. 
\end{proposition}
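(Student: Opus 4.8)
The plan is to derive this immediately from Theorem \ref{thm2}, after translating the order condition into an index condition via Lagrange's theorem. Since $G$ is finite it is finitely generated, so the hypotheses of Theorem \ref{thm2} are met for any presentation $\langle X \mid R\rangle$ with $X$ finite. The key observation is that a subgroup $H\leq G$ has order $d$ if and only if $[G:H]=\frac{n}{d}=m$, and that the coprimality of $d$ and $\frac{n}{d}$ is a condition on the integer $d$ alone — it is precisely what makes a subgroup of order $d$ a Hall subgroup. Hence throughout we may treat ``Hall subgroup of order $d$'' as synonymous with ``subgroup of order $d$'' under the standing assumption $\gcd(d,\frac{n}{d})=1$.

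First I would prove the forward implication. Suppose $H$ is a Hall subgroup of $G$ of order $d$. Then $[G:H]=m=\frac{n}{d}\in\N$, so Theorem \ref{thm2}\,(\ref{thm2_(2)}) applies and produces the subgroup graph $\Gamma_{X,R}(H)$, which is $X$-regular and connected, fulfills the defining relators $R$, and has exactly $m$ vertices. This is the required graph.

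For the converse, suppose $\Gamma$ is a connected $X$-regular graph which fulfills the defining relators $R$ and has $m=\frac{n}{d}$ vertices; note $\Gamma$ is automatically finite, having finitely many vertices and being $X$-regular with $X$ finite. Pick a base-vertex $v_0$. By Theorem \ref{thm2}\,(\ref{thm2_(1)}), $H:=\phi(L(\Gamma,v_0))$ is a subgroup of $G$ with $[G:H]=m$, hence $|H|=\frac{n}{m}=d$ by Lagrange. Since $\gcd(d,\frac{n}{d})=1$ by hypothesis, $H$ is a Hall subgroup of order $d$, which finishes the argument.

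There is essentially no serious obstacle here: the entire content is already contained in Theorem \ref{thm2}, and the only point requiring care is the bookkeeping between ``order $d$'' and ``index $\frac{n}{d}$'', together with the remark that the coprimality clause is a hypothesis on $d$ rather than something one must separately verify. The statement for Sylow subgroups is then the special case $d=p^a$ with $p^a$ the exact power of the prime $p$ dividing $n$, and requires no change to the proof.
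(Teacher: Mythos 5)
Your proof is correct and follows essentially the same route as the paper: the forward direction is Theorem \ref{thm2}\,(\ref{thm2_(2)}) applied to the Hall subgroup $H$ (its subgroup graph has $[G:H]=n/d$ vertices), and the converse is Theorem \ref{thm2}\,(\ref{thm2_(1)}) applied at a chosen base-vertex, giving a subgroup of index $m$ and hence order $d$, which is Hall by the coprimality hypothesis. Your extra remarks on finiteness and on the coprimality being a condition on $d$ alone are fine but add nothing beyond the paper's argument.
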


\begin{proof}
Let $H$ be a Hall subgroup with $|H|=d$ and $\Gamma(H)$ its subgroup graph. Then put $\Gamma:= \Gamma_{X,R}(H)$. Hence $|V(\Gamma )|=[G:H]=\frac{n}{d}$.

 Let $v\in V(\Gamma)$. By Theorem \ref{thm2}\,(1), the graph $(\Gamma,v)$ is a subgroup graph of the subgroup $\phi(L(\Gamma,v))=H$ of order $d$ and $H$ has index $m$ in $G$.
\end{proof}

\subsection{Morphisms and subgroups}
\label{sec:Mor}

The first application we extend is the following. 

\begin{proposition}\textup{(See \cite[4.1]{KM02})}\label{propsub} \mbox{}\\
Let $\pi\colon \Gamma\rightarrow \Gamma' $ be a morphism of $X$-graphs, let $v\in V(\Gamma)$ and $v'=\pi(v)$. Suppose that $\Gamma$ and $\Gamma'$ are folded $X$-graphs. Put $K=L(\Gamma,v)$ and $H=L(\Gamma', v')$. Then $K \leq H\leq F(X)$.
\end{proposition}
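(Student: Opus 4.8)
The plan is to show $K \leq H$ by reducing everything to a statement about reduced paths and their labels. Recall that $K = L(\Gamma,v)$ consists of the labels $\mu(p)$ of reduced paths $p$ in $\Gamma$ with $o(p) = t(p) = v$, and similarly for $H = L(\Gamma',v')$. So fix an arbitrary element $w \in K$, realized as $w = \mu(p)$ for some reduced path $p = e_1, \dots, e_k$ in $\widehat{\Gamma}$ with $o(p) = t(p) = v$. First I would push $p$ forward along $\pi$: since $\pi$ is a morphism of $X$-graphs, it extends naturally to a map $\widehat{\pi}\colon \widehat{\Gamma} \to \widehat{\Gamma'}$ (sending $e^{-1}$ to $\pi(e)^{-1}$), and the image $\pi(p) := \widehat{\pi}(e_1), \dots, \widehat{\pi}(e_k)$ is a path in $\Gamma'$ from $\pi(o(p)) = v'$ to $\pi(t(p)) = v'$. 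Because $\pi$ preserves labels, $\mu(\pi(p)) = \mu(p) = w$.

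The one subtlety is that $\pi(p)$ need not be a reduced path in $\Gamma'$, even though $p$ is reduced in $\Gamma$: a morphism of graphs can collapse an edge $e_i$ and $e_{i+1}^{-1}$ onto the same edge of $\Gamma'$. This is where foldedness of $\Gamma'$ enters — and I expect this to be the main (though mild) obstacle. The key observation: if $\pi(p)$ contains a backtrack $f, f^{-1}$ at position $i$, i.e.\ $\widehat{\pi}(e_i) = \widehat{\pi}(e_{i+1})^{-1}$, then $e_i$ and $e_{i+1}^{-1}$ are edges of $\widehat{\Gamma}$ with the same label and, since $t(e_i) = o(e_{i+1})$, also $o(e_{i+1}^{-1}) = t(e_i) = o(e_i)$ once one checks the incidences — more precisely $e_i$ and $e_{i+1}^{-1}$ share an endpoint and have the same label, and by foldedness of $\Gamma$ they must be equal, i.e.\ $e_{i+1} = e_i^{-1}$, contradicting that $p$ is reduced. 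Wait — one must be careful: foldedness of $\Gamma$ forbids two distinct edges with the same label and common origin (or common terminus), so I would argue that $\widehat{\pi}(e_i) = \widehat{\pi}(e_{i+1})^{-1}$ forces $o(e_i)$ and $o(e_{i+1})$ to map to the same vertex but need not be equal. So the clean route is instead to use foldedness of $\Gamma'$ together with a \emph{folding / reduction} argument: delete backtracks from $\pi(p)$ one at a time to get a reduced path $\pi(p)^{\mathrm{red}}$ in $\Gamma'$ from $v'$ to $v'$; this does not change the freely reduced form of the label. Since $\Gamma'$ is folded, by Lemma \cite[2.9]{KM02} every word in $L(\Gamma',v')$ is already freely reduced, and likewise $w = \mu(p)$ is freely reduced because $\Gamma$ is folded; hence $\mu(\pi(p)^{\mathrm{red}}) = \overline{\mu(\pi(p))} = \overline{w} = w$. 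Therefore $w = \mu(\pi(p)^{\mathrm{red}}) \in L(\Gamma',v') = H$.

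Since $w \in K$ was arbitrary, this gives $K \subseteq H$ as subsets of $F(X)$; and since both are subgroups of $F(X)$ by Corollary \ref{cor1} (both $\Gamma$ and $\Gamma'$ being folded), we conclude $K \leq H \leq F(X)$, as desired. The only genuine content is the reduction-preserves-label bookkeeping in the middle paragraph; everything else is unwinding definitions, so I would keep the write-up short, citing Lemma \cite[2.9]{KM02} for freely-reducedness and Corollary \ref{cor1} for the subgroup property rather than re-deriving them.
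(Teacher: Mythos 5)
Your argument is correct, and since the paper states this proposition without proof (quoting \cite{KM02}, Proposition 4.1), your push-the-path-forward-and-reduce argument is essentially the standard one: foldedness of $\Gamma$ gives that $w=\mu(p)$ is freely reduced, and foldedness of $\Gamma'$ lets you identify the label of the reduced image path with $\overline{\mu(\pi(p))}=w\in L(\Gamma',v')$, after which Corollary \ref{cor1} gives the subgroup statement. One small remark: the direct route you abandoned does in fact work, because $e_i$ and $e_{i+1}$ are consecutive edges of the path, so $e_i^{-1}$ and $e_{i+1}$ have the same origin $t(e_i)=o(e_{i+1})$ in $\widehat{\Gamma}$ (no passage to images is needed) and the same label; hence foldedness of $\Gamma$ alone forces $e_{i+1}=e_i^{-1}$, so the image of a reduced path is already reduced and the backtrack-deletion step is unnecessary.
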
 

We now state the analogue of Proposition \ref{propsub}. 

\begin{proposition}\textup{(Morphisms and Subgroups)}\label{propmorsub2} \mbox{}\\
Let $\pi\colon \Gamma\rightarrow \Gamma' $ be a morphism of $X$-graphs, let $v\in V(\Gamma)$ and $v'=\pi(v)$. Let $G =\langle \,X\, |\, R\, \rangle$ be a  group with $X$ finite and $R$  not necessarily finite. Suppose that $\Gamma$ and $\Gamma'$ are connected finite $X$-regular graphs which fulfill the defining relators $R$. Put $K=L(\Gamma,v)$ and $H=L(\Gamma', v')$. Then $\phi(K) \leq \phi(H)\leq G$.
\end{proposition}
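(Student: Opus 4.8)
The plan is to reduce the claim to the already-established Proposition~\ref{propsub} for subgroups of the free group, using Theorem~\ref{thm2}. First I would observe that since $\Gamma$ and $\Gamma'$ are connected finite $X$-regular graphs fulfilling the relators $R$, Theorem~\ref{thm2}\,(\ref{thm2_(1)}) applies: $(\Gamma,v)$ is the subgroup graph $\Gamma_{X,R}(K'')$ of the finite index subgroup $\phi(K) = K''\leq G$, and likewise $(\Gamma',v')$ is the subgroup graph of $\phi(H)\leq G$. The point of the theorem, made explicit in its last line, is that $\Gamma_{X,R}$ coincides with the free-group subgroup graph $\Gamma_X$ of the corresponding preimage: concretely, $\Gamma = \Gamma_X(K')$ where $K' = L(\Gamma,v)\leq F(X)$, and $\Gamma' = \Gamma_X(H')$ where $H' = L(\Gamma',v')\leq F(X)$. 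Moreover, since these graphs fulfill the relators $R$, Proposition~\ref{prop21} gives $N = \langle\!\langle R\rangle\!\rangle_{F(X)} \leq K'$ and $N \leq H'$, so that $\phi(K') = \phi(\phi^{-1}(\phi(K'))) $, i.e. $K' = \phi^{-1}(\phi(K'))$ and likewise for $H'$.

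The key step is then to invoke Proposition~\ref{propsub}. Since $\Gamma$ and $\Gamma'$ are $X$-regular, they are folded by Lemma~\ref{lem1.1}, so the hypotheses of Proposition~\ref{propsub} are met. Applying it to the morphism $\pi$ with $K = L(\Gamma,v)$ and $H = L(\Gamma',v')$ (in the free-group sense, matching the notation $K',H'$ above), we conclude $K' \leq H'$ in $F(X)$. Applying the epimorphism $\phi$, which is order-preserving on the subgroup lattice, yields $\phi(K') \leq \phi(H') \leq G$, that is, $\phi(K)\leq \phi(H)\leq G$, which is exactly the assertion. The only bookkeeping subtlety is the overloaded notation: in the statement $K = L(\Gamma,v)$ already denotes a subgroup of $F(X)$ (the language of a folded graph is a free subgroup by Corollary~\ref{cor1}), and $\phi(K)$ is its image in $G$; so no reinterpretation is actually needed — one feeds $\Gamma,\Gamma',\pi$ directly into Proposition~\ref{propsub} to get $K\leq H$ in $F(X)$, then pushes forward by $\phi$.

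I do not expect a genuine obstacle here; the content of the proposition is entirely carried by Proposition~\ref{propsub} together with the functoriality of $\phi$ on subgroups. The one thing to state carefully is why $\Gamma,\Gamma'$ being connected, finite, and $X$-regular suffices to place us in the setting of Proposition~\ref{propsub}: being $X$-regular forces foldedness (Lemma~\ref{lem1.1}), and that is all Proposition~\ref{propsub} requires. The finiteness and fulfillment-of-$R$ hypotheses are not needed for the inclusion itself — they are only what make $\phi(K)$ and $\phi(H)$ genuine finite index subgroups of $G$ rather than merely subgroups, but since $\phi$ sends subgroups to subgroups regardless, even that is automatic. Hence the proof is a short two-line deduction: folded $+$ Proposition~\ref{propsub} gives $K\leq H$ in $F(X)$; apply $\phi$.
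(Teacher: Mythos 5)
Your proof is correct and follows essentially the same route as the paper: apply Proposition~\ref{propsub} (via foldedness from $X$-regularity, Lemma~\ref{lem1.1}) to get $K\leq H$ in $F(X)$, then push forward by the epimorphism $\phi$, with Theorem~\ref{thm2}\,(1) guaranteeing $\phi(K),\phi(H)$ are subgroups of $G$. The extra remarks about Proposition~\ref{prop21} and $\phi^{-1}(\phi(K'))$ are harmless but not needed.
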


\begin{proof}
By Theorem \ref{thm2}\,(1),  $\phi(K)$ and $ \phi(H)$ are subgroups of $G$. 
By Proposition \ref{propsub}, we have $K \leq H \leq F(X)$. Since $\phi\colon F(X) \rightarrow F(X) / N= G$ is an epimorphism,  $\phi(K) \leq \phi(H) \leq \phi(F(X))=G$ holds. 
\end{proof}

\subsection{Generating systems}
\label{sec:GeneratingSystems}

The next result provides a free basis for the language of an $X$-graph. 
 
Recall that in a connected graph a subgraph is called a \textit{spanning tree} if this subgraph is a tree and contains all vertices of the original graph. If $T$ is a spanning tree, then for any two vertices $u,u'$ of $T$ there is a unique reduced path in $T$ from $u$ to $u'$, which will be denoted $[u,u']_T$.

\begin{proposition}\textup{(Free Basis, see \cite[6.1]{KM02})}\label{properz}\mbox{}\\
Let $\Gamma$ be a folded $X$-graph and let $v$ be a vertex of $\Gamma$. Let $T$ be a spanning tree of $\Gamma$. Let $T^+$ be the set of those edges of $\Gamma$ which lie outside of $T$. For each $e\in T^+$ put $p_e=[v,o(e)]_Te[t(e),v]_T$ (so that $p_e$ is a reduced path from $v$ to $v$ and its label is a freely reduced word in $X \cup X^{-1}$). Also for each $e \in T^+$ put $[e]=\mu(p_e)=\overline{\mu (p_e)}$. Put 
$$ Y_T=\{[e]\mid e\in T^+\}.$$
Then $Y_T$ is a free basis for the subgroup $H=L(\Gamma, v)$ of $F(X)$. 
\end{proposition}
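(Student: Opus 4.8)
The plan is to prove that $Y_T$ is a free basis of $H=L(\Gamma,v)$ in two stages: first that $Y_T$ generates $H$, and second that $Y_T$ freely generates $H$ (i.e.\ there are no non-trivial relations). This is the classical Nielsen--Schreier-type argument adapted to the $X$-graph setting, and since $\Gamma$ is folded the language $L(\Gamma,v)$ already consists of freely reduced words and forms a subgroup of $F(X)$ by Corollary \ref{cor1}, so $H$ is genuinely a subgroup of the free group and it makes sense to ask for a free basis.

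First I would show $H=\langle Y_T\rangle$. Take any reduced path $p=e_1e_2\cdots e_k$ in $\widehat\Gamma$ from $v$ to $v$. For each edge $e_i$ of $p$, insert the trivial detours through the spanning tree: writing $u_0=v, u_1=t(e_1),\dots,u_k=t(e_k)=v$, we have
$$ p \;=\; \bigl([v,u_0]_T\, e_1\, [u_1,v]_T\bigr)\bigl([v,u_1]_T\, e_2\, [u_2,v]_T\bigr)\cdots\bigl([v,u_{k-1}]_T\, e_k\, [u_k,v]_T\bigr)$$
up to inserting and cancelling the paths $[u_i,v]_T[v,u_i]_T$, which label freely trivial words. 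If $e_i\in T$ then the corresponding block $[v,u_{i-1}]_T e_i [u_i,v]_T$ is a reduced closed path inside the tree $T$, hence labels the trivial element of $F(X)$; if $e_i\in T^+$ or $e_i^{-1}\in T^+$, the block is exactly $p_{e_i}^{\pm1}$ and contributes $[e_i]^{\pm1}\in Y_T^{\pm1}$. Taking labels, $\mu(p)$ is a product of elements of $Y_T^{\pm1}$, so $H\subseteq\langle Y_T\rangle$; the reverse inclusion is immediate since each $[e]=\mu(p_e)$ with $p_e$ a reduced closed path at $v$.

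For the freeness, I would use the standard ping-pong/normal-form observation: each generator $[e]=\mu(p_e)$, being the label of the reduced path $[v,o(e)]_T\, e\, [t(e),v]_T$, contains the edge $e$ (an edge outside $T$) in a position that is not cancelled when one forms a reduced product $[e_{i_1}]^{\varepsilon_1}\cdots[e_{i_m}]^{\varepsilon_m}$ with $[e_{i_j}]\ne[e_{i_{j+1}}]$ whenever $\varepsilon_j=-\varepsilon_{j+1}$. Concretely, since $\Gamma$ is folded, reading off a word letter by letter determines the path uniquely, so the only way a letter of $p_{e}$ could cancel against the adjacent factor is inside the tree portions; because $e\notin T$ the edge $e$ itself always survives, and distinct (or non-inverse consecutive) factors cannot fully annihilate. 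Hence a reduced word in $Y_T$ maps to a non-empty reduced path in $\Gamma$, so to a non-trivial element of $F(X)$, giving freeness. The main obstacle is making this cancellation bookkeeping rigorous: one must verify carefully that in a product of the $p_e$'s the ``middle'' tree-segments $[t(e),v]_T[v,o(e')]_T$ can cancel only partially and never reach the distinguished edges $e,e'$, which is where foldedness (uniqueness of reduced paths with a given label) does the real work; this can be organized either via an explicit induction on $m$ tracking the surviving occurrence of the last non-cancelled off-tree edge, or by invoking the bijection between reduced paths and their labels in a folded graph.
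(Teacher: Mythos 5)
Your argument is correct and follows essentially the same route as the source: the paper itself gives no proof of Proposition \ref{properz}, citing \cite[6.1]{KM02}, and the proof there is exactly this two-step spanning-tree argument (decompose a closed reduced path at $v$ into tree-blocks to get generation, then show the off-tree edges survive path reduction in the folded graph to get freeness). The only points left implicit in your sketch are the routine induction on the number of factors showing the distinguished edges are never consumed, together with the observation that distinct edges of $T^+$ yield distinct and mutually non-inverse elements $[e]$ (uniqueness of the reduced path with a given label, which is where foldedness enters) — precisely the bookkeeping carried out in \cite{KM02}.
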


\begin{figure}[h!]
\centering
  \begin{tikzpicture}

\coordinate[label=left:$\Gamma$] () at (-0.5,2);
\coordinate[label=below:$v_1$](1) at (0,0);
\coordinate[label=above:$v_4$](2) at (1.8,1.8); 
\coordinate[label=below:$v_2$](3) at (1.4,0);
\coordinate[label=below:$v_3$](4) at (2.6,0);
\coordinate[label=below:$v_5$](5) at (2.8,1); 
\coordinate[label=above:$v_6$](6) at (0,1.5);
 \filldraw[black](1) circle (0.8pt)
 (2) circle (0.8pt)
 (3) circle (0.8pt)
 (4) circle (0.8pt)
 (5) circle (0.8pt)
 (6) circle (0.8pt)
;
\draw [->,line width=1pt]  (1) to node[right] {$b$} (2);
\draw [->,line width=1pt]  (2) to node[above] {$a$} (6);
\draw [->]  (2) to node[left] {$b$} (4);
\draw [->,line width=1pt]  (5) to node[above] {$a$} (2);
\draw [->,line width=1pt]  (4) to node[above] {$d$} (3);

\draw [->,line width=1pt]  (1) to[bend left=10] node[above] {$a$} (3);
\draw [->]  (3) to[bend left=20] node[below] {$a$} (1);
\draw [->]  (6) to[bend left=10] node[right] {$c$} (1);
\draw [->]  (6) to[bend right=40] node[left] {$b$} (1);
\path[->,min distance=1cm] (5) edge[in=30,out=330,right] node {$d$}(5);
\end{tikzpicture}
	\caption{A folded $\{a,b,c,d\}$-graph with a spanning tree $T$, marked by thicker arrows.}
	\label{fig5}
\end{figure}
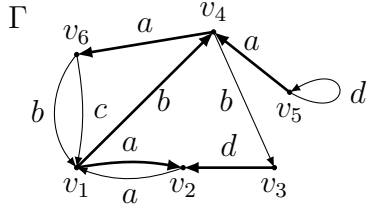

The free basis for the language $L(\Gamma,v_1)$ of the folded $\{a,b,c,d\}$-graph $\Gamma$ shown in Figure \ref{fig5} with $T$ as a spanning tree is the set $$Y_T=\{bab, bac, a^2, b^2da^{-1}, ba^{-1}dab^{-1}\}.$$

Proposition \ref{properz} provides a free basis for the language of a subgroup graph. We use this to get a generating system and even a presentation  for a finite index subgroup $H$ of  $G =\langle \,X\, |\, R\, \rangle$ from its subgroup graph $\Gamma_{X,R}(H)$. Recall that $N:=\left\langle \!\left\langle\, R\, \right\rangle\!\right\rangle_{F(X)}$.

\begin{proposition}\textup{(Generating System and Presentation)\\}
Let $G=\langle \,X\, |\, R\, \rangle$ be a group with $X$ finite and $R$ not necessarily finite. Let $\Gamma$ be a finite connected $X$-regular graph which fulfills the defining relators $R$. Let $H'= L(\Gamma, v)$ and let $S$ be a free basis for $H'$ which we get by Proposition \ref{properz}. Then $\phi(S)$ generates $H=\phi(H')$. Moreover, $\langle \,S\, |\, R'\, \rangle$ is a presentation for the finite index subgroup $H$ of $G$, with $\langle \!\langle\, R' \,\rangle\! \rangle_{F(S)}=N$.
\end{proposition}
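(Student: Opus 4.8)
The plan is to pull the statement back to the free group $F(X)$, apply the classical Reidemeister--Schreier theory there, and then push forward along $\phi$. First I would record the setup: since $\Gamma$ is a finite connected $X$-regular graph fulfilling $R$, Theorem~\ref{thm2}\,(1) tells us that $H'=L(\Gamma,v)$ is a finite index subgroup of $F(X)$ with $N\leq H'$, and that $H=\phi(H')$ is the corresponding finite index subgroup of $G$. By Proposition~\ref{properz}, the set $S=Y_T$ (for a choice of spanning tree $T$) is a \emph{free} basis of $H'$, so $H'=F(S)$ as an abstract free group, with the inclusion $\iota\colon F(S)\hookrightarrow F(X)$ realizing each basis element $[e]$ as a freely reduced $X$-word. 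Since $\phi$ restricted to $H'$ surjects onto $H$, the set $\phi(S)$ generates $H$; this gives the first assertion immediately.

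For the presentation, the key point is to identify the kernel of the composite $F(S)\xrightarrow{\iota} H'\xrightarrow{\phi|_{H'}} H$. This kernel is $\iota^{-1}(\ker\phi\cap H')=\iota^{-1}(N\cap H')=\iota^{-1}(N)$, where the middle equality uses $N\leq H'$. So $H\cong F(S)/\iota^{-1}(N)$, and it remains to show that $\iota^{-1}(N)$ equals the normal closure $\langle\!\langle R'\rangle\!\rangle_{F(S)}$ for the appropriate set $R'$. The natural choice for $R'$ is the Reidemeister--Schreier rewriting of the relators: for each vertex $v_i\in V(\Gamma)$ with $g_i$ the label of the reduced $T$-path $[v,v_i]_T$, and each $r\in R$, rewrite the (necessarily closed, since $\Gamma$ fulfills $R$) $X$-word $g_i r g_i^{-1}$ as a word in the basis $S$; the collection of all these rewritten words is $R'$. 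The classical Reidemeister--Schreier theorem (applied to the finite index subgroup $H'\leq F(X)$ and the normal subgroup $N=\langle\!\langle R\rangle\!\rangle_{F(X)}\leq H'$) yields exactly $\iota^{-1}(N)=\langle\!\langle R'\rangle\!\rangle_{F(S)}$, which is the content of the displayed equation $\langle\!\langle R'\rangle\!\rangle_{F(S)}=N$ in the statement (read inside $F(S)$ via $\iota$). Hence $\langle\,S\mid R'\,\rangle$ presents $H$.

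The main obstacle is the bookkeeping in the Reidemeister--Schreier step: one must verify that the graph-theoretic data (a spanning tree $T$ of $\Gamma$, the coset representatives $g_i$, and the fact that $\Gamma$ fulfills $R$) assemble correctly into a Schreier transversal for $H'$ in $F(X)$, so that the standard rewriting process is applicable and produces a generating set of $\iota^{-1}(N)$ as a normal subgroup of $F(S)$. Here the hypothesis that $\Gamma$ fulfills the defining relators $R$ is essential: it guarantees each $g_i r g_i^{-1}$ traces a closed path at $v_i$, hence lies in $H'$, so its rewriting is genuinely a word in $S$. Once this is set up, the proof is a direct invocation of Reidemeister--Schreier together with the identity $\ker(\phi|_{H'})=N$, and no further computation is needed.
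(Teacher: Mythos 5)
Your argument is correct, and its skeleton is the same as the paper's: identify $F(S)$ with $H'$ via the free basis $S=Y_T$, note that $N=\langle\!\langle R\rangle\!\rangle_{F(X)}$ is contained in $H'$ (because $\Gamma$ fulfills $R$, via Proposition \ref{prop21}) and is normal in $H'$, and conclude $H=\phi(H')\cong F(S)/N$. The difference lies in how $R'$ is produced. The paper's proof is purely existential: since $N\leq H'=\langle S\rangle$ and $N\lhd F(X)$ implies $N\lhd H'$, it simply asserts that \emph{some} subset $R'\subseteq N$ has $\langle\!\langle R'\rangle\!\rangle_{F(S)}=N$ (one may take $R'=N$ itself), which is all the statement requires. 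You instead construct $R'$ explicitly by the Reidemeister--Schreier rewriting of the conjugates $g_i r g_i^{-1}$ ($g_i$ the labels of the $T$-paths $[v,v_i]_T$, $r\in R$), and your justification is sound: every $w\in F(X)$ factors as $w=h g_i$ with $h\in H'$, so $N$ is the normal closure in $H'$ of the set $\{g_i r g_i^{-1}\}$, and each $g_i r g_i^{-1}$ lies in $H'$ precisely because $\Gamma$ fulfills $R$ (the point you correctly flag), so it can be rewritten as an $S$-word; also, the tree-path labels do form a (prefix-closed) transversal, though any transversal would do for the normal-closure identity. What your extra work buys is a concrete, algorithmically computable relator set of size at most $|V(\Gamma)|\cdot|R|$ -- in particular a finite presentation of $H$ when $R$ is finite -- which is strictly more than the paper proves, at the cost of the Reidemeister--Schreier bookkeeping that the paper's two-line argument avoids.
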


 \begin{proof}
The set $S\subset F(X)$ is a free basis for $H'$. Therefore there is an epimorphism $\phi|_{F(S)}\colon F(S) \rightarrow F(S) / N,\ w \mapsto  wN$. Since $N\leq H'=\langle S \rangle$, there exists a subset $R' \subseteq N$ such that $\langle\!\langle\, R' \,\rangle\!\rangle_{F(S)}=N$. 
  Hence $\langle\, S\, |\, R'\, \rangle$ is a presentation for $H$.
 \end{proof}

\begin{example} 
We give examples for the proposition above. We consider the graphs in Figure \ref{dieder}, which are all $\{a, b\}$-regular graphs which fulfill the defining relators of the presentation $\left\langle\,  a, b \, | \, a^3, b^2, (ab)^2 \, \right\rangle$ of the dihedral group $D_3$. 

The set $\{a,b\}$ generates $L(\Gamma, v)=F(a,b)$. Hence $\Gamma$ is the subgroup graph $\Gamma( D_3)$ of $D_3$. 

The language of the graph   $\Gamma'$ is the same for both of its vertices. We have a free basis $\{a, bab^{-1}, b^2\}$ for the language $L(\Gamma',v)$. Since $\phi(bab^{-1})=a^2$ and $\phi(b^2)=1$, the graph $\Gamma'$ is the subgroup graph $\Gamma(\left\langle a \right\rangle)$ of the subgroup $\left\langle a \right\rangle < D_3$. 

For the graph $\Gamma''$ the languages are different for each vertex. 
The language of  $\Gamma''$ with respect to $v_1$ is $F(b, aba^{-2}, a^2ba^{-1}, a^3 )$. Since $\phi(aba^{-2})=\phi(a^2ba^{-1})=b$ and $\phi(a^3)=1$, the subgroup graph of $\left\langle b \right\rangle < D_3$ is $(\Gamma'',v_1)$. The second language for $\Gamma''$ is $L(\Gamma'',v_2)=F(a^3, ab, ba^{-1}, a^2ba^{-2})$. 
We have $ab=\phi(ba^{-1})=\phi(a^2ba^{-2})$. Therefore the graph $(\Gamma'',v_2)$ is the subgroup graph of $\left\langle ab \right\rangle <D_3$.
 The   language
  $L(\Gamma'', v_3)$ has  $\{ a^3, aba^{-1}, ba^{-2}, a^2b \}$ as a free basis  and $\phi$ gives us the subgroup $\left\langle a^2b \right\rangle < D_3$. Hence  $(\Gamma'', v_3)$ is the subgroup graph of $\left\langle a^2b \right\rangle < D_3$.
  
  Since $(\Gamma''', v_i)$ is isomorph to $(\Gamma''',v_j)$, the based graphs provide all the same language. The set $\{a^3, b^2, ab^2a^{-1}, a^2b^2a^{-2}, bab^{-1}a^{-2}, abab^{-1}, a^2bab^{-1}a^{-1}\}$ is a free basis for the language $L(\Gamma''', v_1)$. Since all these words are in the kernel of $\phi$, the graph $\Gamma'''$ is the subgroup graph $\Gamma( \{1_{D_3}\})$ of the trivial group. 
\end{example}

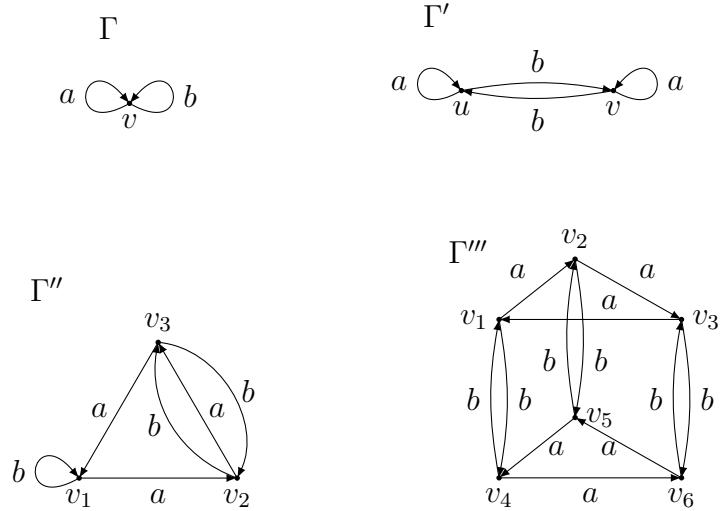
\begin{figure}[h]
\centering
  \begin{tikzpicture}
\coordinate[label=below:$v$](1) at (0,0);
\coordinate[label=left:$\Gamma$] (B) at (0,1);
 \filldraw[black](1) circle (0.8pt)
;
\path[->,min distance=1cm] (1) edge[in=130,out=210,left] node {$a$}(1);
\path[->,min distance=1cm] (1) edge[in=50,out=330,right] node {$b$}(1);

\end{tikzpicture}
\hspace{2cm}
\begin{tikzpicture}
\coordinate[label=below:$u$](1) at (0,0);
\coordinate[label=below:$v$](2) at (2,0); 
\coordinate[label=left:$\Gamma'$] (B) at (0,1);
 \filldraw[black](1) circle (0.8pt)
 (2) circle (0.8pt)
;
\draw [->] (2) to[bend left=10] node[below] {$b$} (1);
\draw [->] (1) to[bend left=10] node[above] {$b$} (2);
\path[->,min distance=1cm] (1) edge[in=130,out=210,left] node {$a$}(1);
\path[->,min distance=1cm] (2) edge[in=50,out=330,right] node {$a$}(2);

\end{tikzpicture}

\vspace{1cm}
\begin{tikzpicture}     

\coordinate[label=below:$v_1$](1) at (210:1.2);
\coordinate[label=below:$v_2$](2) at (330:1.2); 
\coordinate[label=above:$v_3$](3) at (90:1.2);
\coordinate[label=left:$\Gamma''$] (A) at (120:2.2);
 \filldraw[black](1) circle (0.8pt)
 (2) circle (0.8pt)
 (3) circle (0.8pt);
\draw [->] (1) to node[below] {$a$} (2);
\draw [->] (2) to node[right] {$a$} (3);
\draw [->] (3) to node[left] {$a$} (1);
\draw [->] (2) to[bend left=40] node[left] {$b$} (3);
\draw [->] (3) to[bend left=55] node[right] {$b$} (2);

\path[->,min distance=1cm] (1) edge[in=130,out=210,left] node {$b$}(1);
\end{tikzpicture}
\hspace{2cm}
\begin{tikzpicture}

\coordinate[label=left:$\Gamma'''$] () at (0,3);
\coordinate[label=left:$v_1$](1) at (0,2.1);
\coordinate[label=above:$v_2$](2) at (1,2.9); 
\coordinate[label=right:$v_3$](3) at (2.4,2.1);
\coordinate[label=below:$v_4$](4) at (0,0);
\coordinate[label=right:$v_5$](5) at (1,0.8); 
\coordinate[label=below:$v_6$](6) at (2.4,0);
 \filldraw[black](1) circle (0.8pt)
 (2) circle (0.8pt)
 (3) circle (0.8pt)
 (4) circle (0.8pt)
 (5) circle (0.8pt)
 (6) circle (0.8pt)
;
\draw [->]  (1) to node[above left] {$a$} (2);
\draw [->]  (2) to node[above right] {$a$} (3);
\draw [->]  (3) to node[above right] {$a$} (1);
\draw [->]  (4) to node[below] {$a$} (6);
\draw [->]  (5) to node[right] {$a$} (4);
\draw [->]  (6) to node[left] {$a$} (5);

\draw [->]  (1) to[bend left=10] node[right] {$b$} (4);
\draw [->]  (4) to[bend left=10] node[left] {$b$} (1);
\draw [->]  (2) to[bend left=10] node[below right] {$b$} (5);
\draw [->]  (5) to[bend left=10] node[below left] {$b$} (2);
\draw [->]  (3) to[bend left=10] node[right] {$b$} (6);
\draw [->]  (6) to[bend left=10] node[left] {$b$} (3);
\end{tikzpicture}
	\caption{All $\{a, b\}$-regular graphs which fulfill the defining relators of the presentation $\left\langle\,  a, b \, | \, a^3, b^2, (ab)^2\,   \right\rangle$ of the dihedral group $D_3$.}
	\label{dieder}
\end{figure}

\begin{remark}
The example above shows that for isomorphic groups with different presentations we can have different subgroup graphs of isomorphic subgroups.
Take $S_3= \left\langle\,  s_1, s_2 \, | \, s_1^2, s_2^2, (s_1s_2)^3  \, \right\rangle \cong \left\langle \,  a, b \, | \, a^3, b^2, (ab)^2 \, \right\rangle=D_3$.
Comparing Figure \ref{S3} and \ref{dieder}, we observe that the subgroup graphs of the proper subgroups are different. Since the number of vertices of a subgroup graph is the index of the associated subgroup, we have nevertheless $|V(\Gamma(H))|=|V(\Gamma(H'))|$ for $S_3 \geq H\cong H'\leq D_3$.
 \end{remark}

\subsection{Conjugate subgroups}
\label{sec:ConjugacyClass}

With the next application we can detect the conjugacy class of a finite index subgroup of a finitely generated group. 

For finite index subgroups of the free group $F(X)$, with $X$ finite, Kapovich and Myasnikov  proved the following. 

\begin{lemma}\textup{(See \cite[7.5]{KM02})\\} \label{7.5}
Let $\Gamma$ be a folded core graph (with respect to one of its vertices). Let $v$ and $u$ be two vertices of $\Gamma$ and let $q$ be a reduced path in $\Gamma$ from $v$ to $u$ with label $g\in F(X)$. Let $H=L(\Gamma, v)$ and $K=L(\Gamma, u)$. Then $H=gKg^{-1}$. 
\end{lemma}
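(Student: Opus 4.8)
The plan is to prove Lemma \ref{7.5} directly from the definition of the language of an $X$-graph, using the reduced path $q$ from $v$ to $u$ as a ``change of base-point'' device, and exploiting that $\Gamma$ is folded so that labels of reduced paths are freely reduced words representing elements of $F(X)$ unambiguously. First I would set up the two inclusions $gKg^{-1}\subseteq H$ and $H\subseteq gKg^{-1}$ separately, the second being symmetric to the first once one observes that $q^{-1}$ is a reduced path from $u$ to $v$ with label $g^{-1}$.

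For the inclusion $gKg^{-1}\subseteq H$: take an arbitrary element of $K=L(\Gamma,u)$, which by Definition \ref{deflang} is $\mu(c)$ for some reduced path $c$ in $\Gamma$ with $o(c)=t(c)=u$. Consider the concatenation $q\,c\,q^{-1}$; this is a path from $v$ to $v$, but it need not be reduced, since cancellation can occur at the two junctions where $q$ meets $c$ and where $c$ meets $q^{-1}$ (no cancellation occurs inside $q$, inside $c$, or inside $q^{-1}$ individually, as each is already reduced). Let $p$ be the reduced path obtained by successively deleting backtracks $e e^{-1}$ from $q\,c\,q^{-1}$; then $p$ is a reduced path from $v$ to $v$, so $\mu(p)\in L(\Gamma,v)=H$. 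Since $\Gamma$ is folded, by the lemma cited after Definition \ref{deffolded} every reduced path has a freely reduced label, so $\mu(p)=\overline{\mu(q)\mu(c)\mu(q^{-1})}=\overline{g\,\mu(c)\,g^{-1}}=g\,\mu(c)\,g^{-1}$ as an element of $F(X)$; here I use that $\mu(q^{-1})=\mu(q)^{-1}$ follows from the definition of $\widehat\Gamma$. Hence $g\,\mu(c)\,g^{-1}\in H$, giving $gKg^{-1}\subseteq H$. Replacing the triple $(v,u,q,g)$ by $(u,v,q^{-1},g^{-1})$ yields $g^{-1}Hg\subseteq K$, i.e. $H\subseteq gKg^{-1}$, and the two inclusions combine to $H=gKg^{-1}$.

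The main subtlety — though not really an obstacle — is being careful that the process of reducing $q\,c\,q^{-1}$ is well defined and that the resulting label is exactly the free-group product $g\,\mu(c)\,g^{-1}$; this is where foldedness is essential, because it guarantees that reducing the path as a sequence of edges corresponds precisely to free reduction of the label word, so no ``phantom'' cancellation or creation of letters occurs. One should also note that $c$ may be taken to be any reduced loop at $u$, including the empty path, so the argument covers the trivial element, and that the hypothesis ``$\Gamma$ is a core graph with respect to one of its vertices'' is not actually needed for this particular statement but is carried along because it is part of the standing setup for subgroup graphs; the only properties used are that $\Gamma$ is folded and that $q$ is a reduced path from $v$ to $u$. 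Finally I would remark that, reading this through Theorem \ref{thm2}, it specializes to the statement that $\phi(L(\Gamma(H),v))$ runs over the conjugacy class of $H$ as $v$ ranges over the vertices of $\Gamma_{X,R}(H)$, which is the form in which it will be used in Subsection \ref{sec:ConjugacyClass}.
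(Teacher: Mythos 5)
Your argument is correct and is the standard change-of-basepoint conjugation proof: conjugate reduced loops at $u$ by $q$, reduce, use foldedness to identify the label of the reduced loop with the freely reduced word $\overline{g\,\mu(c)\,g^{-1}}$, and apply the symmetric argument with $q^{-1}$ to get both inclusions. The paper itself gives no proof of this lemma but simply cites \cite[7.5]{KM02}, and your proof is essentially the argument given there (and consistent with the reduction technique the paper uses in the proof of Proposition \ref{prop21}), including your accurate observation that only foldedness, not the core hypothesis, is actually used.
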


\begin{proposition}\textup{(See  \cite[7.7]{KM02})\\} \label{7.7}
Let $H$ and $K$ be finite index subgroups of $F(X)$. Then $H$ is conjugate to $K$ in $F(X)$ if and only if the graphs $\Gamma_X(H)$ and $\Gamma_X(K)$ are isomorphic as $X$-graphs. 
\end{proposition}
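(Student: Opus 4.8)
The plan is to exploit two facts already in hand: Lemma \ref{7.5}, which says that moving the base-vertex of a folded core graph along a reduced path conjugates the associated subgroup by the label of that path, and the uniqueness statement in Theorem \ref{thm}, which says that a folded connected core $X$-graph is determined up to based isomorphism by its language. Morally, conjugacy of finite index subgroups should correspond exactly to changing the base-vertex of their (common, up to unbased isomorphism) subgroup graph. Throughout I would use that, since $H$ and $K$ have finite index, $\Gamma_X(H)$ and $\Gamma_X(K)$ are finite, connected and $X$-regular (Proposition \ref{prop1}), hence folded (Lemma \ref{lem1.1}) and core graphs with respect to every vertex (Lemma \ref{lem1.2}).

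For the implication ``conjugate $\Rightarrow$ isomorphic'', I would start from $H = gKg^{-1}$ with $g \in F(X)$. Since $\Gamma_X(H)$ is $X$-regular, there is a unique reduced path $q$ in $\Gamma_X(H)$ with origin $1_H$ and label $g$; let $u$ be its terminus. Applying Lemma \ref{7.5} to $q$ gives $H = g\, L(\Gamma_X(H), u)\, g^{-1}$, so $L(\Gamma_X(H), u) = g^{-1} H g = K$. Then $(\Gamma_X(H), u)$ is a folded connected $X$-graph which is a core graph with respect to $u$ and has language $K$; by the uniqueness part of Theorem \ref{thm} it is isomorphic, as a based $X$-graph, to $(\Gamma_X(K), 1_K)$, and forgetting base-vertices yields $\Gamma_X(H) \cong \Gamma_X(K)$ as $X$-graphs. (One could also exhibit the isomorphism directly on the Schreier coset graphs, sending the right coset $Kw$ to $H(gw) = g(Kw)$ and checking that this is a label-preserving bijection.)

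For the converse, I would take an isomorphism $\psi\colon \Gamma_X(K) \to \Gamma_X(H)$ of $X$-graphs, not assumed to respect base-vertices, and set $u = \psi(1_K)$. Then $\psi$ is an isomorphism of based $X$-graphs $(\Gamma_X(K), 1_K) \cong (\Gamma_X(H), u)$, so $L(\Gamma_X(H), u) = K$. Since $\Gamma_X(H)$ is connected, I would choose a reduced path $q$ from $1_H$ to $u$ with label $g \in F(X)$; Lemma \ref{7.5} then gives $H = g\, L(\Gamma_X(H), u)\, g^{-1} = gKg^{-1}$, so $H$ and $K$ are conjugate in $F(X)$.

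The main obstacle is bookkeeping rather than ideas: one must match the direction of conjugation in Lemma \ref{7.5} to the choice of path label ($g$ versus $g^{-1}$) and to the right-coset convention, and keep track of exactly where finite index enters --- namely, to guarantee via $X$-regularity that a reduced path with prescribed label $g$ exists in $\Gamma_X(H)$, which is what realizes the conjugating element as a change of base-vertex.
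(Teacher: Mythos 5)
Your argument is correct: the paper itself states this proposition without proof (citing \cite{KM02}), and your derivation — reading the conjugator $g$ as a path from $1_H$ via $X$-regularity, applying Lemma \ref{7.5} to identify $L(\Gamma_X(H),u)$ with $K$, and invoking the uniqueness in Theorem \ref{thm} (with Lemmas \ref{lem1.1} and \ref{lem1.2} supplying folded/core at every vertex) — is essentially the standard Kapovich--Myasnikov argument, including the correct handling of the conjugation direction and of where finite index is used.
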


\begin{lemma}\textup{(See \cite[7.12]{KM02})\\} \label{7.12}
Let $H$ and $ K$ be finite index subgroups of $F(X)$. Then there exists an element $g\in F(X)$ with $gKg^{-1}\leq H$ if and only if there exists a morphism of (non-based) $X$-graphs $\pi\colon \Gamma_X(K) \rightarrow \Gamma_X(H)$.
\end{lemma}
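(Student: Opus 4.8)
The plan is to prove the equivalence by passing freely between finite-index subgroups and their Schreier coset graphs. Since $H$ and $K$ have finite index, Proposition \ref{prop1} gives that $\Gamma_X(H)$ and $\Gamma_X(K)$ are finite, connected and $X$-regular; hence they are folded (Lemma \ref{lem1.1}), they are core graphs with respect to each of their vertices (Lemma \ref{lem1.2}), and they are the Schreier coset graphs of $H$ and $K$ with respect to $F(X)$, with base-vertices the cosets $1_H=H$ and $1_K=K$. The two ingredients I would combine are Proposition \ref{propsub} (a morphism of folded $X$-graphs induces an inclusion of the associated languages) and Lemma \ref{7.5} (the language at a second vertex is a conjugate of the language at the base-vertex).

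For the $(\Leftarrow)$ direction, assume a morphism of $X$-graphs $\pi\colon\Gamma_X(K)\to\Gamma_X(H)$ is given, and put $v'=\pi(1_K)\in V(\Gamma_X(H))$. Proposition \ref{propsub} yields $K=L(\Gamma_X(K),1_K)\leq L(\Gamma_X(H),v')$. Since $\Gamma_X(H)$ is connected and folded, there is a reduced path $q$ from $v'$ to $1_H$ (take any path and delete backtracks; its label is then a freely reduced word because $\Gamma_X(H)$ is folded), say with label $g\in F(X)$. Lemma \ref{7.5}, applied to the vertices $v'$ and $1_H$ of $\Gamma_X(H)$, gives $L(\Gamma_X(H),v')=g\,L(\Gamma_X(H),1_H)\,g^{-1}=gHg^{-1}$. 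Hence $K\leq gHg^{-1}$, equivalently $g^{-1}Kg\leq H$, so $g^{-1}\in F(X)$ is the element we want.

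For the $(\Rightarrow)$ direction, assume $gKg^{-1}\leq H$ for some $g\in F(X)$. The subgroups $K$ and $gKg^{-1}$ are conjugate in $F(X)$, so by Proposition \ref{7.7} we have $\Gamma_X(K)\cong\Gamma_X(gKg^{-1})$ as $X$-graphs. (Alternatively, one can pick out in $\Gamma_X(K)$ the vertex $u$ at the end of the reduced path of label $g^{-1}$ based at $1_K$; by Lemma \ref{7.5} its language is $gKg^{-1}$, and Theorem \ref{thm} then identifies $(\Gamma_X(K),u)$ with $(\Gamma_X(gKg^{-1}),1)$.) On the other hand, $gKg^{-1}$ and $H$ both have finite index, so $\Gamma_X(gKg^{-1})$ and $\Gamma_X(H)$ are their Schreier coset graphs, with vertex sets $gKg^{-1}\backslash F(X)$ and $H\backslash F(X)$; since $gKg^{-1}\leq H$, the projection of coset spaces $gKg^{-1}w\mapsto Hw$ is well defined and $F(X)$-equivariant, and reading off the edge relations $gKg^{-1}w\xrightarrow{x}gKg^{-1}wx$ and $Hw\xrightarrow{x}Hwx$ shows it underlies a morphism of $X$-graphs $\Gamma_X(gKg^{-1})\to\Gamma_X(H)$. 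Composing with the isomorphism above produces the desired morphism $\pi\colon\Gamma_X(K)\to\Gamma_X(H)$.

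I do not expect a serious obstacle: both directions are bookkeeping built on Lemma \ref{7.5}, Proposition \ref{propsub} and Proposition \ref{7.7}. The two points needing a moment of care are that the path $q$ in the $(\Leftarrow)$ direction can be taken reduced, so that Lemma \ref{7.5} applies verbatim, and that in the $(\Rightarrow)$ direction the coset projection between Schreier coset graphs is genuinely a graph morphism rather than merely a map of vertex sets; both are immediate from the definitions, and both use the finite-index hypothesis through Proposition \ref{prop1}.
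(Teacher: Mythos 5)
Your argument is correct, but note that the paper does not prove this statement at all: it is quoted verbatim from \cite{KM02} (Lemma 7.12 there) as recalled background, and the only "proof" in the paper is the one-line deduction of its $\Gamma_{X,R}$-analogue from it. So what you have produced is a self-contained reconstruction rather than a variant of an argument in the paper. Your route is legitimate and uses only results the paper states: for ($\Leftarrow$), Proposition \ref{propsub} gives $K\leq L(\Gamma_X(H),\pi(1_K))$ and Lemma \ref{7.5} (applicable because a finite connected $X$-regular graph is folded and a core graph at every vertex, Lemmas \ref{lem1.1}, \ref{lem1.2}) converts the change of base-vertex into conjugation, yielding $g^{-1}Kg\leq H$; for ($\Rightarrow$), the isomorphism $\Gamma_X(K)\cong\Gamma_X(gKg^{-1})$ from Proposition \ref{7.7} composed with the label-preserving coset projection $gKg^{-1}w\mapsto Hw$ between the two Schreier coset graphs (well defined since $gKg^{-1}\leq H$, and an honest $X$-graph morphism by $X$-regularity, since each vertex has exactly one outgoing edge of each label) gives the required $\pi$. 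The one structural difference worth flagging: the original result in \cite{KM02} is proved for arbitrary subgroups using core graphs, whereas your ($\Rightarrow$) direction leans on the finite-index hypothesis to identify both subgroup graphs with Schreier coset graphs via Proposition \ref{prop1}; that is perfectly fine for the statement as it appears here, but it is the step that would need replacing if one wanted the general version.
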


We extend these results to finite index subgroups of finitely generated groups. 

\begin{lemma}\label{lcon2}\mbox{}\\
Let $G=\langle \,X\, |\, R\, \rangle$ be a group with $X$  finite and $R$  not necessarily finite.
Let $\Gamma$ be a finite $X$-regular graph which fulfills the relators $R$. Let $v$ and $u$ be two vertices of $\Gamma$ and let $p$ be a reduced path from $v$ to $u$ with label $g'\in F(X)$. Let $H=\phi(L(\Gamma,v))$ and $K=\phi(L(\Gamma,u))$. Then $H=gKg^{-1}$ for $g=g'N \in G$.  
\end{lemma}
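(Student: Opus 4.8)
The plan is to lift the statement from $G$ to the free group $F(X)$ and then apply Lemma~\ref{7.5}. First I would observe that, since $\Gamma$ is a finite $X$-regular connected graph which fulfills the relators $R$, Lemma~\ref{lem1.1} and Lemma~\ref{lem1.2} tell us that $\Gamma$ is folded and a core graph with respect to every vertex; in particular the hypotheses of Lemma~\ref{7.5} are met. Set $H' := L(\Gamma, v) \leq F(X)$ and $K' := L(\Gamma, u) \leq F(X)$. The reduced path $p$ from $v$ to $u$ has label $g' \in F(X)$, so Lemma~\ref{7.5} yields $H' = g' K' g'^{-1}$ in $F(X)$.

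Next I would push this equality through the canonical epimorphism $\phi\colon F(X) \rightarrow G = F(X)/N$. Applying $\phi$ to $H' = g' K' g'^{-1}$ gives $\phi(H') = \phi(g') \phi(K') \phi(g')^{-1}$ inside $G$. By definition $H = \phi(H')$, $K = \phi(K')$, and $g = g'N = \phi(g')$, so this is exactly $H = g K g^{-1}$, which is the claim. One small point worth recording is that $\phi(H')$ and $\phi(K')$ are genuinely subgroups of $G$ (not merely subsets): this follows from Theorem~\ref{thm2}\,(1), since by Proposition~\ref{prop21} the fact that $\Gamma$ fulfills $R$ means $N \leq L(\Gamma, v)$ and $N \leq L(\Gamma, u)$, so $\phi(L(\Gamma,v))$ and $\phi(L(\Gamma,u))$ are finite index subgroups of $G$.

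I do not expect a genuine obstacle here; the only thing to be careful about is that Lemma~\ref{7.5} is stated for a folded core graph with respect to \emph{one} of its vertices, whereas here I want to use it for the specified pair $v, u$ — but a finite connected $X$-regular graph is a core graph with respect to \emph{every} vertex by Lemma~\ref{lem1.2}, so this causes no difficulty. A secondary point is that $p$ being a reduced path in $\Gamma$ does not force $g' = \mu(p)$ to be a freely reduced \emph{word}; however, since $\Gamma$ is $X$-regular the label of any reduced path is automatically freely reduced (this is the reformulation of $X$-regularity plus the folded property), so $g'$ is a well-defined element of $F(X)$ and no replacement of $g'$ by $\overline{g'}$ is needed. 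Thus the proof is essentially a two-line reduction: invoke Lemma~\ref{7.5} upstairs in $F(X)$, then apply the homomorphism $\phi$.
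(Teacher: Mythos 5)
Your proposal is correct and follows essentially the same route as the paper: set $H'=L(\Gamma,v)$, $K'=L(\Gamma,u)$, apply Lemma \ref{7.5} in $F(X)$ to get $H'=g'K'g'^{-1}$, and push the equality through the epimorphism $\phi$. The extra verifications you record (foldedness and the core property via Lemmas \ref{lem1.1} and \ref{lem1.2}, and that the label of a reduced path in an $X$-regular graph is freely reduced) are left implicit in the paper but are accurate and harmless.
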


\begin{proof}
Let $H'=L(\Gamma, v)$ and $K'=L(\Gamma, u)$. By Lemma \ref{7.5}, $H'=g'K'g'^{-1}$ for some $g'\in F(X)$. Since $\phi$ is a homomorphism,  $H=\phi(H')=\phi(g'K'g'^{-1})=gKg^{-1}$ for $g=g'N\in G$.
\end{proof}

Therefore the subgroup $H$ is conjugate to the subgroup $\phi(L(\Gamma_{X,R}(H), v))$ in $G$  for all $v \in V(\Gamma_{X,R} (H))$. 

\begin{proposition}\textup{(Conjugate Subgroups)}\label{con2}\mbox{}\\
Let $H$ and $K$ be subgroups of finite index in the group $G=\langle \,X\, |\, R\, \rangle$, where $X$ is finite and $R$ is not necessarily finite. Then $H$ is conjugate to $K$ in $G$ if and only if the subgroup graphs $\Gamma_{X,R}(H)$ and $\Gamma_{X,R}(K)$ are isomorphic as $X$-graphs. 
\end{proposition}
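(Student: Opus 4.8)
The plan is to reduce Proposition~\ref{con2} to the corresponding statement for the free group, Proposition~\ref{7.7}, via the identification $\Gamma_{X,R}(H)=\Gamma_X(H')$ from Theorem~\ref{thm2}, where $H'=\phi^{-1}(H)$ and $K'=\phi^{-1}(K)$. The key observation is that since $N\leq H'$ and $N\leq K'$, conjugacy of $H,K$ in $G$ is equivalent to conjugacy of $H',K'$ in $F(X)$.

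First I would prove the forward direction: assume $H=gKg^{-1}$ in $G$. Choose a preimage $g'\in F(X)$ with $\phi(g')=g$. I claim $H'=g'K'g'^{-1}$. Indeed, for $h'\in H'$ we have $\phi(h')\in H$, so $\phi(h')=g\phi(k')g^{-1}$ for some $k'\in K'$, hence $\phi(g'^{-1}h'g')=\phi(k')\in K$, which gives $g'^{-1}h'g'\in\phi^{-1}(K)=K'$; the reverse inclusion is symmetric, using that conjugation by $g'$ is a bijection of $F(X)$. Thus $H'$ and $K'$ are conjugate in $F(X)$, so by Proposition~\ref{7.7} the graphs $\Gamma_X(H')$ and $\Gamma_X(K')$ are isomorphic as $X$-graphs; but these are exactly $\Gamma_{X,R}(H)$ and $\Gamma_{X,R}(K)$.

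For the converse, suppose $\Gamma_{X,R}(H)\cong\Gamma_{X,R}(K)$ as $X$-graphs, i.e.\ $\Gamma_X(H')\cong\Gamma_X(K')$ as $X$-graphs. By Proposition~\ref{7.7}, $H'$ is conjugate to $K'$ in $F(X)$, say $H'=g'K'g'^{-1}$. Applying the epimorphism $\phi$ and using that it is a homomorphism, $H=\phi(H')=\phi(g')\phi(K')\phi(g')^{-1}=gKg^{-1}$ with $g=\phi(g')$, so $H$ and $K$ are conjugate in $G$. (Alternatively, one can argue the converse more geometrically: an $X$-graph isomorphism $\psi\colon\Gamma_{X,R}(H)\to\Gamma_{X,R}(K)$ sends $1_H$ to some vertex $v$; reading off a reduced path $p$ from $1_K$ to $v$ with label $g'$ and invoking Lemma~\ref{lcon2} gives $K=gHg^{-1}$ with $g=\phi(g')$, since $\psi$ identifies $L(\Gamma_{X,R}(H),1_H)$ with $L(\Gamma_{X,R}(K),v)$.)

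The only genuine subtlety — and the step I would write out carefully — is the claim that $\phi^{-1}(H)$ and $\phi^{-1}(K)$ being conjugate in $F(X)$ is equivalent to $H$ and $K$ being conjugate in $G$; the forward implication of that equivalence needs the fact that any $g'\in F(X)$ with $g'K'g'^{-1}=H'$ satisfies $N\leq g'K'g'^{-1}$ automatically (since $N\leq H'$), so conjugation by $g'$ descends to an inner automorphism of $G=F(X)/N$ carrying $K$ to $H$, and the reverse implication just lifts a conjugating element. Everything else is a routine transfer through $\phi$ together with the earlier results, so I anticipate no real obstacle beyond bookkeeping.
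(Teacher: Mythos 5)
Your proposal is correct and follows essentially the same route as the paper: both identify $\Gamma_{X,R}(H)=\Gamma_X(H')$ and $\Gamma_{X,R}(K)=\Gamma_X(K')$ for $H'=\phi^{-1}(H)=L(\Gamma_{X,R}(H),1_H)$, $K'=\phi^{-1}(K)$, and then invoke Proposition~\ref{7.7}. The only difference is that you spell out the (correct) transfer of conjugacy between $G$ and $F(X)$ through $\phi$, which the paper leaves implicit in the phrase ``Proposition~\ref{7.7} completes the proof.''
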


\begin{proof}
Let $H'=L(\Gamma_{X,R}(H), 1_H)$ and $K'=L(\Gamma_{X,R}(K), 1_K)$. Then $H'$ and $K'$ are finite index subgroups of $F(X)$ and $\Gamma_{X,R}(H)=\Gamma_X(H')$ and $\Gamma_{X,R}(K)=\Gamma_X(K')$.   Proposition \ref{7.7} completes the proof.
\end{proof}

\begin{lemma}\mbox{}\\
Let $H$ and $ K$ be finite index subgroups of the group   $G=\langle \,X\, |\, R\, \rangle$, where $X$ is finite and $R$ is not necessarily finite. Then there is $g\in G$ with $gKg^{-1}\leq H$ if and only if there exists a morphism of (non-based) $X$-graphs $\pi\colon \Gamma_{X,R}(K)\rightarrow \Gamma_{X,R}(H)$.
\end{lemma}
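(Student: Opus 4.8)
The plan is to reduce this statement to Lemma \ref{7.12}, exactly as the preceding propositions reduce their content to the corresponding free-group results of \cite{KM02}. Write $H'=L(\Gamma_{X,R}(H),1_H)\leq F(X)$ and $K'=L(\Gamma_{X,R}(K),1_K)\leq F(X)$. By Theorem \ref{thm2}, these are finite index subgroups of $F(X)$ with $N\leq H'$ and $N\leq K'$, and moreover $\Gamma_{X,R}(H)=\Gamma_X(H')$ and $\Gamma_{X,R}(K)=\Gamma_X(K')$ as $X$-graphs. So the morphism condition in the statement is literally the morphism condition $\pi\colon\Gamma_X(K')\rightarrow\Gamma_X(H')$ of Lemma \ref{7.12}, and the remaining task is to match up the two conjugacy conditions $\exists g\in G:\ gKg^{-1}\leq H$ and $\exists g'\in F(X):\ g'K'g'^{-1}\leq H'$.

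First I would prove the forward direction. Suppose there is a morphism of (non-based) $X$-graphs $\pi\colon\Gamma_{X,R}(K)\rightarrow\Gamma_{X,R}(H)$. By Lemma \ref{7.12} applied to $H',K'\leq F(X)$, there is $g'\in F(X)$ with $g'K'g'^{-1}\leq H'$. Applying the epimorphism $\phi$ gives $\phi(g')\phi(K')\phi(g')^{-1}\leq\phi(H')$, i.e. $gKg^{-1}\leq H$ for $g=g'N=\phi(g')\in G$; here I use $\phi(H')=H$, $\phi(K')=K$, which hold by Theorem \ref{thm2}. For the converse, suppose $g\in G$ satisfies $gKg^{-1}\leq H$. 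Pick $g'\in F(X)$ with $\phi(g')=g$. The point is that $g'K'g'^{-1}\leq H'$: indeed $\phi(g'K'g'^{-1})=gKg^{-1}\leq H=\phi(H')$, so $g'K'g'^{-1}\leq\phi^{-1}(H')$; since $N\leq H'$ we have $\phi^{-1}(\phi(H'))=H'$, hence $g'K'g'^{-1}\leq H'$. Now Lemma \ref{7.12} produces the desired morphism $\pi\colon\Gamma_X(K')=\Gamma_{X,R}(K)\rightarrow\Gamma_X(H')=\Gamma_{X,R}(H)$.

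The only subtlety — and it is the step I would be most careful about — is the passage $\phi^{-1}(\phi(H'))=H'$ in the converse, which relies on $N=\ker\phi\leq H'$; this is exactly where the hypothesis that $\Gamma_{X,R}(K)$ and $\Gamma_{X,R}(H)$ are genuine subgroup graphs (so that $H'$ and $K'$ contain $N$, by Proposition \ref{prop21} / Theorem \ref{thm2}) is used. Everything else is a direct transport of Lemma \ref{7.12} along $\phi$, entirely parallel to the proofs of Lemma \ref{lcon2} and Proposition \ref{con2}, so no genuinely new argument is needed.
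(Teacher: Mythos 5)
Your reduction is exactly the paper's argument: the paper's proof consists of the single line that the lemma follows from Lemma \ref{7.12}, and you have simply made explicit the translation via $H'=L(\Gamma_{X,R}(H),1_H)=\phi^{-1}(H)$, $K'=\phi^{-1}(K)$ and the fact that $N\leq H'$ gives $\phi^{-1}(\phi(H'))=H'$. The only blemish is the intermediate expression ``$g'K'g'^{-1}\leq\phi^{-1}(H')$'', which should read $\phi^{-1}(H)=\phi^{-1}(\phi(H'))$; the following clause makes the intended meaning clear, so the proof is correct.
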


\begin{proof}
This follows from Lemma \ref{7.12}.
\end{proof}

\subsection{Normal subgroups and normalizer}
\label{sec:NormalSubgroup}
Proposition \ref{con2} states that for a finite index subgroup $H$ of a group $G$ the set $\{\phi(L(\Gamma(H),v)) \mid v\in V(\Gamma(H)) \}$ is the conjugacy class of $H$. Thus $H$  has at most $|V(\Gamma(H))|$ conjugate subgroups. This leads to the next result.

 \begin{theorem}\textup{(Normal Subgroups)}\label{norm2}\mbox{}\\
Let $H$ be a finite index subgroup of the  group $G= \langle \,X\, |\, R\, \rangle$, where $X$ is finite and $R$ is not necessarily finite. Then $H$ is normal in $G$ if and only if the based $X$-graphs $(\Gamma_{X,R}(H),1_H)$ and $(\Gamma_{X,R}(H),v)$ are isomorphic for all $v\in V( \Gamma_{X,R}(H))$. 
\end{theorem}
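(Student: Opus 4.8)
The plan is to handle both directions of the equivalence through the family of subgroups $K_v := \phi\bigl(L(\Gamma_{X,R}(H),v)\bigr) \le G$ indexed by the vertices $v \in V(\Gamma_{X,R}(H))$, together with the conjugacy description of Lemma~\ref{lcon2}. Write $H'_v := L(\Gamma_{X,R}(H),v) \le F(X)$, so that $K_v = \phi(H'_v)$, $K_{1_H} = H$, and $H'_{1_H} =: H'$ is the subgroup of $F(X)$ with $\Gamma_{X,R}(H) = \Gamma_X(H')$. Since $\Gamma_{X,R}(H)$ is $X$-regular and fulfills $R$, Proposition~\ref{prop21} (applied with each vertex as base-vertex) gives $N \le H'_v$ for every $v$, hence $H'_v = \phi^{-1}(K_v)$; in particular $K_v = H$ if and only if $H'_v = H'$.

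First I would prove the pointwise equivalence: for a fixed vertex $v$, the based $X$-graphs $(\Gamma_{X,R}(H),1_H)$ and $(\Gamma_{X,R}(H),v)$ are isomorphic if and only if $K_v = H$. For the forward implication, an isomorphism of based $X$-graphs induces a label-preserving bijection of $\widehat{\Gamma_{X,R}(H)}$ carrying reduced loops at $1_H$ to reduced loops at $v$, so $L(\Gamma_{X,R}(H),1_H) = L(\Gamma_{X,R}(H),v)$, i.e.\ $H' = H'_v$, whence $H = \phi(H') = \phi(H'_v) = K_v$. Conversely, if $K_v = H$ then $H'_v = \phi^{-1}(K_v) = \phi^{-1}(H) = H'$; since $\Gamma_{X,R}(H)$ is finite, connected and $X$-regular, it is folded (Lemma~\ref{lem1.1}) and a core graph with respect to every vertex (Lemma~\ref{lem1.2}), so $(\Gamma_{X,R}(H),1_H)$ and $(\Gamma_{X,R}(H),v)$ are both folded connected core $X$-graphs with language $H' \le F(X)$, and the uniqueness clause of Theorem~\ref{thm} forces them to be isomorphic as based $X$-graphs.

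Next I would show that $\{K_v \mid v \in V(\Gamma_{X,R}(H))\}$ is exactly the set of $G$-conjugates of $H$. That every $K_v$ is a conjugate of $H$ is the consequence of Lemma~\ref{lcon2} recorded after its proof: taking a reduced path $p_v$ from $1_H$ to $v$ and setting $g_v := \phi(\mu(p_v)) \in G$, one has $H = g_v K_v g_v^{-1}$, i.e.\ $K_v = g_v^{-1} H g_v$. For the reverse inclusion, given $g \in G$ pick a freely reduced word $w \in F(X)$ with $\phi(w) = g$; since $\Gamma_{X,R}(H)$ is $X$-regular there is a reduced path $p$ from $1_H$ with $\mu(p) = w$, ending at some vertex $v$, and because $\Gamma_{X,R}(H)$ is the Schreier coset graph of $H$ one gets $g = \phi(\mu(p)) \in H g_v$, say $g = h g_v$ with $h \in H$; then $g^{-1}Hg = g_v^{-1}h^{-1}Hhg_v = g_v^{-1}Hg_v = K_v$. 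Combining the two steps, $(\Gamma_{X,R}(H),1_H) \cong (\Gamma_{X,R}(H),v)$ for all $v$ is equivalent to $K_v = H$ for all $v$, which is equivalent to $g^{-1}Hg = H$ for all $g \in G$, i.e.\ to $H \lhd G$.

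I expect the one point requiring care to be the reverse inclusion in the conjugacy-class step, namely that \emph{every} conjugate of $H$ occurs as some $K_v$: this is where one must use that $\Gamma_{X,R}(H)$ is genuinely the Schreier coset graph of $H$ (as noted after Theorem~\ref{thm2}), so that the labels $g_v$ of reduced paths from $1_H$ form a transversal for $H$ in $G$. This fact is essentially the content of the paragraph preceding the theorem; without it the argument would only recover the easy direction that $H \lhd G$ implies all the based graphs are isomorphic. Everything else is routine bookkeeping with the uniqueness statements of Theorems~\ref{thm} and \ref{thm2} and with Proposition~\ref{prop21}.
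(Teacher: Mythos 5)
Your proof is correct and follows essentially the same route as the paper: identify the vertex languages $\phi(L(\Gamma_{X,R}(H),v))$ with the conjugacy class of $H$, observe that normality means this class is $\{H\}$, and translate equality of languages into based isomorphism via the uniqueness statement of Theorem~\ref{thm}. The only difference is one of detail: where the paper cites Proposition~\ref{con2} and Lemma~\ref{lcon2} for the conjugacy-class identification, you prove the ``every conjugate occurs as some $K_v$'' inclusion directly from the Schreier coset graph transversal, which is a legitimate (and slightly more self-contained) justification of the same step.
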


\begin{proof}
Assume that $H=\phi(L(\Gamma_{X,R}(H),1_H))$ is conjugate to $\phi(L(\Gamma_{X,R}(H),v))$. The subgroup $H$ is normal if and only if $H$ is conjugate only to itself. This is equivalent to $\phi(L(\Gamma_{X,R}(H),v))=H$ for all $v\in V(\Gamma_{X,R}(H))$. Hence $(\Gamma_{X,R}(H),1_H)$ and $(\Gamma_{X,R}(H),v)$ are isomorphic. 
\end{proof}

With the subgroup graph we can detect the normalizer of a subgroup. The normalizer of a subgroup $H$ in a group $G$ is the subgroup $$N_G(H)=\{g\in G \mid gHg^{-1}=H\}.$$

\begin{theorem}\textup{(Normalizer)}
\label{thmnormalizer}\mbox{}\\ 
Let $G =\langle \,X\, |\, R\, \rangle$ be a group with $X$ finite and $R$  not necessarily finite. Let $H$ be a finite index subgroup of $G$. Let $p_v$ be the reduced path in $\Gamma_{X,R}(H)$ from $1_H$ to $v$ with label $\mu(p_v)=g_v$. Then $g_v\in N_G(H)$ if and only if $(\Gamma_{X,R}(H),1_H)$ and $(\Gamma_{X,R}(H),v)$ are isomorphic as based $X$-graphs. Furthermore, let $V$ be the set of vertices of $\Gamma_{X,R}(H)$ with $(\Gamma_{X,R}(H),1_H)$ isomorph to $(\Gamma_{X,R}(H),v)$ as based $X$-graphs. Then
$$N_G(H)=\bigcup\limits_{v\in V} Hg_v. $$
\end{theorem}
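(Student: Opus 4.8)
The plan is to reduce the statement to the already-established theory of subgroup graphs of subgroups of free groups, exactly as in the proofs of Proposition \ref{con2} and Theorem \ref{norm2}. First I would set $H' = L(\Gamma_{X,R}(H), 1_H) \leq F(X)$, so that by Theorem \ref{thm2} we have $\Gamma_{X,R}(H) = \Gamma_X(H')$ and $\phi(H') = H$, with $N \leq H'$. For a vertex $v$ let $p_v$ be the reduced path in $\Gamma_{X,R}(H)$ from $1_H$ to $v$ and $g_v' = \mu(p_v) \in F(X)$, so $g_v = \phi(g_v') = g_v' N$. By Lemma \ref{lcon2} (or directly Lemma \ref{7.5}), $\phi(L(\Gamma_{X,R}(H), v)) = g_v H g_v^{-1}$ in $G$. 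So $g_v \in N_G(H)$ if and only if $g_v H g_v^{-1} = H$, i.e.\ if and only if $\phi(L(\Gamma_{X,R}(H),v)) = H$.

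The key step is then to show that $\phi(L(\Gamma_{X,R}(H),v)) = H$ if and only if $(\Gamma_{X,R}(H), 1_H) \cong (\Gamma_{X,R}(H), v)$ as based $X$-graphs. One direction is immediate: if the based graphs are isomorphic, then their languages coincide, $L(\Gamma_{X,R}(H), v) = L(\Gamma_{X,R}(H), 1_H) = H'$, whence $\phi(L(\Gamma_{X,R}(H),v)) = \phi(H') = H$. For the converse, suppose $\phi(L(\Gamma_{X,R}(H),v)) = H$. Writing $K' := L(\Gamma_{X,R}(H), v) \leq F(X)$, we have $\phi(K') = H = \phi(H')$ and, since $\Gamma_{X,R}(H)$ is $X$-regular and fulfills $R$, Proposition \ref{prop21} gives $N \leq K'$ as well; because $\phi^{-1}(H) = H'$ is the unique subgroup of $F(X)$ containing $N$ with image $H$, we get $K' = H'$. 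Thus $(\Gamma_{X,R}(H), v)$ is a folded connected core $X$-graph with language $H' = L(\Gamma_{X,R}(H), 1_H)$, so by the uniqueness clause of Theorem \ref{thm} it is canonically isomorphic to $(\Gamma_X(H'), 1_{H'}) = (\Gamma_{X,R}(H), 1_H)$ as based $X$-graphs. This establishes the first assertion.

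For the final formula, recall from the remark following Theorem \ref{thm2} that $\Gamma_{X,R}(H)$ is the Schreier coset graph of $H$, so the vertices of $\Gamma_{X,R}(H)$ are in bijection with the right cosets $H \backslash G$ via $v \mapsto H g_v$, and this is a disjoint union $G = \bigsqcup_{v \in V(\Gamma_{X,R}(H))} H g_v$. Now $g \in N_G(H)$ lies in exactly one coset $H g_v$; writing $g = h g_v$ with $h \in H$, we have $g H g^{-1} = H$ iff $g_v H g_v^{-1} = H$ (since $h \in H$ normalizes $H$), i.e.\ iff $g_v \in N_G(H)$, i.e.\ iff $v \in V$. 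Hence $N_G(H)$ is precisely the union of those cosets $H g_v$ with $v \in V$, which is the claimed identity $N_G(H) = \bigcup_{v \in V} H g_v$.

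The main obstacle I expect is the converse direction of the biconditional, specifically making precise that $\phi(L(\Gamma_{X,R}(H),v)) = H$ forces $L(\Gamma_{X,R}(H),v) = H'$ in $F(X)$ and not merely equality after applying $\phi$; this is exactly the point where one uses that both languages contain $N$ (via Proposition \ref{prop21}) together with the bijection between subgroups of $G$ and subgroups of $F(X)$ containing $N$, after which the uniqueness part of Theorem \ref{thm} closes the argument. Everything else is bookkeeping with the Schreier graph structure.
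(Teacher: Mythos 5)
Your proposal is correct and follows essentially the same route as the paper: reduce to the free-group statement via $H'=L(\Gamma_{X,R}(H),1_H)$ and Lemma \ref{lcon2}, identify $g_v\in N_G(H)$ with $\phi(L(\Gamma_{X,R}(H),v))=H$, pass to equality of languages and invoke the uniqueness of the subgroup graph, and then obtain the coset formula from the Schreier decomposition $G=\bigsqcup_v Hg_v$ (indeed you spell out the step $\phi(K')=H\Rightarrow K'=H'$, via $N\leq K'$ and the correspondence theorem, which the paper's proof leaves implicit). One small slip: Lemma \ref{lcon2} gives $\phi(L(\Gamma_{X,R}(H),v))=g_v^{-1}Hg_v$ rather than $g_vHg_v^{-1}$, but since $g_v\in N_G(H)$ is equivalent to either conjugate being $H$, this does not affect your argument.
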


\begin{proof} 
Let $g_v\in G$ and let $p_v$ be the reduced path with label $\mu(p_v)=g_v$, origin $1_H$ and terminus $v$ in $\Gamma(H):=\Gamma_{X,R}(H)$.  By Lemma \ref{lcon2}, we have $H=g_v K g_v^{-1}$ for $K=\phi(L(\Gamma(H), v))\leq G$. If $g_v\in N_G(H)$, then $K=g_v^{-1}Hg_v=H$. Therefore $(\Gamma(H), 1_H)$ and $(\Gamma(H), v)$ are isomorphic as based $X$-graphs. 

Let $(\Gamma(H), 1_H)$ and $(\Gamma(H), v)$ be isomorphic as based $X$-graphs. Then the subgroups $H=\phi(L(\Gamma(H), 1_H))$ and $K=\phi(L(\Gamma(H), v))$ are equal. Let $p_v$ be the reduced path from $1_H$ to $v$ with label $g_v$. Then $H=g_v K g_v^{-1}=g_v H g_v^{-1}$. Hence $g_v\in N_G(H)$. 

Let $V:=\{ v\in V(\Gamma(H)) \mid (\Gamma(H), 1_H)\cong (\Gamma(H), v)\}$. If $g_v\in N_G(H)$, then $Hg_v\subseteq N_G  (H)$.
Let $g\in N_G(H)$ and let $p$ be the reduced path with label $g$ origin $1_H$ and terminus $v'$. Since $g\in N_G(H)$,  the based graphs $(\Gamma(H),1_H)$ and $(\Gamma(H),v')$ are isomorphic. Therefore $v'\in V$.
\end{proof}

Theorem \ref{thmnormalizer} shows that if there is no symmetry in the subgroup graph $\Gamma(H)$ of a subgroup $H< G$ (that is  $(\Gamma(H),1_H)\ncong (\Gamma(H),v)$ for all $v\neq 1_H$), then $N_G(H)=H$.

\subsection{Intersection of subgroups}
\label{sec:IntersectionOfSubgroups}
We provide the subgroup graph of the intersection of two finite index subgroups.
In this subsection we use $\Gamma(H)$ for the subgroup graph of the subgroup $H$ for both $H \leq F(X)$ and $H\leq G$. This is less precise but clearer to read.  

\begin{definition}{(Product Graph, see \cite[9.1]{KM02})}\label{defproductgraph}\mbox{}\\
Let $\Gamma$ and $\Gamma'$ be $X$-graphs. We define the \textit{product graph} $\Gamma \times \Gamma'$ as follows. The vertex set of $\Gamma \times \Gamma'$ is the set $V(\Gamma) \times V(\Gamma')$. For a pair of vertices $(u,v),\ (u', v')$ in  $V(\Gamma \times \Gamma')$ (such that $u,u' \in V(\Gamma)$ and $v,v' \in V(\Gamma')$) and a letter $x \in X$ we introduce an edge, labeled $x$, with origin $(u,v)$ and terminus $(u',v')$, provided that there is an edge, labeled $x$, from $u$ to $u'$ in $\Gamma$ and there is an edge, labeled $x$, from $v$ to $v'$ in $\Gamma'$.
\end{definition}

Thus $\Gamma \times \Gamma'$ is an $X$-graph. We denote a vertex $(u,v)$ of the product graph $\Gamma \times \Gamma'$ by $u \times v$.  

For an example of a product graph see Figure \ref{schnitt}. 
 The graph in the second row is a product graph of the two graphs in the first row.

\begin{lemma}\textup{(See \cite[9.2]{KM02})}\label{lem9.2}\mbox{}\\
Suppose $\Gamma$ and $\Gamma'$ are folded $X$-graphs. Then $\Gamma \times \Gamma'$ is also a folded $X$-graph. 
\end{lemma}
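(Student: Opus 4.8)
The plan is to unwind the definitions of \emph{folded} and of the \emph{product graph} directly; no auxiliary machinery is needed. Fix a vertex $u\times v$ of $\Gamma\times\Gamma'$ (with $u\in V(\Gamma)$, $v\in V(\Gamma')$) and a letter $x\in X$. First I would verify the origin condition. Suppose $e_1$ and $e_2$ are edges of $\Gamma\times\Gamma'$ with origin $u\times v$ and label $x$, with termini $u_1\times v_1$ and $u_2\times v_2$ respectively. By the definition of the product graph, for $i=1,2$ there is an edge labeled $x$ from $u$ to $u_i$ in $\Gamma$ and an edge labeled $x$ from $v$ to $v_i$ in $\Gamma'$. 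Since $\Gamma$ is folded there is at most one edge of $\Gamma$ with origin $u$ and label $x$, so $u_1=u_2$; since $\Gamma'$ is folded, likewise $v_1=v_2$. Hence $u_1\times v_1=u_2\times v_2$, and because $\Gamma\times\Gamma'$ contains exactly one edge for each admissible triple $((u\times v),(u'\times v'),x)$, we get $e_1=e_2$. Thus there is at most one edge of $\Gamma\times\Gamma'$ with origin $u\times v$ and label $x$.

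Next I would carry out the symmetric argument for the terminus. If $e_1,e_2$ are edges with terminus $u\times v$ and label $x$, with origins $u_1\times v_1$ and $u_2\times v_2$, then there are $x$-labeled edges $u_i\to u$ in $\Gamma$ and $v_i\to v$ in $\Gamma'$ for $i=1,2$; the ``at most one edge with terminus $u$ and label $x$'' part of foldedness, applied to $\Gamma$ and to $\Gamma'$, forces $u_1=u_2$ and $v_1=v_2$, whence $e_1=e_2$. Therefore there is at most one edge with terminus $u\times v$ and label $x$ as well, and $\Gamma\times\Gamma'$ is folded.

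There is essentially no obstacle here; the only point requiring a little care is the bookkeeping around the multi-edge structure — one must read the product-graph construction as introducing a single edge per admissible triple of (origin, terminus, label), so that coincidence of those data forces coincidence of edges. If one instead prefers to track the underlying pair of edges in $\Gamma$ and $\Gamma'$ that gives rise to an edge of the product, the same conclusion follows, since foldedness of $\Gamma$ and of $\Gamma'$ makes each member of that pair unique once the origin (or terminus) and label are fixed. Either reading yields the claim.
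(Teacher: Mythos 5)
Your proof is correct: the paper itself gives no argument for this lemma (it is quoted from \cite{KM02}, Lemma 9.2), and your direct unwinding of the definitions of \emph{folded} and of the product graph, checking the origin and terminus conditions separately at each vertex $u\times v$ and label $x$, is exactly the intended standard argument. The care you take about the multi-edge bookkeeping (one edge of $\Gamma\times\Gamma'$ per admissible triple of origin, terminus and label, as in Definition \ref{defproductgraph}) is the only delicate point, and you handle it properly.
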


\begin{proposition}\textup{(See \cite[9.4]{KM02})}\label{propschn} \mbox{}\\
Let $H$ and $K$ be two subgroups of $F(X)$. 
Let $\Gamma(H)\times_1 \Gamma(K)$ be the connected component of the product graph $\Gamma(H) \times \Gamma(K)$ containing $1_H\times 1_K$ and $\Delta$ be the core of $\Gamma(H) \times_1 \Gamma(K)$ with respect to $ 1_H \times 1_K$. 
 Then $(\Gamma(H\cap K), 1_{H\cap K})=(\Delta, 1_H \times 1_K)$.
\end{proposition}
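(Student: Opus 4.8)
The plan is to verify that the based $X$-graph $(\Delta,1_H\times 1_K)$ has the three defining properties of a subgroup graph listed in Theorem \ref{thm} and then to invoke the uniqueness part of that theorem. Recall that $\Gamma(H)$ and $\Gamma(K)$ exist and are folded, connected core graphs by Theorem \ref{thm}. I must then show: $\Delta$ is folded and connected; $\Delta$ is a core graph with respect to $1_H\times 1_K$; and $L(\Delta,1_H\times 1_K)=H\cap K$. Given these, Theorem \ref{thm} identifies $(\Delta,1_H\times 1_K)$ with $(\Gamma(H\cap K),1_{H\cap K})$ via a canonical isomorphism of based $X$-graphs, which is the asserted equality.

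The topological properties are routine. By Lemma \ref{lem9.2} the product $\Gamma(H)\times\Gamma(K)$ is folded, and foldedness passes to every subgraph, in particular to the connected component $\Gamma(H)\times_1\Gamma(K)$ and to its core $\Delta$. The component $\Gamma(H)\times_1\Gamma(K)$ is connected by definition, and $\Delta=Core(\Gamma(H)\times_1\Gamma(K),1_H\times 1_K)$ is connected because, by Definition \ref{defcore}, it is a union of reduced loops based at $1_H\times 1_K$ (and it contains that vertex via the empty loop). Finally, $Core(\Delta',v)=\Delta'$ whenever $\Delta'=Core(\Gamma,v)$, since any reduced loop at $v$ in $\Gamma$ already lies in $Core(\Gamma,v)$ and remains a reduced loop there; hence $\Delta$ is a core graph with respect to $1_H\times 1_K$.

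The substantive point is the language equality. First, neither passing to the connected component nor passing to the core changes the language at $1_H\times 1_K$: every reduced loop at that vertex stays inside its connected component and, by definition of the core, inside $\Delta$, so $L(\Delta,1_H\times 1_K)=L(\Gamma(H)\times\Gamma(K),1_H\times 1_K)$. Next I would use that, in a folded $X$-graph, a path is reduced if and only if its label is freely reduced: a subword $xx^{-1}$ in the label forces two consecutive edges to be mutually inverse by the determinism of a folded graph, and conversely. Since $\Gamma(H)$, $\Gamma(K)$ and (by Lemma \ref{lem9.2}) their product are folded, this applies to all three. Combined with the definition of the product graph — a path labeled $w$ from $u\times v$ to $u'\times v'$ exists precisely when there are paths labeled $w$ from $u$ to $u'$ in $\Gamma(H)$ and from $v$ to $v'$ in $\Gamma(K)$, recovered by projecting onto the two factors — this yields, for a freely reduced word $w$,
\[
w\in L(\Gamma(H)\times\Gamma(K),1_H\times 1_K)\iff w\in L(\Gamma(H),1_H)\ \text{and}\ w\in L(\Gamma(K),1_K).
\]
Since the languages of folded graphs consist only of freely reduced words, the right-hand side is exactly $w\in H\cap K$, so $L(\Delta,1_H\times 1_K)=H\cap K$.

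I expect the main obstacle to be the careful handling of reducedness of paths in the product graph — namely the equivalence ``path reduced $\iff$ common label freely reduced'' together with its compatibility with projection to the factors — which is precisely where the foldedness of all the graphs in sight is indispensable. Once that is pinned down, the conclusion follows formally from Theorem \ref{thm}.
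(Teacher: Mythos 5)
Your proof is correct, and it is essentially the standard argument of Kapovich--Myasnikov for \cite[9.4]{KM02}, which the paper cites without reproving: show the product of folded graphs is folded, that passing to the component and the core does not change the language at $1_H\times 1_K$, identify that language with $H\cap K$ via the label-wise projection/lifting of reduced paths, and conclude by the uniqueness in Theorem \ref{thm}. Nothing further is needed.
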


 To generalize Proposition \ref{propschn} we need the following two lemmas.

\begin{lemma}\label{lem2}\mbox{}\\
Let $\Gamma$ and $\Gamma'$ be two finite connected $X$-regular graphs. Then the product graph $\Gamma \times \Gamma'$ is a finite $X$-regular graph.  
\end{lemma}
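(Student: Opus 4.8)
The plan is to verify the two defining conditions for an $X$-graph to be $X$-regular directly from Definition \ref{defregular}, applied to the product graph $\Gamma \times \Gamma'$. Recall that $X$-regularity requires that for every vertex and every letter $x \in X \cup X^{-1}$ there is exactly one edge in the completion with that origin and that label; equivalently (by the reformulation in the text), for every vertex and every $x \in X$ there is exactly one edge labeled $x$ with that origin and exactly one labeled $x$ with that terminus. Finiteness is immediate: $|V(\Gamma \times \Gamma')| = |V(\Gamma)| \cdot |V(\Gamma')| < \infty$.

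First I would fix a vertex $u \times v$ of $\Gamma \times \Gamma'$ and a letter $x \in X$, and count the edges labeled $x$ with origin $u \times v$. Since $\Gamma$ is $X$-regular, there is a unique edge $e$ labeled $x$ with $o(e) = u$ in $\Gamma$; write $u' = t(e)$. Since $\Gamma'$ is $X$-regular, there is a unique edge $e'$ labeled $x$ with $o(e') = v$ in $\Gamma'$; write $v' = t(e')$. By Definition \ref{defproductgraph}, the edges labeled $x$ with origin $u \times v$ in $\Gamma \times \Gamma'$ are exactly those arising from a pair (edge labeled $x$ from $u$ in $\Gamma$, edge labeled $x$ from $v$ in $\Gamma'$); there is precisely one such pair, namely $(e, e')$, giving the unique edge $u \times v \to u' \times v'$ labeled $x$. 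The argument for the terminus is symmetric: $X$-regularity of $\Gamma$ and $\Gamma'$ gives a unique edge labeled $x$ ending at $u$ and a unique one ending at $v$, hence a unique edge labeled $x$ ending at $u \times v$ in the product.

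There is no real obstacle here; the only thing to be slightly careful about is that Definition \ref{defproductgraph} is phrased only in terms of letters $x \in X$ (not $X^{-1}$), so one should use the reformulated characterization of $X$-regularity rather than the original one, or else note that $\widehat{\Gamma \times \Gamma'}$ is obtained by adjoining formal inverses and that $(\Gamma \times \Gamma')\widehat{\phantom{x}}$ behaves compatibly with the products $\widehat{\Gamma} \times \widehat{\Gamma'}$ in the relevant sense. In any case, combining the origin and terminus counts with the reformulation stated just before Lemma \ref{lem1.1} yields that $\Gamma \times \Gamma'$ is $X$-regular, and together with the finiteness observation this completes the proof.
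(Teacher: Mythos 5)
Your proof is correct and follows essentially the same route as the paper: use the $X$-regularity of each factor to produce a unique edge with a given label at each vertex of the product, and note finiteness from the product of vertex sets. Your extra care about the product graph being defined only for labels in $X$ (hence using the reformulated origin/terminus characterization) is a minor refinement of the paper's argument, not a different approach.
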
 
 
 \begin{proof}
 Let $v \times v'$ be a vertex in $\Gamma \times \Gamma'$ and let $x\in X \cup X^{-1}$. Since $\Gamma$ and $\Gamma'$ are $X$-regular, there exists exactly one edge $e$ with label $x$ and origin $v$ in $\Gamma$ and exactly one edge $e'$ with label $x$ and origin $v'$ in $\Gamma'$.  By Definition \ref{defregular}, there exists exactly one edge in $\Gamma \times \Gamma'$ with label $x$ and origin $v \times v'$.  
  Hence $\Gamma \times \Gamma'$ is $X$-regular. Since $\Gamma$ and $\Gamma'$ are finite, the product graph is finite. 
 \end{proof}

\begin{lemma}\label{lem3}\mbox{}\\
Let $G=\langle \,X\, |\, R\, \rangle$ be a group with $X$ finite and $R$  not necessarily finite. Let $\Gamma$ and $\Gamma'$ be two finite $X$-regular graphs which fulfill the defining relators $R$. Then $\Gamma \times \Gamma'$ fulfills the defining relators $R$.
\end{lemma}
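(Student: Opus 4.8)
The plan is to transfer the defining property of the two factors $\Gamma$ and $\Gamma'$ to $\Gamma\times\Gamma'$ by projecting a reduced path onto each factor. First I would record that the two coordinate maps
\[
\pi\colon\Gamma\times\Gamma'\longrightarrow\Gamma,\quad u\times v\mapsto u,\qquad
\pi'\colon\Gamma\times\Gamma'\longrightarrow\Gamma',\quad u\times v\mapsto v,
\]
are morphisms of $X$-graphs. Indeed, by Definition \ref{defproductgraph} an edge of $\Gamma\times\Gamma'$ labelled $x\in X$ from $u\times v$ to $u'\times v'$ occurs only together with an edge labelled $x$ from $u$ to $u'$ in $\Gamma$ and an edge labelled $x$ from $v$ to $v'$ in $\Gamma'$; the map $\pi$ (resp.\ $\pi'$) sends it to the former (resp.\ the latter), so $\pi$ and $\pi'$ take vertices to vertices, edges to edges, preserve labels, and commute with $o(\cdot)$ and $t(\cdot)$. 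As usual they extend to the associated $(X\cup X^{-1})$-graphs and hence carry any path to a path with the same label; moreover every vertex $w$ of $\Gamma\times\Gamma'$ satisfies $w=\pi(w)\times\pi'(w)$.

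Next I would fix a vertex $u\times v$ of $\Gamma\times\Gamma'$ and a relator $r\in R$, and let $p$ be a reduced path in $\Gamma\times\Gamma'$ with $o(p)=u\times v$ and $\mu(p)=r$; if no such path exists the condition of Definition \ref{deffulfil} holds vacuously at this vertex. Since $\Gamma$ and $\Gamma'$ are $X$-regular they are folded by Lemma \ref{lem1.1}, hence so is $\Gamma\times\Gamma'$ by Lemma \ref{lem9.2}, and in any case $r\in R\subseteq F(X)$ is already freely reduced. Now $\pi(p)$ is a path in $\Gamma$ with $o(\pi(p))=u$ and label $r$, and since $r$ is freely reduced and $\Gamma$ is folded, $\pi(p)$ is in fact a reduced path. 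Because $\Gamma$ fulfills the defining relators $R$, Definition \ref{deffulfil} yields $t(\pi(p))=u$, that is $\pi(t(p))=t(\pi(p))=u$. Arguing symmetrically with $\pi'$ and $\Gamma'$ gives $\pi'(t(p))=v$. Therefore
\[
t(p)=\pi(t(p))\times\pi'(t(p))=u\times v=o(p),
\]
which is exactly the requirement of Definition \ref{deffulfil}. As $u\times v$ and $r$ were arbitrary, $\Gamma\times\Gamma'$ fulfills the defining relators $R$.

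I do not expect a genuine obstacle here; the only point needing care will be the assertion that the projected path $\pi(p)$ is \emph{reduced} in $\Gamma$. This is why one first notes that a reduced path in a folded graph necessarily has a freely reduced label — a subpath $e,e'$ with $\mu(e)\mu(e')=xx^{-1}$ would force $e'=e^{-1}$ by foldedness — and then uses foldedness of $\Gamma$ once more. One could instead combine the $X$-regularity of $\Gamma\times\Gamma'$ (the argument of Lemma \ref{lem2} is purely local and uses no connectedness) with Proposition \ref{prop21} applied componentwise, but that route also needs the identity $L(\Gamma\times\Gamma',\,u\times v)=L(\Gamma,u)\cap L(\Gamma',v)$, so the direct path-pushing argument above is the more economical one.
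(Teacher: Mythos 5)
Your proof is correct and takes essentially the same route as the paper: the paper likewise reduces the terminus computation for the relator path in $\Gamma\times\Gamma'$ to the corresponding reduced paths with label $r$ in the two factors (invoking $X$-regularity of the product via Lemma \ref{lem2}), which is exactly what your projection morphisms $\pi,\pi'$ make explicit. Your handling of the vacuous case and your justification that the projected path is reduced are minor refinements of the same componentwise argument.
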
 

\begin{proof}
By Lemma \ref{lem2}, $\Gamma \times \Gamma'$ is a finite $X$-regular graph. Let $v\times v'$ be a vertex in $\Gamma \times \Gamma'$ and $r\in R$. Then there exists exactly one reduced path $p_r$ with $\mu(p_r)=r$ and $o(p_r)=v\times v'$. Since $\Gamma$ and $\Gamma'$ fulfill the defining relators $R$, the reduced path $p$ in $\Gamma$ with $\mu(p)=r$ and $o(p)=v$  has terminus $v$ and the reduced path $p'$ in $\Gamma'$ with $\mu(p')=r$ and $o(p_r)=v'$ has terminus $v'$. Therefore $t(p_r)=v \times v'$ in $\Gamma \times \Gamma'$.
\end{proof}

\begin{proposition}\textup{(Intersection)}\label{propint}\mbox{}\\ 
Let $H$ and $K$ be finite index subgroups of the group $G=\langle \,X\, |\, R\, \rangle$, where $X$ is finite and $R$ is not necessarily finite. 
Let $\Gamma( H) \times_1 \Gamma( K)$ be the connected component of the product graph $\Gamma(H) \times \Gamma(K)$ containing $1_H \times 1_K$. Then $(\Gamma( H) \times_1 \Gamma( K), 1_{ H} \times 1_{ K})$ is the subgroup graph of  
$H \cap K< G$.
\end{proposition}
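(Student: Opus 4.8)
The plan is to reduce the statement to the free-group version, Proposition \ref{propschn}, via the identification $\Gamma_{X,R}(\cdot)=\Gamma_X(\cdot')$ supplied by Theorem \ref{thm2}, and then to promote the resulting $X$-graph to a subgroup graph over $G$ by invoking the uniqueness clause of Theorem \ref{thm2}(2).

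First I would set $H':=L(\Gamma_{X,R}(H),1_H)\leq F(X)$ and $K':=L(\Gamma_{X,R}(K),1_K)\leq F(X)$, so that, by Theorem \ref{thm2}, $\Gamma(H)=\Gamma_X(H')$ and $\Gamma(K)=\Gamma_X(K')$ are finite connected $X$-regular graphs which fulfill $R$. By Lemma \ref{lem2} the product $\Gamma(H)\times\Gamma(K)$ is a finite $X$-regular graph, hence so is the connected component $\Gamma(H)\times_1\Gamma(K)$ through $1_H\times 1_K$ ($X$-regularity is local, so a connected component of an $X$-regular graph is again $X$-regular); and by Lemma \ref{lem3} the product, hence also this component, fulfills the defining relators $R$. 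Being finite, connected and $X$-regular, $\Gamma(H)\times_1\Gamma(K)$ is a core graph with respect to every one of its vertices by Lemma \ref{lem1.2}; in particular it is its own core with respect to $1_H\times 1_K$.

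Next I would apply Proposition \ref{propschn} to the subgroups $H',K'\leq F(X)$. Since the core $\Delta$ occurring there coincides with $\Gamma(H)\times_1\Gamma(K)$ by the preceding paragraph, we obtain $(\Gamma_X(H'\cap K'),1_{H'\cap K'})=(\Gamma(H)\times_1\Gamma(K),1_H\times 1_K)$, and in particular $L(\Gamma(H)\times_1\Gamma(K),1_H\times 1_K)=H'\cap K'$. It then remains to identify the image of this language under $\phi$. Because $\Gamma(H)$ and $\Gamma(K)$ fulfill $R$, Proposition \ref{prop21} gives $N\leq H'$ and $N\leq K'$, so $H'=\phi^{-1}(H)$ and $K'=\phi^{-1}(K)$; hence $H'\cap K'=\phi^{-1}(H\cap K)$, and since $\phi$ is onto, $\phi\bigl(L(\Gamma(H)\times_1\Gamma(K),1_H\times 1_K)\bigr)=\phi(\phi^{-1}(H\cap K))=H\cap K$.

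Finally I would assemble the pieces via Theorem \ref{thm2}: $H\cap K$ has finite index in $G$ as an intersection of two finite index subgroups, and $(\Gamma(H)\times_1\Gamma(K),1_H\times 1_K)$ is a based $X$-graph that is $X$-regular and connected, fulfills $R$, and whose language maps onto $H\cap K$ under $\phi$; by Theorem \ref{thm2}(1) its number of vertices therefore equals $[G:H\cap K]$. By the uniqueness part of Theorem \ref{thm2}(2) this based $X$-graph is canonically isomorphic to $\Gamma_{X,R}(H\cap K)$, which is exactly the assertion. The only genuinely delicate point, and the place where the finite-index hypothesis is essential, is the passage from the core $\Delta$ of Proposition \ref{propschn} to the full connected component: over $F(X)$ the core operation cannot be dropped, whereas in the $X$-regular setting Lemma \ref{lem1.2} makes it automatic. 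Everything else is bookkeeping about $\phi$ and its preimages.
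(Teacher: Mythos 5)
Your proposal is correct and follows essentially the same route as the paper: both reduce to Proposition \ref{propschn} for $H'=L(\Gamma(H),1_H)$ and $K'=L(\Gamma(K),1_K)$, use Lemmas \ref{lem2} and \ref{lem3} (with $X$-regularity making the core of the component equal to the component itself) and then transfer through $\phi$ using $N\leq H'\cap K'$. Your final bookkeeping via $H'=\phi^{-1}(H)$, $K'=\phi^{-1}(K)$ is just a mild rephrasing of the paper's step $\phi(H'\cap K')=\phi(H')\cap\phi(K')=H\cap K$, so there is no substantive difference.
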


\begin{proof}
By Theorem \ref{thm2}, the graphs $\Gamma(H)$ and $\Gamma( K)$ are finite, connected, $X$-regular and fulfill the defining relators $R$. By Lemmas \ref{lem2} and \ref{lem3}, the product graph $\Gamma( H) \times \Gamma ( K)$ is finite, $X$-regular and fulfills the defining relators $R$. Therefore  $\Delta=Core(\Gamma(  H) \times_1 \Gamma(K), 1_{ H} \times 1_{ K})=\Gamma( H) \times_1 \Gamma( K)$ is $X$-regular and fulfills the defining relators $R$. 
Recall that $\Gamma(H)=\Gamma(H')$ and $\Gamma(K)= \Gamma(K')$ for $H'=L(\Gamma( H), 1_{ H})$ and $ K'=L(\Gamma(K), 1_{K})$. 
By Proposition \ref{propschn}, we know that $(\Delta, 1_{H} \times 1_{ K})$ is the subgroup graph of $ H' \cap K'$.
Thus $L(\Delta, 1_H\times 1_K)= H' \cap  K'$. Hence  $\phi(L(\Delta, 1_H \times 1_K))=\phi( H' \cap K')$. 
Since $\phi$ is a homomorphism and $\ker \phi \leq  H' \cap K'$, we have $\phi( H' \cap K')= \phi( H') \cap \phi( K')= H \cap K$. 
\end{proof}

\begin{figure}[h!]
	\centering
  \begin{tikzpicture}     

\coordinate[label=below:$1_{ H}$](1) at (210:1);
\coordinate[label=below:$v$](2) at (330:1); 
\coordinate[label=left:$u$](3) at (90:1);
\coordinate[label=left:$\Gamma( H)$] (A) at (120:1.8);
 \filldraw[black](1) circle (0.8pt)
 (2) circle (0.8pt)
 (3) circle (0.8pt);
\draw [->] (2) to[bend left=10] node[below] {$b$} (1);
\draw [->] (1) to[bend left=10] node[above] {$b$} (2);
\draw [->] (2) to[bend left=10] node[left] {$a$} (3);
\draw [->] (3) to[bend left=10] node[right] {$a$} (2);
\draw [->] (3) to[bend left=10] node[below] {$\ c$} (1);
\draw [->] (1) to[bend left=10] node[left] {$c$} (3);
\path[->,min distance=1cm] (1) edge[in=130,out=210,left] node {$a$}(1);
\path[->,min distance=1cm] (2) edge[in=50,out=330,right] node {$c$}(2);
\path[->,min distance=1cm] (3) edge[in=120,out=50,right] node {$\ b$}(3);
\end{tikzpicture}
\hspace{1.5cm}
\begin{tikzpicture}
\coordinate[label=below:$1_{ K}$](1) at (210:1);
\coordinate[label=below:$v'$](2) at (330:1); 
\coordinate[label=left:$u'$](3) at (90:1);
\coordinate[label=left:$\Gamma( K)$] (B) at (120:1.8);
 \filldraw[black](1) circle (0.8pt)
 (2) circle (0.8pt)
 (3) circle (0.8pt)
;
\draw [->] (2) to[bend left=10] node[below] {$c$} (1);
\draw [->] (1) to[bend left=10] node[above] {$c$} (2);
\draw [->] (2) to[bend left=10] node[left] {$b$} (3);
\draw [->] (3) to[bend left=10] node[right] {$b$} (2);
\draw [->] (3) to[bend left=10] node[below] {$\ a$} (1);
\draw [->] (1) to[bend left=10] node[left] {$a$} (3);
\path[->,min distance=1cm] (1) edge[in=130,out=210,left] node {$b$}(1);
\path[->,min distance=1cm] (2) edge[in=50,out=330,right] node {$a$}(2);
\path[->,min distance=1cm] (3) edge[in=120,out=50,right] node {$\ c$}(3);
\end{tikzpicture}
\vspace{8mm}

\begin{tikzpicture}
\coordinate[label=left:$\Gamma( H) \times \Gamma( K)$] () at (150:2.8);
\coordinate[label=left:$1_{H} \times 1_{ K}$](4) at (180:1.5);
\coordinate[label=left:$1_{ H}\times u'$](5) at (120:1.5); 
\coordinate[label=right:$v \times v'$](6) at (60:1.5);
\coordinate[label=right:$u \times v'$](7) at (0:1.5);
\coordinate[label=right:$u \times u'$](8) at (300:1.5); 
\coordinate[label=left:$v \times 1_{ K}$](9) at (240:1.5);
\coordinate[label=left:$c$](10) at (150:0.5);
\coordinate[label=above:$c$](11) at (105:0.46); 
\coordinate[label=right:$c$](12) at (56:0.52);
\coordinate[label=right:$c$](13) at (330:0.5);
\coordinate[label=below:$c$](18) at (280:0.46); 
\coordinate[label=left:$c$](19) at (230:0.52);
 \filldraw[black](4) circle (0.8pt)
 (5) circle (0.8pt)
 (6) circle (0.8pt)
 (7) circle (0.8pt)
 (8) circle (0.8pt)
 (9) circle (0.8pt)
;
\draw [->]  (4) to[bend left=10] node[left] {$a$} (5);
\draw [->]  (4) to[bend left=10] (7);
\draw [->]  (4) to[bend left=10] node[right] {$b$} (9);
\draw [->]  (6) to[bend left=10] node[below] {$\ b$} (5);
\draw [->]  (6) to[bend left=10] (9);
\draw [->]  (6) to[bend left=10] node[right] {$a$} (7);
\draw [->]  (8) to[bend left=10] node[left] {$b$} (7);
\draw [->]  (8) to[bend left=10] node[below] {$a$} (9);
\draw [->]  (8) to[bend left=10]  (5);

\draw [->]  (5) to[bend left=10] node[right] {$a$} (4);
\draw [->]  (7) to[bend left=10] (4);
\draw [->]  (9) to[bend left=10] node[left] {$b$} (4);
\draw [->]  (5) to[bend left=10] node[above] {$b$} (6);
\draw [->]  (9) to[bend left=10] (6);
\draw [->]  (7) to[bend left=10] node[left] {$a$} (6);
\draw [->]  (7) to[bend left=10] node[right] {$b$} (8);
\draw [->]  (9) to[bend left=10] node[above] {$a$} (8);
\draw [->]  (5) to[bend left=10] (8);
\end{tikzpicture}
\hspace{-4mm}
\begin{tikzpicture}     

\coordinate[label=below:$u \times 1_{ K}$](1) at (210:1);
\coordinate[label=below:$\ 1_{ H} \times v'$](2) at (330:1); 
\coordinate[label=left:$v \times u'$](3) at (90:1);

 \filldraw[black](1) circle (0.8pt)
 (2) circle (0.8pt)
 (3) circle (0.8pt)
;
\draw [->] (2) to[bend left=10] node[below] {$c$} (1);
\draw [->] (1) to[bend left=10] node[above] {$c$} (2);
\draw [->] (2) to[bend left=10] node[left] {$b$} (3);
\draw [->] (3) to[bend left=10] node[right] {$b$} (2);
\draw [->] (3) to[bend left=10] node[below] {$\ a$} (1);
\draw [->] (1) to[bend left=10] node[left] {$a$} (3);
\path[->,min distance=1cm] (1) edge[in=130,out=210,left] node {$b$}(1);
\path[->, min distance=1cm] (2) edge[in=50,out=330,right] node {$a$}(2);
\path[->,min distance=1cm] (3) edge[in=120,out=50,above] node {$c$}(3);
\end{tikzpicture}
	\caption{Two subgroup graphs $\Gamma(H)$, $\Gamma(K)$ in the first row and the disconnected product graph $\Gamma(H) \times \Gamma(K)$ in the second row.}
	\label{schnitt}
\end{figure}
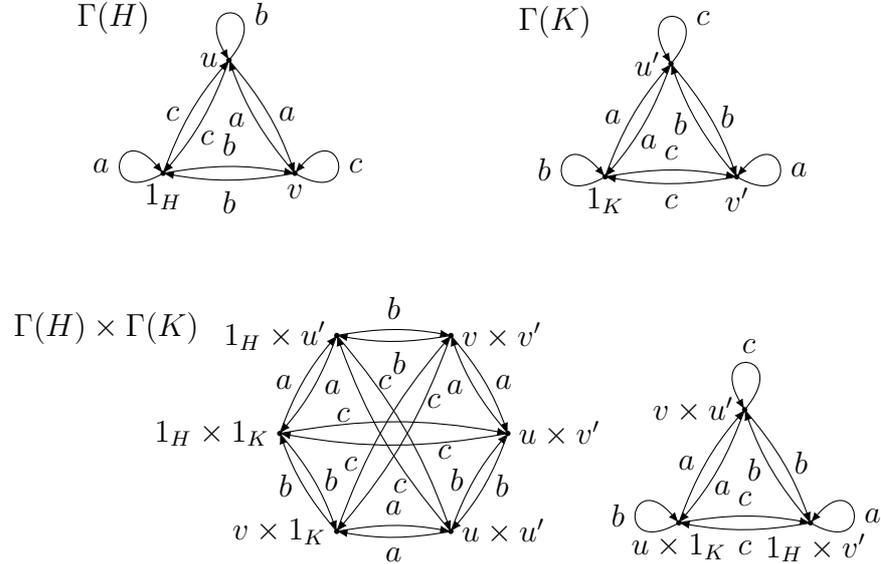

\begin{proposition}\textup{(Intersection of Cosets)\\}
Let $H$ and $K$ be finite index subgroups of the  group $G=\langle \,X\, |\, R\, \rangle$, where $X$ is finite and $R$ is not necessarily finite.  
If $Hw$ is the vertex $v \in V(\Gamma(H))$ and $Kw'$ the vertex $v'\in V(\Gamma(K))$, then $v \times v'$ is a vertex in $\Gamma(H)\times_1 \Gamma(K)$ if and only if the intersection of the cosets $Hw$ and $Kw'$ is not empty. 
\end{proposition}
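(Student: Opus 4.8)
The plan is to reduce everything to the basic fact that in an $X$-regular graph a word in $X\cup X^{-1}$ starting at a fixed vertex determines a unique path, together with the combinatorics of the product construction of Definition \ref{defproductgraph}. Recall from the proof of Proposition \ref{propint} that $\Gamma(H)$ and $\Gamma(K)$ are the Schreier coset graphs of $H$ and $K$, that both are finite and $X$-regular, and that $\Gamma(H)\times\Gamma(K)$ is again $X$-regular by Lemma \ref{lem2}. In particular, $v\times v'$ being a vertex of $\Gamma(H)\times_1\Gamma(K)$ means precisely that $v\times v'$ lies in the connected component of $1_H\times 1_K$, i.e. that there is a path in $\widehat{\Gamma(H)\times\Gamma(K)}$ from $1_H\times 1_K$ to $v\times v'$.

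The first step is a path-lifting observation: for every word $g$ in $X\cup X^{-1}$ and all vertices $a,b$ of $\Gamma(H)$ and $a',b'$ of $\Gamma(K)$, there is a path in $\widehat{\Gamma(H)\times\Gamma(K)}$ with label $g$ from $a\times a'$ to $b\times b'$ if and only if there is a path with label $g$ from $a$ to $b$ in $\widehat{\Gamma(H)}$ and a path with label $g$ from $a'$ to $b'$ in $\widehat{\Gamma(K)}$. This follows by an easy induction on the length of $g$ directly from Definition \ref{defproductgraph}: an edge or inverse edge labelled $x$ from $u\times u'$ to $w\times w'$ in the product exists exactly when the corresponding labelled edges exist in both factors. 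It is worth being explicit that this covers inverse edges, so that it really describes the connected component and not just the vertices reachable along positive edges. Consequently, $v\times v'$ lies in the component of $1_H\times 1_K$ iff there is a word $g$ in $X\cup X^{-1}$ such that the $g$-labelled path from $1_H$ in $\Gamma(H)$ terminates at $v$ and the $g$-labelled path from $1_K$ in $\Gamma(K)$ terminates at $v'$.

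The second step translates this into coset language. Since $\Gamma(H)$ is the Schreier coset graph of $H$ with $1_H$ corresponding to the coset $H$, the path with label $g$ starting at $1_H$ terminates at the vertex $H\phi(g)$; as $v=Hw$, this terminus equals $v$ iff $H\phi(g)=Hw$, i.e. iff $\phi(g)\in Hw$. The analogous statement holds for $\Gamma(K)$ and $v'=Kw'$. Combining with the first step, $v\times v'$ is a vertex of $\Gamma(H)\times_1\Gamma(K)$ iff there exists a word $g$ with $\phi(g)\in Hw\cap Kw'$. Because $\phi\colon F(X)\to G$ is surjective, such a $g$ exists iff $Hw\cap Kw'\neq\emptyset$, which is the assertion. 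For the explicit ``if'' direction one picks $h\in Hw\cap Kw'$, writes $h=\phi(g)$ for a word $g$, notes $Hh=Hw$ and $Kh=Kw'$ so that the $g$-labelled paths from $1_H$ and $1_K$ end at $v$ and $v'$ respectively, and then lifts them to a path from $1_H\times 1_K$ to $v\times v'$ in the product by the first step.

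There is no serious obstacle here; the only points requiring a little care are making the path-lifting observation explicitly handle inverse edges, and the bookkeeping of the right-coset equivalences $H\phi(g)=Hw\iff\phi(g)\in Hw$ together with the use of surjectivity of $\phi$ to move freely between words over $X\cup X^{-1}$ and elements of $G$.
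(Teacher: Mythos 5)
Your proof is correct and follows essentially the same route as the paper: both directions use the correspondence between paths in $\Gamma(H)\times\Gamma(K)$ and pairs of equally labelled paths in the two factors, combined with the Schreier coset graph identification of vertices with cosets. You merely make the path-lifting step (including inverse edges) and the use of surjectivity of $\phi$ more explicit than the paper does.
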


\begin{proof}
Let $v\times v'$ be a vertex in $\Gamma(H) \times_1 \Gamma(K)$. Let $p$ be the reduced path from $1_H\times 1_{K}$ to $v \times v'$ and $\mu(p)=g$. By definition, there exists a reduced path $p'$ from $1_H$ to $v$ in $\Gamma(H)$ and a reduced path $p''$ from $1_{K}$ to $v'$ in $\Gamma(K)$. Therefore $\phi(g)$ is in $Hw$ and $Kw'$. 

Let  $g \in Hw\cap Kw'$. Hence there is a reduced path $p$ with $\mu(p)=g$ from $1_H$ to $v$ in $\Gamma(H)$  and a reduced path $p'$ with label $g$ from $1_K$ to $v'$ in $\Gamma(K)$. Thus the vertex $v\times v'$ is in $V(\Gamma(H) \times_1 \Gamma(K))$.
\end{proof}

\begin{example} 
The graph $(\Gamma(H), 1_H)$ of Figure \ref{schnitt} is the subgroup graph of the subgroup  $H=\left\langle a, cbc, cab \right\rangle$ and $(\Gamma(K), 1_K)$ is the subgroup graph of the subgroup $K= \left\langle b, aca, abc \right\rangle$  of the group  $\Delta (3,3,3)=\langle \, a, b, c\, |\, a^2, b^2, c^2, (ab)^3, (bc)^3, (ac)^3\, \rangle$ (Coxeter group of type $\widetilde A_2$). The connected component of $1_{ H} \times 1_{ K}$ is the subgroup graph of the intersection $H \cap K$. Hence $H \cap K$ is a normal subgroup of index $6$ in $\Delta (3,3,3)$. 
From the subgroup graphs $\Gamma(H)$ and $\Gamma(K)$ we get the cosets $H=1_H$, $Hb=v$, $Hc=u$ of the subgroup $H$ and the cosets $K=1_K$, $Kc=v'$, $Ka=u'$ of the subgroup $K$. Using the product graph $\Gamma(H) \times \Gamma(K)$, we get the following intersection of the cosets: 
 $H\cap K$;
$H \cap Ka = (H\cap K)a$;  $Hb\cap K= (H \cap K)b$  and $Hc \cap Kc=(H \cap K)c$. 
Furthermore, there exist paths  $1_H \times 1_K\xrightarrow{ab} v\times v'$  and $1_H\times 1_K\xrightarrow{ba} u \times u' $, therefore  $Hb \cap Kc= (H\cap K)ab$ and $Hc \cap Ka =(H \cap K)ba$. 
Since $1_H \times v'$, $u\times 1_K$ and $v\times u'$ are not in $\Gamma(H) \times_1\Gamma(K)$, the intersections $H \cap Kc$, $Hc \cap K$ and $Hb \cap Ka$ are empty.
 \end{example}

\subsection{Malnormal subgroups}
\label{sec:Malnormal}

A subgroup $H$ of a group $G$ is called \textit{malnormal} if for all $g\in G\setminus H$ 
$$ gHg^{-1} \cap H=\{1_G\}.$$
The groups $G$ and $\{1_G\}$ are always malnormal. 

The next two propositions are needed for the theorem for malnormal subgroups.  

\begin{proposition}\textup{(See \cite[9.7]{KM02})\\} \label{9.7}
Let $H$ and $K$ be subgroups of $F(X)$. Let $g\in F(X)$ be such that the double cosets $KgH$ and $KH$ are distinct. Suppose that $gHg^{-1}\cap K \neq \{1\}$. Then there is a vertex $v \times u$ in $\Gamma(H) \times \Gamma(K)$ which does not belong to the connected component of $1_H\times 1_K$ such that the subgroup $L(\Gamma(H) \times \Gamma(K), v\times u)$ is conjugate to $gHg^{-1}\cap K$ in $F(X)$. 
\end{proposition}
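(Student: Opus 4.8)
The plan is to produce the required vertex by working first inside the product of the two \emph{Schreier coset graphs}, and then transporting the resulting vertex back into $\Gamma(H)\times\Gamma(K)$ by means of the fact that every non-trivial reduced loop of a Schreier coset graph meets its core. Throughout, write $\bar\Gamma(H)$, $\bar\Gamma(K)$ for the Schreier coset graphs of $H$, $K$ with respect to $X$ and $F(X)$ (Definition~\ref{def1}); these are $X$-regular, hence folded, and $\Gamma(H)$, $\Gamma(K)$ are their cores at $1_H$, $1_K$, with $L(\bar\Gamma(H),1_H)=H$.

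First I would record two bookkeeping facts. (i) For folded $X$-graphs $\Gamma$, $\Gamma'$ and vertices $v$, $u$ one has $L(\Gamma\times\Gamma',v\times u)=L(\Gamma,v)\cap L(\Gamma',u)$; both inclusions are immediate from the definition of the product graph together with the facts that languages of folded graphs consist of freely reduced words and that reading a freely reduced word in a folded graph yields a reduced path. (ii) To each vertex $Hw_1\times Kw_2$ of $\bar\Gamma(H)\times\bar\Gamma(K)$ (with $w_1$, $w_2$ labels of reduced paths from $1_H$, $1_K$) attach the double coset $I(Hw_1\times Kw_2):=Kw_2w_1^{-1}H\in K\backslash F(X)/H$. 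This is well defined (two reduced paths $1_H\to v$ have labels differing by an element of $H$ on the left, and likewise on the $K$-side) and constant along edges, hence on connected components, with $I(1_H\times 1_K)=KH$. Combining (i) with Lemma~\ref{7.5} applied inside the folded core graphs $\Gamma(H)$ and $\Gamma(K)$: for a vertex $v\times u$ of $\Gamma(H)\times\Gamma(K)$ reached from the base-vertices along reduced paths labelled $w_1$, $w_2$ inside $\Gamma(H)$, $\Gamma(K)$, we get $L(\Gamma(H)\times\Gamma(K),v\times u)=w_1^{-1}Hw_1\cap w_2^{-1}Kw_2$, which is conjugate in $F(X)$ to $(w_2w_1^{-1})H(w_2w_1^{-1})^{-1}\cap K$. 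Hence it suffices to exhibit a vertex $v\times u$ of $\Gamma(H)\times\Gamma(K)$ with $I(v\times u)=KgH$: then $L(\Gamma(H)\times\Gamma(K),v\times u)$ is conjugate to $gHg^{-1}\cap K$, and, because $KgH\neq KH$, this vertex lies in a component of $\bar\Gamma(H)\times\bar\Gamma(K)$ — a fortiori of $\Gamma(H)\times\Gamma(K)$ — different from that of $1_H\times 1_K$.

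To produce such a vertex, let $Kg$ be the vertex of $\bar\Gamma(K)$ reached from $1_K$ by reading $g$, and consider $1_H\times Kg$ in $\bar\Gamma(H)\times\bar\Gamma(K)$ (folded by Lemma~\ref{lem9.2}). By (i) and a standard conjugation computation, $L(\bar\Gamma(H)\times\bar\Gamma(K),1_H\times Kg)=H\cap g^{-1}Kg$, and this is non-trivial: conjugating $1\neq z\in gHg^{-1}\cap K$ by $g^{-1}$ gives $1\neq g^{-1}zg\in H\cap g^{-1}Kg$. Let $\mathcal{C}$ be the component of $1_H\times Kg$ — its $I$-value is $I(1_H\times Kg)=KgH$ — and set $\Delta:=Core(\mathcal{C},1_H\times Kg)$, a folded connected core graph with $L(\Delta,1_H\times Kg)=H\cap g^{-1}Kg\neq\{1\}$. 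Every vertex of $\Delta$ has its $H$-coordinate in $\Gamma(H)$, because reduced loops of $\Delta$ at $1_H\times Kg$ project to reduced loops of $\bar\Gamma(H)$ at $1_H$, which lie in $Core(\bar\Gamma(H),1_H)=\Gamma(H)$. It remains to find a vertex of $\Delta$ whose $K$-coordinate lies in $\Gamma(K)$: choose a non-trivial reduced loop $\tau$ in $\Delta$ at $1_H\times Kg$, with freely reduced label $\ell\neq 1$; its coordinate projection to $\bar\Gamma(K)$ is again a reduced loop $\tau_K$ at $Kg$ with label $\ell\neq 1$ (reduced because $\ell$ is freely reduced and $\bar\Gamma(K)$ is folded). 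Since $\bar\Gamma(K)$ is obtained from its core $\Gamma(K)$ by attaching trees, the non-trivial reduced loop $\tau_K$ must pass through some vertex $u^\ast\in V(\Gamma(K))$; the vertex $v^\ast\times u^\ast$ of $\tau$ lying over $u^\ast$ then has $v^\ast\in V(\Gamma(H))$ by the previous sentence, so $v^\ast\times u^\ast$ is a vertex of $\Delta$ lying in $V(\Gamma(H))\times V(\Gamma(K))=V(\Gamma(H)\times\Gamma(K))$ and with $I(v^\ast\times u^\ast)=KgH$. This is the vertex sought, and it is not in the component of $1_H\times 1_K$ since its $I$-value is $KgH\neq KH$.

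The main obstacle is exactly that last step: the core $\Delta$ computed in the product of the Schreier graphs need not be contained in $\Gamma(H)\times\Gamma(K)$ (the product of the cores), because a Schreier coset graph carries vertices lying on non-trivial reduced loops that are outside its core — so one cannot simply take $\Delta$ itself. What rescues the argument is that every non-trivial reduced loop of a Schreier coset graph nevertheless meets the core (the complement of the core being a forest, which contains no non-trivial reduced loop), whence $\Delta$ always contains at least one vertex lying over $\Gamma(K)$. A minor auxiliary point to get right along the way is that coordinate projections of reduced paths in a product of folded graphs stay reduced, which holds precisely because the labels involved are freely reduced.
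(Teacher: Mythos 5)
Your proposal is correct, but note that the paper itself contains no proof of this statement: it is imported verbatim from Kapovich--Myasnikov \cite{KM02} (their 9.7), so the only comparison is with their argument, which stays at the level of the core graphs and analyses the cancellation in a nontrivial element $\overline{ghg^{-1}}\in gHg^{-1}\cap K$ read against $\Gamma(H)$ and $\Gamma(K)$. You take a genuinely different route: you work in the product of the two Schreier coset graphs, introduce the double-coset invariant $I(Hw_1\times Kw_2)=Kw_2w_1^{-1}H$ (well defined, constant on components, equal to $KH$ at $1_H\times 1_K$ and to $KgH$ at $1_H\times Kg$), observe that $L(\bar\Gamma(H)\times\bar\Gamma(K),1_H\times Kg)=H\cap g^{-1}Kg\neq\{1\}$, and then transport a nontrivial reduced loop at $1_H\times Kg$ into $\Gamma(H)\times\Gamma(K)$. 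The individual steps check out: your fact (i) (language of a product of folded graphs is the intersection of the languages) is correct with exactly the one-line reasons you give; the identification $L(\Gamma(H)\times\Gamma(K),v\times u)=w_1^{-1}Hw_1\cap w_2^{-1}Kw_2$ via Lemma \ref{7.5}, together with $w_2w_1^{-1}=kgh$ when $I(v\times u)=KgH$, does give a subgroup conjugate to $gHg^{-1}\cap K$; and $I(v^\ast\times u^\ast)=KgH\neq KH$ correctly forces $v^\ast\times u^\ast$ to lie outside the component of $1_H\times 1_K$, since $\Gamma(H)\times\Gamma(K)$ is a subgraph of $\bar\Gamma(H)\times\bar\Gamma(K)$. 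What your approach buys is that all bookkeeping is absorbed into the component invariant and the core-versus-Schreier-graph structure, at the price of working in the (possibly infinite) Schreier graphs rather than only with the core data.

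The one point you assert rather than prove is the structural fact that every nontrivial reduced loop in a Schreier coset graph meets the core (equivalently, that the complement of the core is a forest, i.e.\ the Schreier graph is the core with hanging trees attached). This is true and standard, but as written it carries the real content of your final step, so it should be proved or precisely cited. It is quick to justify: a nontrivial reduced closed path contains a nonempty cyclically reduced closed subpath $c$; choose a reduced path $p$ from $1_K$ to a vertex of $c$ of minimal length and rebase $c$ there; then $p\,c\,p^{-1}$ is reduced (any cancellation at a junction would produce a shorter reduced path from $1_K$ to a vertex of $c$), hence it, and with it every vertex of $c$, lies in $Core(\bar\Gamma(K),1_K)=\Gamma(K)$. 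With that inserted, your argument is a complete and valid alternative proof of the proposition.
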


\begin{proposition}\textup{(See \cite[9.8]{KM02})\\} \label{9.8}
Let $H$ and $K$ be subgroups of $F(X)$. Then for any vertex $v\times u$ of $\Gamma(H)\times \Gamma(K)$ the subgroup $L(\Gamma(H) \times \Gamma(K), v\times u)$ is conjugate to a subgroup of the form $gHg^{-1}\cap K$ for some $g\in F(X)$. Moreover, if $v\times u$ does not belong to the connected component of $1_H \times 1_K$, then the element $g$ can be chosen such that $KgH\neq KH$.
\end{proposition}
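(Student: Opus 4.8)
The plan is to combine Lemma~\ref{7.5} with the observation that the language of a product graph at a vertex is the intersection of the two factor languages. For the latter: by Lemma~\ref{lem9.2} the graph $\Gamma(H)\times\Gamma(K)$ is folded, so its language at $v\times u$ consists of freely reduced words, and by the definition of the product graph a freely reduced word $w$ labels a (unique, automatically reduced) path starting at $v\times u$ precisely when it labels such paths starting at $v$ in $\Gamma(H)$ and at $u$ in $\Gamma(K)$, in which case the terminus of the first path is the pair of termini of the other two. Consequently
$$L(\Gamma(H)\times\Gamma(K),\,v\times u)=L(\Gamma(H),v)\cap L(\Gamma(K),u).$$

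Since $\Gamma(H)$ and $\Gamma(K)$ are connected, I would choose a reduced path from $1_H$ to $v$ in $\Gamma(H)$ with label $a\in F(X)$ and a reduced path from $1_K$ to $u$ in $\Gamma(K)$ with label $b\in F(X)$. By Lemma~\ref{7.5} one has $L(\Gamma(H),v)=a^{-1}Ha$ and $L(\Gamma(K),u)=b^{-1}Kb$; substituting these into the identity above and conjugating by $b$ gives
$$b\,L(\Gamma(H)\times\Gamma(K),v\times u)\,b^{-1}=(ba^{-1})H(ba^{-1})^{-1}\cap K,$$
so with $g:=ba^{-1}$ the subgroup $L(\Gamma(H)\times\Gamma(K),v\times u)$ is conjugate to $gHg^{-1}\cap K$, which is the first assertion. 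A routine check shows that the double coset $KgH$ does not depend on the choices of $a$ and $b$, so the phrase ``$g$ can be chosen'' in the second assertion is meaningful.

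For the ``moreover'' part I would argue by contraposition: suppose that $KgH=KH$ for the element $g=ba^{-1}$ just produced; I claim $v\times u$ then lies in the connected component of $1_H\times 1_K$. From $KgH=KH$ we get $ab^{-1}=g^{-1}\in HK$, say $ab^{-1}=hk$ with $h\in H$ and $k\in K$, so that $h^{-1}a=kb$ in $F(X)$; set $w:=\overline{h^{-1}a}=\overline{kb}$. Since $h^{-1}\in H=L(\Gamma(H),1_H)$, the word $h^{-1}$ labels a reduced loop at $1_H$ in $\Gamma(H)$; concatenating this loop with the chosen path from $1_H$ to $v$ and deleting backtracks (permitted because $\Gamma(H)$ is folded) produces a reduced path from $1_H$ to $v$ with label $w$. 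Symmetrically, $k\in K=L(\Gamma(K),1_K)$ yields a reduced path from $1_K$ to $u$ in $\Gamma(K)$ with label $w$. Reading these two paths simultaneously, letter by letter, gives a path in $\Gamma(H)\times\Gamma(K)$ from $1_H\times 1_K$ to $v\times u$, so the two vertices lie in the same connected component. This proves the contrapositive, hence if $v\times u$ is not in the component of $1_H\times 1_K$ then $KgH\neq KH$.

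The product-language identity and the backtrack bookkeeping are routine. The points that need genuine care are: verifying that two $w$-labelled reduced paths in $\Gamma(H)$ and $\Gamma(K)$ really do glue to a path in $\Gamma(H)\times\Gamma(K)$ --- which comes down to the fact that an edge of $\widehat{\Gamma(H)\times\Gamma(K)}$ over a letter $y\in X\cup X^{-1}$ is exactly a matched pair of edges over $y$, one in $\widehat{\Gamma(H)}$ and one in $\widehat{\Gamma(K)}$ --- and keeping the left and right sides of the various conjugations straight throughout.
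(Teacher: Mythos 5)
Your proof is correct, and since the paper itself states this proposition only as a quotation from \cite{KM02} (no proof is given there), the right comparison is with the standard argument, which yours essentially reproduces: the identity $L(\Gamma(H)\times\Gamma(K),v\times u)=L(\Gamma(H),v)\cap L(\Gamma(K),u)$ combined with Lemma~\ref{7.5} gives the conjugacy to $gHg^{-1}\cap K$ with $g=ba^{-1}$, and the contrapositive gluing of two reduced paths with the common reduced label $w=\overline{h^{-1}a}=\overline{kb}$ into a path of the product graph handles the ``moreover'' clause. All the delicate points (uniqueness of labelled paths in folded graphs, preservation of endpoints under deleting backtracks, and the lift of a matched pair of edges to an edge of $\widehat{\Gamma(H)\times\Gamma(K)}$) are handled correctly.
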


\begin{theorem}\textup{(See  \cite[9.10]{KM02})\\} \label{malnorm1}
Let $H\leq F(X)$ be a subgroup of $F(X)$. Then $H$ is malnormal in $F(X)$ if and only if every component of $\Gamma(H)\times \Gamma(H)$, which does not contain $1_H\times 1_H$, is a tree. 
\end{theorem}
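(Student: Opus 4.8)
The plan is to run both implications through the product graph $\Gamma(H)\times\Gamma(H)$, using Propositions \ref{9.7} and \ref{9.8} with $K=H$. Three auxiliary observations do all the connecting work. First, by Lemma \ref{lem9.2} the product graph $\Gamma(H)\times\Gamma(H)$ is folded, hence so is each of its connected components. Second, a connected folded graph $C$ is a tree if and only if $L(C,w)=\{1\}$ for some (equivalently, every) vertex $w$ of $C$: this is immediate from Proposition \ref{properz}, since the free basis $Y_T$ of $L(C,w)$ is indexed by the edges of $C$ lying outside a spanning tree $T$, and $T^+=\emptyset$ exactly when $C=T$. Third, taking $K=H$ makes the double coset $HH$ equal to $H$, so the hypothesis ``$HgH\neq HH$'' appearing in Propositions \ref{9.7} and \ref{9.8} is simply ``$g\notin H$''.

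For the implication ``all non-base components are trees $\Rightarrow$ $H$ malnormal'', I would argue by contradiction: assume every connected component of $\Gamma(H)\times\Gamma(H)$ not containing $1_H\times 1_H$ is a tree, but $H$ is not malnormal, so there is $g\in F(X)\setminus H$ with $gHg^{-1}\cap H\neq\{1\}$. Since $g\notin H$, the double cosets $HgH$ and $HH=H$ are distinct, so Proposition \ref{9.7} produces a vertex $v\times u$ of $\Gamma(H)\times\Gamma(H)$ lying outside the connected component of $1_H\times 1_H$ whose language $L(\Gamma(H)\times\Gamma(H),v\times u)$ is conjugate to $gHg^{-1}\cap H$ and is therefore nontrivial. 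By the second observation the component of $v\times u$ is then not a tree, contradicting the assumption.

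For the converse, again by contradiction: assume $H$ is malnormal but some connected component $C$ of $\Gamma(H)\times\Gamma(H)$ with $1_H\times 1_H\notin C$ is not a tree. Choose any vertex $v\times u$ of $C$; by the first two observations $L(\Gamma(H)\times\Gamma(H),v\times u)=L(C,v\times u)\neq\{1\}$. Since $v\times u$ lies outside the component of $1_H\times 1_H$, Proposition \ref{9.8} yields $g\in F(X)$ with $HgH\neq HH=H$, i.e.\ $g\notin H$, such that $L(\Gamma(H)\times\Gamma(H),v\times u)$ is conjugate to (a subgroup of) $gHg^{-1}\cap H$. As this language is nontrivial, $gHg^{-1}\cap H\neq\{1\}$ with $g\notin H$, contradicting malnormality of $H$.

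The substantive content is packed inside Propositions \ref{9.7} and \ref{9.8}, which may be assumed; granting those, the argument is essentially bookkeeping. The step I would be most careful about is the second observation above — that a non-tree connected component has nontrivial language at \emph{every} one of its vertices, not merely at some well-chosen one — which is precisely what Proposition \ref{properz} guarantees once Lemma \ref{lem9.2} tells us the component is folded; and the trivial-looking but essential remark that with $K=H$ the double-coset hypotheses of Propositions \ref{9.7} and \ref{9.8} reduce to ``$g\notin H$'', which is what ties the geometric statement to the definition of malnormality.
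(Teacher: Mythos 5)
Your argument is correct. Note, however, that the paper does not prove this statement at all: Theorem \ref{malnorm1} is quoted verbatim from \cite[9.10]{KM02}, just like Propositions \ref{9.7} and \ref{9.8}, so there is no in-paper proof to compare against. Your derivation — reducing the double-coset hypotheses to ``$g\notin H$'' when $K=H$, using Lemma \ref{lem9.2} plus Proposition \ref{properz} to identify ``non-tree component'' with ``nontrivial language at every vertex of that component,'' and then applying Proposition \ref{9.7} for one direction and Proposition \ref{9.8} for the other — is exactly the standard argument behind the cited result, and all the small points you flag (the language depends only on the component, the free basis $Y_T$ is nonempty precisely when the component is not a tree) are handled correctly.
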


The intersection of two finite index subgroups has finite index. Hence no proper finite index subgroup of an infinite  finitely generated group is malnormal. Therefore the next application restricts to malnormal subgroups of finite groups.

\begin{theorem}\textup{(Malnormal Subgroups)}\label{malnorm2}\mbox{}\\
Let $H$ be a subgroup of the finite group  $G= \langle \,X\, |\, R\, \rangle$.  The subgroup $H$ is malnormal in $G$ if and only if $L(\Gamma(H) \times \Gamma(H), u \times v)=N$ for all vertices $u \times v$ not in the connected component of $1_H \times 1_H$.  
\end{theorem}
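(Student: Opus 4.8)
The plan is to reduce the statement to the free-group case via $\phi$. Write $H' := L(\Gamma_{X,R}(H),1_H)$; by Theorem~\ref{thm2} one has $\Gamma_{X,R}(H)=\Gamma_X(H')$, $\phi(H')=H$ and $H'=\phi^{-1}(H)$, so in particular $N=\ker\phi\leq H'$, and since $G$ is finite $H'$ has finite index in $F(X)$. Throughout I read $\Gamma(H):=\Gamma_X(H')$, so $L(\Gamma(H)\times\Gamma(H),u\times v)$ is a subgroup of $F(X)$. I would first record two preliminary facts. First, the product graph $\Gamma(H)\times\Gamma(H)$ is finite and $X$-regular (Lemma~\ref{lem2}) and fulfills $R$ (Lemma~\ref{lem3}); hence each of its connected components is a finite connected $X$-regular graph fulfilling $R$, so Proposition~\ref{prop21} gives $N\leq L(\Gamma(H)\times\Gamma(H),w)$ for every vertex $w$. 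Second, if $A,B\leq F(X)$ both contain $N$, then $\phi(A\cap B)=\phi(A)\cap\phi(B)$; thus for any $g\in F(X)$, writing $\bar g:=\phi(g)$ and noting $gH'g^{-1}\supseteq gNg^{-1}=N$, one gets $\phi(gH'g^{-1}\cap H')=\bar g H\bar g^{-1}\cap H$.

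The key step is a translation lemma: for a vertex $v\times u$ of $\Gamma(H)\times\Gamma(H)$, putting $M:=L(\Gamma(H)\times\Gamma(H),v\times u)$, I claim $M=N$ if and only if $\bar g H\bar g^{-1}\cap H=\{1_G\}$, where $g$ is the element attached to $v\times u$ by Proposition~\ref{9.8}. Indeed, since $N\leq M$ and $N=\ker\phi$, $M=N$ is equivalent to $\phi(M)=\{1_G\}$; by Proposition~\ref{9.8} the subgroup $M$ is conjugate in $F(X)$ to $gH'g^{-1}\cap H'$, and conjugation does not affect triviality of the image, so $\phi(M)=\{1_G\}$ iff $\phi(gH'g^{-1}\cap H')=\{1_G\}$, which by the second preliminary fact is exactly $\bar g H\bar g^{-1}\cap H=\{1_G\}$.

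With this in hand the two implications are short. For the forward direction, assume $H$ is malnormal in $G$ and let $v\times u$ lie outside the connected component of $1_H\times 1_H$; by Proposition~\ref{9.8} the element $g$ may be chosen with $H'gH'\neq H'H'$, i.e.\ $g\notin H'$, hence $\bar g\notin H$ since $H'=\phi^{-1}(H)$, hence $\bar g H\bar g^{-1}\cap H=\{1_G\}$ by malnormality, hence $M=N$ by the translation lemma. For the converse, assume the language condition holds and that, for contradiction, some $\bar h\in G\setminus H$ satisfies $\bar h H\bar h^{-1}\cap H\neq\{1_G\}$. Lift to $g\in F(X)$ with $\phi(g)=\bar h$; then $g\notin H'=\phi^{-1}(H)$, so $H'gH'\neq H'H'$, while $gH'g^{-1}\cap H'\supseteq N$ together with $\phi(gH'g^{-1}\cap H')=\bar h H\bar h^{-1}\cap H\neq\{1_G\}$ forces $gH'g^{-1}\cap H'\neq\{1\}$. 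Proposition~\ref{9.7} (applied with $K=H'$) then yields a vertex $v\times u$ outside the connected component of $1_H\times 1_H$ whose language is conjugate in $F(X)$ to $gH'g^{-1}\cap H'$; by hypothesis this language is $N$, which is normal in $F(X)$, so $gH'g^{-1}\cap H'=N$, and applying $\phi$ gives $\bar h H\bar h^{-1}\cap H=\{1_G\}$, a contradiction. Hence $H$ is malnormal in $G$.

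The main obstacle is the careful bookkeeping between $F(X)$ and $G$: using $H'=\phi^{-1}(H)$ to turn the double-coset condition $H'gH'\neq H'H'$ (the hypothesis, resp.\ conclusion, of Propositions~\ref{9.7} and~\ref{9.8}) into the statement $\bar g\notin H$; verifying $\phi(gH'g^{-1}\cap H')=\bar g H\bar g^{-1}\cap H$, which relies on $\ker\phi$ lying inside both factors; and invoking the normality of $N$ in $F(X)$ to upgrade ``conjugate to $N$'' to ``equal to $N$''. The graph-theoretic substance (Propositions~\ref{9.7}, \ref{9.8}, \ref{prop21} and Lemmas~\ref{lem2}, \ref{lem3}) is then used essentially as a black box.
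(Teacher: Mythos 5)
Your proposal is correct and follows essentially the same route as the paper: pass to $H'=\phi^{-1}(H)\leq F(X)$, use Lemma \ref{lem3} and Proposition \ref{prop21} to get $N\leq L(\Gamma(H)\times\Gamma(H),u\times v)$, apply Propositions \ref{9.8} and \ref{9.7} for the two directions, use $\ker\phi\leq H'$ to identify $\phi(gH'g^{-1}\cap H')$ with $\bar gH\bar g^{-1}\cap H$, and use normality of $N$ to turn ``conjugate to $N$'' into ``equal to $N$''. Your write-up is, if anything, a slightly more careful organization of the same argument.
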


\begin{proof}
Let $H':=\phi^{-1}(H)$. Thus $\Gamma(H)=\Gamma(H')$.
Let $\Gamma \neq \Gamma(H) \times_1 \Gamma(H)$ be a connected component of $\Gamma (H) \times \Gamma(H)$
and let $u\times v$ be a vertex of $\Gamma$. 
By Proposition \ref{9.8}, $L(\Gamma, u\times v)$ is conjugate to  $g'H'g'^{-1}\cap H'$ for some $g'\in F(X)\setminus H'$.
Since $\ker \phi\leq H'$, we have 
$\phi(g'H'g'^{-1}\cap H')=gHg^{-1} \cap H$ for $g=g'N$. Thus $\phi(L(\Gamma, u\times v))$ is conjugate to $gHg^{-1} \cap H$ for some $g\in G\setminus H$. By Proposition \ref{lem3} and \ref{prop21},  $N \leq L(C, u \times v)$.  
Assume $L(\Gamma, u \times v)\neq N$. Then $g'H'g'^{-1} \cap H' \neq N$. Hence  $gHg^{-1}\cap H\neq \{1_G\}$ for some $g\in G \setminus H$. 

Let $g\in G\setminus H$, then $g\in  F(X)\setminus H'$ and we have $H'gH'\neq H'H'$.
Assume $gH'g^{-1}\cap H' \neq \{1_{F(X)}\}$. By Proposition \ref{9.7}, there exists a vertex $u\times v$ in $\Gamma(H)\times \Gamma (H)\setminus \Gamma(H)\times_1\Gamma(H)$ 
such that $L(\Gamma(H)\times \Gamma(H), u \times v)$ is conjugate to $gH'g^{-1} \cap H'$. Suppose that $L(\Gamma(H)\times \Gamma(H), u \times v)=N$. 
Then $gH'g^{-1}\cap H'=N$ for some $w\in F(X)$. Hence for all $g\in G\setminus H$ with $gH'g^{-1}\cap H'\neq \{1_{F(X)}\}$ we have $gHg^{-1}\cap H=\phi(N)=\{1_G\}$. 
For all $g\in G\setminus H$ with $gH'g^{-1}\cap H'= \{1_{F(X)}\}$ we have $gHg^{-1}\cap H=\{1_G\}$. 
\end{proof}

Figure \ref{malnormal} shows an example of a malnormal subgroup. 
\begin{figure}[h!]
	\centering
  \begin{tikzpicture}     

\coordinate[label=left:$\Gamma(H)$](0) at (0,1);
\coordinate[label=below:$1_H$] (1) at (0,0);
\coordinate[label=below:$v$] (2) at (1.5,0);
\coordinate[label=below:$u$] (3) at (3,0);

 \filldraw[black](1) circle (0.8pt)
 (2) circle (0.8pt)
 (3) circle (0.8pt)
  ;
\path[->,min distance=1cm] (1) edge[in=120,out=200,left] node {$s_1$}(1);
\path[->,min distance=1cm] (3) edge[in=340,out=60,right] node {$s_2$}(3);
\draw [->] (1) to[bend left=25] node[above] {$s_2$} (2);
\draw [->] (2) to[bend left=20] node[below] {$s_2$} (1);

\draw [->] (2) to[bend left=25] node[above] {$s_1$} (3);
\draw [->] (3) to[bend left=20] node[below] {$s_1$} (2);
\coordinate[label=above:$\bigcap$](0) at (5,0);
\coordinate[label=left:$\Gamma(H)$](0) at (7,1);
\coordinate[label=below:$1_H$] (4) at (7,0);
\coordinate[label=below:$v$] (5) at (8.5,0);
\coordinate[label=below:$u$] (6) at (10,0);

 \filldraw[black](4) circle (0.8pt)
 (5) circle (0.8pt)
 (6) circle (0.8pt)
  ;
\path[->,min distance=1cm] (4) edge[in=120,out=200,left] node {$s_1$}(4);
\path[->,min distance=1cm] (6) edge[in=340,out=60,right] node {$s_2$}(6);
\draw [->] (4) to[bend left=25] node[above] {$s_2$} (5);
\draw [->] (5) to[bend left=20] node[below] {$s_2$} (4);

\draw [->] (5) to[bend left=25] node[above] {$s_1$} (6);
\draw [->] (6) to[bend left=20] node[below] {$s_1$} (5);
\end{tikzpicture}
\vspace{10mm}

\begin{tikzpicture}     

\coordinate[label=above:$\Gamma(H) \times \Gamma(H)$](0) at (2,2);
\coordinate[label=below:$1_H\times 1_H\quad $] (1) at (0,0);
\coordinate[label=below:$v\times v$] (2) at (1.5,0);
\coordinate[label=below:$u\times u$] (3) at (3,0);

 \filldraw[black](1) circle (0.8pt)
 (2) circle (0.8pt)
 (3) circle (0.8pt)
 (0,-1.2) circle (0pt)
  ;
\path[->,min distance=1cm] (1) edge[in=120,out=200,left] node {$s_1$}(1);
\path[->,min distance=1cm] (3) edge[in=340,out=60,right] node {$s_2$}(3);
\draw [->] (1) to[bend left=25] node[above] {$s_2$} (2);
\draw [->] (2) to[bend left=15] node[below] {$s_2$} (1);

\draw [->] (2) to[bend left=25] node[above] {$s_1$} (3);
\draw [->] (3) to[bend left=15] node[below] {$s_1$} (2);
\end{tikzpicture}
\begin{tikzpicture}
\coordinate[label=left:$1_H \times v$](4) at (180:1.5);
\coordinate[label=above left:$1_H\times u$](5) at (120:1.5); 
\coordinate[label=above right:$v \times u$](6) at (60:1.5);
\coordinate[label=right:$u \times v$](7) at (0:1.5);
\coordinate[label=below right:$u \times 1_H$](8) at (300:1.5); 
\coordinate[label=below left:$v \times 1_H$](9) at (240:1.5);
 \filldraw[black](4) circle (0.8pt)
 (5) circle (0.8pt)
 (6) circle (0.8pt)
 (7) circle (0.8pt)
 (8) circle (0.8pt)
 (9) circle (0.8pt)
;
\draw [->]  (4) to[bend left=25] node[left] {$s_1$} (5);

\draw [->]  (4) to[bend left=25] node[right] {$s_2$} (9);
\draw [->]  (6) to[bend left=25] node[below] {$s_2$} (5);

\draw [->]  (6) to[bend left=25] node[right] {$s_1$} (7);
\draw [->]  (8) to[bend left=25] node[left] {$s_2$} (7);
\draw [->]  (8) to[bend left=25] node[below] {$s_1$} (9);

\draw [->]  (5) to[bend left=25] node[right] {$s_1$} (4);

\draw [->]  (9) to[bend left=25] node[left] {$s_2$} (4);
\draw [->]  (5) to[bend left=25] node[above] {$s_2$} (6);

\draw [->]  (7) to[bend left=25] node[left] {$s_1$} (6);
\draw [->]  (7) to[bend left=25] node[right] {$s_2$} (8);
\draw [->]  (9) to[bend left=25] node[above] {$s_1$} (8);

\end{tikzpicture}
	\caption{Malnormal subgroup $H=\left\langle s_1 \right\rangle $ of $S_3=\left\langle\, s_1, s_2 \, |\, s_1^2, s_2^2, (s_1s_2)^3\, \right\rangle$. }
	\label{malnormal}
\end{figure}
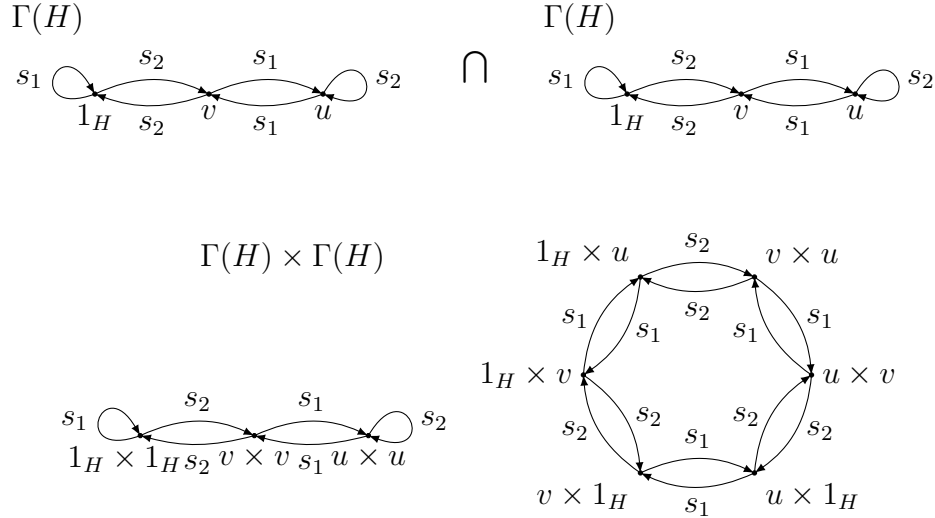
 
\begin{cor}\mbox{}\\
Let $G$ be a finite group and $H<G$ a subgroup of index $n$. If $H$ is malnormal, then $|G|$ divides $n^2-n$.  
\end{cor}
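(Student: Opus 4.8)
The plan is a double count of the vertices of the product graph $\Gamma(H)\times\Gamma(H)$, using Theorem \ref{malnorm2} to control the components that lie away from $1_H\times 1_H$; writing $|G|=n|H|$, the assertion $|G|\mid n^2-n$ will fall out. On the one hand, since $\Gamma(H)=\Gamma_{X,R}(H)$ is the Schreier coset graph of $H$ with respect to $X$ and $G$ it has $[G:H]=n$ vertices, so by Definition \ref{defproductgraph} the product graph $\Gamma(H)\times\Gamma(H)$ has exactly $n^2$ vertices.

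On the other hand, I would count the vertices componentwise. By Lemmas \ref{lem2} and \ref{lem3}, $\Gamma(H)\times\Gamma(H)$ is a finite $X$-regular graph that fulfills the defining relators $R$, and the same is true of each of its connected components (connectedness and $X$-regularity are immediate, and fulfilment of $R$ is a vertex-local condition). Hence, by Theorem \ref{thm2}\,(1), for every component $C$ and every $w\in V(C)$ the subgroup $\phi(L(C,w))\leq G$ has index $|V(C)|$ in $G$. The component containing $1_H\times 1_H$ is $\Gamma(H)\times_1\Gamma(H)$, which by Proposition \ref{propint} is the subgroup graph of $H\cap H=H$ in $G$ and so has $n$ vertices. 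For any component $C$ not containing $1_H\times 1_H$, malnormality of $H$ together with Theorem \ref{malnorm2} gives $L(C,w)=N$ for every $w\in V(C)$, whence $\phi(L(C,w))=\{1_G\}$ (recall $\ker\phi=N$), an index-$|G|$ subgroup; thus $|V(C)|=|G|=n|H|$.

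Summing over all components, if $k$ denotes the number of components other than the one through $1_H\times 1_H$, we obtain $n^2=n+k\,|G|$, i.e. $n^2-n=k\,|G|$, which is exactly $|G|\mid n^2-n$. The only step that requires care is verifying that the properties ``connected, $X$-regular, fulfils $R$'' pass to connected components, so that Theorem \ref{thm2}\,(1) applies to each $C$ and identifies $|V(C)|$ with an index in $G$; everything else is bookkeeping.
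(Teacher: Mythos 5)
Your proposal is correct, and it is exactly the argument the corollary is meant to encode (the paper states it without proof, immediately after Theorem \ref{malnorm2}): count the $n^2$ vertices of $\Gamma(H)\times\Gamma(H)$, identify the component of $1_H\times 1_H$ as $\Gamma(H\cap H)=\Gamma(H)$ with $n$ vertices via Proposition \ref{propint}, and use malnormality plus Theorem \ref{malnorm2} and Theorem \ref{thm2}\,(1) to see that every other component is a subgroup graph of the trivial subgroup and hence has $|G|$ vertices, giving $n^2-n=k|G|$. Your care in checking that connectedness, $X$-regularity and fulfilment of $R$ pass to components is exactly the right point to verify, and it holds.
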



\section{Connectivity of the order complex $\Delta P_\f (G)$ for infinite finitely generated groups}
\label{secconnect}


In this section we use subgroup graphs to prove the connectivity of the order complex $\Delta P_\f (G)$ of the poset $(P_\f (G), \subseteq)$ with 
$$P_\f (G):=\{Hg \mid H< G, [G:H]<\infty, g\in G\}$$
 for a finitely generated group $G$. For this we use the nerve complex $\calnc(G, \scrh_\f )$ which is  homotopy equivalent to the order complex $\Delta P_\f (G)$. 
 If $G$ is a finite group, the poset $(P_\f (G), \subseteq)$ is the \textit{coset poset} $\mathscr C (G)$ (the poset of all left cosets of all proper subgroups of $G$, ordered by inclusion,  see \cite{B00}) and $\mathscr C (G)$ is not contractible, see \cite{SW14}. 
  In Subsection \ref{subsectype} we prove that $\Delta P_\f (G)$ is contractible  for some special classes of infinite finitely generated groups. These contain the free groups, free abelian groups,  Fuchsian groups of genus $g\geq 2$,  right angled Coxeter groups, Artin groups,  pure braid groups, Baumslag-Solitar groups, and infinite virtually cyclic groups.

\begin{definition}{(Order Complex, see \cite{SW14})}\label{deforder} \mbox{}\\
The \textit{order complex} $\Delta P$ of a poset $P$ is the simplicial complex whose $n$-simplices are the chains $C_0 \subset C_1 \subset ... \subset C_n$ of length $n$ (size $n+1$). 
\end{definition}

\begin{definition}{(Nerve Complex, see \cite{AH93})}\label{defnerve}\mbox{}\\
The \textit{nerve complex} $\calnc (G, \scrh)$ of a group $G$ with respect to a set $\scrh$ of subgroups of $G$ is a simplicial complex with an $n$-simplex for every set $\{H_0g_0,..., H_ng_n \}$ with $H_i\in \scrh$ such that $H_0g_0 \cap H_1g_1\cap... \cap H_ng_n\neq \emptyset$.
\end{definition}

Put $$\scrh_\f :=\{H \mid H< G, [G:H]<\infty\}.$$ Then the vertex set of $\calnc(G, \scrh_\f )$ is equal to the set $P_\f (G)$. In fact, the order complex $\Delta P_\f (G)$ is a subcomplex of the nerve complex $\calnc(G, \scrh_\f)$. The next theorem states that $\Delta P_\f (G)$ is homotopy equivalent to $\calnc (G, \scrh_\f)$. 
 It follows from the Theorem \cite[1.4]{AH93} by Abels and Holz.

\begin{theorem} \mbox{}\\
Let $\scrh$ be a set of subgroups of a group $G$ and let  $P$ be the set of all cosets of subgroups in $\scrh$. Suppose that $P$ has the following property: if $C$ and $C'$ are in $P$ and $C\cap C' \neq \emptyset$, then $C\cap C'\in P$. Then the nerve complex $\calnc(G, \scrh)$ and the order complex $\Delta P$ are homotopy equivalent. 
\end{theorem}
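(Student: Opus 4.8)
The plan is to compare the barycentric subdivision of $\calnc(G,\scrh)$ with the order complex $\Delta P$ by means of a poset map with contractible fibres; equivalently, one may invoke \cite[1.4]{AH93} once the data has been matched up. First I would record what the simplices are. A simplex of $\calnc(G,\scrh)$ is a finite nonempty subset $\{C_0,\dots,C_n\}\subseteq P$ with $C_0\cap\cdots\cap C_n\neq\emptyset$; since each $C_i$ is a nonempty coset, every partial intersection $C_0\cap\cdots\cap C_k$ is also nonempty, so applying the closure hypothesis $k$ times yields $C_0\cap\cdots\cap C_k\in P$ for every $k$, and in particular $C_0\cap\cdots\cap C_n\in P$. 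Moreover any strict chain $C_0\subsetneq\cdots\subsetneq C_n$ in $(P,\subseteq)$ has $\bigcap_iC_i=C_0\neq\emptyset$, so $\Delta P$ is a subcomplex of $\calnc(G,\scrh)$ on the common vertex set $P$.

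Next I would build the comparison. Let $\mathcal F$ be the poset of finite nonempty subsets $S\subseteq P$ with $\bigcap S\neq\emptyset$, ordered by inclusion, so that $\Delta\mathcal F=\mathrm{sd}\,\calnc(G,\scrh)$ is homeomorphic to $\calnc(G,\scrh)$. By the first paragraph, $S\mapsto\bigcap S$ is a well-defined order-reversing map $\mathcal F\to P$, that is, an order-preserving map $f\colon\mathcal F\to P^{\mathrm{op}}$, and $\Delta P^{\mathrm{op}}=\Delta P$ as simplicial complexes (a chain in $P$ is the same as a chain in $P^{\mathrm{op}}$). To apply Quillen's fibre lemma I would verify that for each $C\in P$ the fibre $f^{-1}\big((P^{\mathrm{op}})_{\le C}\big)=\{\,S\in\mathcal F:\ C\subseteq\bigcap S\,\}$ is contractible: this is precisely the poset of finite nonempty subsets of $P_{\supseteq C}:=\{D\in P:\ C\subseteq D\}$, and since $C\in P_{\supseteq C}$ the poset self-maps $\mathrm{id}$, $S\mapsto S\cup\{C\}$ and $S\mapsto\{C\}$ satisfy $\mathrm{id}\le(S\mapsto S\cup\{C\})\ge(S\mapsto\{C\})$, so the fibre deformation retracts to a point. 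Quillen's fibre lemma then gives $\calnc(G,\scrh)\simeq\Delta\mathcal F\simeq\Delta P^{\mathrm{op}}=\Delta P$.

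The one delicate step — and the main obstacle I would watch — is the contractibility of these fibres when $P$, hence $P_{\supseteq C}$, is infinite: then $\{\,S\subseteq P_{\supseteq C}\ \text{finite nonempty}\,\}$ is an infinite-dimensional ``simplex'', and one must use that a domination $\mathrm{id}\le g\ge\mathrm{const}$ of poset self-maps still induces a homotopy to a constant map, such homotopies being detected on finite subcomplexes. If one prefers to bypass this bookkeeping, the statement is exactly \cite[1.4]{AH93} applied to the covering of $G$ by the family $P$ of cosets: that theorem's hypothesis is precisely that $P$ be closed under nonempty intersections, and its conclusion, in this case, is precisely the homotopy equivalence asserted here.
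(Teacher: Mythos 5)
Your argument is correct, and it does more than the paper does: in the paper this theorem carries no proof at all beyond the remark that it "follows from Theorem 1.4 of Abels--Holz", which is exactly the bypass you offer in your final paragraph, so on that route you and the paper coincide. The substance of your proposal, however, is a genuinely self-contained proof that the paper does not supply: you identify the face poset $\mathcal F$ of $\calnc(G,\scrh)$, use the closure hypothesis (applied to partial intersections, which are nonempty because they contain the total intersection) to get a well-defined order-preserving map $S\mapsto\bigcap S$ to $P^{\mathrm{op}}$, and check Quillen's fibre lemma via the standard domination $\mathrm{id}\le (S\mapsto S\cup\{C\})\ge \mathrm{const}_{\{C\}}$ on the fibre over $C$, which is the poset of finite nonempty subsets of $P_{\supseteq C}$. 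All of this is sound, including your flag about infinite posets: Quillen's Theorem A and the homotopy between comparable poset maps hold for arbitrary (not necessarily finite) posets with the CW topology on realizations, so the "bookkeeping" you mention is harmless. The trade-off is clear: the paper's one-line citation keeps the exposition short and defers the content to Abels--Holz, whereas your argument makes the role of the intersection-closure hypothesis explicit and keeps the paper self-contained; either is acceptable as a proof of the statement.
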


Now we consider a special subgroup graph.

\begin{proposition}\label{propGp}\mbox{}\\
Let $G= \langle \,X\, |\, R\, \rangle$ be a group with $X$ finite and $R$  not necessarily finite. Let   $(\Gamma_p, v)$ be  an $X$-regular connected graph which fulfills the defining relators $R$ and  has the following properties:  
 the $X$-graph $\Gamma_p$ has $p$ vertices  and there exists a freely reduced $X$-word $w_p$ such that $\{t(p_i) \mid o(p_i)=v, \mu(p_i)=w_p^i,  0\leq i < p\}$ and $V(\Gamma_p)$ are equal for the corresponding reduced paths $p_i$.  
Put  $H_p:=\phi(L(\Gamma_p,v))$. Let
 $H$ be another finite index subgroup of  $G$ with $w_p^m\in H$ for $0<m $. 
 If $m$ and $p$ are coprime, then $Hg\cap H_p \neq \emptyset $ for all $g\in G$.
\end{proposition}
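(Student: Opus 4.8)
The plan is to convert the combinatorial hypothesis on $\Gamma_p$ into a statement about the right–multiplication action of $G$ on the coset space $H_p\backslash G$, namely that $\phi(w_p)$ acts there as a single $p$-cycle; once this is in hand, coprimality of $m$ and $p$ forces already $\langle\phi(w_p^m)\rangle$ — hence $H$ — to act transitively on $H_p\backslash G$, and this transitivity is exactly equivalent to the desired conclusion.

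First I would identify $\Gamma_p$ with a Schreier coset graph. Since $(\Gamma_p,v)$ is an $X$-regular connected graph fulfilling $R$ with $H_p=\phi(L(\Gamma_p,v))$, Theorem \ref{thm2} gives $(\Gamma_p,v)\cong(\Gamma_{X,R}(H_p),1_{H_p})$, and (as noted after Theorem \ref{thm2}) this is the Schreier coset graph of $H_p$ with respect to $X$ and $G$. Hence $V(\Gamma_p)=H_p\backslash G$, the base-vertex $v=1_{H_p}$ is the coset $H_p$, and a reduced path from $v$ with label $w_p^i$ terminates at the coset $H_p\phi(w_p)^i$. The hypothesis that the termini $t(p_i)$ for $0\le i<p$ comprise all of $V(\Gamma_p)$ therefore says precisely that $H_p,H_p\phi(w_p),\dots,H_p\phi(w_p)^{p-1}$ are pairwise distinct and exhaust $H_p\backslash G$; a short argument (the coset $H_p\phi(w_p)^p$ is among these, and distinctness leaves only $H_p$ itself) shows that right multiplication by $\phi(w_p)$ is the $p$-cycle $(H_p,H_p\phi(w_p),\dots,H_p\phi(w_p)^{p-1})$ on $H_p\backslash G$.

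Next I would invoke the elementary fact that a power of a $p$-cycle by an exponent coprime to $p$ is again a single $p$-cycle: since $\gcd(m,p)=1$, right multiplication by $\phi(w_p^m)=\phi(w_p)^m$ is a $p$-cycle on $H_p\backslash G$, so $\langle\phi(w_p^m)\rangle$ acts transitively there. Because $w_p^m\in H$, we get $\langle\phi(w_p^m)\rangle\le H$, so $H$ acts transitively on $H_p\backslash G$ by right multiplication; equivalently $H_pH=G$, and hence $HH_p=G$. To finish, fix $g\in G$: by transitivity there is a power $h$ of $\phi(w_p^m)$ (so $h\in H$) with $H_pg^{-1}h=H_p$, i.e.\ $h^{-1}g\in H_p$; since also $h^{-1}\in H$ we have $h^{-1}g\in Hg\cap H_p$, so $Hg\cap H_p\ne\emptyset$.

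The only point that needs real care is the dictionary between the graph and the coset action: one must keep straight that the vertices are \emph{right} cosets and that traversing an edge labelled $x$ multiplies on the right by $\phi(x)$, and one must check that the $p$ listed cosets are genuinely cyclically permuted (not merely a transitive set), so that the "coprime exponent of a $p$-cycle" argument applies. After that is pinned down, the coprimality step and the final deduction are immediate, so I do not expect a serious obstacle.
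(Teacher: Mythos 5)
Your proposal is correct and is essentially the paper's argument in a different dress: the paper works directly with reduced paths and labels (from $\overline{w^kg}\in H_p$ and $w^{pz}$-loops at the base vertex it solves $pz+k=mz'$ using $\gcd(m,p)=1$ to land $\overline{w^{mz'}g}$ in $Hg\cap H_p$), which is exactly your ``$\phi(w_p)$ acts as a $p$-cycle on $H_p\backslash G$, so a power of $\phi(w_p)^m\in H$ carries $H_pg^{-1}$ back to $H_p$'' recast in the Schreier-coset dictionary. Your reformulation also makes explicit the closure fact $H_p\phi(w_p)^p=H_p$, which the paper uses implicitly.
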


\begin{proof}
The based $X$-graph $(\Gamma_p, v)$ is the subgroup graph of $H_p <G$. We denote $v$ by $1_{H_p}$. Let $p_g$ be the reduced path in $\Gamma_p$ with label $g$ and terminus $1_{H_p}$. Let $v_k:=o(p_g)$. By the assumption, there exists a reduced path $p_k$ with label $w^k$, origin $1_{H_p}$ and terminus $v_k$. The path $p_kp_g$ 
may not be reduced. But $\overline{w^kg}$ is the label of the reduced version of $p_kp_g$. Thus $\overline{w^kg} \in H_p$. Furthermore,  $\overline{w^{pz}w^kg} \in H_p$ for all $z \in \Z$. Since $p$ and $m$ are coprime, there exist integers $z, z'$ such that $pz+k=mz'$. Hence $\overline{w^{pz+k}g}=\overline{w^{mz'}g}\in Hg$.
\end{proof}

\begin{remark}
If $\Gamma_p$ is an $X$-graph as in Proposition \ref{propGp}, then $\{1, w_p, w_p^2, ..., w_p^{p-1}\}$ is a full set of coset representatives of $H_p=\phi(L(\Gamma_p,v))$ in $G$. In fact, if $H_p$ is normal in $G$, then $H_p\backslash G \cong \Z_p$.
\end{remark}

\begin{theorem}\label{thmcon}\mbox{}\\
Let $G= \langle \,X\, |\, R\, \rangle$ be a  group with $X$ finite and $R$  not necessarily finite. Suppose that 
there exists a collection $\{\Gamma_p\}_{p\in P}$  of $X$-graphs $\Gamma_p$  as in Proposition \ref{propGp} such that $P$ is a set of infinitely many primes. Suppose also that there exists a freely reduced $X$-word $w$ with $w_p=w$ uniformly for all $p\in P$. Then the nerve complex $\calnc (G, \scrh_\f)$ and  the order complex $\Delta P_\f (G)$ are contractible. 
\end{theorem}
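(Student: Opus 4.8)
The plan is to prove that the nerve complex $\calnc(G,\scrh_\f)$ is contractible and then transfer this to $\Delta P_\f(G)$ via the homotopy equivalence recorded above. That equivalence applies because $P_\f(G)$ is closed under nonempty intersection: if $Hg\cap H'g'\neq\emptyset$ and $x$ lies in the intersection, then $Hg\cap H'g'=(H\cap H')x$, and $H\cap H'$ is again a proper subgroup of finite index, so $(H\cap H')x\in P_\f(G)$. Write $\mathcal K=\calnc(G,\scrh_\f)$; it is a nonempty simplicial complex, since by Theorem~\ref{thm2}~(1) each $H_p=\phi(L(\Gamma_p,v))$ is a subgroup of index $p\geq 2$, hence a vertex of $\mathcal K$.

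The heart of the argument is the claim that \emph{every finite subcomplex of $\mathcal K$ is contained in a cone inside $\mathcal K$}. Fix a finite subcomplex $\mathcal L\subseteq\mathcal K$. For each simplex $\sigma=\{H_1g_1,\dots,H_kg_k\}$ of $\mathcal L$ the intersection $\bigcap\sigma$ is nonempty and, choosing $x\in\bigcap\sigma$, equals the coset $K_\sigma x$ of the proper finite index subgroup $K_\sigma:=H_1\cap\dots\cap H_k$. Since $K_\sigma$ has finite index, the cyclic group $\langle w\rangle$ acts on the finite set $K_\sigma\backslash G$, so the orbit of the trivial coset is finite and there is $m_\sigma>0$ with $w^{m_\sigma}\in K_\sigma$. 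As $\mathcal L$ has only finitely many simplices, let $M$ be the least common multiple of the $m_\sigma$; then $w^M\in K_\sigma$ for every simplex $\sigma$ of $\mathcal L$. Because $P$ is infinite and $M$ has only finitely many prime divisors, we may pick $p\in P$ with $p\nmid M$, so $\gcd(M,p)=1$. Now apply Proposition~\ref{propGp} with $H=K_\sigma$ and $m=M$ — legitimate because $w_p=w$ uniformly, so $\Gamma_p$ plays the role required there — to conclude $K_\sigma g\cap H_p\neq\emptyset$ for every $g\in G$; in particular $\bigl(\bigcap\sigma\bigr)\cap H_p\neq\emptyset$. Thus $\sigma\cup\{H_p\}$ spans a simplex of $\mathcal K$ for every simplex $\sigma$ of $\mathcal L$, which says exactly that the join $\{H_p\}\ast\mathcal L$ is a subcomplex of $\mathcal K$; this is a cone with apex $H_p$ containing $\mathcal L$.

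The topological conclusion is then routine. Any map $S^n\to\mathcal K$ has image in a finite subcomplex by compactness, and that subcomplex sits inside a contractible cone in $\mathcal K$, so the map is null-homotopic; likewise any two vertices lie in a common such cone, so $\mathcal K$ is path-connected. Hence $\pi_n(\mathcal K)=0$ for all $n\geq 0$, and since $\mathcal K$ is a CW complex, Whitehead's theorem gives that $\mathcal K$ is contractible. Consequently $\Delta P_\f(G)\simeq\mathcal K$ is contractible as well.

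The only nontrivial input is Proposition~\ref{propGp}; granting it, the main obstacle is that the prime $p$ providing the cone apex must be chosen \emph{after} the finite subcomplex is given, which is precisely why the hypotheses supply infinitely many primes together with a \emph{single, uniform} word $w$ — a fixed $p$ would only yield a cone over a bounded part of $\mathcal K$. Everything else (that a nonempty intersection of cosets of finite index subgroups is a coset of a proper finite index subgroup, that some power of $w$ lands in it, and the "compact subsets lie in contractible subcomplexes $\Rightarrow$ contractible" principle) is elementary bookkeeping.
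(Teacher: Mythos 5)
Your proof is correct and follows essentially the same route as the paper: for a finite subcomplex one finds exponents $m_\sigma$ with $w^{m_\sigma}$ in the intersection subgroups, chooses $p\in P$ coprime to them, and uses Proposition~\ref{propGp} to cone the subcomplex off at the vertex $H_p$ inside $\calnc(G,\scrh_\f)$. The only differences are cosmetic refinements (justifying the existence of $m_\sigma$ via finite index, taking the lcm $M$ instead of handling each $m_\sigma$ separately, and spelling out the compactness/Whitehead step and the closure-under-intersection hypothesis for the Abels--Holz equivalence), which the paper leaves implicit.
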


\begin{proof}
A simplicial complex $K$ is contractible if and only if all finite subcomplexes are contractible in $K$. 
Let $U$ be a finite subcomplex of the nerve complex $\calnc (G, \scrh_\f)$. 
Then there exists a finite set $\Sigma$ consisting of all maximal simplices of $U$. Let $\bigcap \s$ be the intersection of the vertices of $\s\in \Sigma$. Thus $\bigcap \s=H_\s g_\s$. Let $m_\s >0$ be such that $w^{m_\s}\in H_\s$. Since $U$ is finite, we have only finitely many $m_\s$. Therefore we can find a prime $p\in P$ coprime to all $m_\s$.  
Hence $H_p \cap H_\s g_\s \neq \emptyset$, and consequently $\{H_p\}\cup \s$ is a simplex in $\calnc (G, \scrh_\f)$. Thus $H_p \ast U$ is a subcomplex  of $\calnc(G, \scrh_\f)$, and so $U$ is contractible in the nerve complex. 
\end{proof}

Since the collection $\{\Gamma_p\}_{p \in P}$ is infinite, the group $G$ has to be infinite. In the next subsections we give examples of groups satisfying these properties.

\subsection{The graph $\Gamma_p$}
\label{subsectype}

Assume we have two finitely generated groups $G= \langle \,X\, |\, R\, \rangle$  and $G'= \langle \,X\, |\, R'\, \rangle$. Let $\{\Gamma_p\}_{p\in P}$ and  $\{\Gamma'_p\}_{p\in P'}$ respectively be collections of $X$-graphs which satisfy the conditions of Theorem \ref{thmcon} for $G$ resp. $G'$.
 The collections may be equal, that is $P=P'$ and $\Gamma_p=\Gamma'_p$. In this case $\{1,w,w^2, ...,w^{p-1}\}$ is a full set of coset representatives for $H_p<G$ as well as $H_p'<G'$. If $ \left\langle \! \left\langle \, R\,
\right\rangle \!\right\rangle_{F(X)} \neq\left\langle \! \left\langle\, R'\,
\right\rangle \!\right\rangle_{F(X)}$, the subgroups $H_p$ and $H_p'$ may not be isomorphic.
Therefore we analyze the structure of the graph $\Gamma_p$, instead of the subgroup $H_p$.

\begin{definition}\mbox{}\\
Let $\Gamma$ be an $X$-graph. The graph $\Gamma|_Y$ with $Y\subset X$ is the subgraph of $\Gamma$ with the following properties:
 $e\in E(\Gamma|_Y)$ if and only if $e\in E(\Gamma)$ and $\mu(e)\in Y$, and $v\in V(\Gamma|_Y)$ if and only if there exists an edge $e\in E(\Gamma|_Y)$ such that $o(e)=v$ or $t(e)=v$. 
\end{definition}

If $\Gamma$ is $X$-regular then $V(\Gamma)=V(\Gamma|_Y)$ and $\Gamma|_Y$ is $Y$-regular for all $Y \subset X$. Furthermore,  $\Gamma|_{\{x\}}$ consists of $x$-circles of length $n_i$ with $|V(\Gamma)|=\sum n_i$. 

\subsubsection{Groups of Type I}
\begin{definition}{($(a,n)$-Circle)\\}
We call the $X$-regular graph with $n$ vertices and $n$ edges labeled $a$ for $a\in X$ the \textit{$a$-circle of length $n$ or the $(a,n)$-circle}. The graph is shown in Figure \ref{gamma_n}.
\end{definition}

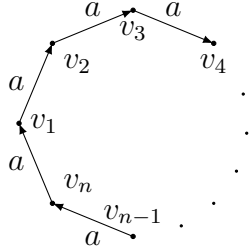
\begin{figure}[h!]
	\centering
 \begin{tikzpicture}

\coordinate[label=right:$v_1$] (1) at (180:1.5);
\coordinate[label=below right:$v_2$] (2) at (135:1.5); 
\coordinate[label=below:$v_3$] (3) at (90:1.5);
\coordinate[label=below:$v_4$] (4) at (45:1.5);
\coordinate[label=above:$v_{n-1}$] (5) at (270:1.5);
\coordinate[label=above right:$v_n$] (p) at (225:1.5);
\filldraw[black](1) circle (0.8pt) 
(2)circle(0.8pt)
(3)circle(0.8pt)
(4)circle(0.8pt)
(p)circle(0.8pt)
(295:1.5)circle(0.4pt)
(315:1.5)circle(0.4pt)
(335:1.5)circle(0.4pt)
(355:1.5)circle(0.4pt)
(5)circle(0.8pt)
(15:1.5)circle(0.4pt)

;
\draw [->] (1) to node[left] {$a$} (2);
\draw [->] (2) to node[above] {$a$} (3);
 \draw[->] (3) to node[above] {$a$} (4);
\draw [->] (p) to node[left] {$a$} (1);
\draw [->] (5) to node[below] {$a$} (p);

\end{tikzpicture} 
	\caption{The $(a,n)$-circle is an $\{a\}$-regular connected core graph.}
	\label{gamma_n}
\end{figure}

\begin{definition}{(Groups of Type I)}\label{deftypI}\mbox{}\\
We call a group $G$ a \textit{group of Type I} if $G$ has a presentation $\left\langle\, X\, |\, R\, \right\rangle$, with $X$ finite and $R$  not necessarily finite, such that the following holds. 
There is an element $a\in X$ of infinite order. Moreover, there exists a collection $\{\Gamma_p\}_{p\in P}$ of $X$-graphs such that $P$ is a set of infinitely many primes, 
 each $\Gamma_p\in \{\Gamma_p\}_{p\in P}$ is a subgroup graph of a finite index subgroup of $G$, and each $\Gamma_p|_{\{a\}}$ is an  $(a,p)$-circle.  
\end{definition}

\begin{proposition}\mbox{}\\
If $G$ is a group of Type I, then the nerve complex $\calnc (G, \scrh_\f)$ and the order complex $\Delta P_\f(G)$ are contractible. 
\end{proposition}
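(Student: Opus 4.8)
The plan is to reduce this to Theorem~\ref{thmcon} by checking that a group of Type~I supplies exactly the data that theorem requires, with the uniform word $w=a$.

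Let $\{\Gamma_p\}_{p\in P}$ be the collection furnished by Definition~\ref{deftypI}, so $P$ is an infinite set of primes, each $\Gamma_p$ is the subgroup graph of a finite index subgroup of $G$, and each restriction $\Gamma_p|_{\{a\}}$ is an $(a,p)$-circle. First I would record the structural properties of $\Gamma_p$: by Theorem~\ref{thm2} a subgroup graph is $X$-regular, connected and fulfills the defining relators $R$; and since $\Gamma_p$ is $X$-regular we have $V(\Gamma_p)=V(\Gamma_p|_{\{a\}})$, which has exactly $p$ vertices because $\Gamma_p|_{\{a\}}$ is an $(a,p)$-circle. Hence each $\Gamma_p$ is an $X$-regular connected graph with $p$ vertices fulfilling $R$.

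Next I would verify the reduced-path condition of Proposition~\ref{propGp} with $w_p=a$. Fix a base-vertex $v$ of $\Gamma_p$. The $a$-labeled edges of $\Gamma_p$ form a single directed cycle $v=u_0\to u_1\to\cdots\to u_{p-1}\to u_0$ visiting every vertex, so for each $i$ with $0\le i<p$ there is a reduced path $p_i$ with origin $v$ and label $\mu(p_i)=a^i$ (it traverses $a$-edges only in the forward direction, hence has no backtracking), and $t(p_i)=u_i$. Since $u_0,\dots,u_{p-1}$ are pairwise distinct and exhaust $V(\Gamma_p)$, we obtain $\{t(p_i)\mid o(p_i)=v,\ \mu(p_i)=a^i,\ 0\le i<p\}=V(\Gamma_p)$. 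As $a\in X$ the word $a$ is freely reduced, so $\Gamma_p$ is a graph as in Proposition~\ref{propGp} with $w_p=a$.

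Finally, the word $a$ does not depend on $p$, so the collection $\{\Gamma_p\}_{p\in P}$ meets all the hypotheses of Theorem~\ref{thmcon} with the uniform freely reduced $X$-word $w=a$; that theorem then gives that $\calnc(G,\scrh_\f)$ and $\Delta P_\f(G)$ are contractible. I do not anticipate a real difficulty: the only points needing care are the elementary translation between the $(a,p)$-circle picture and the reduced-path condition of Proposition~\ref{propGp}, and the observation that $X$-regularity forces $|V(\Gamma_p)|$ to equal the length $p$ of its $a$-circle.
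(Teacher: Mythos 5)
Your proposal is correct and follows the paper's own argument: the paper likewise observes that the reduced path with label $a^k$ starting at the base-vertex of the $(a,p)$-circle ends at the $(k+1)$-st vertex, so the hypotheses of Proposition~\ref{propGp} hold with $w_p=a$, and Theorem~\ref{thmcon} with the uniform word $w=a$ finishes the proof. Your additional verifications (that a Type~I graph is $X$-regular, connected, fulfills $R$, and has exactly $p$ vertices) are just the content of Definition~\ref{deftypI} spelled out, so the two arguments coincide.
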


\begin{proof}
The reduced path $p_k$ with label $a^k$ and origin $v_1$ has terminus $v_{k+1}$ in $\Gamma_p$ for $0\leq k<p$. Therefore Theorem \ref{thmcon} with $w=a$ completes the proof.
 \end{proof}
 
 Now we state examples of finitely generated groups of Type I. These contain the free groups, free abelian groups, Baumslag-Solitar groups,  Artin groups, pure braid groups, and infinite virtually cyclic groups $F\rtimes \mathbb Z$ with $F$ a finite group.

\begin{example}\label{extyp1}
(Groups of Type I)
\begin{itemize}
	\item \textit{The free group $F(X)$ with finite $X$}. For $a$ we can take any element of $X$. Let $\Gamma_p$ be an $(a,p)$-circle with a loop, labeled $x$, for all $a \neq x \in X$ at each vertex of $\Gamma_p$. The graph is shown in Figure \ref{gamma_p}. Since $F(X)=\left\langle \, X\, | \, \emptyset \, \right\rangle$, the graph $\Gamma_p$ is a subgroup graph for each $p\in  \mathbb P$. Thus $\{\Gamma_p\}_{p\in \mathbb P}$ satisfies the conditions of Definition \ref{deftypI}. 
	
	\item \textit{The groups $G\ast \mathbb Z$,  $G\times \mathbb Z$ and  $G \rtimes \mathbb Z$ with $Y$ finite, $X = Y \sqcup \{a\}$ and $G=\left\langle\, Y\, |\, R'\, \right\rangle$}. The collection $\{\Gamma_p\}_{p\in P}$ with $\Gamma_p$ as in Figure \ref{gamma_p} and $P=\mathbb P$ satisfies the conditions of Definition \ref{deftypI}.  We prove this as follows.
	
	$\left\langle\, X\, |\, R'\, \right\rangle$  is a presentation for $G \ast \mathbb Z$. All relators are words in $Y^{\pm 1}$ and each edge labeled $x \in Y$ is a loop in each $\Gamma_p\in\{\Gamma_p\}_{p\in \mathbb P} $. Consequently, the graph $\Gamma_p$ fulfills the relators $R'$ for each prime $p\in \mathbb P$. 
	
	We have $G\times \mathbb Z=\left\langle\, X\, |\, R\, \right\rangle$ with $R=R' \cup \{axa^{-1}x^{-1} \mid x \in Y\}$. Similarly as for $G \ast\Z$, the graph $\Gamma_p$ fulfills the relators $R'$. For the reduced path $v \xrightarrow{a} v'\xrightarrow{x} v' \xrightarrow{a^{-1}} v \xrightarrow{x^{-1}}v $ origin and terminus are equal for all $v\in V(\Gamma_p)$.  
	 Thus each $\Gamma_p$ fulfills the defining relators $R$.
	
 	$G\rtimes_\psi \mathbb Z=\left\langle\, X\, |\, R\, \right\rangle$, where $R=R' \cup \{axa^{-1}(\psi(a)(x))^{-1} \mid x \in Y\}$.  Since $\psi(a)(x)$ is a $Y$-word,  each graph $\Gamma_p$ fulfills the relators $R$, by the same argument as for $G\times \Z$. 
	
	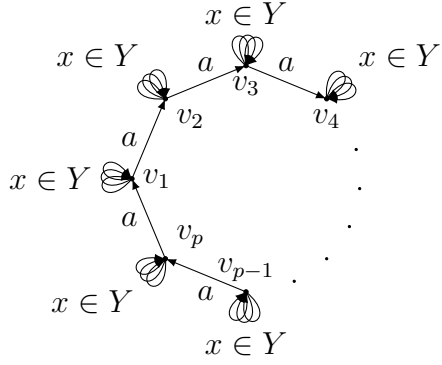
\begin{figure}[h!]
	\centering
 \begin{tikzpicture}

\coordinate[label=right:$v_1$] (1) at (180:1.5);
\coordinate[label=below right:$v_2$] (2) at (135:1.5); 
\coordinate[label=below:$v_3$] (3) at (90:1.5);
\coordinate[label=below:$v_4$] (4) at (45:1.5);
\coordinate[label=above:$v_{p-1}$] (5) at (270:1.5);
\coordinate[label=above right:$v_p$] (p) at (225:1.5);
\filldraw[black](1) circle (0.8pt) 
(2)circle(0.8pt)
(3)circle(0.8pt)
(4)circle(0.8pt)
(p)circle(0.8pt)
(295:1.5)circle(0.4pt)
(315:1.5)circle(0.4pt)
(335:1.5)circle(0.4pt)
(355:1.5)circle(0.4pt)
(5)circle(0.8pt)
(15:1.5)circle(0.4pt)

;
\draw [->] (1) to node[left] {$a$} (2);
\draw [->] (2) to node[above] {$a$} (3);
 \draw[->] (3) to node[above] {$a$} (4);
\draw [->] (p) to node[left] {$a$} (1);
\draw [->] (5) to node[below] {$a$} (p);
\path[->,min distance=6mm]
 (1) edge[in=150,out=210,left] node {$x\in Y$}(1)
(2) edge[in=105,out=165,above left] node {$x\in Y\!$}(2)
(3) edge[in=60,out=120,above] node {$x\in Y$}(3)
(4) edge[in=15,out=75,above right] node {$x\in Y$}(4)
(5) edge[in=240,out=300,below] node {$x\in Y$}(5)
(p) edge[in=195,out=255,below left] node {$x\in Y$}(p)

(1) edge[in=170,out=230,above] (1)
(2) edge[in=125,out=185,above](2)
(3) edge[in=80,out=140,above] (3)
(4) edge[in=35,out=95,above] (4)
(5) edge[in=260,out=320,above] (5)
(p) edge[in=215,out=275,above] (p)

(1) edge[in=130,out=190,above] (1)
(2) edge[in=85,out=145,above] (2)
(3) edge[in=40,out=100,above] (3)
(4) edge[in=355,out=55,above] (4)
(5) edge[in=220,out=280,above] (5)
(p) edge[in=175,out=235,above] (p)
;

\end{tikzpicture} 
	\caption{The graph $\Gamma_p$ for the free group $F(X)$ with a loop, labeled $x$, for each $x\in Y=X\setminus \{a\}$ which is an $X$-regular connected core graph.}
	\label{gamma_p}
\end{figure} 	
 	
 	\item \textit{Free abelian groups $\mathbb Z^n$}, since $\mathbb Z^n=\mathbb Z^{n-1} \times \mathbb Z$ and $\Z=F(a)$.

 	\item \textit{The infinite virtually cyclic groups of the form $F\rtimes \mathbb Z$} with $F$ a finite group.

\item \textit{The orientation-preserving Fuchsian groups of genus one.} These are groups  $\left\langle\, X\, |\, R\, \right\rangle$ with  generators $X=\{a_1,b_1, x_1,..., x_d, y_1,...,y_s,z_1,...,z_t\}$ and relators $R=\{x_1^{m_1},..., x_d^{m_d}, x_1\cdots x_dy_1\cdots y_sz_1\cdots z_t[a_1,b_1]\}$ with $d,s,t\geq 0$ and $m_i \geq 2$, see \cite{LS04}.  Let $\Gamma_p$ be as in Figure \ref{gamma_p} with $a=a_1$. 
Hence $\Gamma_p$ fulfills the relators $x_i^{m_i}$. 
For $x_1 \cdots x_dy_1\cdots y_sz_1\cdots z_ta_1b_1a_1^{-1}b_1^{-1}$ we have $v \xrightarrow{x_1}v \xrightarrow{x_2} ...  \xrightarrow{z_t} v \xrightarrow{a_1} v' \xrightarrow{b_1} v' \xrightarrow{a_1^{-1}} v \xrightarrow{b_1^{-1}} v$ for different vertices $v, v' \in V(\Gamma_p)$. Therefore  $\{\Gamma_p\}_{p\in \mathbb P}$ is as in Definition \ref{deftypI}. 
	
\item \textit{Baumslag-Solitar groups $BS(m,n)=\left\langle\,  a,b \, |\, ab^na^{-1}b^{-m}\, \right\rangle$ for all integers $m$, $n$}.	Let $\Gamma_p$  be an $(a,p)$-circle with a loop labeled $b$ at each vertex of the circle. Therefore $v \xrightarrow{a} v' \xrightarrow{b} v' \xrightarrow{b} ... \xrightarrow{b} v' \xrightarrow{a^{-1}} v\xrightarrow{b^{-1}} v \xrightarrow{b^{-1}} v ... \xrightarrow{b^{-1}} v$. 
Hence the graph $\Gamma_p$ fulfills the relator for all $p\in \mathbb P$ and $ \{\Gamma_p\}_{p\in \mathbb P}$ satisfies the conditions of Definition \ref{deftypI}.

	\item \textit{Artin groups $A=\left\langle\, x_1,...,x_n \, |\, R\, \right\rangle$ (which include all braid groups)} such that $R=\{r_{i,j} \mid 1\leq i< j \leq n \}$ with $r_{i,j}=\left\langle x_i,x_j \right\rangle^{m_{i,j}}(\left\langle x_j,x_i \right\rangle^{m_{j,i}})^{-1}$,  $m_{i,j}=m_{j,i}$ for $i\neq j \in \{1,...,n\}$ and $\langle x_i, x_j \rangle^s$ an alternating product of $x_i$ and $x_j$ of length $s$ starting with $x_i$. 
	Let $\Gamma_p$ be an $X$-regular graph with $p$ vertices $v_1,..., v_p$ 
	such that the graph $\Gamma_p|_{\{x\}}$ is an $(x,p)$-circle for each $x\in X$. All $\Gamma_p|_{\{x\}}$ have the same direction. That is, for all $x\in X$ there is an edge labeled $x$ from $v_1$ to $v_2$. 
	The graph $\Gamma_p$ is shown in Figure \ref{artin}.
	 Starting in one vertex the reduced path $p_{r}$ with $\mu(p_r)=r_{i,j} $ leads $m_{i,j}$ vertices in the positive direction and then $m_{i,j}$ vertices in the negative direction. Hence $\Gamma_p$ fulfills the relators $R$ for every $p\in \mathbb P$. This gives us the collection $\{\Gamma_p\}_{p\in \mathbb P}$ as required in Definition \ref{deftypI}.

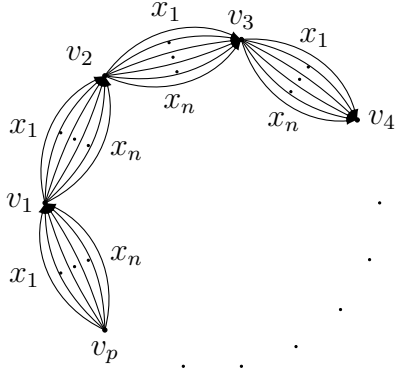
\begin{figure}[h]
	\centering
 \begin{tikzpicture}
\coordinate[label=left:$v_1$] (1) at (180:2.2);
\coordinate[label=above left:$v_2$] (2) at (130:2.2); 
\coordinate[label=above:$v_3$] (3) at (80:2.2);
\coordinate[label=right:$v_4$] (4) at (30:2.2);
\coordinate[label=below:$v_p$] (p) at (230:2.2);
\filldraw[black](1) circle (0.8pt) 
(2)circle(0.8pt)
(3)circle(0.8pt)
(4)circle(0.8pt)
(p)circle(0.8pt)
(260:2.2)circle(0.4pt)
(280:2.2)circle(0.4pt)
(300:2.2)circle(0.4pt)
(320:2.2)circle(0.4pt)
(340:2.2)circle(0.4pt)
(0:2.2)circle(0.4pt)
(155:1.8)circle(0.4pt)
(105:1.8)circle(0.4pt)
(55:1.8)circle(0.4pt)
(205:1.8)circle(0.4pt)
(155:2.2)circle(0.4pt)
(105:2.2)circle(0.4pt)
(55:2.2)circle(0.4pt)
(205:2.2)circle(0.4pt)
(155:2.0)circle(0.4pt)
(105:2.0)circle(0.4pt)
(55:2.0)circle(0.4pt)
(205:2.0)circle(0.4pt)
;
\draw [->] (1) to[bend right=40] node[right] {$x_n$} (2);
\draw [->] (2) to[bend right=40] node[below] {$x_n$} (3);
 \draw[->] (3) to[bend right=40] node[below] {$x_n$} (4);
\draw [->] (p) to[bend right=40] node[right] {$x_n$} (1);
\draw [->] (1) to[bend left=40] node[left] {$x_1$} (2);
\draw [->] (2) to[bend left=40] node[above] {$x_1$} (3);
 \draw[->] (3) to[bend left=40] node[above] {$x_1$} (4);
\draw [->] (p) to[bend left=40] node[left] {$x_1$} (1);
\draw [->] (1) to [bend left=8]  (2);
\draw [->] (2) to [bend left=8] (3);
 \draw[->] (3) to [bend left=8]  (4);
\draw [->] (p) to [bend left=8]  (1);
\draw [->] (1) to [bend right=8]  (2);
\draw [->] (2) to [bend right=8] (3);
 \draw[->] (3) to [bend right=8]  (4);
\draw [->] (p) to [bend right=8]  (1);
\draw [->] (1) to [bend left=24]  (2);
\draw [->] (2) to [bend left=24] (3);
 \draw[->] (3) to [bend left=24]  (4);
\draw [->] (p) to [bend left=24]  (1);
\draw [->] (1) to [bend right=24]  (2);
\draw [->] (2) to [bend right=24] (3);
 \draw[->] (3) to [bend right=24]  (4);
\draw [->] (p) to [bend right=24]  (1);

\end{tikzpicture} 
	\caption{The $X$-graph $\Gamma_p$ for the collections $\{\Gamma_p\}_{p\in \mathbb P}$ for Artin groups and pure braid groups with generators $X=\{x_1,...,x_n\}$. The graph $\Gamma_p$ is $X$-regular, connected and a core graph with respect to $v_i$.}
	\label{artin}
\end{figure}
	
	\item \textit{Pure braid groups $PB_m=\left\langle\, A_{ij}, 1\leq i< j \leq m \, |\, R_1, R_2, R_3, R_4\, \right\rangle$} with relators
	
 $\quad R_1=\{A_{rs}A_{ij}A_{rs}^{-1}A_{ij}^{-1}  \mid s< i \textup{ or } j <r\}$,
 
$\quad R_2=\{A_{rs}A_{ij}A_{rs}^{-1}A_{is}^{-1}A_{ij}^{-1}
A_{is}\mid  i< j=r < s \}$,

$\quad R_3=\{A_{rs}A_{ij}A_{rs}^{-1}A_{ij}^{-1}A_{ir}^{-1}A_{ij}^{-1}A_{ir}A_{ij}   \mid i<r<j =s\}$ and	

$\quad R_4=\{A_{rs}A_{ij}A_{rs}^{-1}A_{is}^{-1}A_{ir}^{-1}
A_{is}A_{ir}A_{ij}^{-1}
A_{ir}^{-1}A_{is}^{-1}A_{ir}A_{is}
 \mid i < r < j < s\}$. 
 
	Let $\Gamma_p$ be as in Figure \ref{artin}, where $n$ is the number of generators $A_{ij}$ and $X=\{A_{ij} \mid 1\leq i< j \leq m\}$. If we sum up the exponents of the $A_{ij}$ in each relator $r\in R_i$, it gives $0$. Therefore the graph $\Gamma_p$ fulfills the relators for all $p\in \mathbb P$ and $\{\Gamma_p\}_{p\in \mathbb P}$ is as required in Definition \ref{deftypI}.\\

\end{itemize}
\end{example}

\subsubsection{Group of Type II}

\begin{definition}{($(a,k,b,l)$-Graph)}\mbox{}\\
Let $k, l \geq 2$. We construct the following graph, which we call an \textit{$(a,k,b,l)$-graph}.   
We glue an $(a,k)$-circle with a $(b,l)$-circle over a single vertex $v$. We say that the circles share the vertex $v$. Then we glue this $(b,l)$-circle with a second $(a,k)$-circle over a different vertex. We glue the second $(a,k)$-circle  with a second $(b,l)$-circle. Repeating these steps we end with a $(b,l)$-circle. An $(a,k)$-circle and a $(b,l)$-circle share only one vertex.  
We add loops with label $a$ or $b$ such that the constructed graph is $\{a,b\}$-regular.
Thus to every not shared vertex of an $(a,k)$-circle we add a loop labeled $b$ and to every not shared vertex of a $(b,l)$-circle we add a loop  labeled $a$, see 
 Figure \ref{gamma_kl}. 
\end{definition}

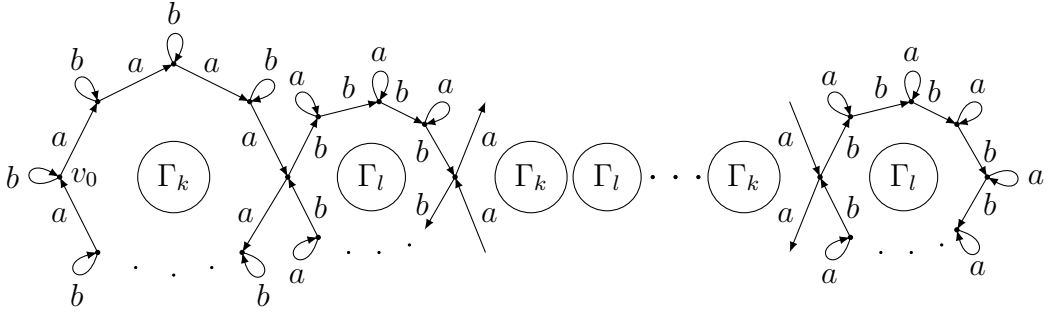
\begin{figure}[h]
	\centering
 \begin{tikzpicture}
\coordinate[label=right:$v_0$] (1) at (0,0);
\coordinate (2) at (0.5,1); 
\coordinate (3) at (1.5,1.5);
\coordinate(4) at (2.5,1);
\coordinate(42) at (2.4,-1);
\coordinate (p) at (0.5,-1);
\node (A) at (1.5,0) [circle, draw] {$\Gamma_k$};

\coordinate (5) at (3,0);
\coordinate(6) at (3.4,0.8); 
\coordinate (7) at (4.2,1);
\coordinate (8) at (4.8,0.7);
\coordinate (9) at (5.2,0);
\coordinate (10) at (4.8,-0.7);
\coordinate (q) at (3.4,-0.8);
\node (B) at (4.1,0) [circle, draw] {$\Gamma_l$};

\coordinate (11) at (5.6,1);
\coordinate (12) at (5.6,-1);

\filldraw[black](1) circle (0.8pt) 
(2)circle(0.8pt)
(3)circle(0.8pt)
(4)circle(0.8pt)
(42)circle(0.8pt)
(p)circle(0.8pt)
(1.5,-1.3)circle(0.4pt)
(2,-1.2)circle(0.4pt)
(1,-1.2)circle(0.4pt);
\draw [->] (1) to node[left] {$a$} (2);
\draw [->] (2) to node[above] {$a$} (3);
 \draw[->] (3) to node[above] {$a$} (4);
\draw [->] (p) to node[left] {$a$} (1);
 \draw[->] (5) to node[left] {$a$} (42);
\path[->,min distance=6mm] 
(1) edge[in=150,out=210,left] node {$b$}(1)
(2) edge[in=105,out=165,above] node {$b$}(2)
(3) edge[in=60,out=120,above] node {$b$}(3)
(4) edge[in=15,out=75,above] node {$b$}(4)
(p) edge[in=195,out=255,below] node {$b$}(p)
(42) edge[in=285,out=345,below] node {$b$}(42)
;

\draw [->] (4) to node[left] {$a$} (5);

\filldraw[black](5) circle (0.8pt)
(6) circle (0.8pt)
(7) circle (0.8pt)
(8) circle (0.8pt)
(q) circle (0.8pt)
(9)circle(0.8pt)
(3.8,-1)circle(0.4pt)
(4.2,-1)circle(0.4pt)
(4.6,-0.9)circle(0.4pt)
;
\draw [->] (5) to node[right] {$b$} (6);
\draw [->] (6) to node[above] {$b$} (7);
 \draw[->] (7) to node[above] {$b$} (8);
  \draw[->] (8) to node[left] {$b$} (9);
   \draw[->] (9) to node[left] {$b$} (10);
\draw [->] (q) to node[right] {$b$} (5);
\path[->,min distance=6mm] 
(6) edge[in=105,out=165,above] node {$a$}(6)
(7) edge[in=60,out=120,above] node {$a$}(7)
(8) edge[in=15,out=75,above] node {$a$}(8)
(q) edge[in=195,out=255,below] node {$a$}(q)
;

 \draw[->] (12) to node[right] {$a$} (9);
   \draw[->] (9) to node[right] {$a$} (11);
   
   \node () at (6.2,0) [circle, draw] {$\Gamma_k$};
     \node () at (7.2,0) [circle, draw] {$\Gamma_l$};
   \node () at (9,0) [circle, draw] {$\Gamma_k$};
      \node () at (11.1,0) [circle, draw] {$\Gamma_l$};
      
      \filldraw[black](7.8,0) circle (0.6pt) 
(8.1,0)circle(0.6pt)
(8.4,0)circle(0.6pt);

\coordinate (13) at (10,0);
\coordinate(14) at (10.4,0.8); 
\coordinate (15) at (11.2,1);
\coordinate (16) at (11.8,0.7);
\coordinate (17) at (12.2,0);
\coordinate (18) at (11.8,-0.7);
\coordinate (19) at (10.4,-0.8);

 \filldraw[black](13) circle (0.8pt)
(14) circle (0.8pt)
(15) circle (0.8pt)
(16) circle (0.8pt)
(17) circle (0.8pt)
(18)circle(0.8pt)
(19)circle(0.8pt)
(10.8,-1)circle(0.4pt)
(11.2,-1)circle(0.4pt)
(11.6,-0.9)circle(0.4pt)
;
\draw [->] (13) to node[right] {$b$} (14);
\draw [->] (14) to node[above] {$b$} (15);
 \draw[->] (15) to node[above] {$b$} (16);
  \draw[->] (16) to node[right] {$b$} (17);
   \draw[->] (17) to node[right] {$b$} (18);
\draw [->] (19) to node[right] {$b$} (13);
\path[->,min distance=6mm] 
(14) edge[in=105,out=165,above] node {$a$}(14)
(15) edge[in=60,out=120,above] node {$a$}(15)
(16) edge[in=15,out=75,above] node {$a$}(16)
(17) edge[in=330,out=30,right] node {$a$}(17)
(18) edge[in=285,out=345,below] node {$a$}(18)
(19) edge[in=195,out=255,below] node {$a$}(19)
;

\draw [->] (9.6,1) to node[left] {$a$} (13);
\draw [->] (13) to node[left] {$a$} (9.6,-1);

\end{tikzpicture} 
	\caption{An $(a,k,b,l)$-graph.}
	\label{gamma_kl}
\end{figure}

\begin{definition}{(Groups of Type II)}\label{deftypII}\mbox{}\\
We call a group $G$ a \textit{group of Type II} if $G$ has a presentation $\left\langle\, X\, |\, R\, \right\rangle$, with $X$ finite and $R$  not necessarily finite,  such that the following holds. There exist two elements $a$ and $b$ in $X$ with $\ord(a)\in k\mathbb N_{>0}$, $\ord(b)\in l\mathbb N_{>0}$ and $\ord(ab)=\infty$. Moreover, there exists a collection $\{\Gamma_p\}_{p\in P}$ such that $P$ is a set of infinitely many primes, each $\Gamma_p$ is a subgroup graph of a subgroup of $G$, and $\Gamma_p|_{\{a,b\}}$ is an $(a,k,b,l)$-graph with $p$ vertices.  
\end{definition}

\begin{lemma}\label{lemkl}\mbox{}\\
Let $\Gamma$ be an $(a,k,b,l)$-graph with $m=k+l-1+(k+l-2)n$ vertices. Then $(ab)^m \in L(\Gamma,v_0)$ and $\{t(p_{m'}) \mid \mu(p_{m'})=(ab)^{m'}, o(p_{m'})=v_0, 0 \leq m'< m \}=V(\Gamma)$.
\end{lemma}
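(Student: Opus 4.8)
The plan is to reduce the lemma to a statement about a single permutation of \(V(\Gamma)\). First, since \(a\neq b\), no two consecutive letters of the word \((ab)^{m'}\) form a cancelling pair \(x,x^{-1}\); so the unique walk in the \(X\)-regular graph \(\Gamma\) that starts at \(v_0\) and reads \((ab)^{m'}\) is automatically reduced, and it is \(p_{m'}\). Because \(\Gamma\) is \(X\)-regular, for each \(x\in X\) reading \(x\) permutes \(V(\Gamma)\) (by the reformulation of \(X\)-regularity each vertex has a unique outgoing and a unique incoming \(x\)-edge), hence so does reading any fixed word in \(X^{\pm1}\); let \(\sigma\colon V(\Gamma)\to V(\Gamma)\) be the bijection "read \(ab\)", so that \(t(p_{m'})=\sigma^{m'}(v_0)\). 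It therefore suffices to prove that \(\sigma\) is a single \(m\)-cycle: then \(v_0,\sigma(v_0),\dots,\sigma^{m-1}(v_0)\) are pairwise distinct and hence exhaust \(V(\Gamma)\) (the second claim), while \(\sigma^{m}=\mathrm{id}\) gives \(t(p_m)=v_0\), i.e.\ \((ab)^m\in L(\Gamma,v_0)\).

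Next I would identify \(\sigma\). Write \(\sigma=\beta\circ\alpha\), where \(\alpha=\) "read \(a\)" and \(\beta=\) "read \(b\)". From the construction of an \((a,k,b,l)\)-graph, the \(a\)-edges are exactly the directed \(k\)-cycles around the \((a,k)\)-circles together with one \(a\)-loop at every vertex that does not lie on an \((a,k)\)-circle; hence \(\alpha\) is the product of the pairwise disjoint cyclic permutations of the \((a,k)\)-circles and fixes every other vertex, and symmetrically \(\beta\) is the product of the cyclic permutations of the \((b,l)\)-circles. A vertex count --- \(N\) circles of each kind, glued in a chain, contribute \(Nk+Nl-(2N-1)\) vertices --- shows that the hypothesis \(|V(\Gamma)|=k+l-1+(k+l-2)n\) forces \(N=n+1\), so \(\Gamma\) is a chain \(A_1,B_1,A_2,B_2,\dots,A_{n+1},B_{n+1}\) of \(2(n+1)\) circles in which consecutive circles share exactly one vertex.

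The core of the argument is that this particular \(\beta\circ\alpha\) is a single cycle, which I would prove by assembling \(\Gamma\) one circle at a time. Begin with \(\Gamma^{(0)}=A_1\), an \((a,k)\)-circle with a \(b\)-loop at every vertex, for which \(\sigma^{(0)}=\alpha^{(0)}\) is already a \(k\)-cycle; then glue on \(B_1,A_2,B_2,\dots,A_{n+1},B_{n+1}\) in this order. Each \((b,l)\)-circle is glued at a vertex \(s_i\in A_i\) carrying a \(b\)-loop, hence a fixed point of the current \(\beta\); each \((a,k)\)-circle is glued at a vertex \(t_i\in B_i\) carrying an \(a\)-loop, hence a fixed point of the current \(\alpha\). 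A short direct computation --- the splicing lemma --- then shows that if \(v\) is a fixed point of \(\beta\) and one replaces \(\beta\) by its composite with a new \(l\)-cycle \((v\,d_1\,\cdots\,d_{l-1})\) on \(v\) and fresh vertices \(d_j\), then the block \(d_1,\dots,d_{l-1}\) is inserted contiguously next to \(v\) in the cycle of \(\sigma\) through \(v\) while all other cycles are unchanged; symmetrically for an attached \(k\)-cycle. So \(\sigma\) stays a single cycle whose length grows by (size of the new circle)\(-1\). Summing these increments over the \(2n+1\) gluings, starting from \(k\), gives \(k+(n+1)(l-1)+n(k-1)=k+l-1+(k+l-2)n=m\); thus the final \(\sigma\) is an \(m\)-cycle and the lemma follows.

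The main obstacle is the splicing lemma together with the surrounding bookkeeping: one must check that at every stage the attaching vertex really is a fixed point of the correct one of \(\alpha,\beta\) --- which uses exactly that a vertex shared by an \((a,k)\)- and a \((b,l)\)-circle carries no loop whereas every other vertex carries exactly one loop, and that consecutive circles of the chain meet in a single vertex --- and then verify the insertion formula, which is a short but slightly fiddly case analysis on \(\beta\alpha\). I do not expect difficulty from the extreme cases \(k=2\) or \(l=2\); the splicing computation is of a shape that applies to them verbatim. The only other point to settle is that the numerical hypothesis \(|V(\Gamma)|=k+l-1+(k+l-2)n\) indeed pins down the combinatorial type of \(\Gamma\), which is the vertex count recorded above.
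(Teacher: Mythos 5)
Your proposal is correct, but it proves the lemma by a genuinely different route than the paper. The paper does not give a self-contained proof: it points to the proof of Theorem \ref{thmast1}, where the graph $\Gamma_p$ is built as a chain of copies of two subgroup graphs glued at single vertices (of which the $(a,k,b,l)$-graph is the special case of an $(a,k)$-circle and a $(b,l)$-circle), and the statement is obtained by an explicit case analysis ($m_1<m_1'$, $m_1>m_1'$, and the analogous cases on the $b$-circles) that traces, for each vertex $v$, a concrete reduced path labelled $(w_1w_2)^k$ from the base vertex to $v$ by walking back and forth along the chain. You instead observe that, by $X$-regularity, ``read $ab$'' is a permutation $\sigma=\beta\circ\alpha$ of $V(\Gamma)$, reduce the lemma to showing $\sigma$ is a single $m$-cycle, and prove that by assembling the chain one circle at a time with a splicing lemma: gluing a new circle at a fixed point of the appropriate factor inserts its new vertices contiguously into the cycle of $\sigma$, so the cycle length grows by (circle size)$-1$, and the increments sum to $k+(n+1)(l-1)+n(k-1)=m$. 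I checked the splicing computation and the bookkeeping (including that each attaching vertex is a fixed point of the current $\alpha$ or $\beta$, and that the vertex count forces $n+1$ circles of each kind); both are right, and your remark that the walk reading $(ab)^{m'}$ is automatically reduced and unique is also correct. What each approach buys: yours is shorter and more conceptual, shows in addition that $m'\mapsto t(p_{m'})$ is a bijection onto $V(\Gamma)$ for $0\le m'<m$, and works verbatim for any base vertex; the paper's path-chasing argument is longer but is carried out in the more general setting actually needed for Theorem \ref{thmast1} (chains of arbitrary subgroup graphs whose cosets are represented by powers of a single element), so the lemma falls out as a special case rather than being proved separately.
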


\begin{proof}
The proof is a part of the proof of Theorem \ref{thmast1}.
\end{proof}

\begin{remark}
An $(a,k,b,l)$-graph has $m=k+l-1+(k+l-2)n$ vertices with $n\in \mathbb N$. 
The numbers $k+l-1$ and $k+l-2$ are coprime for all $k,l \in \N_{>1}$. By Dirichlet's Theorem, there exist infinitely many $n$ such that $m$ is prime. 
\end{remark}

The next proposition follows from Theorem \ref{thmcon} and Lemma \ref{lemkl}.

\begin{proposition}\mbox{}\\
If $G$ is a group of Type II, then the nerve complex $\calnc (G, \scrh_\f)$ and the order complex $\Delta P_\f(G)$ are contractible. 
\end{proposition}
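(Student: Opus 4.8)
The plan is to deduce this directly from Theorem \ref{thmcon}, by checking that a group of Type II automatically supplies the data required there, with the single uniform word $w=ab$.

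First I would unpack the definition of a group of Type II: we are handed a presentation $\langle X \mid R\rangle$ with $X$ finite, elements $a,b\in X$ satisfying $\ord(a)\in k\N_{>0}$, $\ord(b)\in l\N_{>0}$, $\ord(ab)=\infty$, and a collection $\{\Gamma_p\}_{p\in P}$ with $P$ an infinite set of primes, each $\Gamma_p$ the subgroup graph of a finite index subgroup of $G$, and each $\Gamma_p|_{\{a,b\}}$ an $(a,k,b,l)$-graph with $p$ vertices. By Theorem \ref{thm2}, every $\Gamma_p$ is $X$-regular, connected, and fulfills the defining relators $R$; since $\Gamma_p$ is $X$-regular we also have $V(\Gamma_p)=V(\Gamma_p|_{\{a,b\}})$, so $\Gamma_p$ has exactly $p$ vertices.

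Next I would set $w_p:=ab$ for every $p\in P$; this is a freely reduced $X$-word because $a$ and $b$ are distinct elements of $X$. The key observation is that any reduced path in $\Gamma_p$ whose label is a power $(ab)^i$ traverses only edges labeled $a$ or $b$, hence lies entirely inside the subgraph $\Gamma_p|_{\{a,b\}}$, which is an $(a,k,b,l)$-graph with $p$ vertices; in particular $p=k+l-1+(k+l-2)n$ for some $n\in\N$. Applying Lemma \ref{lemkl} with $m=p$ to this $(a,k,b,l)$-graph gives $(ab)^p\in L(\Gamma_p|_{\{a,b\}},v_0)\subseteq L(\Gamma_p,v_0)$ and $\{\, t(p_i)\mid \mu(p_i)=(ab)^i,\ o(p_i)=v_0,\ 0\le i<p \,\}=V(\Gamma_p|_{\{a,b\}})=V(\Gamma_p)$. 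Thus each $(\Gamma_p,v_0)$ meets all the requirements of Proposition \ref{propGp} with the word $w_p=ab$, and these words coincide for all $p\in P$. Theorem \ref{thmcon} then yields contractibility of $\calnc(G,\scrh_\f)$ and of $\Delta P_\f(G)$.

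The only genuine work is the verification that a reduced $(ab)^i$-path in $\Gamma_p$ cannot leave the $\{a,b\}$-subgraph and that Lemma \ref{lemkl} applies directly with $m$ equal to the prime $p$; everything else is bookkeeping, and the infinitude of $P$ is already handled by the remark following Lemma \ref{lemkl} via Dirichlet's theorem applied to the coprime pair $k+l-1$, $k+l-2$. I do not anticipate a serious obstacle: this proposition is essentially a specialization of Theorem \ref{thmcon}, the real content having been pushed into the combinatorial Lemma \ref{lemkl}.
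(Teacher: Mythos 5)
Your proposal is correct and follows exactly the route the paper takes: the paper simply notes that the proposition follows from Theorem \ref{thmcon} together with Lemma \ref{lemkl}, with $w=ab$ supplied by the $(a,k,b,l)$-structure of $\Gamma_p|_{\{a,b\}}$. You merely make explicit the (easy) verification that reduced $(ab)^i$-paths live in the $\{a,b\}$-subgraph and that $V(\Gamma_p|_{\{a,b\}})=V(\Gamma_p)$, which the paper leaves implicit.
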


Now we state examples of groups of Type II. These contain free products of cyclic groups, infinite right angled Coxeter groups, Fuchsian groups of genus $g\geq 2$, and infinite virtually cyclic groups $A \ast_C B$.  

\begin{example}\label{extypII}
(Groups of Type II)
\begin{itemize}
\item \textit{The free product $\mathbb Z_s \ast \mathbb Z_t=\left\langle\,  a, b \, | \, a^s, b^t \, \right\rangle$}. Suppose that $k \mid s$ and $l \mid t$ and that $\Gamma_p$ is an $(a,k,b,l)$-graph with $p$  vertices. The graph $\Gamma_p$ consists of loops labeled $a$ or $b$,  $(a,k)$-circles and $(b,l)$-circles. 
Consequently,  the graph $\Gamma_p$ fulfills the relators. Since the set $P=\{p=k+l-1+(k+l-2)n \mid p \text{ prime} \}$ is infinite, the collection  $\{\Gamma_p\}_{p\in P}$ satisfies the conditions of Definition \ref{deftypII}.

\item \textit{The modular group} PSL$(2,\Z)$, since it is isomorphic to $\Z_2\ast \Z_3$.

\item \textit{The groups  $G \ast (\mathbb Z_s \ast \mathbb Z_t)$, $G \times (\mathbb Z_s \ast \mathbb Z_t)$ and $G \rtimes_\psi (\mathbb Z_s \ast \mathbb Z_t)$ with $X=Y\sqcup \{a,b\}$, $Y$ finite and $G=\left\langle\, Y\, |\, R'\, \right\rangle$}. Suppose that $k \mid s$ and $l \mid t$ and that $\Gamma_p $ is an $(a,k,b,l)$-graph with a loop, labeled $x$, for each $x\in Y$ at every vertex of $\Gamma_p$. The collection $\{\Gamma_p\}_{p\in P}$  with $P=\{p=k+l-1+(k+l-2)n \mid p \text{ prime} \}$ is as in Definition \ref{deftypII} required.  We prove this as follows.

$G \ast (\Z_s \ast \Z_t)= \left\langle\, X\, |\, R\, \right\rangle$ with $R= R' \cup \{a^s, b^t\}$. All edges labeled $x\in Y$ are loops. Consequently, each $\Gamma_p\in \{ \Gamma_p\}_{p\in P}$ fulfills the relators $R'$. 

$G \times (\Z_s \ast Z_t)=\left\langle\, X\, |\, R\, \right\rangle$ with $R=R' \cup \{a^s, b^t, axa^{-1}x^{-1}, bxb^{-1}x^{-1} \mid x\in Y \}$. By the same arguments as for the groups $G \ast (\Z_s \ast \Z_t)$ and $G \times \Z$, each graph $\Gamma_p\in \{ \Gamma_p\}_{p\in P}$ fulfills the relators $R$. 

The group $G \rtimes_\psi (\Z_s \ast Z_t)$  has a presentation of the form  $\left\langle\, X\, |\, R\, \right\rangle$ with relators $R=R' \cup \{a^s, b^t, axa^{-1}(\psi(a)(x))^{-1}, bxb^{-1}(\psi(b)(x))^{-1} \mid  x\in Y \}$. By the same arguments as for $G \ast (\Z_s \ast \Z_t)$ and $G \rtimes \Z$, each $\Gamma_p$ fulfills the relators $R$.

\item \textit{The infinite right angled Coxeter groups $W=\left\langle\, s_1,...,s_n \, |\, s_i^2,  (s_is_j)^{m_{i,j}} \, \right\rangle$}. Then $m_{i,j}\in \{2, \infty \}$ and at least one $m_{i,j}=\infty$.
For $m_{\alpha, \beta}=\infty$ let $a:=s_\alpha$ and $b:=s_\beta$. Let $\Gamma_p$ be an $(a,2,b,2)$-graph with a loop, labeled $s_i$, at every vertex for all $s_i$ with $i \neq \alpha, \beta$ (see Figure \ref{rcox}). 
 Consequently, $\Gamma_p$ fulfills the relators $s_i^2$ and $(s_is_j)^{m_{i,j}}$ with $i,j \neq \alpha, \beta$. 
 We have $v\xrightarrow{a} v' \xrightarrow{a} v$ and $v \xrightarrow{a} v' \xrightarrow{s_i} v'\xrightarrow{a} v \xrightarrow{s_i} v$ for all $s_i\neq a, b$ and analogously $v\xrightarrow{b^2} v$ and $v \xrightarrow{bs_ibs_i} v$ for all $s_i\neq a, b$.  Thus $\Gamma_p$ fulfills the relators  for all  $p\in P=\mathbb P_{>2}$ and $\{\Gamma_p\}_{p\in  P}$ satisfies the conditions of Definition \ref{deftypII}.

 \begin{figure}[h!]
	\centering
 \begin{tikzpicture}
\coordinate[label=below:$v_1$] (1) at (0,0);
\coordinate[label=below:$v_2$] (2) at (1.5,0); 
\coordinate[label=below:$v_3$] (3) at (3,0);
\coordinate[label=below:$v_4$] (4) at (4.5,0);
 \coordinate[label=below:$v_{p-1}$] (5) at (7.1,0);
 \coordinate[label=below:$v_p$] (p) at (8.6,0);
 \filldraw[black](1) circle (0.8pt)
 (2) circle (0.8pt)
 (3) circle (0.8pt)
 (4) circle (0.8pt)
 (5) circle (0.8pt)
 (p) circle (0.8pt)
 (5.6,0) circle (0.4pt)
 (5.8,0) circle (0.4pt)
 (6.0,0) circle (0.4pt)
 ;
 \draw [->] (1) to[bend left=12] node[above] {$a$} (2);
\draw [->] (2) to[bend left=12] node[above] {$b$} (3);
 \draw[->] (3) to[bend left=12] node[above] {$a$} (4);
\draw [->] (5) to[bend left=12] node[above] {$b$} (p);
 \draw [->] (2) to[bend left=12] node[below] {$a$} (1);
\draw [->] (3) to[bend left=12] node[below] {$b$} (2);
 \draw[->] (4) to[bend left=12] node[below] {$a$} (3);
\draw [->] (p) to[bend left=12] node[below] {$b$} (5);
\draw [->] (4) to[bend left=6] node[above right] {$b$} (5.3,0.1);
\draw [->] (5.3,-0.1) to[bend left=6] node[below right] {$b$} (4);
\draw [->] (6.3,0.1) to[bend left=6] node[above left] {$a$} (5);
\draw [->] (5) to[bend left=6] node[below left] {$a$} (6.3,-0.1);

\path [->, min distance=7mm]
 (1) edge[in=160,out=200,left]  node{$b$} (1)
 (1) edge[in=70,out=110,above] node{$s_i\in Y$} (1)
 (2) edge[in=70,out=110,above] node{$s_i\in Y$} (2)
 (3) edge[in=70,out=110,above] node{$s_i\in Y$} (3)
(4) edge[in=70,out=110,above] node{$s_i\in Y$} (4)
(5) edge[in=70,out=110,above] node{$s_i\in Y$} (5)
(p) edge[in=70,out=110,above] node{$s_i\in Y$} (p)
(1) edge[in=50,out=90] (1)
 (2) edge[in=50,out=90]  (2)
 (3) edge[in=50,out=90] (3)
(4) edge[in=50,out=90]  (4)
(5) edge[in=50,out=90]  (5)
(p) edge[in=50,out=90]  (p)
(1) edge[in=90,out=130] (1)
 (2) edge[in=90,out=130]  (2)
 (3) edge[in=90,out=130] (3)
(4) edge[in=90,out=130]  (4)
(5) edge[in=90,out=130]  (5)
(p) edge[in=90,out=130]  (p)
(p) edge[in=340,out=20,right] node{$a$} (p);
\end{tikzpicture} 
	\caption{The $X$-graph $\Gamma_p$ for an infinite right angled Coxeter group with $X$ finite and $Y=X\setminus \{a,b\}$. The graph $\Gamma_p$ is an $X$-regular connected core graph with respect to every vertex.}
	\label{rcox}
\end{figure}
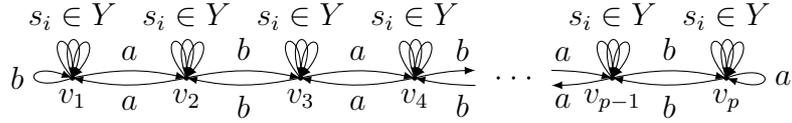

\item \textit{The infinite Coxeter groups $W=\left\langle \, s_1,...,s_n \, | \, s_1^2,...,s_n^2, (s_is_j)^{m_{i,j}} \, \right\rangle$ such that $m_{\alpha, \beta}=\infty$, $m_{\alpha,i}, m_{\beta,i} \in 2\mathbb N_{>0} \cup \infty$ and arbitrary $m_{i,j}$ for $i,j \neq \alpha, \beta$}. We use the same collection  $\{\Gamma_p\}_{p\in P}$  as for the right angled Coxeter groups. Therefore $\Gamma_p$ fulfills $s_l^2$ and $(s_is_j)^{m_{i,j}}$ for $i,j \neq \alpha, \beta$. Since $\Gamma_p$ fulfills $(s_\alpha s_i)^{m_{\alpha, i}}$ and $(s_\beta s_i)^{m_{\beta, i}}$ for $m_{\alpha,i}, m_{\beta,i} \in \{2, \infty\}$, it fulfills $m_{\alpha,i}, m_{\beta,i} \in 2\N_{>0} \cup \infty$.

\item \textit{The alternating subgroup $W^+$ of an infinite Coxeter group $W$ as in the previous example}. $W^+=\left\langle \, s_ks_i, i\neq k, 1\leq i\leq n \, |\, (s_ks_i)^{m_{k,i}},  ((s_ks_i)^{-1}s_ks_j)^{m_{i,j}}\, \right\rangle$ with fixed $k$ such that there are $ \alpha, \beta\neq k$ with $m_{\alpha, \beta}= \infty$.  Then let $a:=s_ks_\alpha$ and $b:=s_ks_\beta$. Let the graph $\Gamma_p$ be an $(a,2,b,2)$-graph with a loop, labeled $s_ks_i$, at every vertex for each $s_ks_i$ with $i \neq \alpha, \beta$.  Thus $\Gamma_p$ fulfills the relators $(s_ks_i)^{m_{k,i}}$ and  $((s_ks_i)^{-1}s_ks_j)^{m_{i,j}} $  for $i,j \neq \alpha, \beta$. Since $m_{k,\alpha}, m_{k,\beta}, m_{\alpha, i}, m_{\beta,i} \in 2\N_{>0}\cup \infty$, the graph $\Gamma_p$ fulfills the relators for all $p\in \mathbb P_{>2}$. This gives us the collection $ \{\Gamma_p\}_{p\in \mathbb P_{>2}}$. 

\item \textit{The orientation-preserving Fuchsian groups of genus $g\geq 2$}. These are  groups  $\left\langle\, X\, | \, R \, \right\rangle$ with $X=\{  a_1, b_1,...,a_g,b_g, x_1,...,x_d, y_1,...,y_s, z_1,..,z_t\}$ and $R=\{ x_1^{m_1}, ..., x_d^{m_d}, x_1 \cdots x_dy_1\cdots y_sz_1  \cdots z_t[a_1,b_1]\cdots [a_g,b_g]\}$ with
 $d,s, t \geq 0$,  $g\geq 2$ and $m_i\geq 2$, see \cite{LS04}. Let $\Gamma_p$ be an $(a_1,2,a_2,2)$-graph with a loop, labeled $x$, for each $x\in X\setminus \{a_1, a_2\}$ at every vertex of $\Gamma_p$. Thus $\Gamma_p$ fulfills the relators $x_i^{m_i}$. Since all other parts are just loop,
the important part of the relator $x_1 \cdots x_dy_1\cdots y_sz_1  \cdots z_t[a_1,b_1]\cdots [a_g,b_g]$ is $a_1a_1^{-1}a_2a_2^{-1}$, which $\Gamma_p$ fulfills. This gives the collection $ \{\Gamma_p\}_{p\in \mathbb P_{>2}}$ which satisfies the conditions of Definition \ref{deftypII}.  

\item \textit{The non-orientation-preserving Fuchsian groups of genus $g\geq 2$}. These are groups $\left\langle\, X\, | \, R \, \right\rangle$ with $X=\{  a_1,...,a_g, x_1,...,x_d, y_1,...,y_s, z_1,..,z_t\}$ and relators  $R=\{ x_d^{m_d},..., x_d^{m_d}, x_1 \cdots x_dy_1\cdots y_sz_1  \cdots z_ta_1^2\cdots a_g^2\}$ with $g\geq 2$, $d,s, t \geq 0$ and $m_i\geq 2$, see \cite{LS04}. Let $\Gamma_p$ be an 
$(a_1,2,a_2,2)$-graph with a loop, labeled $x$, for each $x\in X\setminus \{a_1, a_2\}$ at every vertex of $\Gamma_p$. Thus the important part of the relator $x_1 \cdots x_dy_1\cdots y_sz_1  \cdots z_ta_1^2\cdots a_g^2$ is $ v \xrightarrow{a_1} v' \xrightarrow{a_1} v  \xrightarrow{a_2} v'' \xrightarrow{a_2} v$. Moreover, $\Gamma_p$ fulfills all relators $x_i^{m_i}$. Consequently, $ \{\Gamma_p\}_{p\in \mathbb P_{>2}}$ is as in Definition \ref{deftypII} required.

\item \textit{Infinite virtually cyclic groups of the form $A \ast_{C_A=C_B} B$} with $A$ and $B$ finite groups and $C_A<A$, $C_B<B$ subgroups of index $2$. 
A presentation is $\left\langle \, a, b, c_1,..., c_k,\psi(c_1),...,\psi(c_k) \, | \, R_A, R_B,  c_i\psi(c_i)^{-1} \, \right\rangle$, where $\left\langle  c_1,...,c_k \right\rangle=C_A$,  $A= \left\langle\, a, c_1,...,c_k \, | \, R_A \, \right\rangle$ and $B= \left\langle\, b, \psi(c_1),...,\psi(c_k) \, | \, R_B \, \right\rangle$ for $\psi\colon C_A \rightarrow C_B$ an isomorphism.
 Let $\Gamma_p$ be an $(a,2,b,2)$-graph with a loop, labeled $c_i$,  for each $c_i$ and a loop, labeled $\psi(c_i)$, for each $\psi(c_i)$ at every vertex. Then $\Gamma_p$ fulfills the relators $c_i\psi(c_i)^{-1}$. The connected component of $\Gamma_p|_{\{ a, c_1,...,c_k\}}$ is either a graph with one vertex and a loop for $a$ and all $c_i$  or an $(a,2)$-circle with a loop for every $c_i$ at both vertices. 
 Thus each connected component of $\Gamma_p|_{\{ a, c_1,...,c_k\}}$ is either the subgroup graph of $A$ or of $C_A$ in $A$.
 Hence $\Gamma_p$ fulfills the relators $R_A$. Analogously, we prove that $\Gamma_p$ fulfills the relators $R_B$. 
  It follows that the graph $\Gamma_p$ fulfills each relator of the given presentation for all odd prime. Consequently, the collection $ \{\Gamma_p\}_{p\in \mathbb P_{>2}}$ satisfies the conditions of Definition \ref{deftypII}.

  \item \textit{The amalgamated product $A \ast_{D} B$ of two finitely generated groups $A$ and $B$ with $\{1\}\leq D\leq C_A$ and $C_A\cong C_B$ subgroups of index $2$ in $A$ and $B$.} We have $\left\langle \, a, b, c_1,..., c_k,\psi(c_1),...,\psi(c_k) \, | \, R_A, R_B,  d\psi(d)^{-1},   d \in D \, \right\rangle$  with 
   $A$, $B$, $C_A$ generated as in the previous example $A\ast_{C_A=C_B} B$. Furthermore, we use the same collection $ \{\Gamma_p\}_{p\in \mathbb P_{>2}}$.  Consequently, $\Gamma_p$ fulfills the relators $R_A$ and $R_B$.
   Since $D\leq C_A$, each $d\in D$ is a word in $\{c_1^{\pm 1}, ..., c_k^{\pm 1}\}$ and every $\psi(d)$ a word in $\{\psi(c_1)^{\pm 1},..., \psi(c_k)^{\pm 1}\}$. Every edge labeled $c_i$ or $\psi(c_i)$ is a loop in the graph $\Gamma_p$. Therefore each $\Gamma_p\in \{\Gamma_p\}_{p\in \mathbb P_{>2}}$ fulfills the relator $d\psi (d)^{-1}$ for all $d\in D$.   
  
  \end{itemize}
\end{example}

There are more types of groups that could be considered, but we will not develop these types here.  
 

\section{Sufficient conditions for the contractibility of $\Delta P_\f (G)$ }
\label{secsuf}


After studying some special types of groups in Subsection \ref{subsectype}, we are now interested in more general statements. 
We prove a sufficient condition for an infinite finitely generated group $G_1$ such that the order complex $\Delta P_\f (H)$ of a finite index subgroup $H< G_1$ inherited the contractibility  of the order complex $\Delta P_\f (G_1)$.  Furthermore,  we prove sufficient conditions for finitely generated groups $G_1$ and $G_2$ such that the order complexes $\Delta P_\f (G_1 \ast G_2)$, $\Delta P_\f (G_1 \times G_2)$ and $\Delta P_\f (G_1 \ltimes G_2)$ are contractible. Each group of Subsection \ref{subsectype} can be  chosen for $G_1$ and $G_2$. We end this section with a condition for the connectivity of an amalgamated product $G_1 \ast_D G_2$.

\begin{theorem}\label{thmconsub}\mbox{}\\
Let $G=\left\langle\,  X \, | \, R\, \right\rangle$ be a  group with $X$ finite and $R$  not necessarily finite such that there is a collection $\{\Gamma_p\}_{p\in P}$ of $X$-graphs as in Theorem \ref{thmcon}. 
Let $H$ be a finite index subgroup of $G$. Then the nerve complex $\calnc(H, \scrh_\f )$ and the order complex $\Delta P_\f (H)$ are contractible.
\end{theorem}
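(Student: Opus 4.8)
The plan is to run the mechanism of Theorem \ref{thmcon} relative to $H$: for every finite subcomplex of $\calnc(H,\scrh_\f)$ I would produce a cone point of the form $H\cap H_p$, with the prime $p\in P$ chosen depending on the subcomplex, and then invoke Proposition \ref{propGp} to verify that the cone is a genuine subcomplex.

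First I would reduce to a statement about the nerve complex. Since the intersection of two proper finite index subgroups of $H$ is again a proper finite index subgroup of $H$, the poset $P_\f(H)$ is closed under nonempty intersections, so the Abels--Holz theorem (the Theorem quoted just before Proposition \ref{propGp}) gives $\Delta P_\f(H)\simeq\calnc(H,\scrh_\f)$; thus it suffices to prove that $\calnc(H,\scrh_\f)$ is contractible. I may assume $H\neq G$ (for $H=G$ this is Theorem \ref{thmcon}) and set $d:=[G:H]\geq 2$. As in the proof of Theorem \ref{thmcon}, it is then enough to show that an arbitrary finite subcomplex $U$ is contractible inside $\calnc(H,\scrh_\f)$.

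Given such a $U$, let $\Sigma$ be its finite set of maximal simplices. For $\sigma\in\Sigma$ I would write $\bigcap\sigma=K_\sigma g_\sigma$ with $K_\sigma<H$ of finite index and $g_\sigma\in H$ (an element of $P_\f(H)$, again by closure under nonempty intersections). Since $K_\sigma$ is also a finite index subgroup of $G$, right multiplication by $\phi(w)$ permutes the finite set $K_\sigma\backslash G$, so $\phi(w)^{m_\sigma}\in K_\sigma$ for some integer $m_\sigma>0$. Using that $P$ is infinite I would choose a prime $p\in P$ coprime to $d$ and to each $m_\sigma$. Because $p\nmid d=[G:H]$, the subgroup $H$ is not contained in $H_p$, so $K:=H\cap H_p$ is a \emph{proper} finite index subgroup of $H$, hence a vertex of $\calnc(H,\scrh_\f)$ (in fact $[G:K]$ is a common multiple of $d$ and $p$ bounded by $dp$, so $[H:K]=p$). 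Now Proposition \ref{propGp}, applied with $K_\sigma\leq G$ in the role of the ``other'' finite index subgroup (so $w^{m_\sigma}\in K_\sigma$ and $\gcd(m_\sigma,p)=1$), yields $K_\sigma g_\sigma\cap H_p\neq\emptyset$; since $K_\sigma g_\sigma=\bigcap\sigma\subseteq H$, this is the same as $K\cap\bigcap\sigma\neq\emptyset$. Hence $\{K\}\cup\sigma$ spans a simplex of $\calnc(H,\scrh_\f)$ for every maximal $\sigma$, and as every simplex of $U$ lies in a maximal one, $K\ast U$ is a subcomplex of $\calnc(H,\scrh_\f)$; being a cone it is contractible and it contains $U$, so $U$ is contractible in $\calnc(H,\scrh_\f)$. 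Since $U$ was arbitrary, $\calnc(H,\scrh_\f)$, and with it $\Delta P_\f(H)$, is contractible.

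The part I expect to need the most care is the index and properness bookkeeping for the cone point $K=H\cap H_p$: one must identify exactly which primes $p$ make $H\cap H_p$ a proper subgroup of $H$ (this is what forces the requirement $p\nmid[G:H]$) so that $K$ is an admissible vertex, and one must be careful to apply Proposition \ref{propGp} with $K_\sigma$ regarded as a finite index subgroup of the \emph{ambient} group $G$ rather than of $H$. Conceptually nothing new is required beyond Theorem \ref{thmcon} and Proposition \ref{propGp}; the content of the argument is just that the common-cone-point construction survives passage to the finite index subgroup $H$.
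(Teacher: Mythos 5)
Your proposal is correct and takes essentially the same route as the paper: cone off each finite subcomplex $U$ of $\calnc(H,\scrh_\f)$ by choosing $p\in P$ coprime to the exponents $m_\sigma$, applying Proposition \ref{propGp} to get $H_p\cap\bigcap\sigma\neq\emptyset$ for each maximal simplex, and using $H\cap H_p$ as the cone point. The only (harmless) deviation is how properness of $H\cap H_p$ in $H$ is secured: you impose the extra condition $p\nmid[G:H]$ and argue via index divisibility, whereas the paper needs no extra condition on $p$, deriving $H\nleq H_p$ from $w^{m_\sigma}\in H_\sigma\leq H$ together with the fact that $w^i\in H_p$ if and only if $p\mid i$.
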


\begin{proof}
The sets $\{H \cap Kh\neq H \mid h \in H, Kh \in P_\f (G) \}$ and  $P_\f (H)$ are equal since $H$ is of finite index in $G$. Hence the nerve complex $\calnc(H, \scrh_\f)$  is a subcomplex of $\calnc(G, \scrh_\f)$. 
Let $\Sigma_U$ be the set of all maximal simplices in $U$ and let $\bigcap \s =H_\s g_\s$. Let $m_\s>0$ be such that $w^{m_\s}\in H_\s$. 
 By Theorem \ref{thmcon}, there exists a $p\in P$ for every finite subcomplex $U$ of $\calnc(G, \scrh_\f)$ such that the join $H_p \ast U$ is a subcomplex of $\calnc(G, \scrh_\f)$ and $p$ and $m_\s$ are coprime for all $\s\in \Sigma_U$. 
 Therefore $H_p \cap H_\s g_\s \neq \emptyset$  for all  $\s \in \Sigma_U$. If $U$ is a finite  subcomplex of $\calnc(H, \scrh_\f)$, then $H_\s g_\s \subset H$. 
 Thus $H\cap (H_p \cap H_\s g_\s) \neq \emptyset$. If $H\cap H_p\neq H$, then $(H\cap H_p) \ast U$ is a contractible subcomplex of $\calnc(H, \scrh_\f)$. Suppose $H\leq H_p$. Then $H_\s < H_p$. Consequently, $w^{m_\s}\in H_p$. By the properties of $\Gamma_p$, we have $w^i\in H_p$ if and only if $i\in p\Z$. Therefore $p \mid m_\s$, contrary to $p$ and $m_\s$ are coprime. Thus $H\nleq H_p$, which completes the proof.
\end{proof}

Theorem \ref{thmast1} and \ref{thmast2} both deal with free products of groups satisfying  some conditions. The difference is that in the first case $G_1$ and $G_2$ have to satisfy the same condition while in the second case only $G_1$ has to satisfy some conditions.

\begin{theorem}\label{thmast1}\mbox{}\\
Let $G_1=\left\langle \, X_1 \, | \, R_1 \,\right\rangle$ and $G_2=\left\langle \,  X_2 \, | \, R_2 \,\right\rangle$ be groups with $X_1$, $X_2$ finite and $R_1$, $R_2$ not necessarily finite. Suppose that there exist proper subgroups $H_1<  G_1$, $H_2 < G_2$ such that $\{1, w_1, w_1^2, ..., w_1^{n_1-1}\}$, $\{1,w_2, w_2^2,..., w_2^{n_2-1}\}$ are full sets of coset representatives of $H_1$, $H_2$ in $G_1$, $G_2$ for some $w_1\in G_1$, $w_2\in G_2$. Then the nerve complex $\calnc (G_1 \ast G_2, \scrh_\f )$ and the order complex $\Delta P_\f (G_1 \ast G_2)$  are contractible. 
\end{theorem}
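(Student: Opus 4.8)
The plan is to reduce the statement to Theorem~\ref{thmcon}: it suffices to exhibit, for the free product $G := G_1 \ast G_2 = \langle\, X_1 \sqcup X_2 \mid R_1 \cup R_2 \,\rangle$, an infinite set $P$ of primes together with $X$-graphs $\Gamma_p$ ($p\in P$) as in Proposition~\ref{propGp}, all carrying the \emph{same} word $w_p = w$. Write $X := X_1 \sqcup X_2$, $R := R_1\cup R_2$, and $n_i = [G_i:H_i] \ge 2$ (finite, since $\{1,\dots,w_i^{n_i-1}\}$ is a full transversal and $H_i$ is proper). Fix once and for all freely reduced $X_i$-words representing the $w_i$ and set $w := w_1w_2$, a nonempty freely reduced $X$-word. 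Let $\Gamma_i := \Gamma_{X_i,R_i}(H_i)$ be the subgroup graph of $H_i$ in $G_i$ provided by Theorem~\ref{thm2}: it is finite, connected, $X_i$-regular, fulfills $R_i$, has $n_i$ vertices, and (being a Schreier coset graph) has vertex set $H_i\backslash G_i$. The coset-representative hypothesis says exactly that right multiplication by $w_i$ cyclically permutes $H_i, H_iw_i, \dots, H_iw_i^{n_i-1}$, and in particular $w_i^{n_i}\in H_i$; hence reading the word $w_i$ along reduced paths in $\Gamma_i$ induces a single $n_i$-cycle on $V(\Gamma_i)$.

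Next I would construct $\Gamma_p$. For each $m\ge 1$ put $p = m(n_1+n_2-2)+1$, take $m$ disjoint copies of $\Gamma_1$ and $m$ of $\Gamma_2$, and string them into an alternating chain $B_1 - B_2 - \dots - B_{2m}$ (odd blocks $\cong\Gamma_1$, even blocks $\cong\Gamma_2$) in which consecutive blocks are identified along a single vertex. Inside each block the two gluing vertices are chosen consecutive on that block's $w_i$-cycle, arranged so that the ``outgoing'' gluing vertex of a block is precisely the vertex from which reading $w_i$ would lead back to the block's ``incoming'' gluing vertex (for $B_1$ the incoming vertex is not glued; for $B_{2m}$ the outgoing vertex is not glued). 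At every vertex of a $\Gamma_1$-block that is not a gluing vertex, attach a loop labeled $x$ for each $x\in X_2$, and symmetrically attach $X_1$-loops at the non-gluing vertices of the $\Gamma_2$-blocks (so also at the free endpoints of $B_1$ and $B_{2m}$). Take the free endpoint of $B_1$ as $v_0$. One checks routinely that $\Gamma_p$ is finite, connected and $X$-regular, that $|V(\Gamma_p)| = m\,n_1 + m\,n_2 - (2m-1) = p$, and that $\Gamma_p$ fulfills $R$: every connected component of $\Gamma_p|_{X_1}$ is isomorphic either to $\Gamma_1$ or to a one-vertex graph with $X_1$-loops, each of which fulfills $R_1$, and symmetrically for $\Gamma_p|_{X_2}$, so every reduced path labeled by a relator in $R_1\cup R_2$ closes up. Since $\gcd(1,\,n_1+n_2-2)=1$, Dirichlet's theorem shows that $\{\,m(n_1+n_2-2)+1 : m\ge 1\,\}$ contains infinitely many primes, and we let $P$ be the set of these prime values.

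The heart of the argument --- and the step I expect to be the main obstacle --- is the orbit condition of Proposition~\ref{propGp}: that reading $w = w_1w_2$ repeatedly from $v_0$ visits all $p$ vertices of $\Gamma_p$ before first returning to $v_0$, i.e. $\{\,t(p_i) : o(p_i)=v_0,\ \mu(p_i)=w^i,\ 0\le i<p\,\} = V(\Gamma_p)$ (note that since $w^i$ is freely reduced, the path $p_i$ is automatically reduced in the $X$-regular graph $\Gamma_p$, and unique). The mechanism is that at a non-gluing vertex of a $\Gamma_1$-block, reading $w_1$ advances one step along that block's $n_1$-cycle while reading $w_2$ is a loop and does nothing; at a gluing vertex shared with a $\Gamma_2$-block, reading $w_1$ advances within the $\Gamma_1$-block and then reading $w_2$ carries the path into the $\Gamma_2$-block (and symmetrically). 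Tracing this, the first few applications of $w$ sweep forward through $B_1,\dots,B_{2m}$ along the ``incoming side'' of each block, hopping block to block at the shared vertices, and the remaining applications sweep back along the ``outgoing side'', the two passes being disjoint and together exhausting $V(\Gamma_p)$, the last application returning to $v_0$ after exactly $p$ steps; the degenerate cases $n_1=2$ or $n_2=2$, where a block has no non-gluing vertex, are the $(a,2,b,2)$-type situations and only simplify the count. This trace is exactly what is needed for Lemma~\ref{lemkl} as well. Once it is in place, $\{\Gamma_p\}_{p\in P}$ is a collection as in Proposition~\ref{propGp} carrying the uniform word $w_p = w$, and Theorem~\ref{thmcon} gives that the nerve complex $\calnc(G_1\ast G_2,\scrh_\f)$ and the order complex $\Delta P_\f(G_1\ast G_2)$ are contractible.
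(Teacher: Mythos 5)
Your proposal is correct and follows essentially the same route as the paper's proof: build $\Gamma_p$ by stringing together alternating copies of $\Gamma_{X_1,R_1}(H_1)$ and $\Gamma_{X_2,R_2}(H_2)$ glued at single vertices with loops in the other alphabet added at non-gluing vertices, get infinitely many prime vertex counts $p=m(n_1+n_2-2)+1$ from Dirichlet, verify the orbit condition of Proposition \ref{propGp} for $w=w_1w_2$ by the forward/backward sweep through the chain, and conclude via Theorem \ref{thmcon}. The only differences are cosmetic: you normalize the two gluing vertices of each block to be consecutive on its $w_i$-cycle (the paper glues at arbitrary distinct vertices and absorbs this into its explicit case analysis with exponents $k_i,l_i$), and your sweep argument is a condensed sketch of exactly the trace the paper carries out in detail.
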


\begin{proof}
Let $\Gamma(H_i)=\Gamma_{X_i,R_i}(H_i)$ be the subgroup graph of $H_i< G_i$ for $i=1,2$. We consider the $(X_1\cup X_2)$-graph $\Gamma_p$ with $p=n_1+n_2-1+(n_1+n_2-2)n$ vertices. Since $n_1,n_2>1$, there exist infinitely many $n\in \mathbb N$ such that $p$ is prime. The graph $\Gamma_p$ is built from a copy of the graph $\Gamma(H_1)$ glued with a copy of $\Gamma(H_2)$ at a single vertex $v_{11}$. This $\Gamma(H_2)$ is glued with a copy of $\Gamma(H_1)$ at another single vertex $v_{12}$ and so on.  
We end up with a copy of $\Gamma(H_2)$, which is only glued with the previous $\Gamma(H_1)$ over the vertex $v_{nn}$. At every vertex $v\neq v_{ss}, v_{ss+1}$ of the copies of $\Gamma(H_i)$ we add a loop, labeled $x$, for each $x\in X_j$ with $i\neq j$. For $\Gamma(H_i)^{(s)}|_{X_i}=\Gamma(H_i)$ the graph $\Gamma_p$ looks like this:
$$1_{H_p}\Gamma(H_1)^{(1)}\, v_{11}\, \Gamma(H_2)^{(1)}\, v_{12}\, \Gamma(H_1)^{(2)}\, v_{22}\, ... \, \Gamma(H_2)^{(n-1)}\, v_{n-1n}\, \Gamma(H_1)^{(n)}\, v_{nn}\, \Gamma(H_2)^{(n)}$$ 
with $v_{kl}=V(\Gamma(H_i) ^{(k)})\cap V(\Gamma(H_j)^{(l)})$ and $1_{H_p}\in V(\Gamma(H_1)^{(1)})$ is the base-vertex of $\Gamma_p$ with $v_{11}\neq 1_{H_p}$.  (Figure \ref{bsp} shows an example.) We prove that for each vertex $v$ in $\Gamma_p$ there exists a reduced path with label $(w_1w_2)^k$ from $1_H$ to $v$.

Let $v\in \Gamma(H_1)^{(s)}$.  Then there exist $0 \leq m_1, m_1'< n_1$ such that $v_{s-1s} \xrightarrow{w_1^{m_1}} v$ and $v_{ss} \xrightarrow{w_1^{m_1'}} v$ are reduced paths. For $v_{01}$ we take the base-vertex $1_{H_p}$. 

Case $m_1 <m_1'$: Put $k_1:=m_1$. Then $v_{s-1s} \xrightarrow{(w_1w_2)^{k_1}} v$ is a reduced path. This follows from the facts that  an edge $e$ labeled $x\in X_2$ is a loop in  $\Gamma(H_1)^{(s)}$ if and only if $o(e)\neq v_{s-1s}, v_{ss}$, and, in this case, the paths in $\Gamma(H_1)^{(s)}$ with label $w_1^k$ for $0< k \leq k_1$ and origin $v_{s-1s}$ never have terminus $v_{s-1s}$ or $v_{ss}$.  
If $m_1=0$, then $v=v_{s-1s}$. 
 Since $\Gamma(H_2)^{(s-1)}|_{X_2}=\Gamma(H_2)$, there exists the reduced path $v_{s-1s-1} \xrightarrow{w_2^{k_2+1}} v_{s-1s}$.
 An edge $e$ labeled $x\in X_1$ is a loop in $\Gamma(H_2)^{(s-1)}$ as long as $o(e)\neq v_{s-1s-1}, v_{s-1s}$. Furthermore,  the reduced paths  in $\Gamma(H_2)^{(s-1)}$ with label $w_2^k$ for $0< k \leq k_2$ and origin $v_{s-1s-1}$ never have terminus $v_{s-1s-1}$ or $v_{s-1s}$. Consequently, $v_{s-1s-1}\xrightarrow{w_2(w_1w_2)^{k_2}} v_{s-1s}$ is a reduced path.
The vertices $v_{s-2s-1}$ and $v_{s-1s-1}$ are in $\Gamma (H_1)^{(s-1)}$. Hence  $v_{s-2s-1}\xrightarrow{w_1^{k_3+1}} v_{s-1s-1}$ is a reduced path. 
This gives the reduced path $v_{s-2s-1}\xrightarrow{(w_1w_2)^{k_3}w_1} v_{s-1s-1}$. 
Repeating these steps, we end in $\Gamma(H_1)^{(1)}$ at vertex $v_{11}$. There exists a reduced path $1_{H_p}\xrightarrow{w_1^{k_{\nu}+1}} v_{11}$. Thus $1_{H_p} \xrightarrow{(w_1w_2)^{k_{\nu}}w_1} v_{11}$ is a reduced path.  Therefore we have \\
 $1_{H_p}\!\xrightarrow{(w_1w_2)^{k_\nu}w_1}\!v_{11}\! \xrightarrow{w_2(w_1w_2)^{k_{\nu-1}}}\! v_{12}\ ...\ \!\xrightarrow{(w_1w_2)^{k_3}w_1}\!v_{s-1s-1}\!\xrightarrow{w_2(w_1w_2)^{k_2}}\! v_{s-1s}\! \xrightarrow{(w_1w_2)^{k_1}}\! v.$\vspace{2mm}
 Thus there is a reduced path with label $(w_1w_2)^k$ from $1_{H_p}$ to $v$. 
 
 Case $m_1>m_1'$: Put $l_1:=m_1'$. By the arguments above, $v_{ss}\xrightarrow{(w_1w_2)^{l_1}} v$ is a reduced path in $\Gamma_p$. If $m_1'=0$, then $v=v_{ss}$. 
 We need a path with terminus $v_{ss}$ whose label ends with $w_2$. Hence we consider a path in $\Gamma(H_2)^{(s)}$. There is the reduced  path $v_{ss+1}\xrightarrow{w_2^{l_2+1}}v_{ss}$. As above $v_{ss+1} \xrightarrow{w_2(w_1w_2)^{l_2}} v_{ss}$ is a reduced path in $\Gamma_p$. 
 In $\Gamma(H_1)^{(s+1)}$ exists the reduced path $v_{s+1s+1}\xrightarrow{w_1^{l_3+1}} v_{ss+1}$. Therefore we have $v_{s+1s+1} \xrightarrow{(w_1w_2)^{l_3}w_1} v_{ss+1}$. Repeating this, we reach the vertex $v_{nn}$  with the reduced  path $v_{nn} \xrightarrow{(w_1w_2)^{l_\tau} w_1} v_{n-1n}$. We have to find a path whose label ends with $w_2$. Since $\Gamma(H_2)$ has $n_2$ vertices, $v_{nn}\xrightarrow{w_2^{n_2}} v_{nn}$ is a reduced path in $\Gamma(H_2)^{(n)}$. Thus we have $v_{nn}\xrightarrow{w_2(w_1w_2)^{n_2-1}} v_{nn}$. At last there is the reduced path $v_{n-1n} \xrightarrow{(w_1w_2)^{n_1-l_{\tau}-2}w_1} v_{nn}$. 
 The vertex $v_{n-1n}$ is in $\Gamma(H_1)^{(n)}$. Hence we are in the case $m_1<m_1'$. 
 Consequently, there exist $k,l \in \N$ such that $1_{H_p}\xrightarrow{(w_1w_2)^k} v_{n-1n}\xrightarrow{(w_1w_2)^l} v$ is a reduced path.
 
 Let $v\in V(\Gamma(H_2)^{(s)})\setminus \{v_{ss}, v_{ss+1}\}$. Then there exist $0 < m_2, m_2'< n_2$ such that $v_{ss} \xrightarrow{w_2^{m_2}} v$ and $v_{ss+1} \xrightarrow{w_2^{m_2'}} v$ are reduced paths. 
 
 Case $m_2<m_2'$: There are paths $v_{ss}\xrightarrow{w_2(w_1w_2)^{m_2-1}} v$ and $v_{s-1s} \xrightarrow{(w_1w_2)^{\mu}w_1} v_{ss}$ in $\Gamma_p$. The vertex $v_{s-1s}$ is in $\Gamma(H_1)^{(s)}$. Thus we are in the case $m_1<m_1'$. Hence there is a reduced path as required. 

 Case $m_2>m_2'$: We get paths $v_{ss+1} \xrightarrow{w_2(w_1w_2)^{m_2'-1}} v$ and  $v_{s+1s+1} \xrightarrow{(w_1w_2)^{\nu}w_1} v_{ss+1}$ in $\Gamma_p$. The vertex $v_{s+1s+1}$ is in $\Gamma(H_1)^{(s+1)}$ hence we are in the case $m_1>m_1'$.

Therefore  $\{t(p_k) \mid \mu(p_i)=(w_1w_2)^k, o(p_k)=1_{H_p}, 0\leq k<p\}=V(\Gamma_p)$ holds. Every connected component of $\Gamma_p|_{X_i}$ is either $\Gamma_{X_i,R_i}(H_i)$ or $\Gamma_{X_i,R_i}(G_i)$. Hence $\Gamma_p$ fulfills the relators $R_1$ and $R_2$. By Theorem \ref{thmcon}, the nerve complex $\calnc(G_1  \ast G_2, \scrh_\f)$ and the order complex $\Delta P_\f (G_1 \ast G_2)$ are contractible and $\{1,w_1w_2, ..., (w_1w_2)^{p-1}\}$  is a full set of coset representatives of $H_p=\phi(L(\Gamma_p,1_{H_p}))$ in $G_1 \ast G_2$. 
\end{proof}

 Figure \ref{bsp} shows an example of a subgroup graph constructed as in the proof of Theorem \ref{thmast1} for $G_1=\left\langle \, a, b, c \, |\, a^2, b^2, c^2, (ab)^3, (bc)^3, (ac)^3\, \right\rangle$ with $H_1=\left\langle b, c, abcba \right\rangle$ and $w_1=abc$ and $G_2=\left\langle\, d \, | \, d^2 \, \right\rangle$ with $H_2=\{1_{G_2}\}$ and $w_2=d$. The graph $\Gamma_{13}$ is a subgroup graph of the subgroup $H=\phi(L(\Gamma_{13}, 1_H))$ of $G_1 \ast G_2$.  
   
 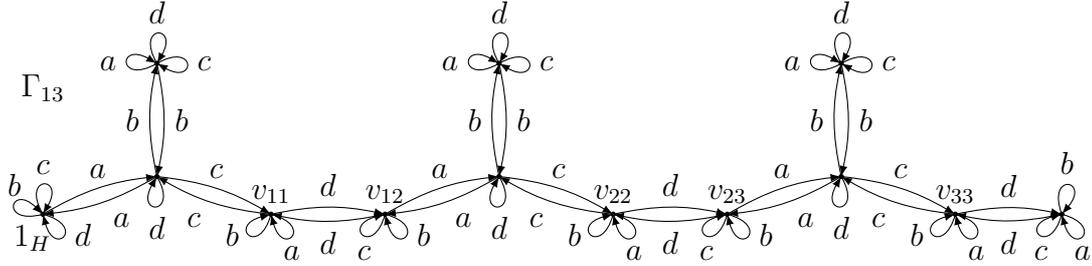
\begin{figure}[h!]
	\centering
 \begin{tikzpicture}
\coordinate[label=below:$\Gamma_{13}$] (0) at (0,2);
\coordinate[label=below:$1_H\ \ $] (1) at (0,0);
\coordinate (2) at (1.5,0.5); 
\coordinate (3) at (1.5,2);
\coordinate[label=above:$v_{11}$] (4) at (3,0);
\coordinate[label=above:$v_{12}$] (5) at (4.5,0);
\coordinate (6) at (6,0.5); 
\coordinate (7) at (6,2);
\coordinate[label=above:$v_{22}$] (8) at (7.5,0);
\coordinate[label=above:$v_{23}$] (9) at (9,0);
\coordinate (10) at (10.5,0.5);
\coordinate (11) at (10.5,2);
\coordinate[label=above:$v_{33}$] (12) at (12,0);
\coordinate (13) at (13.4,0);

\filldraw[black](1) circle (0.8pt) 
(2)circle(0.8pt)
(3)circle(0.8pt)
(4)circle(0.8pt)
(5)circle(0.8pt)
(6)circle(0.8pt)
(7)circle(0.8pt)
(8)circle(0.8pt)
(9)circle(0.8pt)
(10)circle(0.8pt)
(11)circle(0.8pt)
(12)circle(0.8pt)
(13)circle(0.8pt);

\path[->,min distance=6mm] 
(1) edge[in=135,out=195,above] node {$b$}(1)
(1) edge[in=60,out=120,above] node {$c$}(1)
(1) edge[in=285,out=345,right] node {$d$}(1)
(2) edge[in=240,out=300,below] node {$d$}(2)
(3) edge[in=150,out=210,left] node {$a$}(3)
(3) edge[in=330,out=30,right] node {$c$}(3)
(3) edge[in=60,out=120,above] node {$d$}(3)
(4) edge[in=285,out=345,below] node {$a$}(4)
(4) edge[in=195,out=255,left] node {$b$}(4)
(5) edge[in=285,out=345,right] node {$b$}(5)
(5) edge[in=195,out=255,below] node {$c$}(5)
(6) edge[in=240,out=300,below] node {$d$}(6)
(7) edge[in=150,out=210,left] node {$a$}(7)
(7) edge[in=330,out=30,right] node {$c$}(7)
(7) edge[in=60,out=120,above] node {$d$}(7)
(8) edge[in=285,out=345,below] node {$a$}(8)
(8) edge[in=195,out=255,left] node {$b$}(8)
(9) edge[in=285,out=345,right] node {$b$}(9)
(9) edge[in=195,out=255,below] node {$c$}(9)
(10) edge[in=240,out=300,below] node {$d$}(10)
(11) edge[in=150,out=210,left] node {$a$}(11)
(11) edge[in=330,out=30,right] node {$c$}(11)
(11) edge[in=60,out=120,above] node {$d$}(11)
(12) edge[in=285,out=345,below] node {$a$}(12)
(12) edge[in=195,out=255,left] node {$b$}(12)
(13) edge[in=195,out=255,below] node {$c$}(13)
(13) edge[in=285,out=345,below] node {$a$}(13)
(13) edge[in=50,out=110,above] node {$b$}(13)
;

\draw [->] (1) to[bend left=12] node[above] {$a$} (2);
\draw[->] (2) to[bend left=12] node[below right] {$a$} (1);
\draw[->] (3) to[bend left=12] node[right] {$b$} (2);
\draw[->] (2) to[bend left=12] node[left] {$b$} (3);
\draw[->] (4) to[bend left=12] node[below left] {$c$} (2);
\draw[->] (2) to[bend left=12] node[above] {$c$} (4);
\draw[->] (4) to[bend left=12] node[above] {$d$} (5);
\draw[->] (5) to[bend left=12] node[below] {$d$} (4);
\draw[->] (5) to[bend left=12] node[above] {$a$} (6);
\draw[->] (6) to[bend left=12] node[below right] {$a$} (5);
\draw[->] (7) to[bend left=12] node[right] {$b$} (6);
\draw[->] (6) to[bend left=12] node[left] {$b$} (7);
\draw[->] (6) to[bend left=12] node[above] {$c$} (8);
\draw[->] (8) to[bend left=12] node[below left] {$c$} (6);
\draw[->] (8) to[bend left=12] node[above] {$d$} (9);
\draw[->] (9) to[bend left=12] node[below] {$d$} (8);
\draw[->] (9) to[bend left=12] node[above] {$a$} (10);
\draw[->] (10) to[bend left=12] node[below right] {$a$} (9);
\draw[->] (11) to[bend left=12] node[right] {$b$} (10);
\draw[->] (10) to[bend left=12] node[left] {$b$} (11);
\draw[->] (12) to[bend left=12] node[below left] {$c$} (10);
\draw[->] (10) to[bend left=12] node[above] {$c$} (12);
\draw[->] (12) to[bend left=12] node[above] {$d$} (13);
\draw[->] (13) to[bend left=12] node[below] {$d$} (12);


\end{tikzpicture} 
	\caption{An  $X$-graph as constructed in the proof of Theorem \ref{thmast1}.}
	\label{bsp}
\end{figure}

\begin{cor}\mbox{}\\
Let $W_1$ and $W_2$ be two finitely generated Coxeter groups. Then the nerve complex $\calnc (W, \scrh_\f )$ and the order  complex $\Delta P_\f (W)$   of the Coxeter group $W=W_1 \ast W_2$ are contractible. 
\end{cor}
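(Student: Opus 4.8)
The plan is to read this off from Theorem~\ref{thmast1} with $G_1=W_1$ and $G_2=W_2$. The only input that needs checking is the hypothesis of that theorem: for $i=1,2$ there must exist a proper subgroup $H_i<W_i$ and an element $w_i\in W_i$ for which $\{1,w_i,\dots,w_i^{n_i-1}\}$ is a full set of coset representatives of $H_i$ in $W_i$.

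First I would fix, for each $i$, a Coxeter presentation $W_i=\langle S_i\mid R_i\rangle$ with $S_i$ finite and nonempty (so $W_i\neq\{1\}$; a trivial Coxeter factor contributes nothing to the free product and is excluded). Every relator of a Coxeter presentation is a word of even length in $S_i$ (the relators are $s^2$ and $(st)^{m(s,t)}$, of lengths $2$ and $2m(s,t)$), so the assignment $s\mapsto 1$ for $s\in S_i$ extends to a well-defined surjective homomorphism $\varepsilon_i\colon W_i\to\mathbb{Z}/2\mathbb{Z}$, the parity homomorphism. Thus $H_i:=\ker\varepsilon_i$ has index $2$ in $W_i$, hence is proper; and for any choice of $s_i\in S_i$, setting $w_i:=s_i$ gives $\varepsilon_i(w_i)=1$, so $w_i\notin H_i$, and therefore $\{1,w_i\}$ is a full set of coset representatives of $H_i$ in $W_i$, with $n_i=2>1$.

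With $H_1,H_2,w_1,w_2$ in hand, I would apply Theorem~\ref{thmast1} directly---its $X_i$ being our finite $S_i$ and its $R_i$ our Coxeter relators (which may be taken finite, although the theorem does not require this)---to conclude that $\calnc(W_1\ast W_2,\scrh_\f)$ and $\Delta P_\f(W_1\ast W_2)$ are contractible, which is the assertion for $W=W_1\ast W_2$. There is essentially no obstacle here: the entire content is the elementary observation that a nontrivial finitely generated Coxeter group surjects onto $\mathbb{Z}/2\mathbb{Z}$ via the parity map, yielding an index-$2$ subgroup with cyclic quotient; everything else is a verbatim invocation of Theorem~\ref{thmast1}. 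The only points I would flag explicitly are the convention that ``finitely generated Coxeter group'' means a Coxeter system with a finite generating set, and the harmless exclusion of a trivial factor.
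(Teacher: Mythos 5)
Your proof is correct and is essentially the paper's own: the kernel of the parity homomorphism you construct is exactly the alternating subgroup $W_i^+$ that the paper takes for $H_i$, giving index $2$ with coset representatives $\{1,s_i\}$, after which Theorem \ref{thmast1} is invoked verbatim. Your explicit exclusion of trivial factors is a harmless (and sensible) extra remark.
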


\begin{proof} 
Take $H_1=W_1^+$ and $H_2=W_2^+$ the alternating subgroups of $W_1$ and $W_2$. 
\end{proof}

\begin{theorem}\label{thmast2}\mbox{}\\
Let $G_1=\left\langle \, X_1 \, | \, R_1 \,\right\rangle$ and $G_2=\left\langle \,  X_2 \, | \, R_2 \,\right\rangle$ be  groups with $X_1$, $X_2$ finite and $R_1$, $R_2$ not necessarily finite. Suppose that $G_1$ is a group such that there exists a collection $\{\Gamma_p\}_{p\in P}$  of $X_1$-graphs as in Theorem \ref{thmcon}.  
Then the nerve complex $\calnc(G, \scrh_\f )$ and the order complex $\Delta P_\f (G)$ are contractible for  $G=G_1\ast G_2$, $G=G_1\times G_2$ or $G=G_2 \rtimes G_1$.
\end{theorem}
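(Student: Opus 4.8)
The plan is to reduce all three cases to Theorem~\ref{thmcon}: I would enlarge each $X_1$-graph $\Gamma_p$ to a graph over the alphabet $X := X_1 \sqcup X_2$ that still meets the hypotheses of that theorem, now for the group $G$. Fix the standard presentations on $X$: for $G = G_1 \ast G_2$ take $R = R_1 \cup R_2$; for $G = G_1 \times G_2$ take $R = R_1 \cup R_2 \cup \{\,xyx^{-1}y^{-1} \mid x \in X_1,\ y \in X_2\,\}$; and for $G = G_2 \rtimes_\psi G_1$ take $R = R_1 \cup R_2 \cup \{\,xyx^{-1}(\psi(x)(y))^{-1} \mid x \in X_1,\ y \in X_2\,\}$, where $\psi(x)(y)$ is a freely reduced $X_2$-word representing the image of $y$ under the automorphism $\psi(x)$ of $G_2$. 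In each case $X$ is finite and $\langle X \mid R \rangle$ presents $G$.

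For each $p \in P$ I would define $\Gamma_p'$ to have the same vertex set as $\Gamma_p$, the same $X_1$-labeled edges as $\Gamma_p$, and, in addition, at every vertex $v$ and for every $y \in X_2$ a loop at $v$ labeled $y$. Then $\Gamma_p'$ has $p$ vertices, is connected (it contains $\Gamma_p$ on the same vertex set), is $X$-regular (the $X_1$-regularity is inherited from $\Gamma_p$ and the new loops supply the unique incoming and outgoing $y$-edge at each vertex for $y \in X_2$), and satisfies $\Gamma_p'|_{X_1} = \Gamma_p$. Next I would check that $\Gamma_p'$ fulfills $R$. A relator in $R_1$ is an $X_1$-word, so a reduced path labeled by it with origin $v$ lies in $\Gamma_p'|_{X_1} = \Gamma_p$, which fulfills $R_1$ by hypothesis; a relator in $R_2$ is an $X_2$-word, and since every $X_2$-edge of $\Gamma_p'$ is a loop, the reduced path labeled by it with origin $v$ is a loop at $v$. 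For the direct product the extra relator $xyx^{-1}y^{-1}$ ($x \in X_1$, $y \in X_2$) traces $v \xrightarrow{x} v' \xrightarrow{y} v' \xrightarrow{x^{-1}} v \xrightarrow{y^{-1}} v$, hence returns to $v$; for the semidirect product the same computation applies with the tail $(\psi(x)(y))^{-1}$ a loop at $v$ because it is an $X_2$-word, exactly as in the treatment of $G \rtimes_\psi \Z$ in Example~\ref{extyp1}. So each $\Gamma_p'$ is $X$-regular, connected, fulfills $R$, and has $p$ vertices.

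Finally I would observe that $\{\Gamma_p'\}_{p \in P}$ meets the hypotheses of Theorem~\ref{thmcon} for $\langle X \mid R \rangle$: $P$ is still an infinite set of primes, and for the uniform freely reduced $X_1$-word $w = w_p$ coming from the hypothesis on $G_1$, the reduced path with label $w^i$ and origin the base-vertex uses only edges of $\Gamma_p'|_{X_1} = \Gamma_p$, hence is literally the corresponding path of $\Gamma_p$; therefore $\{t(p_i) \mid o(p_i) = v,\ \mu(p_i) = w^i,\ 0 \le i < p\} = V(\Gamma_p) = V(\Gamma_p')$, and $w$ serves uniformly for all $p \in P$. Theorem~\ref{thmcon} then yields that $\calnc(G, \scrh_\f)$ and $\Delta P_\f(G)$ are contractible. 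The only real work is the bookkeeping in the second paragraph — writing down the correct presentations (in particular exploiting that $\psi(x)$ is an automorphism of $G_2$, so $\psi(x)(y)$ is an $X_2$-word) and checking that the paths labeled by the various relators are reduced and closed in the $X$-regular graph $\Gamma_p'$ — which is routine but must be carried out separately in each of the three cases.
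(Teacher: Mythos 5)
Your proposal is correct and follows essentially the same route as the paper: you form $\Gamma_p'$ by adding a loop labeled $y$ at every vertex for each $y\in X_2$, check that $\Gamma_p'$ fulfills $R_1$, $R_2$ and the commutation/semidirect relators (the latter because $\psi(x)(y)$ is an $X_2$-word, hence labels only loops), and invoke Theorem~\ref{thmcon}. Your explicit verification that the paths labeled $w^i$ live in $\Gamma_p'|_{X_1}=\Gamma_p$, so the hypotheses of Theorem~\ref{thmcon} persist, is a detail the paper leaves implicit but is exactly in its spirit.
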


\begin{proof}
We have $G_1 \ast G_2= \left\langle\,  X_1, X_2 \, | \, R_1, R_2\, \right\rangle$. 
We add to each graph $\Gamma_p \in \{\Gamma_p\}_{p\in P}$ a loop, labeled $y$, for each $y\in X_2$ at every vertex of $\Gamma_p$ and call the graph $\Gamma_p'$. 
Since $\Gamma_p'|_{X_1}=\Gamma_p$, each $\Gamma_p'$ fulfills the relators $R_1$. Since each edge labeled $y\in X_2$ is a loop, each $\Gamma_p'$ fulfills the relators $R_2$.  

For the groups
$ G_1 \times G_2= \left\langle \, X_1, X_2 \, | \, R_1, R_2, xyx^{-1}y^{-1}, x\in X_1, y\in X_2 \, \right\rangle$ and $G_2\rtimes_\psi G_1=\left\langle \, X_1, X_2 \, | \, R_1, R_2, xyx^{-1}(\psi(x)(y))^{-1}, x\in X_1, y\in X_2 \, \right\rangle$ we take the same collection $\{\Gamma_p'\}_{p\in P}$ of $(X_1\cup X_2)$-graphs $\Gamma_p'$ as above. It remains to show that each $\Gamma_p'$ fulfills the relators  $xyx^{-1}y^{-1}$ or $xyx^{-1}(\psi(x)(y))^{-1}$ for all $x\in X_1$ and $y\in X_2$. The image $\psi(x)(y)$ is an $X_2$-word. Thus a path in $\Gamma_p'$ with label $(\psi(x)(y))^{-1}$ consists only of loops. Therefore each $\Gamma_p'$ fulfills the relators. 

 Thus the collection $\{\Gamma_p'\}_{p\in P}$  of $(X_1\cup X_2)$-graphs satisfies the conditions of Theorem \ref{thmcon} for the groups $G_1 \ast G_2$, $G_1 \times G_2$ and $G_2 \rtimes G_1$.
\end{proof}

All groups of Type I and II satisfy the properties of the Theorems  \ref{thmconsub}, \ref{thmast1} and \ref{thmast2}. Therefore we have the following corollary.

\begin{cor}\label{corende}\mbox{}\\
Let $G$ be either the free product $G_1 \ast ...\ast G_n$, the direct product $G_1 \times ... \times G_n$, the semidirect product $G_1 \rtimes G_2$ or a finite index subgroup of the group $G_1$. Suppose that each $G_i$, for $1\leq i \leq n$, is one of the following finitely generated groups: \\
 a free group; a free abelian group; a Fuchsian group of genus $g \geq 2$;  an infinite right angled Coxeter group; an Artin group; a pure braid group; a Baumslag-Solitar group or an infinite virtually cyclic group. \\
Then the nerve complex $\calnc (G, \scrh_\f)$ and the order complex $\Delta P_\f (G)$ are contractible. 
\end{cor}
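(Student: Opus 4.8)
The plan is to recognise the corollary as a purely formal consequence of the examples of groups of Type~I and Type~II together with the closure statements in Theorems~\ref{thmconsub}, \ref{thmast1} and \ref{thmast2}. Say that a finitely generated group $G=\langle X\mid R\rangle$ is \emph{admissible} if it carries a collection $\{\Gamma_p\}_{p\in P}$ of $X$-graphs as in Theorem~\ref{thmcon}; for such $G$ the complexes $\calnc(G,\scrh_\f)$ and $\Delta P_\f(G)$ are contractible by Theorem~\ref{thmcon} itself.

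First I would check that every family listed in the statement consists of admissible groups. Free groups, free abelian groups, Baumslag--Solitar groups, Artin groups and pure braid groups are groups of Type~I by Example~\ref{extyp1}, and by Definition~\ref{deftypI} the collection exhibited there meets the requirements of Theorem~\ref{thmcon} (with $w=a$); hence they are admissible. An infinite virtually cyclic group is, by the standard structure theory, either of the form $F\rtimes\Z$ with $F$ finite --- a group of Type~I (Example~\ref{extyp1}) --- or of the form $A\ast_C B$ with $[A:C]=[B:C]=2$ --- a group of Type~II (Example~\ref{extypII}); either way it is admissible. Fuchsian groups of genus $g\ge2$ (orientation-preserving or not) and infinite right angled Coxeter groups are groups of Type~II by Example~\ref{extypII}, so the collection there (with $w=ab$) makes them admissible. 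Thus each $G_i$ appearing in the corollary is admissible.

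Next I would record that admissibility passes to the three product constructions. If $G_1$ is admissible and $G_2=\langle X_2\mid R_2\rangle$ is any finitely generated group, then $G_1\ast G_2$, $G_1\times G_2$ and $G_2\rtimes G_1$ are admissible: this is precisely what the proof of Theorem~\ref{thmast2} establishes, since the collection $\{\Gamma_p'\}_{p\in P}$ built there --- attach to $\Gamma_p$ a loop labelled $y$ at every vertex for each $y\in X_2$ --- satisfies the hypotheses of Theorem~\ref{thmcon} for all three groups. Consequently the class of admissible groups is closed under free products and under direct products (in each case it suffices that one factor be admissible, and in our situation all factors are). For an iterated free product $G=G_1\ast\dots\ast G_n$ I would induct on $n$: the case $n=1$ is the admissibility of $G_1$, and for $n\ge2$ write $G=G_1\ast(G_2\ast\dots\ast G_n)$ and apply closure under free products. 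The iterated direct product is handled the same way. For the semidirect product $G_1\rtimes G_2$ (with $G_1$ the normal factor) I would apply Theorem~\ref{thmast2} with the admissible group $G_2$ playing the role of the group that carries the collection, which yields that $G_1\rtimes G_2$ is admissible. In all these cases contractibility of $\calnc(G,\scrh_\f)$ and $\Delta P_\f(G)$ follows from Theorem~\ref{thmcon}.

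Finally, for a finite index subgroup $H$ of $G_1$: since $G_1$ is admissible it carries a collection $\{\Gamma_p\}_{p\in P}$ as in Theorem~\ref{thmcon}, and Theorem~\ref{thmconsub} then gives directly that $\calnc(H,\scrh_\f)$ and $\Delta P_\f(H)$ are contractible. Putting these cases together proves the corollary. The only real work is bookkeeping --- one must confirm that the phrase ``collection $\{\Gamma_p\}_{p\in P}$ as in Theorem~\ref{thmcon}'' in the hypotheses of Theorems~\ref{thmconsub}, \ref{thmast1} and \ref{thmast2} is exactly the admissibility verified in Examples~\ref{extyp1} and \ref{extypII}, and that the proof of Theorem~\ref{thmast2} genuinely returns such a collection for the product, so that the induction on the number of factors can proceed. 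I expect this cross-referencing, rather than any new mathematical idea, to be the main (and only) obstacle.
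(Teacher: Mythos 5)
Your proposal is correct and follows essentially the same route as the paper, whose own justification is the remark preceding the corollary that all groups of Type I and II carry a collection $\{\Gamma_p\}_{p\in P}$ as in Theorem \ref{thmcon} (Examples \ref{extyp1} and \ref{extypII}), so that Theorems \ref{thmconsub}, \ref{thmast1} and \ref{thmast2} apply. Your additional bookkeeping (the virtually cyclic dichotomy $F\rtimes\Z$ versus $A\ast_C B$, the induction on the number of factors via the closure statement at the end of the proof of Theorem \ref{thmast2}, and the role of the acting factor in $G_1\rtimes G_2$) is exactly what the paper leaves implicit and is carried out correctly.
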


We now generalize the last point of Example \ref{extypII}, the amalgamated product $A\ast_D B$ of two finitely generated groups $A$ and $B$.

\begin{theorem}\label{thmamalgam}\mbox{}\\
Let $G_1$ and $G_2$ be finitely generated groups. Suppose there exist proper normal subgroups  $H_1 \lhd G_1$ and $H_2 \lhd G_2$  of finite index such that  $H_1 \cong H_2$ and $\{1,w_1,...,w_1^{n_1-1}\}$ and $\{1,w_2,...,w_2^{n_2-1}\}$ are full sets of coset representatives of $H_1< G_1$ and $H_2<G_2$ for some $w_1\in G_1$, $w_2\in G_2$. Then the nerve complex $\calnc(G, \scrh_\f)$ and the order complex $\Delta P_\f(G)$ are contractible for $G=G_1 \ast_D G_2$ with $\{1\}\leq D \leq H_1$.
\end{theorem}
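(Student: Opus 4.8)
The plan is to reduce this to Theorem~\ref{thmcon} by constructing, for infinitely many primes $p$, an $(X_1\sqcup X_2)$-graph $\Gamma_p$ of the kind demanded by Proposition~\ref{propGp}, using the very same alternating-gluing construction as in the proof of Theorem~\ref{thmast1}. Fix presentations $G_1=\langle X_1\mid R_1\rangle$, $G_2=\langle X_2\mid R_2\rangle$ with $X_1\cap X_2=\emptyset$, let $\psi\colon H_1\xrightarrow{\sim}H_2$ be the given isomorphism, and write $G=G_1\ast_D G_2=\langle X_1,X_2\mid R_1,R_2,\ d\psi(d)^{-1}\ (d\in D)\rangle$, where $D\le H_1$ is identified with $\psi(D)\le H_2$; note in particular $\psi(D)\le H_2$. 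Since $n_i:=[G_i:H_i]>1$, the subgroup graphs $\Gamma(H_i):=\Gamma_{X_i,R_i}(H_i)$ are finite connected $X_i$-regular graphs with $n_i$ vertices. I would build $\Gamma_p$ with $p=n_1+n_2-1+(n_1+n_2-2)n$ vertices from a chain $\Gamma(H_1)^{(1)},\Gamma(H_2)^{(1)},\Gamma(H_1)^{(2)},\dots,\Gamma(H_2)^{(n)}$ in which consecutive copies share exactly one vertex, and, at each vertex of a $\Gamma(H_i)$-copy that is not one of the (at most two) shared vertices of that copy, add one loop labelled $x$ for every $x$ in the other alphabet, making $\Gamma_p$ connected and $(X_1\sqcup X_2)$-regular. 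As $n_1+n_2-1$ and $n_1+n_2-2$ are consecutive integers they are coprime, so Dirichlet's theorem gives an infinite set $P$ of primes of this form.

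Next I would verify that $\Gamma_p$ fulfills all the defining relators. For $R_1$ and $R_2$ this is word for word the argument in the proof of Theorem~\ref{thmast1}: every connected component of $\Gamma_p|_{X_i}$ is either a copy of $\Gamma_{X_i,R_i}(H_i)$ or a single vertex carrying all $X_i$-loops, i.e.\ $\Gamma_{X_i,R_i}(G_i)$, and both fulfill $R_i$. The only genuinely new point is the amalgamation relators $d\psi(d)^{-1}$, $d\in D$, and this is exactly where normality of $H_1$ and $H_2$ is used. Fix a vertex $v$ of $\Gamma_p$. If $v$ lies on a copy of $\Gamma(H_1)$, say $v$ corresponds to the coset $H_1g$, then the reduced path labelled $d$ from $v$ uses only $X_1$-edges, hence stays inside that copy, and ends at $H_1gd=H_1g$ because $d\in D\le H_1\lhd G_1$ forces $gdg^{-1}\in H_1$; the reduced path labelled $\psi(d)$, a word in $X_2$, either consists of the added $X_2$-loops at $v$ (if $v$ is non-shared) or runs inside the adjacent copy of $\Gamma(H_2)$ and returns to $v$ since $\psi(d)\in\psi(D)\le H_2\lhd G_2$ (if $v$ is shared). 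In both cases each of the two subpaths is closed at $v$, so the reduced path labelled $d\psi(d)^{-1}$ from $v$ ends at $v$; the case $v$ on a copy of $\Gamma(H_2)$ is symmetric. Hence $\Gamma_p$ fulfills $R$, so by Theorem~\ref{thm2}(1) it is the subgroup graph of a finite index subgroup $H_p=\phi(L(\Gamma_p,1_{H_p}))$ of $G$.

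Then I would check the path condition of Proposition~\ref{propGp} with $w_p=w_1w_2$: for every vertex $v$ there is a reduced path from $1_{H_p}$ to $v$ with label $(w_1w_2)^k$ for some $0\le k<p$. This is precisely the combinatorial induction already carried out in the proof of Theorem~\ref{thmast1} — walking from $v$ back along the chain of glued copies, alternately invoking that $\{1,\dots,w_i^{n_i-1}\}$ is a full set of coset representatives of $H_i$ in $G_i$ (equivalently, that right multiplication by $w_i$ acts on the vertices of $\Gamma(H_i)$ as a single $n_i$-cycle) — and it nowhere uses the amalgamation relators, so it transfers verbatim. Consequently $\{t(p_k)\mid \mu(p_k)=(w_1w_2)^k,\ o(p_k)=1_{H_p},\ 0\le k<p\}=V(\Gamma_p)$. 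Since $w=w_1w_2$ is a freely reduced $(X_1\sqcup X_2)$-word serving uniformly for all $p\in P$, and $P$ is an infinite set of primes, Theorem~\ref{thmcon} applies to the collection $\{\Gamma_p\}_{p\in P}$ and yields that $\calnc(G,\scrh_\f)$ and $\Delta P_\f(G)$ are contractible.

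I expect the main obstacle to be the bookkeeping at the shared vertices, where the $X_1$-edges come from one glued copy and the $X_2$-edges from another: one must argue cleanly that $\Gamma_p|_{X_1}$ restricted to the vertex set of a $\Gamma(H_1)$-copy is exactly $\Gamma(H_1)$ (so the path labelled $d$ cannot escape), likewise for $\Gamma(H_2)$-copies and the path labelled $\psi(d)$, and that the normality of $H_i$ in $G_i$ is precisely what upgrades ``$d$ is a loop at $1_{H_i}$'' to ``$d$ is a loop at every vertex of $\Gamma(H_i)$''. Once this is in place, everything else is borrowed from Theorems~\ref{thmast1} and \ref{thmcon}.
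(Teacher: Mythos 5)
Your proposal is correct and follows essentially the same route as the paper: build the Theorem \ref{thmast1} chain of glued copies of $\Gamma_{X_1,R_1}(H_1)$ and $\Gamma_{X_2,R_2}(H_2)$, check that these graphs also fulfill the amalgamation relators $d\psi(d)^{-1}$ (using normality of $H_1$ and $H_2$), and then invoke Theorem \ref{thmcon} with $w=w_1w_2$. The only, harmless, difference is how normality enters: the paper fixes presentations with $X_1=\{w_1,c_1,\dots,c_k\}$ and $X_2=\{w_2,\psi(c_1),\dots,\psi(c_k)\}$ so that every $c_i$- and $\psi(c_i)$-edge is a loop and the relators are fulfilled trivially, whereas you keep arbitrary presentations and argue directly on cosets that the $d$- and $\psi(d)$-subpaths are closed at every vertex.
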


\begin{proof}
Let $\psi\colon H_1 \rightarrow H_2$ be an isomorphism and  $H_1=\left\langle c_1,...,c_k \right\rangle$.   Then there exist presentations $G_1=\left\langle \, X_1 \, | \, R_1 \, \right\rangle$ and $G_2=\left\langle \, X_2 \, | \, R_2 \, \right\rangle$  with $X_1=\{ w_1, c_1,...,c_k \}$ and  $X_2=\{ w_2, \psi(c_1),...,\psi(c_k) \}$. 
 Let $\Gamma_{X_i,R_i}(H_i)$ be the subgroup graph of $H_i$ in $G_i$ for $i=1,2$. Every coset of $H_i$ in $G_i$ is of the form $H_iw_i^j$ for $0\leq j < n_i$. Therefore  $\Gamma_{X_i,R_i}(H_i)|_{\{w_i\}}$ is a $(w_i,n_i)$-circle. 
Let $v$ be the terminus of the edge $e$ with origin $1_{H_1}$ and label $c_1$ and let $H_1w_1^j$ be the coset corresponding to $v$. Then $c_1 \in H_1w_1^j$. Consequently, $j=0$ and $v=1_{H_1}$. Thus $e$ is a loop. Analogously, all edges with label $c_i$ and origin $1_{H_1}$    or with label $\psi(c_i)$ and origin $1_{H_2}$ are loops. Since $H_1$ and $H_2$ are normal, $(\Gamma_{X_1,R_1}(H_1), 1_{H_1})\cong (\Gamma_{X_1,R_1}(H_1), v)$ and $(\Gamma_{X_2,R_2}(H_2), 1_{H_2})\cong (\Gamma_{X_2,R_2}(H_2), v')$ for all vertices $v$ in $\Gamma_{X_1,R_1}(H_1)$ and $v'$ in $\Gamma_{X_2,R_2}(H_2)$. Thus every edge labeled $c_i$ respectively $\psi(c_i)$ is a loop  at each vertex of $\Gamma_{X_1,R_1}(H_1)$ respectively  $\Gamma_{X_2,R_2}(H_2)$. 

For the group $G_1 \ast_D G_2=\left\langle \, X_1, X_2\, | \, R_1, R_2, d\psi(d)^{-1}, d\in D\, \right\rangle$ we construct the collection  $\{\Gamma_p\}_{p\in P}$ of $(X_1\cup X_2)$-graphs $\Gamma_p$ as in Theorem \ref{thmast1}. Thus each $\Gamma_p$ is a $(w_1,n_1,w_2,n_2)$-graph with $p=n_1+n_2-1+(n_1+n_2-2)n$ vertices and a loop for each $c_i$ and $\psi(c_i)$ at every vertex.  
Since $D\leq H_1$, each $d\in D$ is a word in $\{c_1^{\pm 1},..., c_k^{\pm 1}\}$ and each $\psi(d)$ a word in $\{\psi(c_1)^{\pm 1},..., \psi(c_k)^{\pm 1}\}$. Therefore  each $\Gamma_p\in \{\Gamma_p\}_{p\in P}$ fulfills the relator $d\psi(d)^{-1}$ for all $d\in D$.  
Theorems  \ref{thmcon} and \ref{thmast1}  complete the proof.   
\end{proof}

\begin{bibdiv}
\begin{biblist}

\bib{AH93}{article}{
   author={Abels, Herbert},
   author={Holz, Stephan},
   title={Higher generation by subgroups},
   journal={J. Algebra},
   volume={160},
   date={1993},
   number={2},
   pages={310--341},
   issn={0021-8693},
   doi={10.1006/jabr.1993.1190},
}

\bib{B00}{article}{
   author={Brown, Kenneth S.},
   title={The coset poset and probabilistic zeta function of a finite group},
   journal={J. Algebra},
   volume={225},
   date={2000},
   number={2},
   pages={989--1012},
   issn={0021-8693},
   doi={10.1006/jabr.1999.8221},
}

\bib{KM02}{article}{
   author={Kapovich, Ilya},
   author={Myasnikov, Alexei},
   title={Stallings foldings and subgroups of free groups},
   journal={J. Algebra},
   volume={248},
   date={2002},
   number={2},
   pages={608--668},
   issn={0021-8693},
   doi={10.1006/jabr.2001.9033},
}	
	
\bib{LS04}{article}{
   author={Liebeck, Martin W.},
   author={Shalev, Aner},
   title={Fuchsian groups, coverings of Riemann surfaces, subgroup growth,
   random quotients and random walks},
   journal={J. Algebra},
   volume={276},
   date={2004},
   number={2},
   pages={552--601},
   issn={0021-8693},
   doi={10.1016/S0021-8693(03)00515-5},
}

\bib{R05}{article}{
   author={Ramras, Daniel A.},
   title={Connectivity of the coset poset and the subgroup poset of a group},
   journal={J. Group Theory},
   volume={8},
   date={2005},
   number={6},
   pages={719--746},
   issn={1433-5883},
   doi={10.1515/jgth.2005.8.6.719},
}

   \bib{S}{article}{
   author={Stallings, John R.},
   title={Topology of finite graphs},
   journal={Invent. Math.},
   volume={71},
   date={1983},
   number={3},
   pages={551--565},
   issn={0020-9910},
   doi={10.1007/BF02095993},}

\bib{SW14}{unpublished}{
  author={Shareshian, J.},
  author={Woodroofe, R.},
   title={Order complexes of coset postes of finite groups are not contractible}, 
note={ arXiv:1406.6067v2, 2014}
  }

\end{biblist}
\end{bibdiv}

\end{document}